\newcommand*{\rom}[1]{\expandafter\@slowromancap\romannumeral #1@}
\DeclareSymbolFont{largesymbols}{OMX}{yhex}{m}{n}
\DeclareMathAccent{\wideparen}{\mathord}{largesymbols}{"F3}
\newcounter{tmp}
\DeclareMathOperator{\ad}{ad}
\DeclareMathOperator{\im}{im}
\newtheorem{theorem}{Theorem}[section]
\newtheorem{lemma}[theorem]{Lemma}
\newtheorem{corollary}[theorem]{Corollary}
\newtheorem{definition}[theorem]{Definition}
\newtheorem{proposition}[theorem]{Proposition}
\newtheorem{conjecture}[theorem]{Conjecture}
\newtheorem{construction}[theorem]{Construction}
\newtheorem{theo}{Theorem}[section]
\begin{document}

\title{Primitive ideals in rational, nilpotent Iwasawa algebras}

\author{Adam Jones}

\date{\today}

\maketitle

\begin{abstract}

\noindent Given a $p$-adic field $K$ and a nilpotent uniform pro-$p$ group $G$, we prove that  all primitive ideals in the $K$-rational Iwasawa algebra $KG$ are maximal, and can be reduced to a particular standard form. Setting $\mathcal{L}$ as the associated $\mathbb{Z}_p$-Lie algebra of $G$, our approach is to study the action of $KG$ on a Dixmier module $\widehat{D(\lambda)}$ over the affinoid envelope $\widehat{U(\mathcal{L})}_K$, and to prove that all primitive ideals can be reduced to annihilators of modules of this form.

\end{abstract}

\tableofcontents

\section{Introduction}

Fix $p>2$ a prime, and let $K\backslash\mathbb{Q}_p$ be a finite extension with ring of integers $\mathcal{O}$, uniformiser $\pi$, residue field $k$. 

\subsection{Background}

Let $G$ be a compact $p$-adic Lie group, and recall that we define the \emph{completed group algebra} of $G$ over $\mathcal{O}$ as:

\begin{equation}
\mathcal{O}G:=\varprojlim{\mathcal{O}[G/N]}
\end{equation}

\noindent where the limit is taken over all open normal subgroups $N$ of $G$. Continuous, $\mathcal{O}$-linear representations of $G$ are closely related to $\mathcal{O}G$-modules.\\ 

\noindent This paper is part of an ongoing project to classify the prime ideal structure of $\mathcal{O}G$, towards which much progress has been made in \cite{chevalley}, \cite{nilpotent}, \cite{munster}, \cite{APB} and \cite{primitive}. In the same vein as those works, we aim to prove that all prime ideals in $\mathcal{O}G$ can be reduced to a particular standard form. Specifically, recall the following definition \cite[Definition 1.1]{primitive}:

\begin{definition}\label{standard}

We say that a prime ideal $P$ of $\mathcal{O}G$ is \emph{standard} if there exists a closed, normal subgroup $H$ of $G$ such that:

\begin{itemize}

\item $G_0:=\frac{G}{H}$ is torsionfree.

\item $H-1\subseteq P$.

\item The image of $P$ in $\mathcal{O}G_0$ is centrally generated.

\end{itemize}

\noindent We say that $P$ is \emph{virtually standard} if $P\cap \mathcal{O}U$ is a finite intersection of standard prime ideals of $\mathcal{O}U$ for some open normal subgroup $U$ of $G$.

\end{definition}

\noindent The essence of this definition is that $P$ is standard when it can be constructed using only \emph{augmentation ideals} of the form $(H-1)\mathcal{O}G$, for $H$ a closed subgroup of $G$, and centrally generated ideals, i.e. the \emph{obvious} prime ideals.\\

\noindent In our case, we will assume further that $G$ is a \emph{uniform pro-$p$ group} in the sense of \cite[Definition 4.1]{DDMS}. This is a safe reduction since all compact $p$-adic Lie groups have an open, uniform normal subgroup.\\ 

\noindent Let us recall the main conjecture within the study of two-sided ideals in non-commutative Iwasawa algebras, first proposed in \cite[Question N]{survey}, and stated in \cite[Conjecture 1.1]{primitive}:

\begin{conjecture}\label{main}

Let $G$ be a solvable, uniform pro-$p$ group, and let $P$ be a prime ideal in $\mathcal{O}G$. Then $P$ is virtually standard, and moreover if $p\in P$ then $P$ is standard.

\end{conjecture}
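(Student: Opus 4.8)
The plan is to prove Conjecture~\ref{main} by induction on the derived length $d$ of $G$, using as the engine the nilpotent case in the sharp form established here: every primitive ideal of $KG$ is maximal and reduces to standard form via a Dixmier module $\widehat{D(\lambda)}$ over $\aff{\mathcal{L}}_K$. At the outset one separates the two regimes in the statement. If $p\in P$ one argues in the mod-$p$ Iwasawa algebra $kG$; if $p\notin P$ then $P$ corresponds to a prime of the $K$-rational Iwasawa algebra $KG$, to which the rigid-analytic machinery of this paper applies, and one transports the conclusion back to $\mathcal{O}G$ afterwards. The base case $d\le 1$ is the abelian one: $\mathcal{O}G\cong\mathcal{O}[[t_1,\dots,t_n]]$ is commutative, so \emph{centrally generated} is automatic in every quotient, and an elementary argument (compare \cite{primitive}) shows that any prime becomes standard after at worst replacing $G$ by the open subgroup generated by $p$-th powers; hence it is virtually standard, and standard when $p\in P$.

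For the inductive step, fix $d\ge 2$ and suppose the conjecture holds for all solvable uniform pro-$p$ groups of derived length $<d$ over every finite ring extension of $\mathcal{O}$; the induction is therefore framed relatively. Given a prime $P$ of $\mathcal{O}G$, let $H\trianglelefteq G$ be a closed normal subgroup maximal with $(H-1)\mathcal{O}G\subseteq P$ — this exists by Noetherianity of $\mathcal{O}G$. Passing to $G/H$, and when $p\notin P$ also to an open uniform subgroup to restore torsionfreeness, we may assume $P$ is \emph{faithful}: no $(H'-1)\mathcal{O}G$ with $1\ne H'\trianglelefteq G$ closed lies in $P$. Let $A:=G^{(d-1)}$, the last nontrivial term of the derived series; since $G$ is torsionfree this is a nontrivial abelian characteristic uniform subgroup, and $G/A$ is solvable of derived length $d-1$ (pass to an open uniform subgroup of it if needed). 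Then $Q:=P\cap\mathcal{O}A$ is a $G$-stable prime of the commutative ring $\mathcal{O}A\cong\mathcal{O}[[t_1,\dots,t_r]]$, and faithfulness of $P$ forces $Q$ to contain no $(B-1)\mathcal{O}A$ with $1\ne B\le A$ closed and stable under $G$-conjugation.

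The argument then bifurcates on $Q$. If $Q=0$, then $\mathcal{O}A\hookrightarrow\mathcal{O}G/P$; inverting the Ore set $\mathcal{O}A\setminus\{0\}$ realises $\mathcal{O}G/P$ inside a completed crossed product of $G/A$ over the field $\operatorname{Frac}(\mathcal{O}A)$, and the relative inductive hypothesis for $G/A$ yields the conclusion. If $Q\ne 0$, one passes to the rigid-analytic polydisc $X=\operatorname{Sp}(\widehat{KA})$: the conjugation action of $G/A$, through a suitable integral representation, preserves the closed analytic subvariety $V(Q)\subseteq X$, and faithfulness of $Q$ says $V(Q)$ lies in no $G$-stable proper subvariety. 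Choosing $\lambda\in V(Q)$ with Zariski-dense $G$-orbit, one forms the Dixmier module $\widehat{D(\lambda)}$ over the affinoid envelope of the $\mathbb{Z}_p$-Lie algebra of $\operatorname{Stab}_G(\lambda)$ — now \emph{twisted} by the $2$-cocycle produced by Clifford theory for $1\to A\to G\to G/A\to 1$ — and identifies $P$ as the annihilator of the $KG$-module obtained by the corresponding induction, which forces the image of $P$ in the relevant torsionfree quotient to be centrally generated.

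The main obstacle is exactly this last case — the solvable analogue of the Dixmier-module argument of the present paper. For nilpotent $G$ the orbit method is exact, the stabilisers are again nilpotent and uniform, and no cocycle appears; in the solvable setting one must combine a Roseblade-type control theorem, whose analogue for completed group algebras is only partially established, with the affinoid technology here, working over twisted affinoid envelopes $\aff{\mathcal{L}}_K$ and tracking both the Clifford cocycle and the stabiliser subgroups — which need not be uniform and may demand a further reduction. Proving that the resulting module has precisely the predicted annihilator, and that the Ore localisation and Zariski-density statements persist in the completed setting, where Noetherianity can fail after inverting $\mathcal{O}A\setminus\{0\}$, is where the genuinely new work lies; the \emph{moreover} clause then follows by running the whole scheme inside $kG$, where the passage to an open subgroup in the faithful reduction can be avoided.
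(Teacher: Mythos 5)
The statement you were asked about is Conjecture~\ref{main}: it is an open conjecture, not a theorem of the paper, and the paper offers no proof of it. What the paper actually establishes is strictly weaker: Theorem~\ref{A} (primitive ideals in \emph{nilpotent} $KG$ are maximal and virtually standard) and Theorem~\ref{C} (faithful primes in nilpotent $KG$ are controlled by some abelian normal subgroup, which need not be central). Your proposal is a research programme for the full solvable conjecture, and by your own admission its core steps are unproven; it cannot be accepted as a proof. The concrete gaps are as follows.

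First, your ``engine'' is miscalibrated. The nilpotent input available from this paper gives standardness only for \emph{primitive} ideals; for general primes one only gets control by an abelian normal subgroup (Theorem~\ref{C}), and the paper is explicit that this subgroup can fail to be central. An induction on derived length that needs the full prime-ideal statement at each stage therefore cannot bootstrap off what is actually known. Relatedly, in your case $Q\neq 0$ you ``identify $P$ as the annihilator'' of an induced Dixmier-type module; but even in the nilpotent case the reduction of a prime to a Dixmier annihilator (via \cite[Theorem A]{aff-dix} and Proposition~\ref{Dix-decomp}) is available only for primitive ideals, and the choice of $\lambda$ with Zariski-dense orbit has no analogue for a general prime. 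For solvable, non-nilpotent $\mathfrak{g}$ the orbit method is moreover not exact, so the claimed identification is not merely unproved but doubtful as stated.

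Second, in the case $Q=0$ you invert $\mathcal{O}A\setminus\{0\}$ and invoke the inductive hypothesis for $G/A$ ``over every finite ring extension of $\mathcal{O}$.'' The localisation lands in a crossed product over $\operatorname{Frac}(\mathcal{O}A)$, which is not a finite extension of $\mathcal{O}$ and is not complete; the inductive hypothesis as you framed it simply does not apply there, and Noetherianity and the Zariskian filtration technology of Section~3 are lost. Finally, the ``moreover'' clause cannot be obtained by ``running the whole scheme inside $kG$'': the logarithm-of-automorphisms and affinoid Dixmier machinery are intrinsically characteristic~$0$ (the paper notes explicitly that $\log(\overline{\varphi})$ ``never makes sense if $p\in P$''), and the known characteristic-$p$ results (\cite[Theorem A]{nilpotent}) use entirely different methods and are again confined to nilpotent $G$.
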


\noindent\textbf{Note:} There is a version of this conjecture for non-solvable groups, which requires us to exclude the case where $\mathcal{O}G/P$ is a finitely generated $\mathcal{O}$-module, but this will not concern us here.\\

\noindent  When the prime ideal $P$ contains $p$, we can reduce to studying the \emph{mod-$p$ Iwasawa algebra}:

\begin{center}
$kG:=\frac{\mathcal{O}G}{(\pi)}=\varprojlim{k[G/N]}$.
\end{center} 

\noindent We know that Conjecture \ref{main} holds for all prime ideals $P$ of $kG$ whenever $G$ is nilpotent by \cite[Theorem A]{nilpotent}, and also when $G$ is \emph{abelian-by-procyclic} by \cite[Theorem 1.4]{APB}.\\

\noindent In the case where the prime ideal $P$ does not contain $p$, however, the picture is very different. Define the \emph{rational Iwasawa algebra} or \emph{Iwasawa algebra of continuous distributions} as 

\begin{center}
$KG:=\mathcal{O}G\otimes_{\mathcal{O}}K$.
\end{center} 

\noindent This is a Noetherian, topological $K$-algebra, and the prime ideals of $\mathcal{O}G$ not containing $p$ are in bijection with prime ideals in $KG$, via the map $P\mapsto P\otimes_{\mathcal{O}}K$. We aim to prove the analogue of Conjecture \ref{main} for prime ideals in $KG$.\\

\noindent\textbf{Note:} 1. This conjecture is trivially true for $G$ abelian.\\

\noindent 2. The requirement that prime ideals in $KG$ are only virtually standard is necessary, since they are \emph{not} all standard. For example, if $G=\mathbb{Z}_p\times\mathbb{Z}_p$, and $K$ contains a $p$'th root of unity $\zeta$, and we let $P$ be the kernel of the map $KG\to K\mathbb{Z}_p, (r,s)\mapsto\zeta^rs$. Then $P$ is a prime ideal of $KG$ and $P$ is not standard.\\

\noindent In this paper, we will prove a version of Conjecture \ref{main} for $KG$, in the case where $G$ is nilpotent.

\subsection{Alternative Formulation}

There is an alternative way of describing standard prime ideals in $\mathcal{O}G$ and $KG$, and thus formulating Conjecture \ref{main}, which will be of more practical use:\\

\noindent Firstly, for any two-sided ideal $I$ of $\mathcal{O}G$, recall from \cite[Definition 5.2]{nilpotent} that we define $I^{\dagger}:=\{g\in G:g-1\in I\}$, a closed, normal subgroup of $G$, and we say that $I$ is \emph{faithful} if $I^{\dagger}=1$, i.e. if the natural map $G\to\left(\frac{\mathcal{O}G}{I}\right)^{\times},g\mapsto g+I$ is injective.

Then setting $G_I:=\frac{G}{I}$, the kernel of the natural surjection $\mathcal{O}G\to\mathcal{O}G_I$ is the augmentation ideal $(I^{\dagger}-1)\mathcal{O}G$, and the image of $I$ under this surjection is a faithful ideal of $\mathcal{O}G_I$.\\

\noindent\textbf{Note:} If $I$ is prime and $p\in I$, it follows from \cite[Lemma 5.2]{nilpotent} then $G_I$ is torsionfree, but this need not be true if $p\notin I$. Roughly speaking, this is why we can only generally assert that prime ideals not containing $p$ are \emph{virtually standard}, and not standard.\\

\noindent If $P$ is a faithful, prime ideal of $\mathcal{O}G$, then to prove that $P$ is standard, we see using Definition \ref{standard} that it is only required to prove that $P$ is centrally generated. Using \cite[Corollary A]{centre}, we know that $Z(\mathcal{O}G)=\mathcal{O}Z(G)$, so $P$ is centrally generated precisely when $P=(P\cap\mathcal{O}Z(G))\mathcal{O}G$.\\

\noindent More generallly, if $I$ is a right ideal of $\mathcal{O}G$ and $H$ is a closed subgroup of $G$, we say that $H$ \emph{controls} $I$ if $I=(I\cap\mathcal{O}H)\mathcal{O}G$, i.e. $I$ is generated as a right ideal by a subset of $\mathcal{O}H$.  Define the \emph{controller subgroup} of $I$ by $I^{\chi}:=\bigcap\{U\leq_o G: U$ controls $I\}$, and it follows from \cite[Theorem A]{controller} that a closed subgroup $H$ of $G$ controls an ideal $I\trianglelefteq RG$ if and only if $I^{\chi}\subseteq H$, so in particular $I^{\chi}$ controls $I$.

If $I$ is a two-sided ideal, then $I^{\chi}$ is a closed, normal subgroup of $G$ by \cite[Lemma 5.3(a)]{nilpotent}, and to prove that $I$ is centrally generated, all that is required is to prove that $I$ is controlled by $Z(G)$, i.e. $I^{\chi}\subseteq Z(G)$.\\

\noindent So, to summarise, given a prime ideal $P$ of $\mathcal{O}G$, to prove that $P$ is standard, we need only to prove that the quotient $G_P=\frac{G}{P^{\dagger}}$ is torsionfree, and that the image of $P$ under the surjection $\mathcal{O}G\to\mathcal{O}G_P$ is controlled by $Z(G_P)$. Therefore, we deduce the following alternative formulation for Conjecture \ref{main}:\\

\noindent\textbf{Alternative Formulation:} Let $G$ be a solvable, uniform pro-$p$ group. We conjecture that every faithful prime ideal of $\mathcal{O}G$ is controlled by $Z(G)$.

\subsection{Main Results}

When studying Iwasawa algebras, rather than studying general prime ideals, we may be interested specifically in classifying \emph{primitive ideals}, i.e. the annihilators of simple $\mathcal{O}G$-modules.\\ 

\noindent However, since $G$ is a pro-$p$ group, the Iwasawa algebra $\mathcal{O}G$ has a unique maximal left ideal $\mathfrak{m}=(G-1,\pi)$, which is in fact two-sided. Therefore $\mathfrak{m}$ is the only primitive ideal in $\mathcal{O}G$. The rational Iwasawa algebra $KG$, on the other hand, has many simple modules and primitive ideals.

\begingroup
\setcounter{tmp}{\value{theo}}
\setcounter{theo}{0}
\renewcommand\thetheo{\Alph{theo}}

\begin{theo}\label{A}

Let $G$ be a nilpotent, uniform pro-$p$ group. Then every primitive ideal of $KG$ is maximal and virtually standard. Moreover, every faithful, primitive ideal of $KG$ is standard.

\end{theo}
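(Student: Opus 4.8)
The plan is to transport the problem from the Iwasawa algebra $KG$ to the affinoid enveloping algebra $\aff{\mathcal{L}}_K$ of the associated $\mathbb{Z}_p$-Lie algebra $\mathcal{L}=\log G$, and there to carry out a $p$-adic version of the Dixmier--Kirillov orbit method for nilpotent Lie algebras.

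First I would reduce to a faithful statement. Given a primitive ideal $P=\operatorname{Ann}_{KG}(M)$ with $M$ simple, pass to the quotient $G_P=G/P^{\dagger}$: this replaces $KG$ by $KG_P$ and $P$ by a faithful primitive ideal. Since $p\notin P$ the group $G_P$ may have torsion, but it contains an open uniform normal subgroup $U$, and $P\cap KU$ is a finite intersection of primes by the going-up/going-down machinery for the finite free extension $KU\subseteq KG_P$; so it suffices to treat each of these. This is where ``virtually'' enters, and it reduces Theorem~A to the assertion: \emph{every faithful primitive ideal $P$ of $KG$, $G$ nilpotent uniform, is maximal and centrally generated}, i.e. $P=(P\cap KZ(G))KG$ --- here I use $Z(KG)=KZ(G)$ and that uniform groups are torsionfree, so that ``standard'' for a faithful ideal means exactly ``centrally generated''.

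For the faithful case I would first extract a central character: $P\cap KZ(G)$ is a maximal ideal of the commutative algebra $KZ(G)$ with residue field a finite extension $L$ of $K$, so after base change we may assume $M$ has a central character $\theta\colon KZ(G)\to K$. Combined with $M$, the logarithm turns $\theta$ into a linear functional $\lambda\in\mathcal{L}_K^{\ast}$, well defined up to the coadjoint action; faithfulness of $P$ forces the radical of the alternating form $(x,y)\mapsto\lambda([x,y])$ on $\mathcal{L}_K$ to be as small as possible, namely the centre of $\mathcal{L}_K$, with $\lambda$ non-degenerate transversally. Nilpotency of $\mathcal{L}$ guarantees a polarising subalgebra $\mathfrak{h}\subseteq\mathcal{L}_K$ subordinate to $\lambda$, and I would form the Dixmier module $\widehat{D(\lambda)}=\aff{\mathcal{L}}_K\,\widehat{\otimes}_{\aff{\mathfrak{h}}_K}K_{\lambda}$, a suitably completed induced module. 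The two key steps are then: (i) $\widehat{D(\lambda)}$ is a simple $\aff{\mathcal{L}}_K$-module whose annihilator $J(\lambda)$ is a maximal ideal with $\aff{\mathcal{L}}_K/J(\lambda)$ an affinoid Weyl algebra $\widehat{A_n}(L)$, itself a simple ring; and (ii) $J(\lambda)\cap KG=P$, which amounts to embedding $M$ into the restriction of $\widehat{D(\lambda)}$ --- the surjectivity half of the orbit correspondence. Granting these, $KG/P$ embeds in the simple ring $\widehat{A_n}(L)$ and a Noetherian-dimension count forces the embedding to be onto, so $P$ is maximal; and since the central character $\theta$ already carves out this same Weyl algebra (the non-degeneracy extracted from faithfulness ruling out any further collapse), $(P\cap KZ(G))KG=P$.

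The main obstacle I anticipate is the analytic bookkeeping in step (i): in contrast to the classical filtration argument for Weyl algebras, establishing simplicity of the completed induced module $\widehat{D(\lambda)}$ and identifying its quotient with a Noetherian, simple affinoid Weyl algebra requires genuinely $p$-adic input --- propagating the Banach/Fr\'echet topology through the induction, verifying that completing neither creates nor destroys submodules, and proving simplicity of $\widehat{A_n}(L)$. A secondary difficulty is the surjectivity in (ii): showing that every faithful simple $KG$-module is of Dixmier type, which I would attack by induction on $\dim\mathcal{L}$, splitting off a one-dimensional central subalgebra on which $\lambda$ is non-trivial to produce a Heisenberg-type subquotient and drop the dimension, mirroring Dixmier's inductive proof of the orbit bijection in the nilpotent case.
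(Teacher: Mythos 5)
Your overall frame (pass to the affinoid enveloping algebra, realise the primitive ideal through a Dixmier module, exploit a central character) is the same as the paper's, but the two steps you treat as routine are exactly where the paper has to work hardest, and as stated your argument for them fails. The key problem is the leap from ``$KG/P$ embeds densely in a simple affinoid Weyl-type quotient of $\widehat{U(\mathcal{L})}_K$'' to ``$P$ is maximal and $(P\cap KZ(G))KG=P$''. The embedding $KG\hookrightarrow\widehat{U(\mathcal{L})}_K$ is dense but \emph{not} faithfully flat, so $KG/P$ is merely a dense proper subalgebra of the simple Banach algebra on the right; density gives neither simplicity nor central generation, and the proposed ``Noetherian-dimension count forces the embedding to be onto'' is not a valid inference (compare $K[x]\subseteq K\langle x\rangle$: equal Krull dimension, dense, not onto). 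Likewise, knowing that the centrally generated ideal of $\widehat{U(\mathcal{L})}_K$ cuts out the same Weyl algebra says nothing about the ideal of $KG$ generated by $P\cap KZ(G)$: its contraction to $KG$ can a priori be strictly larger than the ideal it generates in $KG$. Establishing $P=(P\cap KZ(G))KG$ is precisely the content of Theorems B and C of the paper, which require the logarithm-of-automorphisms and non-commutative valuation machinery of Section 3 (control by an abelian normal subgroup), and then the almost-polynomial-map, rigid-geometry and crossed-product analysis of Section 4 to push the control down to $Z(G)$. Nothing in your proposal substitutes for this.

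Your step (ii) is also stronger than what is available and is not how the paper proceeds: the paper never shows that a faithful primitive ideal of $KG$ is the contraction of a \emph{single} Dixmier annihilator. Instead it lifts $P$ to a primitive ideal of the Noetherian Banach completion $D_{p^n}=\widehat{U(p^n\mathcal{L})}_K\ast G/G^{p^n}$ (Proposition \ref{prim-max}), intersects down to $\widehat{U(p^n\mathcal{L})}_K$ using the crossed-product lemmas, applies the affinoid result of \cite{aff-dix} there, and only concludes that $P\cap KG^{p^m}$ is a \emph{finite intersection} of Dixmier annihilators over finite extensions $F_i$ (Proposition \ref{Dix-decomp}); it then has to climb back from $KG^{p^n}$ to $KG$ using weak rationality of primitive ideals (via \cite{endomorphism}), Lemma \ref{rational}, and Theorem \ref{completely-prime'}. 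Your appeal to maximality of $P\cap KZ(G)$ also silently uses this weak-rationality input, which is a theorem, not a formality. Two smaller inaccuracies: faithfulness corresponds to injectivity of $\lambda$ on $Z(\mathfrak{g})$ (Lemma \ref{dagger}), not to the radical of the form $\lambda([\cdot,\cdot])$ being exactly the centre; and the paper needs $\mathcal{L}$ (or $p^{n-1}\log G$) powerful for Theorem \ref{B}, which is another reason the argument is run on $G^{p^n}$ rather than on $G$ directly. So the proposal, as written, has genuine gaps at the decisive points; filling them would essentially reproduce Sections 3--5 of the paper.
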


\endgroup

\noindent As explained above, we see that to prove Theorem \ref{A}, it suffices to show that all faithful, primitive ideals in $KG$ are controlled by $Z(G)$.\\

\noindent Now, recall from \cite[\rom{3} 2.1.2]{Lazard} the definition of a \emph{$p$-valuation} $\omega:G\to\mathbb{R}\cup\{\infty\}$.  We will recap the key properties of $p$-valuations in section 2, but for now, just recall that if $G$ is uniform, then $G$ carries a complete $p$-valuation given by $\omega(g):=\sup\{n\in\mathbb{N}:g\in G^{p^{n+1}}\}$, so this concept gives rise to a larger class of torsionfree compact $p$-adic Lie groups which, in particular, contains the class of all closed subgroups of uniform groups.\\

\noindent If we assume that $(G,\omega)$ is a complete, nilpotent $p$-valued group of finite rank, then it follows from \cite[Theorem A]{nilpotent} that all faithful prime ideals in the mod-$p$ Iwasawa algebra $kG$ are controlled by $Z(G)$. One might think that these techniques could be generalised to the characteristic 0 case to prove the same result. Unfortunately, the author showed in \cite{primitive} that these techniques fail in characteristic 0, and they can only be used to establish a much weaker control theorem for primitive ideals (\cite[Theorem 1.2]{primitive}).

However, in this paper, we will adapt the argument given in \cite{primitive} with some new techniques, and prove the following much stronger control theorem for general prime ideals:

\begingroup
\setcounter{tmp}{\value{theo}}% store current value of theorem counter
\setcounter{theo}{1} %assign desired value to theorem counter
\renewcommand\thetheo{\Alph{theo}}
\begin{theo}\label{C}

Let $G$ be a nilpotent, complete $p$-valued group of finite rank. Then there exists an abelian normal subgroup $A$ of $G$ such that $A$ controls every faithful prime ideal in $KG$.

\end{theo}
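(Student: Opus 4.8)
The plan is to transport the problem to the Lie-theoretic side and to exploit the explicit geometry of Dixmier modules. Write $\mathcal{L}$ for the $\mathbb{Z}_p$-Lie lattice attached to $G$ and $\mathfrak{g}=\mathcal{L}\otimes_{\mathbb{Z}_p}\mathbb{Q}_p$, so that $KG$ acts on the Dixmier modules $\widehat{D(\lambda)}$ through its natural map to the affinoid enveloping algebra $\widehat{U(\mathcal{L})}_K$, and control-theoretic information about ideals transfers between $KG$ and $\widehat{U(\mathcal{L})}_K$ along this map. The abelian subgroup $A$ will be $\exp(\mathfrak{a})$ for a well-chosen abelian ideal $\mathfrak{a}$ of $\mathcal{L}$ --- in the first instance one that is maximal among abelian ideals, replaced by its isolator so that $A$ is a genuine (isolated, normal) subgroup of $G$; note that $\mathfrak{a}$ then contains $Z(\mathcal{L})$ and, being abelian, is subordinate to every $\lambda\in\mathfrak{g}^{*}$, hence lies inside some polarisation of every such $\lambda$. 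It suffices to prove that $A$ controls every faithful prime ideal of $KG$.

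The first main step is the reduction to Dixmier modules. Given a faithful prime $P\trianglelefteq KG$, I would show --- by adapting the argument of \cite{primitive} and carrying it through to a genuine ``affinoid orbit method'' --- that, up to operations that preserve control by $A$ (finite intersections, finite extension of scalars $K\subseteq L$, and restriction to an open subgroup), $P$ equals $\operatorname{Ann}_{KG}\widehat{D(\lambda)}$ for a Dixmier module attached to some linear functional $\lambda$ defined over a finite extension $L/K$; and, crucially, that the faithfulness of $P$ forces $\lambda$ to be non-degenerate on $Z(\mathfrak{g})$ and, more generally, ``generic'' in the sense that its coadjoint stabiliser satisfies $\mathfrak{g}^{\lambda}\subseteq\mathfrak{a}\otimes_{\mathbb{Z}_p}L$. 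The theorem thus reduces to the statement: if $\mathfrak{g}^{\lambda}\subseteq\mathfrak{a}\otimes_{\mathbb{Z}_p}L$, then $\operatorname{Ann}\widehat{D(\lambda)}$ is controlled by $\widehat{U(\mathfrak{a})}_L$.

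For the second main step, choose a polarisation $\mathfrak{h}$ of $\lambda$ with $\mathfrak{a}\otimes_{\mathbb{Z}_p}L\subseteq\mathfrak{h}$, built from a flag of ideals of $\mathfrak{g}$ refining $\mathfrak{a}$; then $\widehat{D(\lambda)}\cong\widehat{U(\mathcal{L})}_L\,\widehat{\otimes}_{\widehat{U(\mathfrak{h})}_L}L_{\lambda}$, realised concretely on functions in the transverse coordinates. Since $\mathfrak{a}$ is abelian, its image in $\operatorname{End}\widehat{D(\lambda)}$ is a commutative affinoid algebra $C$, so that $\operatorname{Ann}\widehat{D(\lambda)}\cap\widehat{U(\mathfrak{a})}_L$ is the (prime) kernel of $\widehat{U(\mathfrak{a})}_L\twoheadrightarrow C$. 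A PBW computation with a basis adapted to $\mathfrak{a}\subseteq\mathfrak{h}\subseteq\mathcal{L}$ then shows --- and here is where the genericity $\mathfrak{g}^{\lambda}\subseteq\mathfrak{a}\otimes L$ is used --- that modulo the two-sided ideal $\widehat{U(\mathcal{L})}_L\cdot\bigl(\operatorname{Ann}\widehat{D(\lambda)}\cap\widehat{U(\mathfrak{a})}_L\bigr)$ the algebra $\widehat{U(\mathcal{L})}_L$ is already generated by a system of Weyl generators subject to no further relations; since the affinoid Dixmier isomorphism identifies $\widehat{U(\mathcal{L})}_L/\operatorname{Ann}\widehat{D(\lambda)}$ with a simple affinoid Weyl algebra, the two two-sided ideals must coincide. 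Hence $\operatorname{Ann}\widehat{D(\lambda)}$ is $\mathfrak{a}$-controlled; transferring back along $KG\to\widehat{U(\mathcal{L})}_K$ and reassembling via the reductions of the first step, $P$ itself is $A$-controlled.

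I expect the main obstacle to lie in the first step: setting up the affinoid orbit method rigorously for all faithful --- not merely primitive --- prime ideals. This needs, on the one hand, affinoid analogues of Dixmier's induction-from-a-polarisation results, with careful control of $\pi$-adic convergence inside $\widehat{U(\mathcal{L})}_L$ and of the finite field extensions that appear; and, on the other hand, extracting from the bare faithfulness of $P$ the Lie-theoretic genericity $\mathfrak{g}^{\lambda}\subseteq\mathfrak{a}\otimes L$ --- equivalently, the assertion that a maximal abelian ideal of a nilpotent $\mathbb{Q}_p$-Lie algebra contains the coadjoint stabiliser of every functional whose Dixmier module affords a faithful representation of $G$. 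The completed-ring bookkeeping in the second step --- replacing the naive PBW manipulations by genuine convergence estimates on $\widehat{U(\mathcal{L})}_L$ --- is a further technical point.
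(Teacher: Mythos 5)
There is a genuine gap, and it sits exactly where you locate your ``main obstacle'': the reduction of an arbitrary faithful \emph{prime} ideal to Dixmier annihilators is not available, and it is not a technical point that can be patched --- it is the reason the paper proves Theorem \ref{C} by a completely different method. The orbit-method reduction you invoke works only for \emph{primitive} ideals: one needs $P=\mathrm{Ann}_{KG}M$ for a simple module $M$, then passes to the Noetherian Banach completions $D_{p^n}$ (Proposition \ref{prim-max}), uses semiprimitivity of the contraction (Lemma \ref{semiprimitive}) and the classification of primitive ideals of $\widehat{U(p^m\mathcal{L})}_K$ as Dixmier annihilators from \cite[Theorem A]{aff-dix} --- and even then one only identifies $P\cap KG^{p^m}$ with a finite intersection of Dixmier annihilators, using weak rationality, which again comes from primitivity. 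None of this machinery exists for a general prime, and the paper states explicitly that all results resting on Dixmier modules require primitivity. Worse, the structure of your argument is circular relative to the logic of the paper: the control theorem for Dixmier annihilators by an abelian subgroup (Theorem \ref{B} and the arguments of Section 4 --- the crossed product $D_p(A)$, almost-polynomial maps, the rigid-geometric and Galois-theoretic analysis) takes Theorem \ref{C} as an input, since it starts from the fact that the controller subgroup of the annihilator is already known to be abelian. The paper instead proves Theorem \ref{C} by studying logarithms of bounded $G$-automorphisms acting on $Q(\mathcal{O}G/P)$ equipped with a non-commutative valuation (Theorem \ref{E}, via the Vandermonde argument), deducing control by $C_G(Z_2(G))$ for every faithful prime (Theorem \ref{prime-control}), and then iterating $A_0=G$, $A_{i+1}=C_{A_i}(Z_2(A_i))$ until the chain terminates in an abelian isolated normal subgroup $A$, with the non-splitting/virtually non-splitting reduction (Theorem \ref{split}) gluing the steps together; no representation-theoretic input about $P$ is needed.

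Two further steps in your outline are also unsupported even in the primitive case. First, faithfulness of $\mathrm{Ann}_{KG}\widehat{D(\lambda)}$ only yields injectivity of $\lambda$ on $Z(\mathfrak{g})$ (Lemma \ref{dagger}); your stronger genericity claim $\mathfrak{g}^{\lambda}\subseteq\mathfrak{a}\otimes L$ for a chosen maximal abelian ideal $\mathfrak{a}$ does not follow from this and would need a separate argument (nor is it clear it holds for every faithful $\lambda$ and a fixed $\mathfrak{a}$). Second, ``transferring back along $KG\to\widehat{U(\mathcal{L})}_K$'' is not harmless: this embedding is not faithfully flat, and control of the annihilator in $\widehat{U(\mathcal{L})}_L$ by $\widehat{U(\mathfrak{a})}_L$ does not formally give $P=(P\cap KA)KG$; obtaining control at the level of $KA\subseteq KG$ is precisely what forces the paper into the crossed-product and $p$-th-root analysis of Section 4.4 (Proposition \ref{domain}, Theorem \ref{D}). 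So the proposal, as it stands, assumes the hardest content of the theorem rather than proving it.
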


\endgroup

\noindent Of course, if we could show that this subgroup $A$ is central, then Theorem \ref{A} would follow immediately, and would remain true for prime ideals as opposed to just primitive ideals. But unfortunately, this need not always be the case. 

For example, if $G=H\rtimes\mathbb{Z}_p$ for $H$ abelian and $(G,H)\not\subseteq Z(G)$, then the subgroup $A$ given by Theorem \ref{C} is $H$, which is not central. We will prove Theorem \ref{C} in section 3.\\

\noindent Theorem \ref{C} is the strongest result we have obtained to date concerning general prime ideals in $KG$, but all subsequent results require the additional assumption that our prime ideals are \emph{primitive}.\\ 

\noindent The key idea is that we want to define a class of $KG$-representations $M$ whose annihilator ideals completely describe the primitive ideal structure of $KG$. Using  \cite[Theorem 5.2]{ST}, we have a dense, faithfully flat embedding fo $KG$ into the \emph{locally analytic distribution algebra} $D(G,K)$ as defined in \cite[Definition 2.1, Proposition 2.3]{ST'}, so it makes sense to restrict to the class of coadmissible $D(G,K)$-modules, which naturally have the structure of $KG$-modules. However, since $D(G,K)$ is non-noetherian, this may present difficulties, so instead we restrict our attention to larger, Noetherian completions of $KG$:\\

\noindent Returning to the case where $G$ is uniform, let $\mathcal{L}_G=\log(G)$ be the $\mathbb{Z}_p$-Lie algebra of $G$ as defined in \cite[Theorem 4.30]{DDMS}, and set $\mathcal{L}:=\frac{1}{p}\mathcal{L}_G$. Recall from from \cite[Definition 1.2]{aff-dix} that we define the \emph{affinoid enveloping algebra} of $\mathcal{L}$ with coefficients in $K$ to be:

\begin{equation}
\widehat{U(\mathcal{L})}_K:=\left(\underset{n\in\mathbb{N}}{\varprojlim}{U(\mathcal{L})/\pi^nU(\mathcal{L})}\right)\otimes_{\mathcal{O}}K
\end{equation}

\noindent This is a Noetherian, Banach $K$-algebra, and recall from \cite[Theorem 10.4]{annals} that there exists a continuous, dense embedding of $K$-algebras: 

\begin{equation}\label{embedding'}
KG\xhookrightarrow{}\widehat{U(\mathcal{L})}_K, g\mapsto \exp(\log(g)). 
\end{equation}

\noindent Unlike the embedding $KG\to D(G,K)$, this map is not faithfully flat, but we can still use it to study the representation theory of $KG$ via the representation theory of $\mathcal{L}$.\\

\noindent In section 2, we will recall from a previous work \cite{aff-dix} how we define the \emph{Dixmier module} $\widehat{D(\lambda)}$ of $\widehat{U(\mathcal{L})}_K$, corresponding to a linear form $\lambda\in$ Hom$_{\mathbb{Z}_p}(\mathcal{L},\mathcal{O})$. It follows from \cite[Theorem A]{aff-dix} that using the annihilators of these modules, we can completely describe the primitive ideal structure of $\widehat{U(\mathcal{L})}_K$. 

So now, we are interested in the restricted action of $KG$ on $\widehat{D(\lambda)}$, and the key result we need in the proof of Theorem \ref{A} is the following:

\begingroup
\setcounter{tmp}{\value{theo}}% store current value of theorem counter
\setcounter{theo}{2} %assign desired value to theorem counter
\renewcommand\thetheo{\Alph{theo}}
\begin{theo}\label{B}

Let $G$ be a nilpotent, uniform pro-$p$ group such that $\mathcal{L}$ is powerful, let $F/K$ be a finite extension, and let $\lambda\in$ Hom$_{\mathbb{Z}_p}(\mathcal{L},\mathcal{O}_F)$ such that $\lambda|_{Z(\mathcal{L})}$ is injective.  Then $P:=$\emph{ Ann}$_{KG}\widehat{D(\lambda)}_F$ is controlled by $Z(G)$.

\end{theo}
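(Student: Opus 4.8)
The strategy is to compute the controller subgroup $P^{\chi}$ directly, by relating it to the structure of the $\widehat{U(\mathcal{L})}_K$-module $\widehat{D(\lambda)}_F$. First I would recall from \cite{aff-dix} the explicit construction of $\widehat{D(\lambda)}_F$: since $\lambda \in \mathrm{Hom}_{\mathbb{Z}_p}(\mathcal{L},\mathcal{O}_F)$ vanishes on $[\mathcal{L},\mathcal{L}]$ up to a twist and $\lambda|_{Z(\mathcal{L})}$ is injective, the Dixmier module is built from a polarisation of $\lambda$, i.e. a subalgebra $\mathfrak{b} \leq \mathcal{L}$ (defined over a suitable extension) that is isotropic for the alternating form $B_\lambda(x,y) = \lambda([x,y])$ and of maximal dimension, with $\widehat{D(\lambda)}_F = \widehat{U(\mathcal{L})}_F \otimes_{\widehat{U(\mathfrak{b})}_F} F_\lambda$ where $F_\lambda$ is the one-dimensional $\mathfrak{b}$-representation given by $\lambda$. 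The key point is that the annihilator of $\widehat{D(\lambda)}_F$ in $\widehat{U(\mathcal{L})}_F$ is a maximal ideal whose structure is governed by $\lambda$ and the polarisation; by \cite[Theorem A]{aff-dix} this picture is faithful in the sense that these annihilators exhaust the primitive ideals.

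Next I would transfer this to $KG$ via the embedding \eqref{embedding'}. Since $P = \mathrm{Ann}_{KG}\widehat{D(\lambda)}_F = KG \cap \mathrm{Ann}_{\widehat{U(\mathcal{L})}_F}\widehat{D(\lambda)}_F$, and the controller subgroup $P^{\chi}$ is characterised by the property that $H$ controls $P$ iff $P^{\chi} \subseteq H$ (by \cite[Theorem A]{controller}), I want to show $P^{\chi} \subseteq Z(G)$. The link between the Lie-theoretic side and the group side is the correspondence $H \mapsto \mathcal{L}_H = \log(H) \subseteq \mathcal{L}_G$: under this correspondence $Z(G)$ corresponds (up to the $\tfrac1p$ scaling) to $Z(\mathcal{L})$, using that $\mathcal{L}$ powerful makes $\log$ and $\exp$ behave well and that the centre of $G$ matches the centre of $\mathcal{L}_G$. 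So it suffices to show that the ``Lie-algebra controller'' of $\mathrm{Ann}_{\widehat{U(\mathcal{L})}_F}\widehat{D(\lambda)}_F$ — the smallest Lie subalgebra $\mathfrak{h}$ such that the annihilator is generated by its intersection with $\widehat{U(\mathfrak{h})}_F$ — is contained in $Z(\mathcal{L})$. This is where the injectivity hypothesis on $\lambda|_{Z(\mathcal{L})}$ does the work: one shows that the annihilator of $\widehat{D(\lambda)}$ contains, for each $z \in Z(\mathcal{L})$, the central element $z - \lambda(z)$, and that these central elements together with the defining relations coming from the polarisation already cut out the annihilator; injectivity of $\lambda$ on $Z(\mathcal{L})$ ensures no proper subalgebra omitting part of $Z(\mathcal{L})$ can support these relations. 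More precisely, the nilpotency of $\mathcal{L}$ lets us induct on the nilpotency class, peeling off central directions, and at each stage the relation $z \equiv \lambda(z)$ forces the controller to contain $z$.

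The main obstacle I anticipate is the passage between the abstract controller subgroup $P^{\chi}$ of the \emph{group} ideal $P \trianglelefteq KG$ and the corresponding controlling subalgebra of the \emph{enveloping-algebra} ideal $\mathrm{Ann}_{\widehat{U(\mathcal{L})}_F}\widehat{D(\lambda)}_F$. The embedding $KG \hookrightarrow \widehat{U(\mathcal{L})}_F$ is dense but \emph{not} faithfully flat, so contraction of ideals does not automatically preserve the controller data, and I will need a compatibility statement: if a closed subgroup $H$ has $\mathcal{L}_H$ controlling the affinoid annihilator, then $H$ controls $P = KG \cap (\text{affinoid annihilator})$, and conversely. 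I expect to prove this using the good filtration on $\widehat{U(\mathcal{L})}_K$ and the associated graded (a polynomial ring over $k$ or its $K$-analogue), so that control can be checked on symbols, combined with the fact — already available from \cite{controller} and the apparatus of \cite{primitive} — that controller subgroups are detected after such completions. Once this dictionary is in place, the remaining computation that $Z(\mathcal{L})$ controls the Dixmier annihilator, given $\lambda|_{Z(\mathcal{L})}$ injective, is the kind of polarisation-and-induction argument that should go through cleanly.
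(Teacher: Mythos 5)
Your plan has a genuine gap at both of its load-bearing steps. First, the ``dictionary'' you propose between the controller subgroup $P^{\chi}\subseteq G$ and a controlling Lie subalgebra of the affinoid annihilator is exactly the hard point, and the mechanism you offer for it (checking control on symbols via the associated graded of a good filtration) is the characteristic-$p$ strategy that is known to break down here: passing to symbols destroys the distinction between an element of $G$ and its image in the graded ring, and the paper is explicit that Mahler/graded techniques from \cite{nilpotent} fail in characteristic $0$ (see \cite[Section 3.3]{primitive}). Moreover, even granting a Lie-side statement that the annihilator $I=\mathrm{Ann}_{\widehat{U(\mathcal{L})}_F}\widehat{D(\lambda)}_F$ is generated by $I\cap\widehat{U(Z(\mathcal{L}))}_F$, this does not contract along the non-faithfully-flat embedding $KG\hookrightarrow\widehat{U(\mathcal{L})}_F$ to the statement you need, namely $P=(P\cap KZ(G))KG$: the algebra $KZ(G)$ is far smaller than $\widehat{U(Z(\mathcal{L}))}_F$, and no general principle lets you descend central generation to central control of the contraction. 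Second, the Lie-side claim itself is not established by your sketch: the relations $z-\lambda(z)$ for $z\in Z(\mathcal{L})$, together with the polarisation relations, generate the \emph{left} ideal defining $\widehat{D(\lambda)}$, not its two-sided annihilator, and your induction ``forcing the controller to contain $z$'' proves containment in the wrong direction (control by $Z(G)$ means $P^{\chi}$ is \emph{small}, i.e. contained in the centre). The injectivity of $\lambda|_{Z(\mathcal{L})}$ is used in the paper only to guarantee faithfulness of $P$ (Lemma \ref{dagger}, Lemma \ref{Dix-prime}); it does not by itself rule out control by a non-central subalgebra.

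For comparison, the actual proof runs quite differently and none of its ingredients appear in your plan. After reducing to $F=K$ containing a $p$-th root of unity via Lemma \ref{base-change}, one invokes Theorem \ref{C} to get that the controller subgroup $A=P^{\chi}$ is an abelian normal subgroup, and then argues by contradiction that $A$ must be central: Lemma \ref{action} shows $\mathcal{A}=\frac{1}{p}\log(A)$ acts on $\widehat{D(\lambda)}\cong K\langle x_1,\dots,x_r\rangle$ through an almost-polynomial map into a Tate algebra $K\langle\partial_1,\dots,\partial_s\rangle$ with $s=0$ precisely when $A$ is central; if $s>0$, a rigid-geometric argument (Lemma \ref{disc}, Proposition \ref{exp}, using the radius of convergence of $\exp$ and a $(p-1)$-st root of $p$) shows some $\exp(pf_k)$ escapes the image of $\widehat{U(p\mathcal{A})}_K$, and a crossed-product plus Kummer-theory argument (Proposition \ref{domain}, Theorem \ref{D}) then shows $P\cap KA$ is controlled by a proper open subgroup of $A$, contradicting minimality of the controller. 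If you want to salvage your approach, you would have to supply precisely these missing engines: a replacement for Theorem \ref{C}, and a genuine mechanism for transferring control across the non-flat embedding, neither of which the graded-ring argument provides.
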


\endgroup

\noindent\textbf{Note:} To say that $\mathcal{L}$ is powerful just means that $[\mathcal{L},\mathcal{L}]\subseteq p\mathcal{L}$.\\ 

\noindent We will prove this result in section 4. The key idea is that we know that the annihilator $P:=$ Ann$_{KG}\widehat{D(\lambda)}$ is controlled by an abelian normal subgroup $A$ of $G$ by Theorem \ref{C}, so we consider the action of $KA$ on $\widehat{D(\lambda)}$, and prove that the kernel of this action is controlled by $Z(G)$.

In section 5, we will apply \cite[Theorem A]{aff-dix}, to prove that it suffices to know that Dixmier annihilators are controlled by $Z(G)$ to establish the same result for all primitive ideals, and Theorem \ref{A} will follow immediately from Theorem \ref{B}.\\

\noindent\textbf{Acknowledgments:} I am very grateful to Konstantin Ardakov for many helpful comments. I would also like to thank EPSRC and the Heilbronn Institute for Mathematical Research for funding me in my research.

\section{Preliminaries}

\noindent\textbf{Notation:} For $g,h\in G$, we denote the group commutator by $(g,h):=ghg^{-1}h^{-1}$. Moreover, we will write $H\trianglelefteq_c^i G$ to mean that $H$ is a closed, \emph{isolated} normal subgroup of $G$, i.e. $\frac{G}{H}$ is torsionfree.

\subsection{Non-commutative valuations}

Let us first recap some basic notions of ring filtrations and valuations. Throughout, let $R$ be any ring.

\begin{definition}

A \emph{filtration} on $R$ is a map $w:R\to\mathbb{Z}\cup\{\infty\}$ such that $w(0)=\infty$ and for all $r,s\in R$:

\begin{itemize}

\item $w(r+s)\geq \min\{w(r),w(s)\}$.

\item $w(rs)\geq w(r)+w(s)$

\end{itemize}

\noindent We say that $w$ is \emph{separated} if $w(r)=\infty$ if and only if $r=0$, and $w$ is a \emph{valuation} if $w(rs)=w(r)+w(s)$ for all $r,s\in R$.

\end{definition}

\noindent For each $n\in\mathbb{Z}$, we define $F_nR:=\{r\in R:w(r)\geq n\}$, and define the \emph{associated graded ring} gr$_w$ $R$ to be $\underset{n\in\mathbb{Z}}{\oplus}{\frac{F_nR}{F_{n+1}R}}$ with multiplication $(r+F_{n+1}R)(s+F_{m+1}R)=rs+F_{n+m+1}R$. 

Note that $w$ is a valuation if and only if gr$_w$ $R$ is a domain.\\ 

\noindent If $r\in R$ and $w(r)=n$ then we denote gr$(r):=r+F_{n+1}R\in$ gr $R$.\\

\noindent Recall from \cite[Ch. \rom{2} Definition 2.2.1]{LVO} that a filtration $w:R\to\mathbb{Z}\cup\{\infty\}$ is \emph{Zariskian} if the \emph{Rees ring} $\tilde{R}:=\underset{n\in\mathbb{Z}}{\oplus}{F_nR}$ is Noetherian, and $F_1R\subseteq J(F_0R)$. We will not use this definition very often, but we will usually always assume that our filtrations are Zariskian.\\ 

\noindent Note that if $w$ is Zariskian, then it is separated and both $R$ and gr$_w$ $R$ are Noetherian, since they arise as quotients of the Rees ring.\\

\noindent\textbf{Example:} 1. If $R$ carries a filtration $w$, then the matrix ring $M_n(R)$ carries a filtration $w_n(A)=\min\{w(a_{i,j}):i,j=1,\cdots,n\}$ -- the \emph{standard matrix filtration}.\\

\noindent 2. If $I$ is a two-sided ideal of $R$ and $R$ carries a filtration $w$, then the quotient ring $\frac{R}{I}$ carries the \emph{quotient filtration} given by $\overline{w}(r+I)=\sup\{w(r+y):y\in I\}$. Note that gr$_{\overline{w}}$ $\frac{R}{I}=\frac{\text{gr}_w\text{ }R}{\text{gr }I}$, and if $w$ is Zariskian then $\overline{w}$ is Zariskian.\\

\noindent Now, recall the following definition (\cite[Definition 3.1]{APB})

\begin{definition}\label{non-comm-val'}

Let $Q$ be a simple artinian ring, and let $v:Q\to\mathbb{Z}\cup\{\infty\}$ be a filtration. We say that $v$ is a \emph{non-commutative valuation} if the completion $\widehat{Q}$ of $Q$ with respect to $v$ is isomorphic to a matrix ring $M_n(Q(D))$, where: 

\begin{itemize}

\item $Q(D)$ is the ring of quotients of some non-commutative DVR $D$ with uniformiser $\nu$,

\item the extension of $v$ to $\widehat{Q}$ is given by the standard matrix filtration corresponding to the $\nu$-adic filtration on $Q(D)$.

\end{itemize}

\end{definition}

\noindent Note that if $v$ is a non-commutative valuation on $Q$, then for all $z\in Z(Q)$, $q\in Q$, $v(qz)=v(q)+v(z)$, a property which will be very useful to us in section 3.\\

\noindent The following construction allows us to define a non-commutative valuation on the artinian ring of quotients $Q(R)$ of a Zariskian filtered ring $R$. This construction was derived in \cite[Section 3]{nilpotent}, and we state it fully since we will need it for some proofs in section 3.

\begin{construction}\label{non-comm-val}

Let $R$ be a prime, Noetherian ring with a Zariskian filtration $w$ such that gr$_w$ $R$ is commutative and the graded ideal $($gr$_w$ $R)_{\geq 0}$ is non-nilpotent. Then for each minimal prime ideal $\mathfrak{q}$ of gr$_w$ $R$, we can construct a non-commutative valuation on $Q(R)$ using the following data:

\begin{itemize}

\item $S:=\{r\in R:$\emph{ gr}$(r)\notin\mathfrak{q}\}$ -- an Ore set in $R$ such that $S^{-1}R=Q(R)$

\item $w'$ -- a Zariskian filtration on $Q(R)$ such that $w'(r)\geq w(r)$ for all $r\in R$, and $w'(s^{-1}r)=w'(r)-w(s)$ for all $s\in S$. The associated graded gr$_{w'}$ $Q(R)$ is the homogeneous localisation of gr$_w$ $R$ at $\mathfrak{q}$.

\item $Q'$ -- the completion of $Q(R)$ with respect to $w'$, an artinian ring.

\item $U$ -- the positive part of $Q'$, a Noetherian ring.

\item $z$ -- a regular, normal element of $J(U)$ such that $z^nU=F_{nw'(z)}Q'$ for all $n\in\mathbb{Z}$.

\item $v_{z,U}$ -- the $z$-adic filtration on $Q'$, topologically equivalent to $w'$.

\item $\widehat{Q}$ -- a simple quotient of $Q'$.

\item $V$ -- the image of $U$ in $\widehat{Q}$.

\item $\overline{z}$ -- the image of $z$ in $V$.

\item $v_{\overline{z},V}$ -- the $\overline{z}$-adic filtration on $\widehat{Q}$.

\item $\mathcal{B}$ -- a maximal order in $\widehat{Q}$, equivalent to $V$, satisfying $\mathcal{B}\subseteq \overline{z}^{-r}V$ for some $r\geq 0$, isomorphic to $M_n(D)$ for some non-commutative DVR $D$.

\item $v_{\overline{z},\mathcal{B}}$ -- the $\overline{z}$-adic filtration on $\mathcal{B}$.

\item $v_{\mathfrak{q}}$ -- the $J(\mathcal{B})$-adic filtration on $\widehat{Q}$, topologically equivalent to $v_{\overline{z},\mathcal{B}}$.

\end{itemize}

\noindent Then $v=v_{\mathfrak{q}}$ defines a non-commutative valuation on $Q(R)$, whose completion is $\widehat{Q}$, and the natural map $R\to Q(R)$ is continuous. Moreover, if $w(x)\geq 0$ then $v(x)\geq 0$.

\end{construction}

\subsection{Crossed Products}

Given a ring $R$ and a group $H$, recall from \cite{Passman} that a \emph{crossed product} of $R$ with $H$, denoted $R\ast H$, is a ring extension $R\subseteq S$, free as a left $R$-module with basis $\{\overline{h}:h\in H\}\subseteq S^{\times}$ in bijection with $H$ such that for each $g,h\in H$:

\begin{itemize}

\item $\overline{g}R=R\overline{g}$ and \item $\overline{g}R\overline{h}R=\overline{gh}R$.

\end{itemize}

\noindent Furthermore, given a sequence $H_1,\cdots,H_r$ of groups, we denote an \emph{iterated crossed product} $R\ast H_1\ast H_2\ast\cdots\ast H_r$ inductively to mean a crossed product of $R\ast H_1\ast\cdots\ast H_{r-1}$ with $H_r$.\\

\noindent Let us recap some properties of crossed products that we will use throughout.

\begin{lemma}\label{semiprime}

Let $R$ be a Noetherian $\mathbb{Q}$-algebra, $F$ a finite group. Then if $P$ is a prime ideal of a crossed product $S=R\ast F$, then:

$i$. $P\cap R$ is semiprime in $R$.

$ii$. $J:=(P\cap R)\cdot S$ is semiprime in $S$, and $P$ is a minimal prime above $J$.

$iii$. $S/J=(P/P\cap R)\ast F$.

\end{lemma}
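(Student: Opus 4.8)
The plan is to deduce all three parts from standard results on crossed products with finite groups, principally those developed by Passman and the semiprimeness criteria available for $\mathbb{Q}$-algebras. First I would address part $i$. Since $S = R \ast F$ is free of finite rank $|F|$ as a left $R$-module, $S$ is a finitely generated $R$-module, so $R \subseteq S$ is a finite ring extension; in particular $S$ is Noetherian (as $R$ is) and integral over $R$ in the appropriate module-finite sense. The ideal $P \cap R$ is certainly semiprime once we show it is an intersection of prime ideals of $R$; the cleanest route is to invoke the fact that for a prime ideal $P$ of $S$, the contraction $P \cap R$ is an $F$-stable ideal of $R$ (since $\overline{g} R \overline{g}^{-1} = R$ for all $g \in F$, conjugation by the units $\overline{g}$ induces an action of $F$ on $R$ permuting the primes of $R$ minimal over $P \cap R$), and an $F$-prime ideal of a Noetherian ring is a finite intersection of the primes minimal over it, which are permuted transitively by $F$. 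Hence $P \cap R$ is semiprime. (One uses here that $R$ is a $\mathbb{Q}$-algebra, equivalently that $|F|$ is invertible, to rule out nilpotent obstructions — this is exactly where the hypothesis enters.)

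Next, for part $ii$, set $J := (P \cap R) S$. Because $P \cap R$ is $F$-stable, $J$ is a two-sided ideal of $S$, and moreover $S/J \cong (R/(P\cap R)) \ast F$ in a natural way, which will also give part $iii$ once established. Now $R/(P \cap R)$ is a semiprime Noetherian $\mathbb{Q}$-algebra by part $i$, and a crossed product of a semiprime Noetherian ring by a finite group whose order is invertible is again semiprime — this is a theorem of Fisher–Montgomery (or can be extracted from \cite{Passman}). Therefore $J$ is semiprime in $S$. Since $P \supseteq J$ and $P$ is prime, $P$ contains some minimal prime over $J$; conversely, any prime $Q$ minimal over $J$ satisfies $Q \cap R \supseteq P \cap R$, and going-up / lying-over for the finite extension $R/(P\cap R) \subseteq S/J$ forces $Q \cap R = P \cap R$ (the primes of $S/J$ all contract to minimal primes of $R/(P \cap R)$, and incomparability holds), from which minimality of $P$ over $J$ follows. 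I would spell this out using that $S/J$ is module-finite over the semiprime ring $R/(P \cap R)$, so Bergman–Small or the standard incomparability theorem for such extensions applies.

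For part $iii$, the isomorphism $S/J \cong (R/(P \cap R)) \ast F$ is essentially formal: $J = (P \cap R)S = \bigoplus_{g \in F} (P \cap R)\overline{g}$ as a left $R$-module (using freeness of $S$ and $F$-stability of $P \cap R$ to see that $(P\cap R)S = S(P \cap R)$), so $S/J = \bigoplus_{g} (R/(P\cap R))\, \overline{g}$, and the crossed-product relations descend verbatim since the twisting cocycle and the action of $F$ on $R$ both pass to the quotient. I would just check that the two defining conditions for a crossed product ($\overline{g}\,\overline{R} = \overline{R}\,\overline{g}$ and $\overline{g}\,\overline{R}\,\overline{h}\,\overline{R} = \overline{gh}\,\overline{R}$) are inherited.

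The main obstacle is part $i$, or more precisely the semiprimeness propagation: everything hinges on the Fisher–Montgomery-type result that finite group crossed products preserve semiprimeness in the presence of $|F|^{-1}$, together with the fact that contractions of primes under module-finite extensions behave well (lying over, incomparability). Once those are in hand, parts $ii$ and $iii$ are bookkeeping. If for some reason the ambient references only supply these facts for group rings rather than crossed products, I would reprove the needed incomparability statement directly using the trace map $\mathrm{tr}: S \to R$, $\sum r_g \overline{g} \mapsto |F|\, r_1$ (well-defined and $R$-bilinear up to the cocycle, and nonzero on every nonzero ideal precisely because $|F|$ is invertible), which is the standard device for transferring semiprimeness and comparing ideals across $R \subseteq R \ast F$.
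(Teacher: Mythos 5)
Your proposal follows essentially the same route as the paper: contract $P$ to $R$, exploit the $F$-action on $R$ induced by the units $\bar{g}$, obtain semiprimeness of $J$ from the Fisher--Montgomery theorem (the paper cites \cite[Theorem 4.4]{Passman}), and read off part ($iii$) from the decomposition $J=\underset{g\in F}{\oplus}(P\cap R)\bar{g}$. The one substantive gap is in part ($i$): your parenthetical only justifies that $P\cap R$ is $F$-\emph{stable}, and you then apply a fact about $F$-\emph{prime} ideals without establishing $F$-primeness. That implication is false for merely $F$-stable ideals, and the $F$-primeness of $P\cap R$ is precisely the one step the paper proves rather than cites; the argument is short and you should include it: if $A,B$ are $F$-invariant ideals of $R$ with $AB\subseteq P\cap R$, then $AS$ and $BS$ are two-sided ideals of $S$ with $(AS)(BS)\subseteq P$, so primeness of $P$ gives (say) $AS\subseteq P$, and since $AS=\underset{g\in F}{\oplus}A\bar{g}$ this forces $A\subseteq P\cap R$.

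A second, minor correction: your claim that the $\mathbb{Q}$-algebra hypothesis is ``exactly where the hypothesis enters'' in part ($i$) is misplaced. An $F$-prime ideal of a Noetherian ring is automatically semiprime with no assumption on characteristic (the prime radical of $R/(P\cap R)$ is $F$-stable and nilpotent, so $F$-primeness forces it to vanish); the characteristic-zero hypothesis is genuinely needed only in part ($ii$), where Fisher--Montgomery requires $|F|$ to be invertible --- which is where both you and the paper actually use it. Your incomparability argument for minimality of $P$ over $J$ is sound (it is Incomparability for crossed products by finite groups, equivalently finite normalizing extensions; the paper instead cites \cite[Proposition 10.5.8]{McConnell}), though ``Bergman--Small'' is not the right name for that result --- your fallback to the standard incomparability theorem, or the trace-map device, is the correct reference point.
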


\begin{proof}

We will prove that $P\cap R$ is an $F$-prime ideal, i.e. it is $F$-invariant and for any $F$-invariant ideals $A,B$ of $R$, if $AB\subseteq P\cap R$ then $A\subseteq P\cap R$ or $B\subseteq P\cap R$.\\ 

\noindent Having established this, part $i$ follows from the fact that all minimal primes above $P\cap R$ form a single $F$-orbit by \cite[Lemma 14.2($ii$)]{Passman}, part $iii$ is obvious since $J=\underset{g\in F}{\oplus}(P\cap R)\bar{g}$, and part $ii$ is part $iii$ together with \cite[Proposition 10.5.8]{McConnell} and \cite[Theorem 4.4]{Passman}.\\

\noindent So, suppose $A,B\trianglelefteq R$ are $F$-invariant, i.e. for all $g\in F$, $\bar{g}A=A\bar{g}$ and $\bar{g}B=B\bar{g}$, and suppose that $AB\subseteq P\cap R$. Then $AS,BS$ are two-sided ideals of $S$, and $(AS)(BS)\subseteq P$. So since $P$ is prime, we can assume without loss of generality that $AS\subseteq P$.

So since $AS=\underset{g\in F}{\oplus}{A\bar{g}}$, it follows that $A\subseteq P\cap R$, and hence $P\cap R$ is $F$-prime as required.\end{proof}

\begin{lemma}\label{semiprimitive}

Let $R$ be a Noetherian ring, $F$ a finite group. Then if $P$ is a primitive ideal of a crossed product $R\ast F$, then $P\cap R$ is semiprimitive.

\end{lemma}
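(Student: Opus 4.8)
The plan is to use that a primitive ideal is, by definition, the annihilator of a simple module, and then run the crossed-product analogue of Clifford's theorem for the finite group $F$. Write $S=R\ast F$ and pick a simple $S$-module $M$ with $\mathrm{Ann}_S(M)=P$; then $P\cap R=\mathrm{Ann}_R(M)$, so it is enough to prove that the restriction $M|_R$ is a semisimple $R$-module of finite length. Indeed, once $M|_R\cong S_1\oplus\cdots\oplus S_m$ with each $S_i$ a simple $R$-module, we get $P\cap R=\bigcap_{i=1}^m\mathrm{Ann}_R(S_i)$, an intersection of primitive ideals of $R$, which is precisely the assertion that $P\cap R$ is semiprimitive.

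To prove $M|_R$ is semisimple I would argue as follows. Since $M$ is cyclic over $S$ and $S$ is finitely generated (in fact free of rank $|F|$) as a left $R$-module, $M$ is finitely generated over $R$; being nonzero, it has a maximal $R$-submodule $M_0$, so $M/M_0$ is simple over $R$. For each $g\in F$, left multiplication by $\bar g$ is a bijection of $M$ that is semilinear for the automorphism of $R$ given by conjugation by $\bar g$ (using $\bar gR=R\bar g$); hence it preserves the lattice of $R$-submodules, and $\bar gM_0$ is again a maximal $R$-submodule. Set $N:=\bigcap_{g\in F}\bar gM_0$. This is an $R$-submodule, and it is $F$-stable: writing $\bar h\bar g=u\cdot\overline{hg}$ for a unit $u\in R$ and noting that multiplying an $R$-submodule by a unit of $R$ leaves it unchanged, one gets $\bar h(\bar gM_0)=\overline{hg}M_0$, so $\bar hN=N$. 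Thus $N$ is an $S$-submodule, and since $N\subseteq M_0\subsetneq M$ and $M$ is simple, $N=0$. Therefore the map $M\to\bigoplus_{g\in F}M/\bar gM_0$, $m\mapsto(m+\bar gM_0)_g$, is injective, exhibiting $M|_R$ as a submodule of a finite direct sum of simple $R$-modules, hence semisimple of length at most $|F|$.

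Assembling the pieces, $P\cap R=\mathrm{Ann}_R(M)=\bigcap_{g\in F}\mathrm{Ann}_R(M/\bar gM_0)$, an intersection of primitive ideals of $R$, so $P\cap R$ is semiprimitive. The one step that needs genuine care is checking that $N=\bigcap_{g\in F}\bar gM_0$ is stabilised by the units $\bar h$: this is where the crossed-product relations and the unit cocycle $u(h,g)$ enter, and it is the reason finiteness of $F$ is essential, while—by contrast with Lemma \ref{semiprime}—no $\mathbb{Q}$-algebra hypothesis is required. It is also worth recording that finitely generated modules over an arbitrary unital ring have maximal submodules, so the argument does not really use that $R$ is Noetherian, although we are free to invoke it.
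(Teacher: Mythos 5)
Your argument is correct and is essentially the paper's own proof: both choose a maximal $R$-submodule of a simple $S$-module $M$ with annihilator $P$, translate it by the units $\overline{g}$ for $g\in F$, observe that the intersection of the translates is an $S$-submodule and hence zero by simplicity, and conclude that $P\cap R$ is the intersection of the finitely many primitive annihilators of the resulting simple $R$-quotients. Your added remarks (the cocycle check that the intersection is $F$-stable, and that Noetherianity of $R$ is not really needed since finitely generated modules always have maximal submodules) are accurate but do not change the route.
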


\begin{proof}

Let $S=R\ast F$, then since $P$ is primitive, $P=\text{ Ann}_S M$ for some irreducible $S$-module $M$. Since $F$ is finite, $M$ is finitely generated over $R$, so since $R$ is Noetherian, we can choose a maximal $R$-submodule $U$ of $M$.\\

\noindent For each $g\in F$, $gUg^{-1}$ is a maximal $R$-submodule of $M$, so set $M_g:=M/gUg^{-1}$, an irreducible $R$-module, and let $Q_g:=Ann_R M_g$, a primitive ideal of $R$. Clearly if $r\in P\cap R=Ann_RM$ then $rN_g=0$ for all $g\in F$, so $P\cap R\subseteq\underset{g\in F}{\cap}{Q_g}$.\\

Also, $\underset{g\in F}{\cap}{gUg^{-1}}$ is an $S$-submodule, so by simplicity of $M$, $\underset{g\in F}{\cap}{gUg^{-1}}=0$. So if $r\in\underset{g\in F}{\cap}{Q_g}$ then $rM_g=0$ for all $g$, so $rM\subseteq gUg^{-1}$ for all $g$, i.e. $rM\subseteq\underset{g\in F}{\cap}{gUg^{-1}}=0$ and hence $r\in Ann_RM=P\cap R$. Hence:

\begin{center}
$P\cap R=\underset{g\in F}{\cap}{Q_g}$
\end{center}

\noindent Hence $P\cap R$ is semiprimitive as required.\end{proof}

\subsection{$p$-valued Groups}

Let $G$ be a group. Recall from \cite[\rom{3} 2.1.2]{Lazard} that we define a \emph{$p$-valuation} on $G$ to be a map $\omega:G\to\mathbb{R}\cup\{\infty\}$ such that for all $g,h\in G$:

\begin{itemize}

\item $\omega(g)=\infty$ if and only if $g=1$.

\item $\omega(g^{-1}h)\geq\min\{\omega(g),\omega(h)\}$.

\item $\omega((g,h))\geq \omega(g)+\omega(h)$.

\item $\omega(g^p)=\omega(g)+1$.

\item $\omega(g)>\frac{1}{p-1}$.

\end{itemize}

\noindent Note that if $(G,\omega)$ is a $p$-valued group then $G$ is torsionfree, and carries a topology defined by the metric $d(g,h):=c^{-\omega(g^{-1}h)}$ for some $c>1$. We will always assume that $G$ is complete with respect to this topology, in which case we can define \emph{$p$-adic exponentiation} in $G$, i.e. for all $g\in G$, $\alpha\in\mathbb{Z}_p$, if $\alpha=\underset{n\rightarrow\infty}{\lim}{\alpha_i}$ for $\alpha_i\in\mathbb{Z}$, we define $g^{\alpha}:=\underset{n\rightarrow\infty}{\lim}{g^{\alpha_i}}\in G$.\\

\noindent Given $d\in\mathbb{N}$, we say that $G$ has \emph{finite rank $d$} if there exists a subset $\underline{g}:=\{g_1,\cdots,g_d\}\subseteq G$ such that for every $g\in G$, there exists a unique $\alpha\in\mathbb{Z}_p^d$ such that $g=\underline{g}^{\alpha}:=g_1^{\alpha_1}\cdots g_d^{\alpha_d}$, and $\omega(g):=\min\{v_p(\alpha_i)+\omega(g_i):i=1,\cdots,d\}$. We call such a subset $\underline{g}$ an \emph{ordered basis} for $(G,\omega)$.\\

\noindent\textbf{Example:} If $G$ is uniform, and $\omega(g):=\sup\{n\in\mathbb{N}:g\in G^{p^{n+1}}\}$, then $(G,\omega)$ is a complete $p$-valued group, and any minimal topological generating set for $G$ is an ordered basis for $(G,\omega)$.

\begin{definition}\label{abelian-val}

We say that a $p$-valuation $\omega:G\to\mathbb{R}\cup\{\infty\}$ is \emph{abelian} if:

\begin{itemize}

\item There exists $n\in\mathbb{N}$ such that $\omega(G)\subseteq\frac{1}{n}\mathbb{Z}$.

\item For all $g,h\in G$, $\omega((g,h))>\omega(g)+\omega(h)$.

\end{itemize}

\end{definition}

\noindent Using \cite[Lemma 26.13]{Schneider}, if $(G,\omega)$ is any integer valued $p$-valued group of finite rank (e.g. a uniform group), then we can choose $c>0$ such that $\omega_c(g):=\omega(g)-c$ is an abelian $p$-valuation on $G$.\\

\noindent Now, suppose that $(G,\omega)$ is a complete $p$-valued group of rank $d$, with ordered basis $\underline{g}=\{g_1,\cdots,g_d\}$ then the Iwasawa algebra $\mathcal{O}G$ is isomorphic to the power series ring $\mathcal{O}[[b_1,\cdots,b_d]]$ as an $\mathcal{O}$-module (and as a ring if $G$ is abelian), where each variable $b_i$ corresponds with $g_i-1$.\\

\noindent Moreover, if $\omega$ is an abelian $p$-valuation, taking values in $\frac{1}{n}\mathbb{Z}$ for some $n\in\mathbb{N}$, then recall from \cite[Section 2.2]{primitive} that we can define a filtration $w$ on $\mathcal{O}G$ via: 

\begin{center}
$w(\underset{\alpha\in\mathbb{N}^d}{\sum}{\lambda_{\alpha}b_1^{\alpha_1}\cdots b_d^{\alpha_d}})=\inf\{v_{\pi}(\lambda_{\alpha})+\underset{i\leq d}{\sum}{en\alpha_i\omega(g_i)}:\alpha\in\mathbb{N}^d\}$,
\end{center} 

\noindent where $e$ is the ramification index of $K/\mathbb{Q}_p$.\\ 

\noindent We call $w$ the \emph{Lazard filtration} on $\mathcal{O}G$. Using \cite[Theorem 4.5]{ST}, we see that gr$_w$ $\mathcal{O}G\cong k[t,t_1,\cdots,t_d]$, where $k$ is the residue field of $K$, $t=$ gr$(\pi)$ and $t_i=$ gr$(b_i)$, and hence is commutative. Note that for any $g\in G$, $w(g-1)\geq en\omega(g)$, with equality if $g=g_i$ for some $i$.\\

\noindent Furthermore, since $\mathcal{O}G$ is complete with respect to $w$ and gr$_w$ $\mathcal{O}G$ is Noetherian, it follows from \cite[Ch. \rom{2} Theorem 2.1.2]{LVO} that $w$ is a Zariskian filtration. Hence for any two-sided ideal $I$ of $\mathcal{O}G$, the quotient filtration $\overline{w}$ on $\mathcal{O}G/I$ is Zariskian.

In particular, if $I$ is a prime ideal then we can use Construction \ref{non-comm-val} to define a non-commutative valuation $v$ on the Goldie ring of quotients $Q(\mathcal{O}G/I)$ such that the natural map $\tau:(\mathcal{O}G,w)\to (Q(\mathcal{O}G/I),v)$ is continuous.

\subsection{Prime ideals in $KG$}

Fixing $(G,\omega)$ a complete $p$-valued group of finite rank, we will now examine some basic properties of prime ideals in $KG$. First of all, the following lemma allows us to simplify the statement of Theorem \ref{B} to remove reference to the finite extension $F/K$:

\begin{lemma}\label{base-change}

Let $F/K$ be a finite extension, and let $I'$ a right ideal of $FG$. Setting $I:=I'\cap KG$, we have that if $I'$ is controlled by $U\leq_c G$ then $I$ is controlled by $U$.

\end{lemma}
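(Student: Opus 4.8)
The plan is to exploit the fact that $FG = KG \otimes_K F$ is a free $KG$-module, with a basis that can be chosen to behave well with respect to the subalgebra $FU \subseteq FG$ and its $KU$-form. Concretely, pick a $K$-basis $\{e_1 = 1, e_2, \dots, e_m\}$ of $F$; then every element of $FG$ is written uniquely as $\sum_j e_j x_j$ with $x_j \in KG$, and the projection $\rho\colon FG \to KG$ onto the $e_1$-component is a $KG$-bimodule map restricting to the identity on $KG$. The same basis simultaneously expresses $FU = \bigoplus_j e_j \, KU$, so $\rho$ also maps $FU$ onto $KU$. The key compatibility is that $\rho$ commutes with right multiplication by elements of $KG$, and that $\rho(FU) = KU$ and $\rho^{-1}(KU) \cap FU\cdot(\text{something})$ interact correctly; I will need to check that $\rho(I') \subseteq I$, which holds because $I = I' \cap KG$ and, for $y \in I'$ with $y = \sum_j e_j y_j$, applying a $K$-linear functional on $F$ (dual to the $e_j$) and using $KG$-linearity shows each $y_j$ lies in $I'$, hence in $I$.

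First I would set up this splitting and record that $\rho$ is right $KG$-linear and satisfies $\rho(FU) = KU$. Second, I would assume $I'$ is controlled by $U$, i.e. $I' = (I' \cap FU)\cdot FG$, and take an arbitrary $x \in I$; write $x = \sum_k a_k g_k$ with $a_k \in I' \cap FU$ and $g_k \in FG$, then expand each $g_k$ in a fixed $\mathcal{O}$- (or $K$-) coset decomposition of $FG$ over $FU$ — or more cleanly, apply $\rho$. Since $x \in KG$ we have $\rho(x) = x$; on the other hand $\rho$ of a right $FU$-combination, after first reducing coefficients, should land in $(I \cap KU)\cdot KG$. Third, I would conclude $I \subseteq (I \cap KU)\cdot KG$, and the reverse inclusion is automatic, giving that $U$ controls $I$.

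The main obstacle is the second step: the naive computation $\rho\big(\sum a_k g_k\big)$ mixes the $F$-components of the $a_k \in FU$ with those of the $g_k \in FG$, so $\rho$ alone does not obviously return something in $(I\cap KU)KG$. The fix is to be more careful about the module structure: view $I'$ as a right $FG$-submodule and use that $FG$ is free over $KG$ on a basis $\{h_1, \dots, h_t\}$ of coset representatives that can be taken \emph{inside} a set with $FU$ free over $KU$ on the \emph{same} type of representatives — then a right $FU$-generating set for $I'\cap FU$ over $KU$ can be combined with the $KG$-module structure. Equivalently, and perhaps most transparently, one argues: $I' \cap FU$ is a $KU$-submodule of the free $KU$-module $FU = \bigoplus_j e_j KU$, so every element has coordinates in $KU$; applying the coordinate functionals to the relation $x = \sum a_k g_k$ and using that these functionals are right $KG$-linear shows $x \in (I'\cap KU)\cdot KG$; but $I' \cap KU = (I' \cap FU) \cap KG \subseteq I' \cap KG = I$ and conversely $I \cap KU \subseteq I'$, so $I' \cap KU = I \cap KU$, and we are done. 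I would write the argument in this second form to sidestep the basis-mixing issue entirely.
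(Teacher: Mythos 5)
There is a genuine gap, and it is exactly the ``basis-mixing'' problem you flagged, reappearing one level down in your proposed fix. Your final step needs the $KU$-coordinates (with respect to a $K$-basis $\{e_1=1,\dots,e_m\}$ of $F$) of the elements $a_k\in I'\cap FU$ to lie in $I'\cap KU$; equivalently, your first paragraph asserts $\rho(I')\subseteq I$ because ``each $y_j$ lies in $I'$''. This is false: the coordinate projections $\rho_j:FG\to KG$ are right $KG$-linear but not right $FG$-linear, and membership in $I'$ is an $FG$-module condition, so the fact that $I'$ is an $F$-subspace does not make it stable under taking coordinates. Concretely, take $K=\mathbb{Q}_p$, $F=K(\pi)$ with $\pi^2=p$, $G=\mathbb{Z}_p\times\mathbb{Z}_p$ with topological generators $g_1,g_2$, $U=\overline{\langle g_1\rangle}$, and $I'=(g_1-1-\pi)FG$, a proper right ideal controlled by $U$. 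Then $a=g_1-1-\pi\in I'\cap FU$, but its coordinates with respect to the basis $\{1,\pi\}$ are $g_1-1$ and $-1$, and neither lies in $I'$: the continuous $F$-algebra homomorphism $FG\to F$ with $g_1\mapsto 1+\pi$, $g_2\mapsto 1$ kills $I'$ but sends $g_1-1$ to $\pi\neq 0$ and $-1$ to $-1$. So applying the coordinate functionals to $x=\sum_k a_kg_k$ only gives $x\in\rho(I'\cap FU)\cdot KG$ with $\rho(I'\cap FU)\subseteq KU$, and you cannot conclude $x\in(I'\cap KU)KG$; the claimed inclusion $\rho(I')\subseteq I$ fails in the same example.

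What makes the lemma work is a decomposition over $G/U$ rather than over $F/K$, which is the paper's route. For $U$ open, choose coset representatives $g_1,\dots,g_t$ of $U$ in $G$, so that $FG=\bigoplus_i FU\,g_i$ and $KG=\bigoplus_i KU\,g_i$ over the \emph{same} representatives; the control hypothesis gives $I'=(I'\cap FU)FG=\bigoplus_i(I'\cap FU)\,g_i$, and since the components of $x\in I=I'\cap KG$ in this decomposition are unique, they lie simultaneously in $KU$ and in $I'\cap FU$, hence in $I'\cap KU=I\cap KU$ -- precisely the membership your $\rho$-argument cannot produce. The case of a general closed $U$ (which your proposal also does not treat separately) is then reduced to the open case via the controller subgroup: every open subgroup controlling $I'$ controls $I$, so $I^{\chi}\subseteq I'^{\chi}$, and any closed subgroup controlling $I'$ contains $I'^{\chi}\supseteq I^{\chi}$ and therefore controls $I$ by \cite[Theorem A]{controller}.
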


\begin{proof}

\noindent We will first suppose that $U$ is open in $G$. Then given $r\in I$, choose a complete set of coset representatives $\{g_1,\cdots,g_r\}$ for $U$ in $G$, then $r=\underset{1\leq i\leq r}{\sum}{r_{i}g_i}$ for some $r_{i}\in KU\subseteq FU$.

So since $I=I'\cap KG$ and $I'$ is controlled by $U$, it follows that $r_{i}\in I'\cap FU\cap KG=I'\cap KU=I$ for each $i$, and hence $I$ is controlled by $U$.\\

\noindent So, let $I^{\chi}$ be the controller subgroup of $I$, i.e. the intersection of all open subgroups of $G$ controlling $I$. So since this includes all open subgroups of $G$ controlling $I'$, we have that $I^{\chi}\subseteq I'^{\chi}$, hence any closed subgroup controlling $I'$ also controls $I$.\end{proof}

\vspace{0.1in}
 
\noindent Now, recall that a two-sided ideal $P$ of a ring $R$ is \emph{completely prime} if the quotient $\frac{R}{P}$ is a domain. The following result is the characteristic 0 analogue of \cite[Theorem 8.6]{nilpotent}, and it uses a similar argument.

\begin{theorem}\label{completely-prime'}

Let $P$ be a prime ideal of $KZ(G)$. Then $PKG$ is a completely prime ideal of $KG$, and if $P$ is faithful then $PKG$ is faithful.

\end{theorem}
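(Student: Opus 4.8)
The plan is to pass to the integral Iwasawa algebra $\mathcal O G$, exhibit the relevant quotient as a skew power series ring over a complete Noetherian domain, and run a filtration argument; the faithfulness assertion then drops out along the way.

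First, by \cite{centre} we have $Z(\mathcal O G)=\mathcal O Z(G)$, hence $Z(KG)=KZ(G)$ is commutative; so $P$ being prime just means $KZ(G)/P$ is a domain, and $PKG=(KG)P$ is a two‑sided ideal. Set $P_0:=P\cap\mathcal O Z(G)$. Since $\pi\in K^{\times}$, $P_0$ is a prime of $\mathcal O Z(G)$ with $\pi\notin P_0$, $P=P_0\otimes_{\mathcal O}K$, $PKG=(P_0\mathcal O G)\otimes_{\mathcal O}K$, and $\mathcal O Z(G)/P_0$ is $\pi$‑torsionfree. As $Z(G)$ is an isolated subgroup of $G$, we may pick an ordered basis $g_1,\dots,g_d$ of $(G,\omega)$ whose first $m$ members form an ordered basis of $Z(G)$; by \cite{DDMS}, $\mathcal O G$ is then topologically free over $\mathcal O Z(G)$ on the monomials $\underline g^{\beta}:=g_{m+1}^{\beta_{m+1}}\cdots g_d^{\beta_d}$, $\beta\in\mathbb N^{\,d-m}$. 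Because $\mathcal O Z(G)$ is central, $P_0\mathcal O G=\bigoplus_{\beta}P_0\,\underline g^{\beta}$ (topologically), so $PKG$ is a proper ideal, $P_0\mathcal O G\cap\mathcal O Z(G)=P_0$, and $P_0\mathcal O G$ is $\pi$‑saturated in $\mathcal O G$ (since $\mathcal O G/P_0\mathcal O G$ is flat, hence $\pi$‑torsionfree, over $\mathcal O Z(G)/P_0$); consequently $PKG$ is completely prime (resp. faithful) if and only if $P_0\mathcal O G$ is so as an ideal of $\mathcal O G$. For faithfulness: if $g\in(P_0\mathcal O G)^{\dagger}$ write $g=zu$ with $z=g_1^{\alpha_1}\cdots g_m^{\alpha_m}\in Z(G)$ and $u=\underline g^{\alpha''}$; expanding $u=\sum_{\beta}\lambda_{\beta}\underline g^{\beta}$ with $\lambda_\beta\in\mathbb Z_p$, the condition $g-1\in\bigoplus_\beta P_0\underline g^\beta$ forces $z\lambda_{\beta}\in P_0$ for all $\beta\neq 0$; as $z$ is a unit and $P_0\cap\mathbb Z_p=0$ (because $p\notin P_0$), this gives $\lambda_\beta=0$ for $\beta\neq0$, so $u$ equals a scalar, hence $u=1$ (as $G\hookrightarrow(\mathcal O G)^{\times}$), hence $g=z$ and $z-1\in P_0\mathcal O G\cap\mathcal O Z(G)=P_0$. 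Thus $(P_0\mathcal O G)^{\dagger}=P_0^{\dagger}$, which is trivial when $P$ is faithful.

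It remains to show $\overline R:=\mathcal O G/P_0\mathcal O G$ is a domain, which is the crux. By the above decomposition and centrality of $\mathcal O Z(G)$, $\overline R$ is topologically free over the complete Noetherian domain $\overline A:=\mathcal O Z(G)/P_0$ on the images of the $\underline g^{\beta}$, with multiplication governed by structure constants lying in $\overline A$; i.e. $\overline R$ is an iterated skew power series ring over $\overline A$. The quotient $\overline w$ of the Lazard filtration on $\mathcal O G$ is Zariskian, and since the $\underline g^{\beta}$ have pairwise distinct homogeneous leading symbols one gets $\operatorname{gr}_w(P_0\mathcal O G)=(\operatorname{gr}_w P_0)\operatorname{gr}_w\mathcal O G$ and hence $\operatorname{gr}_{\overline w}\overline R\cong(\operatorname{gr}_{\overline w}\overline A)[t_{m+1},\dots,t_d]$, a skew polynomial ring. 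The obstruction is that $\operatorname{gr}_{\overline w}\overline A$ can fail to be a domain, so I would instead fix a rank‑one discrete valuation $v$ on $\operatorname{Frac}(\overline A)$ that is non‑negative on $\overline A$ and whose graded ring $\operatorname{gr}_v\overline A$ is a domain (such $v$ exist: extend to $\operatorname{Frac}(\overline A)$ a suitable monomial valuation on a regular complete local subring over which $\overline A$ is module‑finite, so that $\operatorname{gr}_v\overline A$ embeds into a polynomial ring over a field). Non‑negativity of $v$ on $\overline A$ makes $v(\textstyle\sum_\beta\overline a_\beta\underline g^\beta):=\min_\beta v(\overline a_\beta)$ a submultiplicative, separated filtration on $\overline R$, and combining it with $\overline w$ yields a separated (bi)filtration on $\overline R$ whose associated graded is the polynomial ring $(\operatorname{gr}_v\overline A)[t_{m+1},\dots,t_d]$, a domain. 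Since a separated‑filtered ring with domain associated graded is itself a domain, $\overline R$ is a domain, i.e. $P_0\mathcal O G$, and hence $PKG$, is completely prime. The main obstacle I foresee is making the last two sentences precise: verifying submultiplicativity of the combined filtration (which amounts to controlling the $\overline A$‑valued structure constants of $\overline R$ via the $p$‑valuation axioms, exactly as in \cite[Theorem 8.6]{nilpotent}) and identifying the associated graded; alternatively one can route this through the non‑commutative valuation on $Q(\overline R)$ furnished by Construction \ref{non-comm-val}.
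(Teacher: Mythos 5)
Your overall strategy is essentially the paper's: reduce to showing that $P_0\mathcal{O}G$ is completely prime and faithful, expand $\mathcal{O}G$ over the central subring $\mathcal{O}Z(G)$ on a topological monomial basis, equip $\mathrm{Frac}(\mathcal{O}Z(G)/P_0)$ with a rank-one valuation that is non-negative on $\overline{A}$, and combine that valuation on coefficients with Lazard weights on monomials to obtain a separated filtration whose associated graded is a polynomial ring over a domain; the faithfulness computation is also the paper's expansion-and-compare-coefficients argument. The genuine difference is the source of the valuation $v$: the paper produces it by applying Construction \ref{non-comm-val} to the quotient Lazard filtration on $\mathcal{O}Z(G)/P_0$ (which hands you continuity of the map $\tau$ and non-negativity on the image of $F_0$ for free), whereas you propose Cohen's structure theorem plus a monomial valuation on a regular complete local subring, extended to the finite extension $\mathrm{Frac}(\overline{A})$; this is a workable and arguably more elementary alternative. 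The step you flag as the remaining obstacle, submultiplicativity of the combined filtration, is exactly where the paper does its extra work, and it closes for your $v$ as well: one checks $v>0$ on the maximal ideal of $\overline{A}$ (every element of that ideal has a power lying in the extension of the maximal ideal of your regular subring), which gives continuity for the adic topology, and then rescales $v$ by a suitable integer $M$ so that $v(\overline{x})\geq w(x)$ for all $x\in\mathcal{O}Z(G)$; with this inequality, submultiplicativity on products of monomials follows from multiplicativity of the Lazard filtration exactly as in the paper's proof of Theorem \ref{completely-prime'}.

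Two points need repair. First, for all the weight and graded arguments you must use the topological basis of monomials in $g_i-1$ rather than in the group elements themselves: the monomials $\underline{g}^{\beta}=g_{m+1}^{\beta_{m+1}}\cdots g_d^{\beta_d}$ all have Lazard value $0$ and principal symbol equal to that of $1$, so your claim that they have pairwise distinct homogeneous leading symbols, and the resulting identification of $\mathrm{gr}_{\overline{w}}\overline{R}$ with a polynomial ring over $\mathrm{gr}_{\overline{w}}\overline{A}$, fail as written; the $(g_i-1)$-monomials fix this, and the faithfulness argument goes through in that basis with binomial coefficients, as in the paper. Second, your parenthetical alternative of routing the argument through the non-commutative valuation on $Q(\overline{R})$ from Construction \ref{non-comm-val} is circular: that construction requires the filtered ring to be prime, which for $\overline{R}=\mathcal{O}G/P_0\mathcal{O}G$ is precisely what you are trying to prove; the construction should instead be applied to the commutative quotient $\mathcal{O}Z(G)/P_0$, which is exactly what the paper does.
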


\begin{proof}

Let $Z:=Z(G)$. We will prove that if $P$ is a prime ideal of $\mathcal{O}Z$ with $p\notin P$ then $P\mathcal{O}G$ is completely prime, and it is faithful if $P$ is faithful. The result for the rational Iwasawa algebras follows immediately.\\ 

\noindent Let $Q$ be the field of fractions of $\mathcal{O}Z/P$. If we let $w$ be the Lazard filtration on $\mathcal{O}Z$, then since $w$ is a Zariskian filtration and the associated graded is a commutative, infinite dimensional $k$-algebra, it follows from Construction \ref{non-comm-val} that there exists a valuation $v'$ on $Q$ such that the natural map $\tau:\mathcal{O}G\to Q$ is continuous, and if $w(x)\geq 0$ then $v'(\tau(x))\geq 0$.\\

\noindent Furthermore, if $v'(\tau(z-1))=0$ for some $z\in Z$ then $v'(\tau(z-1)^n)=0$ for all $n$ since $v'$ is a valuation, which is a contradiction since $(z-1)^n$ converges to zero in $\mathcal{O}G$, and hence in $Q$ by continuity of $\tau$. Therefore $v'(\tau(z-1))>0$ for all $z\in Z(G)$, and after choosing an ordered basis $\{z_1,\cdots,z_n\}$ for $Z$ and an integer $M$ such that $Mv'(\tau(z_i-1))\geq w(z_i-1)$ for all $i$, then we obtain an equivalent valuation $v:=Mv'$ on $Q$ such that $v(\tau(x))\geq w(x)$ for all $x\in\mathcal{O}Z$.\\

\noindent Recall that if we fix an ordered basis $\{g_1,\cdots,g_e\}$ for $\frac{G}{Z}$, then every element of $\mathcal{O}G$ has the form $\underset{\alpha\in\mathbb{N}^e}{\sum}{\mu_{\alpha}\underline{c}^{\alpha}}$ for some $\mu_{\alpha}\in\mathcal{O}Z$ where $c_i=g_i-1$. Define a map $u:\mathcal{O}G\to\mathbb{Z}\cup\{\infty\}$ via:

\begin{equation}
u:\mathcal{O}G\to\mathbb{Z}\cup\{\infty\},\underset{\alpha\in\mathbb{N}^e}{\sum}{\mu_{\alpha}\underline{c}^{\alpha}}\mapsto \inf\{v(\tau(\mu_{\alpha}))+w(\underline{c}^{\alpha}):\alpha\in\mathbb{N}^e\}.
\end{equation}

\noindent Since $v$ is a separated valuation, it is clear that $u(\underset{\alpha\in\mathbb{N}^e}{\sum}{\mu_{\alpha}\underline{c}^{\alpha}})=\infty$ if and only if $\mu_{\alpha}\in P$ for all $\alpha$, i.e. if and only if $\underset{\alpha\in\mathbb{N}^e}{\sum}{\mu_{\alpha}\underline{c}^{\alpha}}\in P\mathcal{O}G$. Therefore $u^{-1}(\infty)=P\mathcal{O}G$. So following the proof of \cite[Theorem 8.6]{nilpotent}, we will prove that $u$ is a valuation on $\mathcal{O}G$, from which it will follow that $P\mathcal{O}G=u^{-1}(\infty)$ is a completely prime ideal.\\

\noindent Firstly, it is clear from the definition that $u(r+s)\geq\min\{u(r),u(s)\}$, $u(\mu)=v(\tau(\mu))$ and $u(\mu r)=u(\mu)+u(r)$ for all $r,s\in\mathcal{O}G$, $\mu\in\mathcal{O}Z$. It is also clear that if $r_1,r_2,\cdots\in\mathcal{O}G$ with $r_i\rightarrow 0$ as $i\rightarrow\infty$ then $u(r_1+r_2+\cdots)\geq\inf\{u(r_i):i\geq 1\}$, therefore to prove that $u$ is a filtration it remains to prove that $u(\underline{c}^{\alpha}\underline{c}^{\beta})\geq u(\underline{c}^{\alpha})+ u(\underline(c)^{\beta})$ for all $\alpha,\beta\in\mathbb{N}^r$.\\

\noindent Write $\underline{c}^{\alpha}\underline{c}^{\beta}=\underset{\gamma\in\mathbb{N}^e}{\sum}{\lambda_{\gamma}^{\alpha,\beta}\underline{c}^{\gamma}}$, then by the definition of the Lazard filtration, $w(\underset{\gamma\in\mathbb{N}^e}{\sum}{\lambda_{\gamma}^{\alpha,\beta}\underline{c}^{\gamma}})=\inf\{w(\lambda_{\gamma}^{\alpha,\beta})+w(\underline{c}^{\gamma}):\gamma\in\mathbb{N}^d\}$. So since $u(x)\geq w(\tau(x))$ for all $x\in\mathcal{O}Z$, we have:

\begin{center}
$u(\underline{c}^{\alpha}\underline{c}^{\beta})=\inf\{v(\tau(\lambda_{\gamma}^{\alpha,\beta}))+w(\underline{c}^{\gamma}):\gamma\in\mathbb{N}^e\}\geq\inf\{w(\lambda_{\gamma}^{\alpha,\beta})+w(\underline{c}^{\gamma}):\gamma\in\mathbb{N}^e\}=w(\underline{c}^{\alpha})+w(\underline{c}^{\beta})=u(\underline{c}^{\alpha})+u(\underline{c}^{\beta})$.
\end{center}

So $u$ is a filtration on $\mathcal{O}G$, and to verify that it is a valuation, we will show that the associated graded gr$_u$ $\mathcal{O}G$ is a domain. First note that the definition of $u$ gives rise to a natural inclusion of graded rings gr$_{v}$ $\mathcal{O}Z/P\to$ gr$_u$ $\mathcal{O}G$, and this gives rise to an isomorphism of graded rings gr$_v$ $(\mathcal{O}Z/P)[Y_1,\cdots,Y_e]\to$ gr$_u$ $\mathcal{O}G$ where $Y_i$ is sent to gr$(c_i)$. Therefore gr$_u$ $\mathcal{O}G$ is a domain and $u$ is a valuation as required.\\

\noindent Finally, if $P$ is faithful, then suppose $g\in G$ and $g-1\in P\mathcal{O}G$. Then write $g=zg_1^{\alpha_1}\cdots g_e^{\alpha_e}$ for some $z\in Z$, $\alpha_i\in\mathbb{Z}_p$, and it follows that:

\begin{center}
$h-1=(z-1)+(z-1)\underset{0\neq\gamma\in\mathbb{N}^e}{\sum}{\binom{\alpha}{\gamma}\underline{c}^{\alpha}}+\underset{0\neq\gamma\in\mathbb{N}^e}{\sum}{\binom{\alpha}{\gamma}\underline{c}^{\alpha}}$.
\end{center}

\noindent Therefore, we see that $z-1\in P$ and hence $z=1$ since $P$ is faithful. It also follows that for each $0\neq\gamma\in\mathbb{N}^e$, $\binom{\alpha}{\gamma}\in P$, and hence $\binom{\alpha}{\gamma}=0$ since $P\cap\mathcal{O}=0$. This is only possible if $\alpha=(\alpha_1,\cdots,\alpha_e)=0$, and hence $h=zg_1^{\alpha_1}\cdots g_e^{\alpha_e}=1$ and $P\mathcal{O}G$ is faithful as we require.\end{proof}

\noindent In particular, it follows from this result that standard prime ideals in $KG$ are completely prime.

\subsection{Completions of $KG$}\label{dist(G)}

For the rest of this section, fix $G$ a uniform pro-$p$ group, let $\mathcal{L}:=\frac{1}{p}\log(G)$ be its $\mathbb{Z}_p$-Lie algebra of $G$, and let $\mathfrak{g}=\mathcal{L}\otimes_{\mathbb{Z}_p}\mathbb{Q}_p$.\\

\noindent To reiterate, we aim to study the action of $KG$ on certain $\widehat{U(\mathcal{L})}_K$-modules using the dense embedding $KG\to\widehat{U(\mathcal{L})}_K$. However, this embedding is not faithfully flat, so representation theoretic information is lost when passing from $KG$ to $\widehat{U(\mathcal{L})}_K$.

Perhaps a better choice for a completion of $KG$ would be the \emph{distribution algebra} $D(G,K)$ of $G$ with coefficients in $K$ in the sense of \cite{ST}. In this case, the natural dense embedding $KG\to D(G,K)$ is faithfully flat by \cite[Theorem 4.11]{ST}, but unfortunately $D(G,K)$ is not Noetherian, so it would be difficult in practice to extract general ring-theoretic information from $D(G,K)$.\\

\noindent However, for each $n\geq 0$, consider the crossed products 

\begin{center}
$D_{p^n}=D_{p^n}(G):=\widehat{U(p^n\mathcal{L})}_K\ast\frac{G}{G^{p^n}}$
\end{center} 

\noindent as defined in \cite[Proposition 10.6]{annals}, which arise as a Banach completions of $KG$ with respect to the extension of the dense embedding $KG^{p^n}\to\widehat{U(p^n\mathcal{L})}_K$ to $KG=KG^{p^n}\ast\frac{G}{G^{p^n}}$. These algebras give rise to an inverse system:

\begin{center}
$KG\to D(G,K)\to\cdots D_{p^3}\to D_{p^2}\to D_p\to D_0=\widehat{U(\mathcal{L})}_K$.
\end{center}

\noindent i.e. $D(G,K)=\underset{n\rightarrow\infty}{\varprojlim}{D_{p^n}}$, so since $D(G,K)$ is faithfully flat over $KG$, we want to approximate $D(G,K)$ using the Noetherian Banach algebras $D_{p^n}$, and thus limit how much information we lose. Indeed, using \cite[Proposition 10.6(e), Corollary 10.11]{annals}, we see that for all $KG$-modules $M$, $D_{p^n}\otimes_{KG}M\neq 0$ for all sufficiently high $n$.

\begin{lemma}\label{cyclic}

Let $A$ be a free abelian pro-$p$ group of rank $d$, $\mathcal{A}:=\frac{1}{p}\log(A)$. Then $\frac{A}{A^p}=C_1\times\cdots\times C_d$ where each $C_i=\langle c_i\rangle=\langle g_iA^p\rangle$ is a cyclic group of order $p$, and $D_p=D_p(A)$ is an iterated crossed product:

\begin{center}
$D_p=\widehat{U(p\mathcal{A})}_K\ast C_1\ast\cdots\ast C_d$.
\end{center}

\noindent where for each $i=1,\cdots,d$ $\overline{c_i}^r=\overline{c_i^r}$ for $0\leq r<p$, and $\overline{c_i}^p=g_i^p$.

\end{lemma}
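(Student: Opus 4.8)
The plan is to deduce everything from the identity $D_p(A)=\widehat{U(p\mathcal{A})}_K\ast\frac{A}{A^p}$ recalled in Subsection~\ref{dist(G)} from \cite[Proposition 10.6]{annals}, which realises this crossed product as the Banach completion of $KA=KA^p\ast\frac{A}{A^p}$ along the dense embedding $KA^p\hookrightarrow\widehat{U(p\mathcal{A})}_K$. The first assertion is pure group theory: a free abelian pro-$p$ group of rank $d$ is isomorphic to $\mathbb{Z}_p^d$, so fixing an ordered basis $\{g_1,\dots,g_d\}$ (equivalently a minimal topological generating set) we get $A^p\cong(p\mathbb{Z}_p)^d$ and $A/A^p\cong(\mathbb{Z}/p\mathbb{Z})^d=C_1\times\cdots\times C_d$ with $C_i=\langle c_i\rangle$, $c_i:=g_iA^p$, of order $p$.

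For the second assertion I would first exhibit $KA$ itself as the iterated crossed product $KA^p\ast C_1\ast\cdots\ast C_d$. Put $B_0:=A^p$ and $B_i:=\overline{\langle A^p,g_1,\dots,g_i\rangle}$ for $1\le i\le d$, so that $B_d=A$; since $A$ is abelian each $B_{i-1}$ is normal in $B_i$, and $B_i/B_{i-1}$ is cyclic of order $p$ generated by the image of $g_i$, because $g_i^p\in A^p\subseteq B_{i-1}$ whereas $g_i\notin B_{i-1}$ (the $c_j$ form a basis of $(\mathbb{Z}/p\mathbb{Z})^d$). Hence $\{1,g_i,\dots,g_i^{p-1}\}$ is a transversal for $B_{i-1}$ in $B_i$, and $KB_i=KB_{i-1}\ast(B_i/B_{i-1})$ is a crossed product; identifying $B_i/B_{i-1}$ with $C_i$ via $g_iB_{i-1}\mapsto c_i$ and iterating yields $KA=KA^p\ast C_1\ast\cdots\ast C_d$. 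Concretely this corresponds to the section $c_1^{a_1}\cdots c_d^{a_d}\mapsto g_1^{a_1}\cdots g_d^{a_d}$ (for $0\le a_i<p$) of $\frac{A}{A^p}\to A\subseteq KA$, which is multiplicative on the direct factors, so that on the $i$-th factor the section sends $c_i\mapsto g_i$; from this one reads off $\overline{c_i}^{\,r}=g_i^r=\overline{c_i^{\,r}}$ for $0\le r<p$ and $\overline{c_i}^{\,p}=g_i^p\in A^p$.

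Finally I would transfer this structure to the completion. Since $KA=\bigoplus_{0\le a_i<p}KA^p\cdot g_1^{a_1}\cdots g_d^{a_d}$ is finite and free over $KA^p$, taking Banach completions (which is precisely how $D_p(A)$ is built in \cite{annals}) gives $D_p(A)=\widehat{U(p\mathcal{A})}_K\otimes_{KA^p}KA=\bigoplus_{0\le a_i<p}\widehat{U(p\mathcal{A})}_K\cdot\overline{c_1}^{\,a_1}\cdots\overline{c_d}^{\,a_d}$, a free module with the same multiplication table; the completions of the intermediate subrings $KB_i$ then furnish the chain witnessing $D_p(A)=\widehat{U(p\mathcal{A})}_K\ast C_1\ast\cdots\ast C_d$, and the section formulas persist because $\overline{c_i}^{\,p}=g_i^p$ already lies in the base ring $\widehat{U(p\mathcal{A})}_K$. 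The only step that needs genuine care is this last one — checking that the completion respects the intermediate subrings $KB_i$ and not merely the outer crossed product with $\frac{A}{A^p}$ — but it is forced by the finite-freeness and is implicit in the construction of $D_p(A)$; beyond that I do not anticipate any obstacle.
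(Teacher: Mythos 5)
Your proposal is correct and follows essentially the same strategy as the paper: establish the decomposition $KA=KA^p\ast C_1\ast\cdots\ast C_d$ together with the section formulas at the level of the Iwasawa algebra, and then observe that this structure is preserved under the Banach completion defining $D_p(A)$, using that $KA$ is finite free over $KA^p$. The only cosmetic difference is that you verify the iterated crossed product via the chain of intermediate subgroups $B_i$, whereas the paper checks directly inside $KA^p\ast\frac{A}{A^p}$ that $\overline{c_ic_j}=\overline{c_i}\cdot\overline{c_j}$; both amount to the same bookkeeping.
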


\begin{proof}

Firstly, it is clear that since $A=\mathbb{Z}_p^d$ that $\frac{A}{A^p}=\frac{\mathbb{Z}_p^d}{(p\mathbb{Z}_p)^d}=(\frac{\mathbb{Z}_p}{p\mathbb{Z}_p})^d=C_1\times\cdots\times C_d$ as required.\\

\noindent For the second statement, it suffices to prove that $KA=KA^p\ast C_1\ast\cdots\ast C_d$, and that this decomposition satisfies the same properties, since it will be preserved after passing to the completion.\\

\noindent Choose a $\mathbb{Z}_p$-basis $\{g_1,\cdots,g_d\}$ for $A$, and we may assume that $C_i=\langle c_i\rangle$ where $c_i=g_iA^p$. Then every element $r\in KA$ has the form $\underset{\alpha\in [p-1]^d}{\sum}{r_{\alpha}g_1^{\alpha_1}\cdots g_d^{\alpha_d}}$ for some $r_{\alpha}\in KA^p$, and $r$ is sent to $\underset{\alpha\in [p-1]^d}{\sum}{r_{\alpha}\overline{c_1^{\alpha_1}\cdots c_d^{\alpha_d}}}$ under the isomorphism $KA\to KA^{p}\ast\frac{A}{A^p}$.\\

\noindent So, since $g_i$, $g_j$ and $g_ig_j$ are sent to $\overline{c_i}$, $\overline{c_j}$ and $\overline{c_ic_j}$ respectively, it follows that $\overline{c_ic_j}=\overline{c_i}\cdot\overline{c_j}$ for each $i,j$. Hence $KA=KA^p\ast C_1\ast\cdots\ast C_d$.\\

\noindent Finally, for $0\leq r<p$, $g_i^r$ is sent to $\overline{c_i^r}$, and hence $\overline{c_i}^r=\overline{c_i^r}$, and $g_i^p\in KA^p$ is sent to $g_i^p$, so $\overline{c_i}^p=g_i^p$ as required. \end{proof}

\subsection{Dixmier modules}

Recall from \cite[Definition 2.3]{aff-dix} the following definition:

\begin{definition}

Let $\lambda:\mathfrak{g}\to K$ be a $\mathbb{Q}_p$-linear form such that $\lambda(\mathcal{L})\subseteq\mathcal{O}$ (i.e. $\lambda\in$\emph{ Hom}$_{\mathbb{Z}_p}(\mathcal{L},\mathcal{O})$).

\begin{itemize}

\item A \emph{polarisation} of $\mathfrak{g}_K=\mathfrak{g}\otimes_{\mathbb{Q}_p}K$ at $\lambda$ is a solvable subalgebra $\mathfrak{b}$ of $\mathfrak{g}_K$ such that for any subspace $\mathfrak{b}\subseteq V\subseteq\mathfrak{g}_K$, $\lambda([V,V])=0$ if and only if $V=\mathfrak{b}$.

\item Given a polarisation $\mathfrak{b}$ of $\mathfrak{g}_K$ at $\lambda$, let $\mathcal{B}:=\mathfrak{b}\cap(\mathcal{L}\otimes_{\mathbb{Z}_p}\mathcal{O})$, and let $K_{\lambda}$ be the one-dimensional $\mathfrak{b}$-module induced by $\lambda$. Define the \emph{affinoid Dixmier module} of $\widehat{U(\mathcal{L})}_K$ induced by $\lambda$ to be $\widehat{D(\lambda)}=\widehat{D(\lambda)}_{\mathfrak{b}}:=\widehat{U(\mathcal{L})}_K\otimes_{\widehat{U(\mathcal{B})}_K}K_{\lambda}$

\end{itemize}

\end{definition}

\noindent\textbf{Note:} If it is unclear what the base field $K$ is, we may sometimes write $\widehat{D(\lambda)}_K$.\\

\noindent So, fixing $\lambda\in$ Hom$_{\mathbb{Z}_p}(\mathcal{L},\mathcal{O})$, let $\mathfrak{b}$ be a polarisarion of $\mathfrak{g}$ at $\lambda$, and we see that $KG$ acts on $\widehat{D(\lambda)}_{\mathfrak{b}}$ via the embedding $KG\to\widehat{U(\mathcal{L})}_K$. Set $P:=$ Ann$_{KG}\widehat{D(\lambda)}$, and using \cite[Theorem 4.4]{aff-dix}, we see that this does not depend on the choice of polarisation.

\begin{definition}\label{l-scalar}

Define the \emph{$\lambda$-scalar ideal} of $\mathfrak{g}$ to be $\mathfrak{a}_{\lambda}$, the largest ideal of $\mathfrak{g}$ such that $\lambda(\mathfrak{a})=0$. Also, set $\mathcal{A}_{\lambda}:=\mathfrak{a}_{\lambda}\cap\mathcal{L}$, and define the $\lambda$-scalar subgroup of $G$, $A_{\lambda}:=\exp(p\mathcal{A})\trianglelefteq_c^i G$.

\end{definition}

\noindent\textbf{Note:} For any choice of polarisation $\mathfrak{b}$ of $\mathfrak{g}\otimes_{\mathbb{Q}_p}K$ at $\lambda$, it follows from \cite[Lemma 2.3]{aff-dix} that $\mathfrak{a}_{\lambda}\subseteq\mathfrak{b}$.

\begin{lemma}\label{dagger}

Let $A_{\lambda}$ be the $\lambda$-scalar subgroup of $G$. Then if $P=Ann_{KG}\widehat{D(\lambda)}$, then $A_{\lambda}=P^{\dagger}=\{g\in G:g-1\in P\}$. In particular, $P$ is faithful if and only if the restriction of $\lambda$ to $Z(\mathfrak{g})$ is injective.

\end{lemma}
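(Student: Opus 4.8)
The plan is to identify $P^\dagger = \{g \in G : g - 1 \in P\}$ with $A_\lambda = \exp(p\mathcal{A}_\lambda)$ by matching both sides against the Lie-theoretic object $\mathfrak{a}_\lambda$, the $\lambda$-scalar ideal, and then to read off the faithfulness criterion from the special case $G = Z(G)$. First I would recall that $\widehat{D(\lambda)}$ is, as an $\widehat{U(\mathcal{L})}_K$-module, a cyclic module generated by the image of $1 \otimes 1$, and that the Lie algebra $\mathcal{L}$ acts on a generic weight vector so that an element $x \in \mathcal{L}$ acts as scalar $\lambda(x)$ precisely when $x$ lies in a polarisation and, more to the point, acts centrally as the scalar $\lambda(x)$ on the whole module exactly when $x \in \mathfrak{a}_\lambda$ (since $\mathfrak{a}_\lambda \subseteq \mathfrak{b}$ by the Note following Definition \ref{l-scalar}, and being an ideal it acts by a character on the induced module, which forces the character to be $\lambda|_{\mathfrak{a}_\lambda}$). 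Under the embedding \eqref{embedding'}, $g = \exp(\log g) \in G$ acts on $\widehat{D(\lambda)}$ as $\exp$ of the operator by which $\log g \in p\mathcal{L}$ acts.

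The key step is then the chain of equivalences: $g - 1 \in P$ iff $g$ acts trivially (as the identity operator) on $\widehat{D(\lambda)}$ iff $\log g \in p\mathcal{L}$ acts as the zero operator iff $\log g$ acts as the scalar $0$ on the module iff $\log g \in \mathfrak{a}_\lambda$ with $\lambda(\log g) = 0$, i.e. $\log g \in \mathcal{A}_\lambda \cdot p$ — wait, more carefully: $\log g$ acts as zero iff it acts as a scalar (this already forces $\log g \in \mathfrak{a}_\lambda$, using that $\widehat{D(\lambda)}$ is "as large as possible" off $\mathfrak{a}_\lambda$, which is the content of the polarisation condition together with \cite[Lemma 2.3]{aff-dix}) and that scalar is $\lambda(\log g)$, which must vanish. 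Since $\log g \in p\mathcal{L}$, the condition $\log g \in \mathfrak{a}_\lambda$ is equivalent to $\log g \in p\mathcal{A}_\lambda$ (as $\mathcal{A}_\lambda = \mathfrak{a}_\lambda \cap \mathcal{L}$ and $p\mathcal{L} \cap \mathfrak{a}_\lambda = p(\mathcal{L} \cap \mathfrak{a}_\lambda)$ because $\mathfrak{a}_\lambda$ is $\mathbb{Q}_p$-rational, hence isolated in $\mathcal{L}$), and automatically $\lambda(p\mathcal{A}_\lambda) \subseteq \lambda(\mathfrak{a}_\lambda) = 0$. Hence $g - 1 \in P$ iff $\log g \in p\mathcal{A}_\lambda$ iff $g \in \exp(p\mathcal{A}_\lambda) = A_\lambda$, giving $P^\dagger = A_\lambda$.

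For the final sentence: $P$ is faithful means $P^\dagger = 1$, i.e. $A_\lambda = 1$, i.e. $p\mathcal{A}_\lambda = 0$, i.e. $\mathfrak{a}_\lambda = 0$. Now $\mathfrak{a}_\lambda$ is the largest ideal of $\mathfrak{g}$ killed by $\lambda$; since $Z(\mathfrak{g})$ is an abelian ideal, any subspace of $Z(\mathfrak{g})$ on which $\lambda$ vanishes is an ideal contained in $\mathfrak{a}_\lambda$, so $\mathfrak{a}_\lambda = 0$ forces $\lambda|_{Z(\mathfrak{g})}$ to be injective. Conversely, if $\lambda|_{Z(\mathfrak{g})}$ is injective then I claim $\mathfrak{a}_\lambda = 0$: any nonzero ideal $\mathfrak{a}$ of the (nilpotent, via the standing hypotheses — or in general one uses that $\mathfrak{a}_\lambda$ is an ideal hence has nonzero intersection with $Z(\mathfrak{g})$ when $\mathfrak{g}$ is nilpotent) Lie algebra meets $Z(\mathfrak{g})$ nontrivially, so $\lambda$ restricted there is nonzero, whence $\mathfrak{a} \not\subseteq \ker\lambda$; thus $\mathfrak{a}_\lambda = 0$.

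The main obstacle is the step identifying "$\log g$ acts as zero on $\widehat{D(\lambda)}$" with "$\log g \in \mathfrak{a}_\lambda$ and $\lambda(\log g) = 0$": the forward implication requires knowing that an element of $\mathcal{L}$ acting as a scalar on the induced module $\widehat{U(\mathcal{L})}_K \otimes_{\widehat{U(\mathcal{B})}_K} K_\lambda$ must actually lie in the scalar ideal $\mathfrak{a}_\lambda$, which is essentially a restatement of the polarisation/maximality property and \cite[Lemma 2.3]{aff-dix}, and needs to be invoked carefully; the reverse implication is routine once one notes ideals act by characters on induced modules. The $p$-versus-isolation bookkeeping ($p\mathcal{L} \cap \mathfrak{a}_\lambda = p\mathcal{A}_\lambda$) is a minor but genuine point that should be stated explicitly.
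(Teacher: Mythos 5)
Your overall strategy is the same as the paper's: reduce everything to the Lie-theoretic statement that the elements of $\mathfrak{g}$ killing $\widehat{D(\lambda)}$ are exactly $\mathfrak{a}_\lambda$, handle the $\exp/\log$ and isolation bookkeeping, and deduce the faithfulness criterion from nilpotency (a nonzero ideal meets $Z(\mathfrak{g})$). The easy inclusion ($\mathfrak{a}_\lambda$ kills the module, hence $A_\lambda\subseteq P^\dagger$), the isolation point $p\mathcal{L}\cap\mathfrak{a}_\lambda=p\mathcal{A}_\lambda$, and the final paragraph all match the paper and are fine.

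The gap is exactly at the step you yourself flag as the main obstacle, and your proposed justification for it does not work. You assert that ``$\log g$ acts as a scalar on $\widehat{D(\lambda)}$'' already forces $\log g\in\mathfrak{a}_\lambda$, citing the polarisation condition and $\mathfrak{a}_\lambda\subseteq\mathfrak{b}$. That claim is false as stated: any central element $z$ of $\mathfrak{g}$ acts on $\widehat{D(\lambda)}$ by the scalar $\lambda(z)$, yet if $\lambda(z)\neq 0$ then $z\notin\mathfrak{a}_\lambda$ (e.g.\ the Heisenberg algebra with $\lambda$ nonzero on the centre, where $\mathfrak{a}_\lambda=0$). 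What is true, and what you actually need, is that an element of $\mathfrak{g}$ acting as \emph{zero} lies in $\mathfrak{a}_\lambda$; but the maximality in Definition \ref{l-scalar} is maximality among \emph{ideals} killed by $\lambda$, so to invoke it you must first exhibit an ideal. The missing sentence is: the set of annihilating elements is $\mathfrak{g}\cap\mathrm{Ann}_{\widehat{U(\mathcal{L})}_K}\widehat{D(\lambda)}$, hence a Lie ideal of $\mathfrak{g}$; it is contained in $\mathfrak{b}$ (if $x\notin\mathfrak{b}$ then $x\cdot(1\otimes 1)=x\otimes 1\neq 0$) and killed by $\lambda$ (it acts on $1\otimes 1$ by $\lambda$), so by maximality it lies in $\mathfrak{a}_\lambda$. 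The paper gets the ideal structure for free by working not elementwise but with the whole subgroup $T=P^\dagger$: since $T$ is closed and normal in $G$, $\mathcal{T}=\tfrac1p\log T$ is an ideal of $\mathcal{L}$, and then $\mathfrak{t}=\mathcal{T}\otimes\mathbb{Q}_p$ is an ideal of $\mathfrak{g}$ with $\lambda(\mathfrak{t})=0$, forcing $\mathfrak{t}=\mathfrak{a}_\lambda$. Either route works, but as written your argument skips the ideal step, and ``polarisation/maximality'' applied to a single element is not a valid substitute.
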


\begin{proof}

Firstly, since $\mathfrak{a}_{\lambda}\subseteq\mathfrak{b}$, we see that $\mathfrak{a}_{\lambda}\widehat{D(\lambda)}=\mathfrak{a}_{\lambda}\widehat{U(\mathcal{L})}_K\otimes_{\widehat{U(\mathcal{B})}_K}K_{\lambda}=\widehat{U(\mathcal{L})}_K\mathfrak{a}_{\lambda}\otimes_{\widehat{U(\mathcal{B})}_K}K_{\lambda}=0$.\\

\noindent So since $A_{\lambda}-1\subseteq\mathfrak{a}_{\lambda}\widehat{U(\mathcal{L})}_K$, it is clear that $A_{\lambda}-1\subseteq P$, i.e. $A_{\lambda}\subseteq P^{\dagger}$.\\

\noindent Now, since $T=P^{\dagger}$ is a closed, normal subgroup of $G$, $\mathcal{T}:=\frac{1}{p}\log(T)$ is an ideal of $\mathcal{L}$, and it contains $\frac{1}{p}\log(A_{\lambda})=\mathcal{A}_{\lambda}$. Also, since $(T-1)\widehat{D(\lambda)}=0$, it follows that $\mathcal{T}\widehat{D(\lambda)}=0$. This is only possible if $\mathcal{T}\subseteq\mathcal{B}$ and $\lambda(\mathcal{T})=0$.\\

\noindent Setting $\mathfrak{t}:=\mathcal{T}\otimes_{\mathbb{Z}_p}\mathbb{Q}_p$, $\mathfrak{t}$ is an ideal of $\mathfrak{g}$, $\mathfrak{a}_{\lambda}\subseteq\mathfrak{t}$ and $\lambda(\mathfrak{t})=0$. So by the definition of $\mathfrak{a}_{\lambda}$, this means that $\mathfrak{a}_{\lambda}=\mathfrak{t}$.\\

\noindent So, for any $u\in\mathcal{T}$, there exists $i\in\mathbb{N}$ such that $\pi^iu\in\mathcal{A}_{\lambda}=\mathfrak{a}_{\lambda}\cap\mathcal{L}$, and this means that $u\in\mathcal{A}_{\lambda}$. So $\mathcal{A}_{\lambda}=\mathcal{T}$, and it follows immediately that $A_{\lambda}=T$.\\

\noindent Finally, since $G$ is nilpotent, $\mathcal{L}$ is nilpotent, and thus if $\mathcal{A}_{\lambda}\neq 0$, then it must have non-trivial intersection with $Z(\mathfrak{g})$. So since $P$ is faithful if and only if $A_{\lambda}=1$ (i.e. if and only if $\mathfrak{a}_{\lambda}=0$), and any subspace of $Z(\mathfrak{g})$ is an ideal of $\mathfrak{g}$, it follows that $P$ is faithful precisely when nothing in $Z(\mathfrak{g})$ is sent to zero under $\lambda$, i.e. $\lambda|_{Z(\mathfrak{g})}$ is injective.\end{proof}

This lemma is useful to know, because it implies that for any Dixmier annihilator $P$, $P^{\dagger}$ is a closed, isolated normal subgroup of $G$, and hence we can replace $G$ by $G_P=\frac{G}{P^{\dagger}}$, which is still a nilpotent, uniform group, and $P_0=\frac{P}{(P^{\dagger}-1)KG}$ becomes a faithful Dixmier annihilator.\\

\noindent Note that this lemma explains why we need the assumption that $\lambda|_{Z(\mathfrak{g})}$ is injective in the statement of Theorem \ref{B}, since it is generally untrue that non-faithful prime ideals in $KG$ are controlled by $Z(G)$.

\section{The Logarithm of Automorphisms}

In this section, we will study general prime ideals within the rational Iwasawa algebra $KG$ of a $p$-valuable group $G$. This is of course equivalent to studying prime ideals in $\mathcal{O}G$ that do not contain $p$.\\ 

\noindent The methods we use are inspired by those used in \cite{nilpotent} to prove that faithful prime ideals in the mod-$p$ Iwasawa algebra $kG$ are standard, namely the study of \emph{Mahler expansions of $G$-automorphisms}. Unfortunately, these methods do not work in characteristic 0, as demonstrated in \cite[Section 3.3]{primitive}, and the best result they can be used to obtain is a weak control theorem for faithful \emph{primitive} ideals (\cite[Theorem 1.2]{primitive}).

The methods we will employ in this section involve considering \emph{logarithms} of $G$-automorphisms, which in many ways is the characteristic 0 version of their Mahler expansions. And while these methods are not yet sufficient to prove standardness for all prime ideals, we can employ them together with techniques from \cite[Section 7]{nilpotent} to adapt the argument used in \cite[Section 3.4]{primitive} and ultimately reprove the weak control theorem \cite[Theorem 1.2]{primitive} for all faithful prime ideals, and not just primitives. This will culminate in the proof of Theorem \ref{C}.

\subsection{Bounded Ring Automorphisms}

Let $R$ be a ring carrying a complete Zariskian filtration $w$. Recall from \cite{nilpotent} that a function $f:R\to R$ is \emph{bounded} if $\inf\{w(f(r))-w(r):r\in R\}>-\infty$, in which case we define the \emph{degree} of $f$ to be the number $\deg_w(f):=\inf\{w(f(r))-w(r):r\in R\}$. 

If we set $B(R)$ as the space of bounded, additive maps $f:R\to R$, then $B(R)$ is a ring with pointwise addition and composition as multiplication, and $\deg_w$ defines a complete separated filtration on $B(R)$.

\begin{lemma}\label{quotient}

If $f:R\to R$ is an additive map such that $\deg_w(f)>0$, and $I$ is a two-sided ideal of $R$ such that $f(I)\subseteq I$. Then if $\overline{w}$ is the quotient filtration on $R/I$, and $\overline{f}:R/I\to R/I$ is the map induced from $f$, then $\deg_{\overline{w}}(\overline{f})>0$.

\end{lemma}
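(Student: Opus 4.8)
The plan is to unwind the definitions of the quotient filtration and the degree, and reduce the claim to a single estimate about representatives of cosets. Recall $\overline{w}(r+I) = \sup\{w(r+y) : y \in I\}$ and that $\deg_w(f) > 0$ means there is some real $\delta > 0$ with $w(f(r)) \geq w(r) + \delta$ for all $r \in R$; I want to produce the same $\delta$ (or at least some positive number) working for $\overline{f}$. Fix a coset $r + I$ and an arbitrary $y \in I$. Since $f(I) \subseteq I$, we have $\overline{f}(r+I) = f(r) + I = f(r+y) + I$ (because $f(r) - f(r+y) = -f(y) \in I$ by additivity and $f$-invariance of $I$). Therefore $\overline{w}(\overline{f}(r+I)) = \overline{w}(f(r+y) + I) \geq w(f(r+y)) \geq w(r+y) + \delta$, the last inequality being $\deg_w(f) \geq \delta$.

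Now I take the supremum over all $y \in I$ on the right-hand side: this yields $\overline{w}(\overline{f}(r+I)) \geq \sup\{w(r+y) : y \in I\} + \delta = \overline{w}(r+I) + \delta$. Since this holds for every coset $r+I$, we conclude $\deg_{\overline{w}}(\overline{f}) \geq \delta > 0$, which is exactly the claim. I should also note at the outset that $\overline{f}$ is well-defined precisely because $f(I) \subseteq I$, and that it is additive because $f$ is; these are immediate but worth stating so the map in the conclusion makes sense.

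The only subtlety — and really the single point that needs care rather than being an outright obstacle — is the handling of the supremum: the bound $w(f(r+y)) \geq w(r+y) + \delta$ holds for each individual $y$, and since the shift by $\delta$ is uniform in $y$, passing to the supremum is legitimate and preserves the inequality. (If one instead only knew $\deg_w(f) > 0$ without extracting a fixed $\delta$, one would phrase this as: for any $\delta$ strictly below $\deg_w(f)$ the above gives $\deg_{\overline{w}}(\overline{f}) \geq \delta$, hence $\deg_{\overline{w}}(\overline{f}) \geq \deg_w(f) > 0$; in fact this shows the sharper statement $\deg_{\overline{w}}(\overline{f}) \geq \deg_w(f)$, though only positivity is needed here.) No completeness or Zariskian hypothesis is actually used in this argument — those hypotheses are there to make the ambient framework of $B(R)$ sensible — so the proof is genuinely just the two displayed chains of inequalities above.
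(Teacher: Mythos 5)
Your proof is correct and follows essentially the same route as the paper's: both hinge on the observation that, for $y\in I$, $\overline{f}(r+I)=f(r+y)+I$ (since $f(I)\subseteq I$), combined with the estimate $w(f(r+y))\geq w(r+y)+\deg_w(f)$. The only difference is presentational — you pass to the supremum directly and obtain the slightly sharper bound $\deg_{\overline{w}}(\overline{f})\geq\deg_w(f)$, whereas the paper argues by contradiction and invokes integrality of $\overline{w}$ to conclude $\deg_{\overline{w}}(\overline{f})\geq 1$.
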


\begin{proof}

Let $\mu:=\deg_w(f)>0$. Then given $r\in R$, $w(f(r))-w(r)\geq\mu$, we want to prove that $\overline{w}(\overline{f}(r+I))-\overline{w}(r+I)>0$:\\

\noindent By definition, $\overline{w}(r+I)=\sup\{w(r+y):y\in I\}$, so let us suppose for contradiction that $\overline{w}(r+I)\geq\overline{w}(\overline{f}(r+I))$, and hence there exists $y\in I$ such that $w(r+y)\geq w(f(r)+u)$ for all $u\in I$. In particular, since $f(I)\subseteq I$, $w(r+y)\geq w(f(r)+f(y))=w(f(r+y))\geq w(r+y)+\mu>w(r+y)$ -- contradiction.

Therefore $\overline{w}(\overline{f}(r+I))>\overline{w}(r+I)$ for all $r\in R$, so since $\overline{w}$ is integer valued, it follows that $\deg_{\overline{w}}(\overline{f})\geq 1>0$.\end{proof}

\noindent Now, suppose that $R$ is a $\mathbb{Z}_p$-algebra, with $p\neq 0$ and $w(p)>0$. The following lemma will be useful to us several times in this section:

\begin{lemma}\label{terms2}

Given $m\in\mathbb{N}$, $a,b\in R$ such that $w(a)=0$, $w(b)\geq 1$, and $a$ and $b$ commute:

\begin{itemize}

\item $v_p\binom{p^m}{k}=m-v_p(k)$ for all $0<k<p^m$.

\item $\min\{m-v_p(k)+(p^m-k)w(b):0<k<p^m\}\rightarrow\infty$ as $m\rightarrow\infty$

\item $w((a+b)^{p^m}-a^{p^m})\geq\min\{p^mw(b),m-v_p(k)+(p^m-k)w(b):0<k<p^m\}$, and thus $(a+b)^{p^m}-a^{p^m}\rightarrow 0$ as $m\rightarrow\infty$.

\end{itemize}

\end{lemma}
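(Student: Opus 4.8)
The three bullet points are elementary once the standard $p$-adic valuation estimate for binomial coefficients is in hand, so the plan is to dispatch them in order, using the first to feed the second and the second (together with a direct expansion) to feed the third. For the first bullet, I would use the classical identity $k\binom{p^m}{k}=p^m\binom{p^m-1}{k-1}$, which gives $v_p(k)+v_p\binom{p^m}{k}=m+v_p\binom{p^m-1}{k-1}$; it then suffices to observe that $\binom{p^m-1}{k-1}$ is a $p$-adic unit for $0<k<p^m$, which follows from Kummer's theorem (no carries occur when adding $k-1$ and $p^m-k$ in base $p$, since these sum to $p^m-1$ whose base-$p$ digits are all $p-1$) — or alternatively from Lucas' theorem. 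Hence $v_p\binom{p^m}{k}=m-v_p(k)$.

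For the second bullet, fix $m$ and split the range $0<k<p^m$ according to $j:=v_p(k)$, which runs over $0,1,\dots,m-1$. For a given $j$, the smallest value of $p^m-k$ over $k$ with $v_p(k)=j$ is $p^m-p^m+p^j=p^j$ wait — more carefully, the largest such $k$ is $p^m-p^j$ (the largest multiple of $p^j$ that is $<p^m$ and has valuation exactly $j$ is $p^m-p^j$), so $p^m-k\ge p^j$, and therefore
\[
m-v_p(k)+(p^m-k)w(b)\ \geq\ (m-j)+p^j w(b).
\]
Since $w(b)\geq 1>0$, the right-hand side, viewed as a function of $j\in\{0,\dots,m-1\}$, is bounded below by $\min\{\,m,\ 1+w(b),\ 2+p\,w(b),\dots\,\}$; as $m\to\infty$ the $j=0$ term $m$ tends to infinity, while for each fixed $j\geq 1$ the term $(m-j)+p^j w(b)$ also tends to infinity, and for $j$ comparable to $m$ the term $p^j w(b)$ is already enormous. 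A clean way to finish: for any bound $N$, choose $m_0$ with $m_0>N$ and $p^1 w(b)>N$; then for all $m\geq m_0$ every term with $j\geq 1$ exceeds $N$ (as $p^jw(b)\geq pw(b)>N$) and the term(s) with $j=0$ are at least $m\geq m_0>N$. Hence the minimum $\to\infty$.

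For the third bullet, expand using that $a$ and $b$ commute:
\[
(a+b)^{p^m}-a^{p^m}=\sum_{k=1}^{p^m}\binom{p^m}{k}a^{p^m-k}b^{k}.
\]
Apply $w$: using $w(a)=0$ (so $w(a^{p^m-k})=0$ by superadditivity, and in fact $\ge 0$ suffices), $w(\binom{p^m}{k})=v_p\binom{p^m}{k}\,w(p)\geq v_p\binom{p^m}{k}$ wait — I should be careful; the paper only assumes $w(p)>0$, not $w(p)\ge 1$, but since $w$ is integer valued $w(p)\ge 1$, so $w(\binom{p^m}{k})\ge v_p\binom{p^m}{k}=m-v_p(k)$ for $0<k<p^m$ and $w(\binom{p^m}{p^m})=w(1)=0$, we get each term of valuation at least $m-v_p(k)+k\,w(b)$ for $0<k<p^m$ wait — the exponent on $b$ is $k$, and I want $(p^m-k)$; re-examining, one should instead expand $(a+b)^{p^m}-a^{p^m}$ with the roles arranged so the surviving powers of $b$ are $b^k$ with the term being $\binom{p^m}{k}a^{p^m-k}b^k$, giving valuation $\ge (m-v_p(k)) + k\,w(b)$, and for the top term $k=p^m$ valuation $p^m w(b)$. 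Then $w\big((a+b)^{p^m}-a^{p^m}\big)\ge\min\{p^m w(b),\ m-v_p(k)+k\,w(b):0<k<p^m\}$, which matches the stated bound after re-indexing $k\leftrightarrow p^m-k$ in the binomial sum (legitimate since $\binom{p^m}{k}=\binom{p^m}{p^m-k}$). Convergence to $0$ is then immediate from the second bullet together with $p^m w(b)\to\infty$.

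The only genuine obstacle is the combinatorial estimate in the second bullet — making the "$\min\to\infty$" rigorous requires the case split on $v_p(k)$ and the observation that $p^m-k\ge p^{v_p(k)}$; everything else is bookkeeping. I would present the first bullet via Kummer/Lucas, state the case-split inequality explicitly for the second, and keep the third to the one-line binomial expansion plus citation of the previous two bullets.
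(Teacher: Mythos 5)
Your first and third bullets are sound. For the first, the route via $k\binom{p^m}{k}=p^m\binom{p^m-1}{k-1}$ plus Kummer/Lucas (showing $\binom{p^m-1}{k-1}$ is a $p$-adic unit) is a legitimate alternative to the paper's direct computation with Legendre's formula $v_p(k!)=\tfrac{k-s(k)}{p-1}$ and digit sums; both give $v_p\binom{p^m}{k}=m-v_p(k)$. For the third, your expansion, the observation that $w(p)\geq 1$ because $w$ is integer valued and $w(p)>0$, and the reindexing $k\leftrightarrow p^m-k$ (valid since $\binom{p^m}{k}=\binom{p^m}{p^m-k}$ and $v_p(p^m-k)=v_p(k)$ for $0<k<p^m$) reproduce the paper's estimate exactly.

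The genuine gap is in your finish to the second bullet. Your reduction is fine: writing $j=v_p(k)$ and using $p^m-k\geq p^j$ gives $m-v_p(k)+(p^m-k)w(b)\geq (m-j)+p^jw(b)$. But the ``clean way to finish'' fails: you propose to ``choose $m_0$ with $m_0>N$ and $p\,w(b)>N$'', yet $p\,w(b)$ is a fixed constant independent of $m_0$, so for $N\geq p\,w(b)$ no such choice exists, and the only bound you invoke for $j\geq 1$, namely $p^jw(b)\geq p\,w(b)$, does not grow with $m$. The informal remark preceding it (each fixed $j$ gives a term tending to infinity, and $j$ comparable to $m$ gives a huge $p^jw(b)$) is pointwise in $j$ and does not by itself control the minimum over the growing range $0\leq j\leq m-1$; some uniform argument is needed. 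The repair is immediate from your own inequality: either split as the paper does (if $v_p(k)\leq m/2$ the term is $\geq m/2$; if $v_p(k)>m/2$ then $(p^m-k)w(b)\geq p^{v_p(k)}\geq p^{m/2}$), or more simply note $p^j\geq j+1$ and $w(b)\geq 1$, so $(m-j)+p^jw(b)\geq (m-j)+(j+1)=m+1\rightarrow\infty$. With that one step corrected, your proof is complete and otherwise parallels the paper's.
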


\begin{proof}

Firstly, if $k=a_0+a_1p+\cdots+a_tp^t$ for some $0\leq a_i<p$, we define $s(k):=a_0+a_1+\cdots+a_t$. Then using \cite[\rom{3} 1.1.2.5]{Lazard} we see that $v_p(k!)=\frac{k-s(k)}{p-1}$. Therefore, $v_p\binom{p^m}{k}=v_p\left(\frac{p^m!}{k!(p^m-k)!}\right)=\frac{p^m-s(p^m)-k+s(k)-(p^m-k)+s(p^m-k)}{p-1}=\frac{s(k)+s(p^m-k)+1}{p-1}$.\\

\noindent But since $k<p^m$, we may assume that $t=m-1$, i.e. $k=a_0+a_1p+\cdots+a_{m-1}p^{m-1}$. And since $k\neq 0$, let $i$ be maximal such that $a_{m-i}\neq 0$, so $k=a_{m-i}p^{m-i}+\cdots+a_{m-1}p^{m-1}$ and hence $v_p(k)=m-i$.\\

\noindent Now, $p^m=(p-1)p^{m-i}+(p-1)p^{m-i+1}+\cdots+(p-1)p^{m-1}+p^{m-i}$, and thus $p^m-k=(p-a_{m-i})p^{m-i}+(p-a_{m-i+1}-1)p^{m-i+1}+\cdots+(p-a_{m-1}-1)p^{m-1}$ and we deduce that $s(p^m-k)=ip-s(k)-(i-1)=i(p-1)-s(k)+1$.\\

\noindent Therefore, $v_p\binom{p^m}{k}=\frac{s(k)+s(p^m-k)-1}{p-1}=\frac{i(p-1)}{p-1}=i=m-v_p(k)$ as required.\\

\noindent To prove the second statement, we just need to prove that for any $k$, $m-v_p(k)+(p^m-k)w(b)\rightarrow\infty$ as $m\rightarrow\infty$:\\

\noindent If $v_p(k)\leq\frac{m}{2}$ then $m-v_p(k)+(p^m-k)w(b)\geq\frac{m}{2}\rightarrow\infty$.\\

\noindent If $v_p(k)>\frac{m}{2}$ then $k=p^{v_p(k)}y$ with $v_p(y)=0$, so $m-v_p(k)+(p^m-k)w(b)\geq p^{v_p(k)}(p^{m-v_p(k)}-y)w(b)$

\noindent $\geq p^{\frac{m}{2}}\rightarrow\infty$ as required.\\

\noindent Finally, $(a+b)^{p^m}-a^{p^m}=\underset{0\leq k<p^m}{\sum}{\binom{p^m}{k}a^kb^{p^m-k}}$, so $w((a+b)^{p^m}-a^{p^m})\geq\min\{p^mw(b),w(\binom{p^m}{k}a^kb^{p^m-k}):0<k<p^m\}$, and $w(\binom{p^m}{k}a^kb^{p^m-k})\geq v_p\binom{p^m}{k}w(p)+kw(a)+(p^m-k)w(b)\geq m-v_p(k)+(p^m-k)w(b)$ as required.\end{proof}

\noindent Now, fix a ring automorphism $\varphi:R\to R$ such that $\deg_w(\varphi-1)>0$.

\begin{lemma}\label{aut-converge}

The sequence $\varphi^n-1$ converges to $0$ in $B(R)$ as $v_p(n)\rightarrow\infty$.

\end{lemma}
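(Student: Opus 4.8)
The plan is to reduce the convergence statement for $\varphi^n - 1$ to the binomial-expansion estimate of Lemma \ref{terms2}, applied inside the ring $B(R)$ of bounded additive maps rather than inside $R$ itself. First I would record the relevant structure on $B(R)$: it is a ring under pointwise addition and composition, $\deg_w$ is a complete, separated filtration on it, and — crucially — $B(R)$ is a $\mathbb{Z}_p$-algebra in which the scalar $p$ acts as the bounded additive map $r \mapsto pr$, which has $\deg_w(p \cdot \mathrm{id}) = w(p) > 0$ by our standing hypothesis. Write $\psi := \varphi - 1 \in B(R)$, so that $\deg_w(\psi) =: \mu > 0$ by assumption, and note $\varphi = 1 + \psi$ where $1 = \mathrm{id}_R$ is the identity of $B(R)$.

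Next I would compute $\varphi^{p^m} - 1 = (1+\psi)^{p^m} - 1 = \sum_{k=1}^{p^m} \binom{p^m}{k}\psi^k$ inside $B(R)$. This is exactly the situation of the third bullet of Lemma \ref{terms2}, with the ring taken to be $B(R)$, the element $a := 1$ (which has $\deg_w(1) = 0$, since composition with the identity changes degree by $0$), and $b := \psi$ (which has $\deg_w(\psi) = \mu \geq 1$ after rescaling, or one argues directly with $\mu > 0$). Since $a$ and $b$ commute, the lemma gives
\[
\deg_w\!\left(\varphi^{p^m} - 1\right) \;\geq\; \min\bigl\{\, p^m \mu,\ \ m - v_p(k) + (p^m - k)\mu : 0 < k < p^m \,\bigr\},
\]
and the second bullet of Lemma \ref{terms2} shows this quantity tends to $\infty$ as $m \to \infty$. (If $\mu$ is not an integer one can replace $\psi$ by noting $\deg_w(\psi^k) \geq k\mu$ and the same estimates go through verbatim with $w(b)$ replaced by $\mu$ throughout; alternatively, pass to $\lceil \mu \rceil \geq 1$ as a lower bound.) Hence $\varphi^{p^m} - 1 \to 0$ in $B(R)$ as $m \to \infty$.

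Finally I would bootstrap from the subsequence $\{p^m\}$ to arbitrary $n$ with $v_p(n) \to \infty$. Write $n = p^{m} u$ with $v_p(u) = 0$; then in $B(R)$ we have the identity $\varphi^n - 1 = \bigl(\varphi^{p^m}\bigr)^{u} - 1 = \sum_{j=1}^{u}\binom{u}{j}(\varphi^{p^m}-1)^j$, so $\deg_w(\varphi^n - 1) \geq \deg_w(\varphi^{p^m} - 1)$ since each term has degree at least $\deg_w(\varphi^{p^m}-1) \geq 0$ and $\binom{u}{j} \in \mathbb{Z}$ contributes nonnegative degree. As $v_p(n) \to \infty$ forces $m = v_p(n) \to \infty$, the right-hand side tends to $\infty$, which is precisely the assertion that $\varphi^n - 1 \to 0$ in $B(R)$. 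The only mild subtlety — and the step I would be most careful about — is the bookkeeping around non-integer degree $\mu$ and confirming that $\deg_w$ on $B(R)$ genuinely behaves multiplicatively enough for the binomial estimate of Lemma \ref{terms2} to apply with $B(R)$ in place of $R$; once the hypothesis $w(a) = 0$ is read correctly as "$\deg_w$ of the identity map is $0$", everything else is a direct transcription.
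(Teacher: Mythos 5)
Your proof is correct and follows essentially the same route as the paper: reduce to the subsequence $n=p^m$ via the binomial expansion and then apply the estimates of Lemma \ref{terms2} to the filtered ring $(B(R),\deg_w)$. The only cosmetic difference is that you invoke the third bullet of Lemma \ref{terms2} wholesale with $a=\mathrm{id}$, $b=\varphi-1$, whereas the paper applies the valuation of $\binom{p^m}{k}$ termwise; and your worry about non-integer $\mu$ is moot since $\deg_w$ is $\mathbb{Z}$-valued, so $\deg_w(\varphi-1)>0$ already gives $\deg_w(\varphi-1)\geq 1$.
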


\begin{proof}

We just need to prove that $\deg_w(\varphi^n-1)\rightarrow\infty$ as $v_p(n)\rightarrow\infty$. Since $\deg_w(\varphi^n-1)>0$ for all $n$, it suffices to prove that $\deg_w(\varphi^{p^m}-1)\rightarrow\infty$ as $m\rightarrow\infty$.\\

\noindent Now, $\varphi^{p^m}-1=((\varphi-1)+1)^{p^m}-1=\underset{0\leq k< p^m}{\sum}{\binom{p^m}{k}(\varphi-1)^{p^m-k}}$. But since $\deg_w$ defines a ring filtration on $B(R)$ and $\deg_w(\varphi-1)\geq 1$, it follows from Lemma \ref{terms2} that if $k>0$ then\\ 

\noindent $\deg_w(\binom{p^m}{k}(\varphi-1)^{p^m-k})\geq w\binom{p^m}{k}+(p^m-k)\deg_w(\varphi-1)\geq m-v_p(k)+(p^m-k)\deg_w(\varphi-1)\rightarrow\infty$ as $m\rightarrow\infty$, and clearly if $k=0$ then $\deg_w(\binom{p^m}{k}(\varphi-1)^{p^m-k})\geq p^m\deg_w(\varphi-1)\rightarrow\infty$ as required.\end{proof}

\vspace{0.1in}

\noindent Now, let us suppose that $R$ is a prime, Noetherian ring, gr$_w$ $R$ is commutative, and that the positively graded ideal $($gr$_w$ $R)_{\geq 0}$ is not nilpotent. Then the simple, artinian ring $Q(R)$ carries a non-commutative valuation $v$, which we can describe using Construction \ref{non-comm-val}. Clearly any automorphism $\varphi$ of $R$ extends to $Q(R)$.\\

\noindent The following theorem allows us to pass from $(R,w)$ to $(Q(R),v)$ without difficulty. The proof is similar to that of an analogous result in a characteristic $p$ setting, namely the proof of \cite[Proposition 3.5]{APB}.

\begin{theorem}\label{pos-degree}

If $\varphi\in$ Aut$(R)$ and $\deg_w(\varphi-1)>0$, then there exists $n\in\mathbb{N}$ with $\deg_v(\varphi^n-1)>0$.

\end{theorem}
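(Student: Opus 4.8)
The plan is to leverage the continuity of the natural map $\tau:(R,w)\to(Q(R),v)$ furnished by Construction~\ref{non-comm-val} (recall its last assertion: $w(x)\geq 0$ implies $v(\tau(x))\geq 0$), together with the fact that $v$ is a genuine \emph{valuation} on a simple artinian ring, to transport the positivity of $\deg_w(\varphi-1)$ across. First I would observe that since $\deg_w(\varphi-1)>0$ and $w$ is integer-valued, we have $w((\varphi-1)(r))\geq w(r)+1$ for all $r\in R$, and by Lemma~\ref{aut-converge} the maps $\varphi^{p^m}-1$ converge to $0$ in $B(R)$, i.e. $\deg_w(\varphi^{p^m}-1)\to\infty$. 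The idea is to choose $m$ large enough that $\varphi^{p^m}$ is "close enough to the identity" that it must move elements of $Q(R)$ up in $v$-value as well.

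The key step is to control how $\varphi$ acts on the Ore set $S=\{r\in R:\mathrm{gr}_w(r)\notin\mathfrak q\}$ used to build the localisation $Q(R)=S^{-1}R$ and on the completion $Q'$, $\widehat Q$. Here is the mechanism I expect to use: for $s\in S$ with $w(s)=0$ (we may normalise), $\varphi(s)=s+(\varphi-1)(s)$ where $w((\varphi-1)(s))\geq 1$, so $\mathrm{gr}_w(\varphi(s))=\mathrm{gr}_w(s)\notin\mathfrak q$, hence $\varphi(S)=S$ and $\varphi$ indeed extends to $Q(R)$. Moreover for such $s$, $v(\varphi(s)-s)\geq v(\tau((\varphi-1)(s)))\geq 1>0=v(s)$ (using $w((\varphi-1)(s))\geq 1$ and continuity), so $v(\varphi(s))=v(s)=0$ because $v$ is a valuation and the leading terms agree. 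Writing a general element of $Q(R)$ as $s^{-1}r$, one then estimates $v((\varphi-1)(s^{-1}r))$ via $\varphi(s^{-1}r)-s^{-1}r=\varphi(s)^{-1}\bigl((\varphi-1)(r)-(\varphi(s)-s)s^{-1}r\bigr)$ and the valuation property $v(ab)=v(a)+v(b)$. For $r\in R$ with $w(r)\geq 0$ we get $v((\varphi-1)(r))\geq$ (something tending to $\infty$ with the $w$-gap), and combining gives $v((\varphi-1)(s^{-1}r))\geq v(s^{-1}r)+\delta_m$ for a constant $\delta_m$ that we can force positive by enlarging $m$ and passing to $\varphi^{p^m}$; finally pass to the completion $\widehat{Q(R)}$, where $v$ is dense and the same inequality persists by continuity, and then to $Q(R)$ itself.

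The main obstacle I anticipate is the control of $\varphi$ on \emph{denominators}: a priori $\varphi$ need not preserve the finer filtration $w'$ on $Q(R)$ (only $w$ on $R$), so the passage from bounds on $(\varphi-1)$ restricted to $R$ to bounds on all of $Q(R)=S^{-1}R$ requires the valuation identity $v(s^{-1})=-v(s)$ and the crucial observation above that $v(\varphi(s))=v(s)$ for $s\in S$ — i.e. that $\varphi$ does not disturb the $v$-value of elements of the Ore set. Once that is in hand, the rest is a bookkeeping argument with the valuation axioms, mirroring the characteristic-$p$ proof of \cite[Proposition~3.5]{APB}; the characteristic-$0$ twist is only that one works with the $z$-adic / $J(\mathcal B)$-adic filtration from Construction~\ref{non-comm-val} in place of its mod-$p$ analogue, and uses Lemma~\ref{terms2} to handle the binomial coefficients appearing when one expands $\varphi^{p^m}-1=((\varphi-1)+1)^{p^m}-1$. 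A secondary technical point to watch is that $\deg_v$ is only guaranteed to be positive after replacing $\varphi$ by a $p$-power $\varphi^n$ — which is exactly what the statement allows — so I would phrase the final estimate to produce $\delta_m>0$ for all sufficiently large $m$ and then set $n=p^m$.
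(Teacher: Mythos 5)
There is a genuine gap, and it sits exactly where you locate the ``main obstacle'': your treatment of denominators relies on properties that $v$ does not have. By Definition \ref{non-comm-val'}, $v$ is a \emph{non-commutative} valuation: its completion is $M_n(Q(D))$ with the standard matrix filtration, so $v(ab)=v(a)+v(b)$ is only guaranteed when one factor is central, and $v(s^{-1})=-v(s)$ can fail outright (e.g.\ $s=\mathrm{diag}(1,\nu)$ has $v(s)=0$ but $v(s^{-1})=-1$). Your key estimate, bounding $v\bigl((\varphi-1)(s^{-1}r)\bigr)$ below by $v(s^{-1}r)+\delta_m$ via the identity $\varphi(s^{-1}r)-s^{-1}r=\varphi(s)^{-1}\bigl((\varphi-1)(r)-(\varphi(s)-s)s^{-1}r\bigr)$ together with ``$v(\varphi(s))=v(s)$'' and multiplicativity, therefore does not go through: the discrepancy between $v(s^{-1})$ (and $v(\varphi(s)^{-1})$) and $-v(s)$ is not bounded as $s$ ranges over the Ore set, and no bound on $(\varphi^{p^m}-1)|_R$ alone can compensate for it. A secondary unjustified step is the inference $w(x)\geq 1\Rightarrow v(\tau(x))\geq 1$; Construction \ref{non-comm-val} only gives $w(x)\geq 0\Rightarrow v(x)\geq 0$ plus continuity, which yields \emph{some} threshold for each target value but not threshold $1$ (this part is patchable by taking $p$-power iterates, as you note, but it does not rescue the denominator estimate).

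The paper's proof avoids working with $v$ on $Q(R)=S^{-1}R$ directly. It transfers positivity of degree step by step through the intermediate filtrations of Construction \ref{non-comm-val}: first $\deg_{w'}(\varphi-1)\geq 1$ because the induced map on $\mathrm{gr}_{w'}\,Q(R)=(\mathrm{gr}\,R)_{\mathfrak q}$ is the identity; then to $v_{z,U}$ using $z^nU=F_{nw'(z)}Q'$; then — and this is the substantive content your proposal never engages with — it replaces $\varphi$ by a power that descends to the simple quotient $\widehat Q$ (because $\varphi$ only \emph{permutes} the finitely many maximal ideals of the artinian completion $Q'$) and by a further power that preserves the maximal order $\mathcal B$ (via the conductor ideal $I=\{x\in V:\mathcal Bx\subseteq V\}$ and the order $\mathcal O(I)$); only then can one run the final induction on powers of the uniformiser $\nu$ to get $\deg_v(\varphi^n-1)\geq 1$. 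Since $v$ is the $J(\mathcal B)$-adic filtration, some argument of this kind, making a power of $\varphi$ compatible with $\widehat Q$ and $\mathcal B$, is unavoidable; your proposal would need to be restructured along these lines rather than patched.
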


\begin{proof}

Using Construction \ref{non-comm-val}, we see that we have a sequence of filtrations $w',v_{z,U},v_{\underline{z},V},v_{\underline{z},\mathcal{B}},v$ on $Q(R)$, and our strategy is to prove that $\varphi^n-1$ has positive degree for some $n$ with respect to each of these in turn. Let us first consider $w'$.\\

\noindent Since $\deg_w(\varphi-1)>0$, i.e. $w(\varphi(r)-r)>w(r)$ for all $r\in R$, it follows that the induced graded automorphism $\overline{\varphi}:$ gr$_w$ $R\to$ gr$_w$ $R,r+F_nR\mapsto\varphi(r)+F_nR$ is just the identity. Therefore, since gr$_{w'}$ $Q(R)=($gr $R)_{\mathfrak{q}}$, it follows that the induced morphism $\overline{\varphi}:$ gr$_{w'}$ $Q(R)\to$ gr$_{w'}$ $Q(R)$ is also the identity, and hence $\deg_{w'}(\varphi-1)\geq 1$.\\

\noindent So, using Lemma \ref{aut-converge}, we can choose $n_1\in\mathbb{N}$ such that $\deg_{w'}(\varphi^{n_1}-1)\geq w'(z)$. But since $z^nU=F_{nw'(z)}Q(R)$, it follows that if $r\in z^nU$ then $w'((\varphi^{n_1}-1)(r))\geq w'(r)+w'(z)\geq nw'(z)+w'(z)=(n+1)w'(z)$, and hence $(\varphi^{n_1}-1)(z^nU)\subseteq F_{(n+1)w'(z)}Q(R)=z^{n+1}U$, and hence $\deg_{v_{z,U}}(\varphi^{n_1}-1)\geq 1$.\\

\noindent Now, recall that $\widehat{Q}$ is a simple quotient of the completion $Q'$ of $Q(R)$ with respect to $v_{z,U}$, i.e. a quotient of $Q'$ by a maximal ideal $\mathfrak{m}$. But since $Q'$ is artinian, all maximal ideals are minimal prime ideals, and hence there are only finitely many of them. Since $\varphi^{n_1}$ is continuous, $\varphi^{n_1}(\mathfrak{m})$ is also a minimal prime ideal of $Q'$, i.e. $\varphi^{n_1}$ permutes the set of minimal prime ideals of $Q'$. Since this set is finite, all permutations have finite order, so there exists $m$ such that $\varphi^{mn_1}(\mathfrak{m})=\mathfrak{m}$. 

Therefore, there exists $n_2$ such that $\varphi^{n_2}$ induces an automorphism $\overline{\varphi^{n_2}}:\widehat{Q}\to\widehat{Q}$, and since $Q(R)$ is simple, the composition $Q(R)\xhookrightarrow{} Q'\twoheadrightarrow\widehat{Q}$ must be injective. Therefore, we can think of $\overline{\varphi^{n_2}}$ as a extension of $\varphi^{n_2}$ to $\widehat{Q}$, so sometimes we may just call it $\varphi^{n_2}$ for convenience.\\

\noindent Now, if $r\in\overline{z}^nV$, then $r$ is the image of $z^ns$ in $\widehat{Q}$, for some $s\in U$. So since $\deg_{v_{z,U}}(\varphi^{n_2}-1)\geq 1$, it follows that $(\varphi^{n_2}-1)(z^ns)\in z^{n+1}U$, and hence $(\overline{\varphi^{n_2}}-1)(r)\in\overline{z}^{n+1}V$. Therefore $\deg_{v_{\overline{z},V}}(\overline{\varphi^{n_2}}-1)\geq 1$.\\

\noindent Now, $\mathcal{B}$ is a maximal order in $\widehat{Q}$, equivalent to $V$, and $\mathcal{B}\subseteq\overline{z}^{-r}V$. So let $I:=\{x\in V:Bx\subseteq V\}$, then $I$ is a two-sided ideal of $V$ with $z^rV\subseteq I$. Since $\deg_{v_{\overline{z},V}}(\varphi^{n_2}-1)>0$, it follows from Lemma \ref{aut-converge} that we can choose $m\in\mathbb{N}$ such that $(\varphi^{mn_2}-1)(V)\subseteq z^rV$, and hence there exists $n_3\in\mathbb{N}$ such that $(\varphi^{n_3}-1)(V)\subseteq I$.

In particular, $\varphi^{n_3}(I)\subseteq I$, and it follows from Noetherianity of $V$ that $\varphi^{n_3}(I)=I$, and hence $I=\varphi^{-n_3}(I)$.\\

\noindent Therefore, given $x\in\varphi^{n_3}(\mathcal{B})$, $x=\varphi^{n_3}(b)$ for some $b\in\mathcal{B}$, and given $c\in I$, $xc=\varphi^{n_3}(b)c=\varphi^{n_3}(b\varphi^{-n_3}(c))$, and since $\varphi^{-n_3}(c)\in I$, it follows that $b\varphi^{-n_3}(c)\in V$, and hence $xc\in V$.\\

\noindent So setting $\mathcal{O}(I)=\{b\in B:bI\subseteq V\}$, it follows that $\mathcal{O}(I)$ contains $\mathcal{B}$ and $\varphi^{n_3}(\mathcal{B})$. But $\mathcal{O}(I)$ is an order in $\widehat{Q}$, equivalent to $V$, by \cite[Lemma 5.1.12]{McConnell}, so since $\mathcal{B}$ and $\varphi^{n_3}(\mathcal{B})$ are maximal orders, this means that $\mathcal{O}(I)=\mathcal{B}=\varphi^{n_3}(\mathcal{B})$, and hence $\varphi^{n_3}$ is an automorphism of $\mathcal{B}$.\\

\noindent Also, we can choose $n_3$ such that $\deg_{v_{\overline{z},V}}(\varphi^{n_3}-1)\geq r+1$, and hence $(\varphi^{n_3}-1)(\overline{z}^n\mathcal{B})\subseteq (\varphi^{n_3}-1)(z^{n-r}V)\subseteq z^{n+1}v\subseteq z^{n+1}\mathcal{B}$. Therefore $\deg_{\overline{z},\mathcal{B}}(\varphi^{n_3}-1)\geq 1$.\\

\noindent Finally, $\mathcal{B}=M_n(D)$ for some non-commutative DVR $D$, so let $\nu$ be a uniformiser in $D$, and it follows that all two-sided ideals of $\mathcal{B}$ have the form $\nu^n\mathcal{B}$. Since $v$ is the $\nu$-adic filtration on $\widehat{Q}$, and $v$ is topologically equivalent to $v_{\overline{z},\mathcal{B}}$, we know that there exists $k\in\mathbb{N}$ such that $z^r\mathcal{B}\subseteq\nu^2\mathcal{B}$. By Lemma \ref{aut-converge}, we can choose $n\geq n_3$ such that $\deg_{v_{\overline{z},\mathcal{B}}}(\varphi^n-1)\geq k$, i.e. $(\varphi^n-1)(\mathcal{B})\subseteq\overline{z}^k\mathcal{B}\subseteq\mu^2\mathcal{B}$.\\

\noindent So, if we assume for induction that $(\varphi^n-1)(\nu^i\mathcal{B})\subseteq\nu^{i+1}\mathcal{B}$ for all $i<m$, then $(\varphi^n-1)(\nu^m)=\varphi^n(\nu^m)-\nu^m=(\varphi^n-1)(\nu^{m-1})\varphi^n(\nu)+\nu^{m-1}(\varphi^n-1)(\nu)$. But $(\varphi^n-1)(\nu)\in\nu^2\mathcal{B}$, $(\varphi^n-1)(\nu^{m-1})\in\nu^m\mathcal{B}$ and $\varphi^n(\nu)\in\nu\mathcal{B}$, therefore $(\varphi^n-1)(\nu^m)\in\nu^{m+1}\mathcal{B}$. 

It follows that $(\varphi^n-1)(\nu^m\mathcal{B})\subseteq\nu^{m+1}\mathcal{B}$ for all $m$, and hence $\deg_v(\varphi^n-1)\geq 1$ as required.\end{proof}

\subsection{Bounded Group Automorphisms}

Now, let $(G,\omega)$ be a complete, $p$-valued group of finite rank. We may assume that $\omega$ is an \emph{abelian $p$-valuation} as defined in Definition \ref{abelian-val}, and we let $w$ be the corresponding Lazard filtration on $\mathcal{O}G$ -- a complete Zariskian filtration.\\

\noindent Fix an ordered basis $\underline{g}=\{g_1,\cdots,g_d\}$ for $(G,\omega)$, so that $\mathcal{O}G$ is isomorphic to the space of power series $\mathcal{O}[[b_1,\cdots,b_d]]$ as an $\mathcal{O}$-module, where $b_i=g_i-1$. Recall that for any $g\in G$, $w(g-1)\geq en\omega(g)$, with equality if $g=g_i$ for some $i$.\\

\noindent Now, recall the following definition (\cite[Definition 4.5]{nilpotent}):

\begin{definition}\label{bound}

An automorphism $\varphi\in$ Aut$(G)$ is \emph{bounded} if $\inf\{\omega(\varphi(g)g^{-1})-\omega(g):g\in G\}>\frac{1}{p-1}$, and we define the \emph{degree} of $\varphi$ to be the number $\deg_{\omega}(\varphi)=\inf\{\omega(\varphi(g)g^{-1})-\omega(g):g\in G\}$. 

Let Aut$^{\omega}(G)$ be the group of bounded automorphisms of $G$.

\end{definition}

\begin{lemma}\label{G-OG}

If $\varphi\in$ Aut$^{\omega}(G)$ then $\varphi$ extends to a continuous, $\mathcal{O}$-linear automorphism of $\mathcal{O}G$ such that $\deg_w(\varphi-1)>0$.

\end{lemma}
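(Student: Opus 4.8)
The plan is to show that the natural action of a bounded group automorphism $\varphi$ on $G$ extends to the completed group algebra $\mathcal{O}G$ and that this extension moves every element up in the Lazard filtration $w$. Since $\mathcal{O}G \cong \mathcal{O}[[b_1,\dots,b_d]]$ as a topological $\mathcal{O}$-module with $b_i = g_i - 1$, and $w$ is separated and complete, it suffices to define $\varphi$ on the topological generators $g_i$ (equivalently on the $b_i$) and check that the resulting assignment is continuous and multiplicative; continuity plus completeness then forces a unique $\mathcal{O}$-linear ring endomorphism of $\mathcal{O}G$ extending $\varphi$, and applying the same construction to $\varphi^{-1}$ (which is also bounded, with the same degree, since $\omega(\varphi^{-1}(h)h^{-1}) = \omega(\varphi^{-1}(h\varphi(h)^{-1})) \geq \deg_\omega(\varphi) + \omega(h)$ — here I would spell out the small computation) shows it is an automorphism.

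First I would set $\mu := \deg_\omega(\varphi) - \tfrac{1}{p-1} > 0$, so that $\omega(\varphi(g)g^{-1}) \geq \omega(g) + \deg_\omega(\varphi)$ for all $g$. The key filtration estimate is: for $g \in G$,
\[
w\bigl(\varphi(g) - g\bigr) = w\bigl((\varphi(g)g^{-1} - 1)\,g\bigr) \geq w(\varphi(g)g^{-1} - 1) \geq en\,\omega(\varphi(g)g^{-1}) \geq en\bigl(\omega(g) + \deg_\omega(\varphi)\bigr),
\]
using that $w(h-1) \geq en\,\omega(h)$ for every $h \in G$ and that $g$ is a unit with $w(g) = w(g^{-1}) = 0$. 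In particular $w(\varphi(g_i) - g_i) \geq en\,\omega(g_i) + en\deg_\omega(\varphi) = w(b_i) + en\deg_\omega(\varphi)$, so writing $\varphi(g_i) = g_i + (\varphi(g_i) - g_i)$ we see $\varphi$ sends $b_i = g_i - 1$ to $b_i + r_i$ with $w(r_i) > w(b_i)$. One then extends to monomials $\underline{b}^\alpha$ multiplicatively and to power series by continuity; the Lazard filtration is multiplicative on monomials and $w$-adically complete, so this is well-defined and $\mathcal{O}$-linear, and it agrees with the group automorphism on $G$ because $\varphi$ is a group homomorphism and the group elements are $w$-adic limits of polynomials in the $b_i$.

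For the degree bound $\deg_w(\varphi - 1) > 0$: since $w$ is integer-valued it suffices to prove $w(\varphi(x) - x) > w(x)$, equivalently $\geq w(x) + 1$, for all $x \in \mathcal{O}G$. By additivity and continuity of $\varphi - 1$ (both are continuous), and since any $x$ is a convergent $\mathcal{O}$-combination of monomials $\underline{b}^\alpha$ with $w(\underline{b}^\alpha) \geq w(x)$, it is enough to check $w(\varphi(\underline{b}^\alpha) - \underline{b}^\alpha) \geq w(\underline{b}^\alpha) + 1$. Here I would argue by induction on $|\alpha|$ using the telescoping identity $\varphi(bc) - bc = (\varphi(b) - b)\varphi(c) + b(\varphi(c) - c)$ for $b, c \in \{$products of the $b_i\}$: each term picks up at least one extra unit of $w$ from the factor $\varphi(b) - b$ or $\varphi(c) - c$ (which has $w$-value at least $w(b) + en\deg_\omega(\varphi)$, resp. $w(c) + en\deg_\omega(\varphi)$, and $en\deg_\omega(\varphi) \geq 1$ since $\deg_\omega(\varphi) > \tfrac{1}{p-1} \geq \tfrac{1}{e n}$ — actually I should be a little careful and just note $en\deg_\omega(\varphi) > \tfrac{en}{p-1} \geq 1$, or simply that the value is a positive integer), while the other factor is a unit or has nonnegative $w$. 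The main obstacle is none of the individual estimates but rather the bookkeeping in passing from the generators to arbitrary power series: one must confirm that $\varphi$ as constructed really is continuous (so that the degree bound on a dense set of monomials propagates to all of $\mathcal{O}G$) and that it coincides with the abstract group automorphism on all of $G$, not merely on the chosen basis — this uses that the ordered basis topologically generates $G$ and that $\varphi$ preserves $w$-Cauchy sequences, which in turn follows from the degree estimate itself.
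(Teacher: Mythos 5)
Your proposal is correct and takes essentially the same route as the paper: the key estimate $w(\varphi(g)-g)=w\bigl((\varphi(g)g^{-1}-1)g\bigr)\geq en\,\omega(\varphi(g)g^{-1})$ on the ordered basis, the telescoping identity $(\varphi-1)(ab)=(\varphi-1)(a)\varphi(b)+a(\varphi-1)(b)$ to propagate to monomials, and then density/continuity to pass to all of $\mathcal{O}G$. One small slip: the inequality $\tfrac{en}{p-1}\geq 1$ is false in general (take $e=n=1$, $p\geq 5$), but your fallback justification --- that $w$ is integer-valued and the inequality $w(\varphi(g_i)-g_i)>w(b_i)$ is strict, hence the gap is at least $1$ --- is exactly the right argument and is what the paper uses.
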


\begin{proof}

Clearly $\varphi$ extends to an $\mathcal{O}$-linear automorphism of $\mathcal{O}G$, so we need only prove that $\deg_w(\varphi-1)>0$, and it will follow that $\varphi$ is continuous and extends to $\mathcal{O}G$.\\

\noindent Firstly, for each $i=1,\cdots,d$, $(\varphi-1)(g_i-1)=\varphi(g_i)-g_i=(\varphi(g_i)g_i^{-1}-1)g_i$. But since $\varphi\in Aut^{\omega}(G)$, we know that $\omega(\varphi(g_i)g_i^{-1})>\omega(g)+\frac{1}{p-1}$, so $w(\varphi(g_i)g_i^{-1}-1)\geq en\omega(\varphi(g_i)g_i^{-1})\geq en\omega(g_i)=w(g_i-1)$. Hence $w((\varphi-1)(g_i-1))-w(g_i-1)>0$.\\

\noindent Now, given $a,b\in\mathcal{O}[G]$: 

\begin{center}
$(\varphi-1)(ab)=\varphi(a)\varphi(b)-ab=(\varphi(a)-a)\varphi(b)+a(\varphi(b)-b)=(\varphi-1)(a)\varphi(b)+a(\varphi-1)(b)$. 
\end{center}

\noindent If we assume that $w((\varphi-1)(a))-w(a)>0$ and $w((\varphi-1)(b))-w(b)>0$, then it follows that $w(\varphi(b))=w(b)$ and $w((\varphi(a)-a)\varphi(b)+a(\varphi(b)-b))>\min\{w(a)+w(\varphi(b),w(a)+w(b)\}=w(a)+w(b)=w(ab)$, therefore $w((\varphi-1)(ab))-w(ab)>0$.\\

\noindent So since $w((\varphi-1)(g_i-1))-w(g_i-1)>0$ for each $i=1,\cdots,d$, it follows that for every $\alpha\in\mathbb{N}^d$, $\lambda\in\mathcal{O}$, $w(\lambda(\varphi-1)((g_1-1)^{\alpha_1}\cdots (g_d-1)^{\alpha_d}))-w(\lambda (g_1-1)^{\alpha_1}\cdots (g_d-1)^{\alpha_d})>0$. In particular, since $\mathcal{O}[G]$ is generated as an additive group by the monomials $\lambda b_1^{\alpha_1}\cdots b_d^{\alpha_d}$, it follows that $w(\varphi(s))=w(s)$ for all $s\in\mathcal{O}[G]$.\\

\noindent Now, given $r\in\mathcal{O}[G]$ with $w(r)=t$, it follows from the definition of $w$ that $r=\underset{\alpha\in A}{\sum}{\lambda_{\alpha}b_1^{\alpha_1}\cdots b_d^{\alpha_d}}+s$, where $A:=\{\alpha\in\mathbb{N}^d:\underset{1\leq i\leq d}{\sum}{\alpha_ien\omega(g_i)}=t\}$ and $w(s)>t$. Since $w(\varphi(s))=w(s)$, it is clear that $w((\varphi-1)(s))\geq w(s)>t$, so to prove that $w((\varphi-1)(r))>t$, it remains to show that $w((\varphi-1)(\underset{\alpha\in A}{\sum}{\lambda_{\alpha}b_1^{\alpha_1}\cdots b_d^{\alpha_d}}))>t$.\\

\noindent But $w(\lambda_{\alpha}b_1^{\alpha_1}\cdots b_d^{\alpha_d})=t$ for all $\alpha\in A$, so we have seen that $w((\varphi-1)(\lambda_{\alpha}b_1^{\alpha_1}\cdots b_d^{\alpha_d}))>t$ for each $\alpha$, and it follows immediately that $w((\varphi-1)(\underset{\alpha\in A}{\sum}{\lambda_{\alpha}b_1^{\alpha_1}\cdots b_d^{\alpha_d}}))>t$ as required.\end{proof}

\noindent Now, fix a prime ideal $P$ of $\mathcal{O}G$ such that $p\notin P$, and fix an automorphism $\varphi\in Aut^{\omega}(G)$ such that $P$ is invariant under the extension of $\varphi$ to $\mathcal{O}G$, i.e. $\varphi(P)=P$. Hence $\varphi$ induces an automorphism $\overline{\varphi}$ of $\frac{\mathcal{O}G}{P}$.

\begin{theorem}\label{aut-completion}

There exists a non-commutative valuation $v$ on $Q(\mathcal{O}G/P)$ such that the natural map $\tau:(\mathcal{O}G,w)\to (Q(\mathcal{O}G/P),v)$ is continuous, and there exists $n\in\mathbb{N}$ such that  $\deg_v(\overline{\varphi}^n-1)\geq v(p)$.

\end{theorem}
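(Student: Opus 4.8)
The plan is that the existence of a non-commutative valuation $v$ on $Q(\mathcal{O}G/P)$ making $\tau$ continuous is already available: it is exactly what was recorded for prime ideals at the end of Section 2.3, obtained by feeding the Zariskian quotient filtration $\overline{w}$ on $\mathcal{O}G/P$ (with its commutative associated graded, whose positive part is non-nilpotent since $p\notin P$) into Construction \ref{non-comm-val}. So the only real content of the theorem is the degree estimate, and the strategy for it is to push the inequality $\deg_w(\varphi-1)>0$ down the tower $\mathcal{O}G\to\mathcal{O}G/P\to Q(\mathcal{O}G/P)$ and then, via Theorem \ref{pos-degree}, through the chain of filtrations produced by that construction.

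Concretely, by Lemma \ref{G-OG} the bounded automorphism $\varphi$ extends to a continuous $\mathcal{O}$-linear automorphism of $\mathcal{O}G$ with $\deg_w(\varphi-1)>0$. Since $\varphi(P)=P$ it descends to an automorphism $\overline{\varphi}$ of $\mathcal{O}G/P$, and Lemma \ref{quotient}, applied to the quotient filtration $\overline{w}$, gives $\deg_{\overline{w}}(\overline{\varphi}-1)>0$. As $\mathcal{O}G/P$ is prime Noetherian and $\overline{\varphi}$ extends to $Q(\mathcal{O}G/P)$, Theorem \ref{pos-degree} (applied with the filtration $\overline{w}$ and the valuation $v$ above) provides some $n_0\in\mathbb{N}$ with $\deg_v(\overline{\varphi}^{n_0}-1)>0$.

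It remains to upgrade $\deg_v(\overline{\varphi}^{n_0}-1)>0$ to $\deg_v(\overline{\varphi}^{n}-1)\geq v(p)$ for a suitable $n$. First I would note $v(p)>0$: since $p$ is central in $\mathcal{O}G$ and $p^m\to 0$ in $(\mathcal{O}G,w)$, continuity of $\tau$ gives $\tau(p)^m\to 0$ in $(Q(\mathcal{O}G/P),v)$, which is impossible unless $v(\tau(p))>0$, as $v$ is a valuation — the same device used in the proof of Theorem \ref{completely-prime'}. Then, applying Lemma \ref{aut-converge} to the automorphism $\overline{\varphi}^{n_0}$ of $(Q(\mathcal{O}G/P),v)$, which has positive degree, we obtain $\deg_v(\overline{\varphi}^{n_0 p^j}-1)\to\infty$ as $j\to\infty$; choosing $j$ so that this degree is at least $v(p)$ and setting $n:=n_0 p^j$ completes the argument.

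The substantive work is all contained in Theorem \ref{pos-degree} — the transfer of a positive-degree automorphism from a Zariskian filtered prime ring to the non-commutative valuation on its Goldie quotient ring — which I take as given; everything else is bookkeeping. The only points that need a line of justification are the non-nilpotence hypothesis of Construction \ref{non-comm-val} for $\mathcal{O}G/P$ and the inequality $v(p)>0$, both of which follow from $p\notin P$.
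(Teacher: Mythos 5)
Your proposal is correct and follows essentially the same route as the paper: verify the hypotheses of Construction \ref{non-comm-val} for the quotient filtration $\overline{w}$ (non-nilpotence of the positive part coming from $p\notin P$), push $\deg_w(\varphi-1)>0$ through Lemma \ref{G-OG} and Lemma \ref{quotient}, and invoke Theorem \ref{pos-degree}. The one place you go beyond the paper is the explicit upgrade from $\deg_v(\overline{\varphi}^{n_0}-1)>0$ to $\deg_v(\overline{\varphi}^{n}-1)\geq v(p)$ via the observation $v(p)>0$ and Lemma \ref{aut-converge}; the paper's own proof stops at positive degree and leaves this (genuinely needed) strengthening implicit, so your version is if anything slightly more complete.
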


\begin{proof}

Let $\overline{w}$ be the quotient filtration on $\frac{\mathcal{O}G}{P}$, so that gr$_{\overline{w}}$ $\frac{\mathcal{O}G}{P}\cong\frac{\text{gr }_w\text{ }\mathcal{O}G}{\text{gr}_w\text{ }P}$. So since $w$ is Zariskian, it follows that gr$_{\overline{w}}$ $\frac{\mathcal{O}G}{P}$ is commutative and Noetherian, and since $\frac{\mathcal{O}G}{P}$ is complete with respect to $\overline{w}$, it follows from \cite[Theorem 2.1.2]{LVO} that $\overline{w}$ is Zariskian.\\

\noindent Moreover, since $p\notin P$ and $w(p)>0$, it follows that $\overline{w}(p^n)=w(p^n)$ for all $n\in\mathbb{N}$, because if $\overline{w}(p^n)>w(p^n)$ then $p^n+r\in P$ for some $r\in\mathcal{O}G$ with $w(r)>w(p^n)$, and hence $1+p^{-n}r\in P\otimes_{\mathcal{O}}K$ with $w(p^{-n}r)>0$, but $1+p^{-n}r$ is a unit in $KG$ -- a contradiction. Therefore, it follows that gr$(p)$ lies in the positively graded piece of the associated graded gr $\frac{\mathcal{O}G}{P}$ and is non-nilpotent. Hence $($gr$_{\overline{w}}$ $\frac{\mathcal{O}G}{P})_{\geq 0}$ is non-nilpotent.\\

\noindent Therefore, using Construction \ref{non-comm-val}, we can define a non-commutative valuation $v$ on $Q(\mathcal{O}G/P)$ such that the inclusion $(\mathcal{O}G/P,\overline{w})\to (Q(\mathcal{O}G/P),v)$ is continuous. Since the surjection $(\mathcal{O}G,w)\to (\mathcal{O}G/P,\overline{w})$ is continuous, it follows that the composition $\tau$ is also continuous.\\

\noindent Now, since $\deg_w(\varphi-1)>0$ by Lemma \ref{G-OG}, and $(\varphi-1)(P)\subseteq P$, it follows from Lemma \ref{quotient} that $\deg_{\overline{w}}(\varphi-1)>0$. Using Theorem \ref{pos-degree}, it follows that there exists $n\in\mathbb{N}$ such that $\deg_v(\varphi^n-1)>0$ as required.\end{proof}

\subsection{The Logarithm}

Given a complete $p$-valued group $(G,\omega)$ of finite rank, and a faithful prime ideal $P$ of $\mathcal{O}G$ such that $p\notin P$, we now want to take steps towards proving a control theorem for $P$. Again, assume that there is an automorphism $\varphi\in$ Aut$^{\omega}(G)$ with $\varphi\neq 1$ such that $\varphi(P)=P$, and let $\overline{\varphi}$ be the automorphism of $\mathcal{O}G/P$ induced from $\varphi$.\\

\noindent We will now assume further that there is a closed, central subgroup $A$ of $G$ such that:

\begin{itemize}

\item $\varphi(g)g^{-1}\in A$ for all $g\in G$.

\item For all $a\in A$, $\varphi(a)=a$.

\end{itemize} 

\noindent It is straightforward to show that these properties are satisfied for $\varphi^n$ for all $n\in\mathbb{N}$.\\

\noindent Now, using Theorem \ref{aut-completion}, we fix a non-commutative valuation $v$ on $Q=Q(\mathcal{O}G/P)$ such that the natural map $\tau:(\mathcal{O}G,w)\to (Q,v)$ is continuous, and a natural number $n\in\mathbb{N}$ such that $\deg_v(\overline{\varphi}^n-1)\geq v(p)$. After replacing $\varphi$ by $\varphi^n$ if necessary, we may assume that $n=1$. 

Let $\widehat{Q}$ be the completion of $Q$ with respect to $v$, and clearly $\overline{\varphi}$ extends continuously to $\widehat{Q}$.\\

\noindent Also, since $A$ is central in $G$, $P\cap\mathcal{O}A$ is a prime ideal of $\mathcal{O}A$. Therefore, the field of fractions of $\frac{\mathcal{O}A}{P\cap\mathcal{O}A}$ is contained in $\widehat{Q}$. So, let $F$ be the closure of this field of fractions in $\widehat{Q}$, then $F$ is a central subfield of $\widehat{Q}$ carrying a complete valuation $v_F=v|_F$.

\begin{definition}

Define the \emph{logarithm} of $\overline{\varphi}$ to be the derivation of $\widehat{Q}$ defined by the logarithm series at $\overline{\varphi}$. Specifically:

\begin{equation}
\log(\overline{\varphi})=\underset{k\geq 1}{\sum}{\frac{(-1)^{k+1}}{k}(\overline{\varphi}-1)^k}
\end{equation}

\end{definition}

\noindent\textbf{Note:} 1. This definition never makes sense if $p\in P$, because in this case if $p\mid k$ then $k=0$ in $\widehat{Q}$.\\

\noindent 2. This definition makes sense when $p\notin P$ because $\deg_v(\overline{\varphi}-1)\geq v(p)$ so $\deg_v(\overline{\varphi}-1)^k\geq kv(p)$ for all $k$ and $\deg_v(\frac{(-1)^{k+1}}{k}(\overline{\varphi}-1)^k)\geq kv(p)-v(k)=kv(p)-v_p(k)v(p)\rightarrow 0$ as $k\rightarrow\infty$. So since $B(\widehat{Q})$ is complete with respect to $\deg_v$ by \cite[Lemma 2.4]{nilpotent}, it follows that the logarithm series must converge to an element of $B(\widehat{Q})$, and since $\overline{\varphi}$ is an automorphism, this must be a derivation.\\

\noindent 3. See the proof of \cite[Theorem 4]{derivation} for details of why $\log(\overline{\varphi})$ is a derivation of $\widehat{Q}$.

\begin{proposition}\label{log-properties}

Fix $g\in G$, then:\\

$i$. $\deg(\log(\overline{\varphi}))\geq 1$.

$ii$. The series $\log(\varphi(g)g^{-1}):=\underset{k\geq 1}{\sum}{\frac{(-1)^{k+1}}{k}\tau(\varphi(g)g^{-1}-1)^k}$ converges in $F$.

$iii$. $\log(\overline{\varphi})(g)=\log(\varphi(g)g^{-1})\tau(g)$.

$iv$. $\log(\overline{\varphi})(F)=0$

$v$. $\log(\overline{\varphi})$ is a continuous, $F$-linear derivation.

\end{proposition}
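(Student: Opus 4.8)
The plan is to verify the five items essentially in order, using the convergence estimate $\deg_v(\overline{\varphi}-1)\ge v(p)$ established in Theorem~\ref{aut-completion} (after replacing $\varphi$ by a suitable power) as the workhorse throughout. For $(i)$, I would argue directly from the series: since $\deg_v$ is a ring filtration on $B(\widehat{Q})$ and $\deg_v(\overline{\varphi}-1)\ge v(p)\ge 1$, each term $\tfrac{(-1)^{k+1}}{k}(\overline{\varphi}-1)^k$ has degree at least $kv(p)-v_p(k)v(p)$, which is $\ge v(p)\ge 1$ for every $k\ge 1$ (the worst case $k=p$ still gives $(p-1)v(p)\ge 1$), so the infimum over the series is $\ge 1$. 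For $(ii)$, note that $\varphi(g)g^{-1}\in A$ by the standing hypothesis, so $\tau(\varphi(g)g^{-1}-1)$ lies in the image of $\mathcal{O}A$, hence in $F$; moreover continuity of $\tau$ together with $w(\varphi(g)g^{-1}-1)>0$ (from boundedness of $\varphi$, as in the proof of Lemma~\ref{G-OG}) gives $v(\tau(\varphi(g)g^{-1}-1))>0$, and then the same $k$-by-$k$ estimate as above — now combined with the fact that $v$ restricted to $F$ is a genuine (scalar) valuation so that $v(\tau(\varphi(g)g^{-1}-1)^k)=k\,v(\tau(\varphi(g)g^{-1}-1))$ — shows the series converges in the complete field $F$.

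For $(iii)$, I would expand $\log(\overline{\varphi})(g)$ term by term. Writing $a:=\varphi(g)g^{-1}-1\in\mathcal{O}A$, we have $(\overline{\varphi}-1)(g)=\varphi(g)-g=\tau(a)\tau(g)$, and I claim inductively that $(\overline{\varphi}-1)^k(g)=\tau(a)^k\tau(g)$: indeed, since $a$ is fixed by $\varphi$ (as $a\in A$ and $\varphi|_A=\mathrm{id}$), applying $\overline{\varphi}-1$ to $\tau(a)^{k}\tau(g)$ gives $\tau(a)^k(\varphi(g)-g)=\tau(a)^{k+1}\tau(g)$. Summing the series then yields $\log(\overline{\varphi})(g)=\big(\sum_{k\ge1}\tfrac{(-1)^{k+1}}{k}\tau(a)^k\big)\tau(g)=\log(\varphi(g)g^{-1})\,\tau(g)$, using that $\tau(g)$ can be pulled out of the (convergent) sum by continuity. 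For $(iv)$, the subfield $F$ is the closure of $\mathrm{Frac}(\mathcal{O}A/(P\cap\mathcal{O}A))$, and $\mathcal{O}A$ is generated topologically by elements of $A$, each of which is fixed by $\varphi$; hence $\overline{\varphi}$ acts as the identity on $\mathcal{O}A/(P\cap\mathcal{O}A)$, hence on its fraction field, hence on $F$ by continuity, so $(\overline{\varphi}-1)(F)=0$ and every term of the log series kills $F$.

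Finally, $(v)$: continuity is automatic since $\log(\overline{\varphi})\in B(\widehat{Q})$ has $\deg_v\ge 1$ by $(i)$, and it is a derivation by the remark following the definition (the series of the logarithm at an automorphism is always a derivation; see \cite[Theorem 4]{derivation}). For $F$-linearity, I would combine $(iv)$ with the Leibniz rule: for $c\in F$ and $x\in\widehat{Q}$, $\log(\overline{\varphi})(cx)=\log(\overline{\varphi})(c)\,x+c\,\log(\overline{\varphi})(x)=c\,\log(\overline{\varphi})(x)$ since the first term vanishes; because $F$ is central in $\widehat{Q}$ there is no issue with sides. The main obstacle I anticipate is not any single item but the bookkeeping in $(iii)$ — making sure the induction $(\overline{\varphi}-1)^k(g)=\tau(a)^k\tau(g)$ is airtight (it rests squarely on $a\in A$ being $\varphi$-fixed and central) and that the factor $\tau(g)$ genuinely commutes past the whole convergent sum — together with confirming in $(ii)$ that $v|_F$ being a valuation is exactly what upgrades the mere filtration inequality into the termwise convergence one needs; these are the points where the hypotheses on $A$ (central, pointwise $\varphi$-fixed, containing all $\varphi(g)g^{-1}$) are really used.
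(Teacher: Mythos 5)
Your items $i$, $iii$, $iv$ and $v$ follow the paper's own proof essentially step for step: the termwise bound $\deg_v\bigl(\tfrac{(-1)^{k+1}}{k}(\overline{\varphi}-1)^k\bigr)\geq (k-v_p(k))v(p)\geq 1$ for $i$; the induction $(\overline{\varphi}-1)^k(g)=\tau(\varphi(g)g^{-1}-1)^k\tau(g)$, resting on $\varphi(g)g^{-1}\in A$ being fixed by $\varphi$, for $iii$; vanishing on $\tau(\mathcal{O}A)$ propagated to $F$ for $iv$ (your variant, showing $\overline{\varphi}|_F=\mathrm{id}$ first, is an equivalent and slightly cleaner packaging of the paper's use of the derivation property); and $v$ as a formal consequence of $iv$ and the Leibniz rule. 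The genuine divergence is $ii$: the paper never estimates the scalar series directly. It deduces $ii$ from $iii$ — the operator series $\log(\overline{\varphi})$ converges in $B(\widehat{Q})$, hence its value at $g$ converges in $\widehat{Q}$, and multiplying on the right by $\tau(g)^{-1}$ shows the partial sums of $\underset{k\geq 1}{\sum}\tfrac{(-1)^{k+1}}{k}\tau(\varphi(g)g^{-1}-1)^k$ converge; since these partial sums lie in $F$ and $F$ is by construction closed in $\widehat{Q}$, the limit lies in $F$.

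Your direct route for $ii$ requires the strict inequality $v(\tau(\varphi(g)g^{-1}-1))>0$, and the justification you give — continuity of $\tau$ together with $w(\varphi(g)g^{-1}-1)>0$ — does not deliver it: continuity only says that large $w$-value forces large $v$-value, and Construction \ref{non-comm-val} only guarantees $w(x)\geq 0\Rightarrow v(x)\geq 0$; there is no inequality $v\circ\tau\geq w$ available here (unlike in the proof of Theorem \ref{completely-prime'}, where $v$ was rescaled precisely to arrange one). Without strict positivity your bound $kv(\tau(a))-v_p(k)v(p)$ need not tend to infinity (take $k=p^m$), so as written this step is a gap. It is easily patched by the argument from the proof of Theorem \ref{completely-prime'}: writing $h=\varphi(g)g^{-1}\in A$ with $h\neq 1$, one has $v(\tau(h-1))\geq 0$, and if $v(\tau(h-1))=0$ then, since $\tau(h-1)$ lies in the central subfield $F$ on which $v$ is multiplicative, $v(\tau((h-1)^n))=0$ for all $n$, contradicting $\tau((h-1)^n)\to 0$, which genuinely does follow from continuity because $w((h-1)^n)\to\infty$. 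With that patch your proof of $ii$ is correct; alternatively, adopting the paper's order (operator convergence first, then read off the scalar series and use closedness of $F$) avoids the issue altogether.
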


\begin{proof}

$i$. Since $\deg_v(\overline{\varphi})\geq v(p)$, it follows that $\deg(\frac{(-1)^{k+1}}{k}(\overline{\varphi}-1)^k)\geq kv(p)-v(k)=(k-v_p(k))v(p)\geq 1$, and hence $\deg_v(\log(\overline{\varphi}))\geq 1$.\\

\noindent $ii$ -- $iii$. For any $g\in G$, $(\overline{\varphi}-1)(g)=\tau(\varphi(g)-g)=\tau(\varphi(g)g^{-1}-1)\tau(g)$, and if we suppose for induction that $(\overline{\varphi}-1)^k(g)=\tau(\varphi(g)g^{-1}-1)^k\tau(g)$, then $(\overline{\varphi}-1)^{k+1}(g)=(\overline{\varphi}-1)(\tau(\varphi(g)g^{-1}-1)^k\tau(g))=\overline{\varphi}(\tau(\varphi(g)g^{-1}-1)^k)\overline{\varphi}(\tau(g))-\tau(\varphi(g)g^{-1}-1)^k\tau(g)$.

But since $\varphi(g)g^{-1}\in A$, it follows from our assumption that $\overline{\varphi}(\tau(\varphi(g)g^{-1}-1)^k)=\tau(\varphi(g)g^{-1}-1)^k$, and hence $(\overline{\varphi}-1)^{k+1}(g)=\tau(\varphi(g)g^{-1}-1)^k\overline{\varphi}(\tau(g))-\tau(\varphi(g)g^{-1}-1)^k\tau(g)=\tau(\varphi(g)g^{-1}-1)^{k+1}\tau(g)$.\\

\noindent Therefore, $\log(\overline{\varphi})(g)=\underset{k\geq 1}{\sum}{\frac{(-1)^{k+1}}{k}\tau(\varphi(g)g^{-1}-1)^k\tau(g)}$, so multiplying on the right by $\tau(g)^{-1}$ gives that $\log(\varphi(g)g^{-1})=\underset{k\geq 1}{\sum}{\frac{(-1)^{k+1}}{k}\tau(\varphi(g)g^{-1}-1)^k}$ converges to $\log(\overline{\varphi})(g)\tau(g)^{-1}$, and since $\tau(\varphi(g)g^{-1}-1)\in\tau(\mathcal{O}A)\subseteq F$, it follows that $\log(\varphi(g)g^{-1})\in F$.\\

\noindent $iv$. Again, since $\varphi(a)=a$ for all $a\in A$, it follows that $(\overline{\varphi}-1)(\tau(a))=0$, and hence $\log(\overline{\varphi})(a)=0$. So since $\log(\varphi)$ is $\mathcal{O}$-linear and continuous, it follows that $\log(\overline{\varphi})(s)=0$ for all $s\in\tau(\mathcal{O}A)=\mathcal{O}A/\mathcal{O}A\cap P$.

Moreover, since $F$ is the completion of the field of fractions of $\tau(\mathcal{O}A)$, and $\log(\overline{\varphi})$ is a derivation, it follows that $\log(\overline{\varphi})(s)=0$ for all $s\in F$ as required.\\

\noindent $v$. We know that $\log(\overline{\varphi})$ is a continuous derivation, and since $\log(\overline{\varphi})(F)=0$ it follows that it is $F$-linear.\end{proof}

\noindent\textbf{Remark:} Without our assumptions that $\varphi(g)g^{-1}\in A$ for all $g$, and $\varphi$ is trivial when restricted to $A$, this proposition fails.\\

\noindent Fix $\lambda:=\inf\{v(\log(\overline{\varphi})(\varphi(g)g^{-1})):g\in G\}$, and let $U:=\{g\in G:v(\log(\varphi(g)g^{-1}))>\lambda\}$. The following lemma depends on the assumption that $P$ is faithful:

\begin{lemma}\label{faithful}

$1\leq \lambda<\infty$, and $U$ is a proper, open subgroup of $G$ containing $G^p$ and $(G,G)$.

\end{lemma}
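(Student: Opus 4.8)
The plan is to reduce the entire statement to the single observation that the map $\ell:G\to(F,+)$, $\ell(g):=\log(\varphi(g)g^{-1})=\sum_{k\ge1}\frac{(-1)^{k+1}}{k}\tau(\varphi(g)g^{-1}-1)^k$, is a group homomorphism into the additive group of $F$. This series converges in $F$ by Proposition \ref{log-properties}(ii), and the relevant topological nilpotence is supplied by $\deg_v(\overline{\varphi}-1)\ge v(p)>0$, which forces $v(\tau(\varphi(g)g^{-1}-1))\ge v(p)$ for every $g$. To verify the homomorphism property, put $a=\varphi(g)g^{-1}$ and $b=\varphi(h)h^{-1}$: since $a,b$ lie in the \emph{central} subgroup $A$ and $\varphi$ fixes $A$, one computes $\varphi(gh)(gh)^{-1}=\varphi(g)\varphi(h)h^{-1}g^{-1}=a\,(gbg^{-1})=ab\in A$, and because $\tau(\mathcal{O}A)\subseteq F$ with $F$ a complete valued field of characteristic $0$, the power-series identity $\log(xy)=\log x+\log y$ for commuting topologically nilpotent $x-1,y-1$ gives $\ell(gh)=\ell(g)+\ell(h)$. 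In particular $\ell(1)=0$ and $\ell(g^{-1})=-\ell(g)$.

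Granting this, write $U=\ell^{-1}(\{x\in F:v(x)>\lambda\})$. Since $\{x\in F:v(x)>\lambda\}$ is an $\mathcal{O}_F$-submodule of $F$, hence an additive subgroup, $U$ is a subgroup of $G$, and it contains $1$ as soon as $\lambda<\infty$ (because $v(\ell(1))=v(0)=\infty$). Centrality of $A$ gives $\varphi(g^p)(g^p)^{-1}=(\varphi(g)g^{-1})^p$, so $\ell(g^p)=p\,\ell(g)$ and $v(\ell(g^p))=v(p)+v(\ell(g))\ge v(p)+\lambda>\lambda$; hence $G^p\subseteq U$. Because $\ell$ maps into an abelian group it kills commutators, so $(G,G)\subseteq\ker\ell\subseteq U$. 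Thus $U$ contains $\langle G^p,(G,G)\rangle$, whose index in $G$ is finite since the quotient is an elementary abelian $p$-group of rank at most $\mathrm{rk}(G)$; therefore $U$ is open.

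It remains to locate $\lambda$. The bound $\lambda\ge1$ is immediate: Proposition \ref{log-properties}(i) gives $\deg_v(\log(\overline{\varphi}))\ge1$, and (iii) gives $\log(\overline{\varphi})(\tau(g))=\ell(g)\tau(g)$ with $\ell(g)$ central in $\widehat{Q}$; comparing $v$-valuations yields $v(\ell(g))=v(\log(\overline{\varphi})(\tau(g)))-v(\tau(g))\ge1$ for all $g$, so $\lambda\ge1$. The finiteness $\lambda<\infty$ is the one place where faithfulness of $P$ enters, and I expect it to be the main obstacle. Suppose instead $\lambda=\infty$, i.e. $\ell\equiv0$. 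Then $\log(\overline{\varphi})$ vanishes on $\tau(G)$; being an $F$-linear continuous derivation of $\widehat{Q}$, it then vanishes on the closure of $\tau(K[G])$, and hence --- using $\log(\overline{\varphi})(xs^{-1})=\log(\overline{\varphi})(x)s^{-1}+x\log(\overline{\varphi})(s^{-1})$ --- on all of $Q$, and so on $\widehat{Q}$. But $\log(\overline{\varphi})=(\overline{\varphi}-1)\circ h$ with $h=1-\tfrac12(\overline{\varphi}-1)+\tfrac13(\overline{\varphi}-1)^2-\cdots$ a unit in $B(\widehat{Q})$, so $\log(\overline{\varphi})=0$ forces $\overline{\varphi}-1=0$, i.e. $\overline{\varphi}=\mathrm{id}$ on $\mathcal{O}G/P$; then $\varphi(g)g^{-1}-1\in P$, hence $\varphi(g)g^{-1}\in P^{\dagger}=\{1\}$ for every $g$ by faithfulness, contradicting $\varphi\ne1$. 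Finally, since $\lambda$ is now a finite integer and $\lambda=\inf\{v(\ell(g)):g\in G\}$ is the infimum of a set of integers bounded below by $1$, it is attained at some $g_0\in G$; then $g_0\notin U$, so $U$ is proper.
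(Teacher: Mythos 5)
Your proposal is correct and follows the paper's own strategy almost step for step: you treat $\psi(g)=\varphi(g)g^{-1}$ as a homomorphism into the central subgroup $A$, use additivity of $\log$ in the commutative complete field $F$ to get $\ell(gh)=\ell(g)+\ell(h)$, derive $\lambda\geq 1$ from $\deg_v(\log(\overline{\varphi}))\geq 1$ together with $v(\tau(g))=0$ and centrality of $\ell(g)$, and obtain $G^p\subseteq U$, $(G,G)\subseteq U$ and properness exactly as the paper does.

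The one place where you genuinely diverge is the finiteness $\lambda<\infty$, i.e.\ the step where faithfulness of $P$ enters. The paper argues elementwise: if $\ell\equiv 0$ then, by injectivity of the logarithm on $\widehat{Q}$ (citing \cite[Corollary 6.25($ii$)]{DDMS}), $\tau(\psi(g)-1)=0$, so $\psi(g)-1\in P$ and faithfulness gives $\psi(g)=1$ for all $g$. You instead work at the operator level: from $\ell\equiv 0$ you deduce that the derivation $\log(\overline{\varphi})$ kills $\tau(G)$, hence (by linearity, continuity, and the Leibniz rule on inverses) all of $\widehat{Q}$, and then invert $h=1-\tfrac12(\overline{\varphi}-1)+\cdots$ in $B(\widehat{Q})$ --- which is legitimate since $\deg_v(h-1)\geq v(p)>0$ for $p>2$ and $B(\widehat{Q})$ is $\deg_v$-complete --- to force $\overline{\varphi}=\mathrm{id}$ on $\mathcal{O}G/P$, and only then apply faithfulness. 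This variant buys independence from the injectivity statement for $\log$ on $\widehat{Q}$, at the modest cost of the extension-of-vanishing argument from $\tau(G)$ to $\widehat{Q}$ and of the (harmless, since the paper assumes $p>2$) restriction needed to make $h$ a unit; both routes use faithfulness in the same way and reach the same contradiction with $\varphi\neq 1$.
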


\begin{proof}

For convenience, set $\psi(g):=\varphi(g)g^{-1}$.  Then since $\varphi(g)g^{-1}\in Z(G)$ for all $g\in G$, it follows that $\psi:G\to A$ is a group homomorphism.\\

Using Proposition \ref{log-properties}, we see that $\deg(\overline{\varphi})\geq 1$, and that $\log(\overline{\varphi})(g)=\log(\psi(g))g$ for all $g\in G$. So since $v(g)=0$, it follows that $v(\log(\psi(g)))=v(\log(\overline{\varphi})(g))\geq\deg(\overline{\varphi})\geq 1$ for all $g\in G$, and hence $\lambda\geq 1$.\\

\noindent If $\lambda=\infty$ then $v(\log(\psi(g)))=\infty$ and hence $\log(\psi(g))=0$ for all $g\in G$. But the function on $\widehat{Q}$ defined by the logarithm series is injective by \cite[Corollary 6.25($ii$)]{DDMS}, and hence $\tau(\psi(g)-1)=0$ for all $g\in G$, i.e. $\psi(g)-1\in P$. But $P$ is faithful, so this means that $\psi(g)=\varphi(g)g^{-1}=1$ for all $g\in G$, and hence $\varphi$ is trivial -- contradicting our assumption.\\

\noindent Therefore $1\leq \lambda<\infty$, and clearly $\lambda$ is an integer, so there exists $g\in G$ such that $v(\log(\psi(g)))=\lambda$, and hence $U\subsetneq G$.\\

\noindent Furthermore, given $g\in G$, $v(\log(\psi(g^p)))=v(\log(\psi(g)^p))=v(p\log(\psi(g))=v(\log(\psi(g)))+v(p)\geq\lambda+v(p)>\lambda$, and hence $G^p\subseteq U$.\\

\noindent Also, if $g,h\in U$ then 

\begin{center}
$v(\log(\psi(gh)))=v(\log(\psi(g)\psi(h)))=v(\log(\psi(g))+\log(\psi(h)))\geq\min\{v(\log(\psi(g))),v(\log(\psi(h)))\}>\lambda$,
\end{center} 

\noindent so since $\frac{G}{G^p}$ is a finite group and $\frac{U}{G^p}$ is closed under the group operation, it follows that $U$ is an open subgroup of $G$.\\

\noindent Finally, if $g,h\in G$, $v(\log(\psi(g,h)))=v(\log((\psi(g),\psi(h))))=v(\log(1))=\infty>\lambda$, so $(G,G)\subseteq U$ as required.\end{proof}

\noindent So, using \cite[Lemma 4.2]{nilpotent}, choose an ordered basis $\{g_1,\cdots,g_d\}$ for $(G,\omega)$ such that $\{g_1^p,\cdots,g_r^p,g_{r+1},\cdots,g_d\}$ is an ordered basis for $U$ for some $1\leq r\leq d$. Thus $v(\log(\varphi(g_i)g_i^{-1}))=\lambda$ for $i=1,\cdots,r$ and $v(\log(\varphi(g_i)g_i^{-1}))>\lambda$ for all $i>r$.\\

\noindent Now, set $a:=\log(\varphi(g_1)g_1^{-1})$. Then $a\in F$ by Proposition \ref{log-properties}, and $v(a)=\lambda<\infty$ so $a\neq 0$, thus $a$ is a unit in $F$. So for each $i=1,\cdots,d$, set $z_i:=a^{-1}\log(\varphi(g_i)g_i^{-1})\in F$.

Since $F$ is central, it follows from the definition of a non-commutative valuation that $v(z_i)=v(\log(\varphi(g_i)g_i^{-1}))-v(a)$ for each $i$, so $v(z_1)=\cdots=v(z_r)=0$, and $v(z_i)>0$ if $i>r$.\\

\noindent From now on, set $\delta:=a^{-1}\log(\overline{\varphi}):\widehat{Q}\to\widehat{Q}$, which is an $F$-linear derivation of $\widehat{Q}$, and using Proposition \ref{log-properties} we see that for all $g\in G$, $\delta(g)=a^{-1}\log(\overline{\varphi})(g)=a^{-1}\log(\varphi(g)g^{-1})g$. But if $g=\underline{g}^{\alpha}:=g_1^{\alpha_1}\cdots g_d^{\alpha_d}$ for some $\alpha\in\mathbb{Z}_p^d$ then 

\begin{center}
$\log(\varphi(g)g^{-1})=\log((\varphi(g_1)g_1^{-1})^{\alpha_1}\cdots(\varphi(g_d)g_d^{-1})^{\alpha_d}))=\alpha_1\log(\varphi(g_1)g_1^{-1})+\cdots+\alpha_d\log(\varphi(g_d)g_d^{-1})$.
\end{center}

\noindent Therefore, $\delta(\underline{g}^{\alpha})=(\alpha_1z_1+\cdots+\alpha_dz_d)\underline{g}^{\alpha}$ for all $\alpha\in\mathbb{Z}_p^d$.\\

\noindent Furthermore, since $\delta$ is $F$-linear, it follows that $\delta^n(\underline{g}^{\alpha})=(\alpha_1z_1+\cdots\alpha_dz_d)^n\underline{g}^{\alpha}$ for all $n\in\mathbb{N}$.

\subsection{A Convergence argument}

We will now show how we can study convergence of $\delta^{p^m}$ to prove a control theorem for $P$.\\

\noindent\textbf{Notation:} For any $\alpha\in\mathbb{Z}_p$, denote by $\alpha'$ the unique integer in $\{0,\cdots,p-1\}$ such that $\alpha\equiv\alpha'$ $($mod $p)$. Also, let $\mathcal{V}=\{z\in F:v(q)\geq 0\}$, $\mathcal{V}^+=\{z\in F:v(q)>0\}$, so that $\frac{\mathcal{V}}{\mathcal{V}^+}$ is a field extension of $\mathbb{F}_p$.

\begin{lemma}\label{LI}

For all $m\in\mathbb{N}$, $z_1^{p^m},\cdots,z_r^{p^m}$ are $\mathbb{F}_p$-linearly independent modulo $\mathcal{V}^+$.

\end{lemma}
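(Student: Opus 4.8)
Recall the data in play: $F$ is the completion of the fraction field of $\mathcal{O}A/(P\cap\mathcal{O}A)$ inside $\widehat{Q}$, it carries the complete valuation $v$, and $z_1,\dots,z_r$ are the elements of $F$ with $v(z_i)=0$, while $z_i$ for $i>r$ have $v(z_i)>0$. Here $z_i=a^{-1}\log(\varphi(g_i)g_i^{-1})$ and $a=\log(\varphi(g_1)g_1^{-1})$, so $z_1=1$. We must show that the images $\bar z_1^{\,p^m},\dots,\bar z_r^{\,p^m}$ in the residue field $\mathcal{V}/\mathcal{V}^+$ of $F$ are $\mathbb{F}_p$-linearly independent for every $m$. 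Since the Frobenius $x\mapsto x^{p^m}$ on the field $\mathcal{V}/\mathcal{V}^+$ is injective, the claim for general $m$ is \emph{equivalent} to the claim for $m=0$: the $\bar z_1,\dots,\bar z_r$ are $\mathbb{F}_p$-linearly independent in $\mathcal{V}/\mathcal{V}^+$. So the plan reduces to proving this single statement.

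**The core argument.** Suppose for contradiction that $\bar z_1,\dots,\bar z_r$ are $\mathbb{F}_p$-linearly dependent, i.e.\ there exist $c_1,\dots,c_r\in\{0,\dots,p-1\}$, not all zero, with $v(c_1z_1+\cdots+c_rz_r)>0$. Recall from the previous subsection that $\delta(\underline g^{\alpha})=(\alpha_1z_1+\cdots+\alpha_dz_d)\,\underline g^{\alpha}$ for all $\alpha\in\mathbb{Z}_p^d$, and hence $\delta^n(\underline g^{\alpha})=(\alpha_1z_1+\cdots+\alpha_dz_d)^n\,\underline g^{\alpha}$. Take $\alpha=(c_1,\dots,c_r,0,\dots,0)$ and set $g=\underline g^{\alpha}=g_1^{c_1}\cdots g_r^{c_r}\in G$, a nontrivial element since not all $c_i$ vanish (and the $c_i<p$, so $g$ is not a $p$-th power times something forcing triviality). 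Then the eigenvalue of $\delta$ on $g$ is $\zeta:=\sum_{i\le r}c_iz_i+\sum_{i>r}0\cdot z_i=\sum_{i\le r}c_iz_i$, which by assumption has $v(\zeta)>0$. Consequently $v(\delta^{p^m}(g))=v(\zeta^{p^m})=p^m v(\zeta)\to\infty$ as $m\to\infty$, so $\delta^{p^m}(g)\to 0$ in $\widehat{Q}$. On the other hand, I will argue that $v(\zeta)>0$ is impossible: the point is that $\zeta=a^{-1}\log(\psi(g))$ where $\psi(g)=\varphi(g)g^{-1}\in A$, so $v(\zeta)=v(\log(\psi(g)))-v(a)=v(\log(\psi(g)))-\lambda$, and $v(\zeta)>0$ would say $v(\log(\psi(g)))>\lambda$, i.e.\ $g\in U$. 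But $g=g_1^{c_1}\cdots g_r^{c_r}$ with $0\le c_i<p$, and one of the $c_i$ — say $c_j$ — is nonzero; the ordered basis was chosen so that $\{g_1^p,\dots,g_r^p,g_{r+1},\dots,g_d\}$ is an ordered basis for $U$, so an element of $U$ expressed in the basis $\{g_1,\dots,g_d\}$ must have each of its first $r$ coordinates divisible by $p$. Since $0<c_j<p$ is not divisible by $p$, we get $g\notin U$ — contradiction.

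**The main obstacle.** The delicate point is the passage from the abstract linear dependence $v(\sum c_i z_i)>0$ to the group-theoretic statement $g\in U$. This requires knowing that $\log(\psi(g))=\sum_i \alpha_i\log(\psi(g_i))$ for $g=\underline g^{\alpha}$ — which was established at the end of the previous subsection using that $\psi\colon G\to A$ is a homomorphism and $\log$ linearises $p$-adic powers on the abelian group $A$ — and then reading off from $v(\log\psi(g))$ whether $g\in U$ via the defining condition $U=\{g: v(\log\psi(g))>\lambda\}$. The one subtlety to pin down carefully is that the coordinates $c_i$ with respect to $\{g_1,\dots,g_d\}$ of the element $g=g_1^{c_1}\cdots g_r^{c_r}$ really are $(c_1,\dots,c_r,0,\dots,0)$: this is immediate since $\{g_1,\dots,g_d\}$ is itself an ordered basis and the product is already in the normal form $\underline g^{\alpha}$. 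Then the membership criterion for $U$ in terms of its own ordered basis $\{g_1^p,\dots,g_r^p,g_{r+1},\dots,g_d\}$ forces the required divisibility, and everything closes up. I expect no genuinely hard estimate here — the whole lemma is a bookkeeping consequence of the $F$-linear eigenvalue description of $\delta$ together with the careful choice of ordered basis compatible with $U$.
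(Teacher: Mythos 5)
Your proposal is correct and follows essentially the same route as the paper: the reduction from general $m$ to $m=0$ via injectivity of Frobenius on the residue field $\mathcal{V}/\mathcal{V}^+$ is the paper's Fermat-plus-field-structure step in different clothing, and the core argument (writing $\sum_i c_i z_i = a^{-1}\log(\varphi(\underline{g}^{c})\underline{g}^{-c})$ via the homomorphism $g\mapsto\varphi(g)g^{-1}$, deducing $\underline{g}^{c}\in U$, and contradicting the choice of ordered basis $\{g_1^p,\cdots,g_r^p,g_{r+1},\cdots,g_d\}$ for $U$) is exactly the paper's. The aside about $\delta^{p^m}(g)\to 0$ is unnecessary but harmless.
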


\begin{proof}

Let us suppose that there exist integers $\alpha_1,\cdots,\alpha_r\in\{0,1,\cdots,p-1\}$ such that $\alpha_1z_1^{p^m}+\cdots+\alpha_rz_r^{p^m}\in\mathcal{V}^+$, i.e. $v(\alpha_1z_1^{p^m}+\cdots+\alpha_rz_r^{p^m})>0$.\\

\noindent Firstly, note that $\alpha_i^{p^m}\equiv\alpha_i$ $($mod $p)$ by Fermat's Little theorem, so $0\equiv\alpha_1z_1^{p^m}+\cdots+\alpha_rz_r^{p^m}\equiv (\alpha_1z_1+\cdots+\alpha_rz_r)^{p^m}$ $($mod $\mathcal{V}^+)$, which implies that $\alpha_1z_1+\cdots+\alpha_rz_r\in\mathcal{V}^+$.\\

\noindent But $z_i=a^{-1}\log(\varphi(g_i)g_i^{-1})$, so $\alpha_iz_i=a^{-1}\log((\varphi(g_i)g_i^{-1})^{\alpha_i})$. But since $g\mapsto\varphi(g)g^{-1}$ defines a group homomorphism, it follows that $\alpha_iz_i=a^{-1}\log(\varphi(g_i^{\alpha_i})g_i^{-\alpha_i})$, and hence $\alpha_1z_1+\cdots+\alpha_rz_r=a^{-1}\log(\varphi(g_1^{\alpha_1})g_1^{-\alpha_1}\cdots\varphi(g_r^{\alpha_r})g_r^{-\alpha_r})=a^{-1}\log(\varphi(\underline{g}^{\alpha})g^{-\alpha})\in\mathcal{V}^+$.\\

\noindent Therefore, it follows that $v(\log(\varphi(\underline{g}^{\alpha})\underline{g}^{-\alpha}))>v(a)=\lambda$, and hence $\underline{g}^{\alpha}\in U$ by the definition of $U$. But we know that $\{g_1^p,\cdots,g_r^p,g_{r+1},\cdots,g_d\}$ is an ordered basis for $U$, which means that $p\mid\alpha_i$, i.e. $\alpha_i=0$, for all $i$ as required.\end{proof}

\begin{lemma}\label{terms}

For any $m\in\mathbb{N}$, $\alpha\in\mathbb{Z}_p^d$, $\delta^{p^m}(\underline{g}^{\alpha})=(\alpha_1'z_1^{p^m}+\cdots+\alpha_r'z_r^{p^m})\underline{g}^{\alpha}+z^{(m,\alpha)}\underline{g}^{\alpha}$ for some $z^{(m,\alpha)}\in\mathcal{V}^+$.

\end{lemma}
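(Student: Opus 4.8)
Write $s_\alpha:=\alpha_1z_1+\cdots+\alpha_dz_d\in F$. By the formula $\delta(\underline{g}^\alpha)=s_\alpha\underline{g}^\alpha$ established just before the lemma, together with the $F$-linearity of $\delta$ (which rests on $\log(\overline\varphi)(F)=0$, Proposition \ref{log-properties}($iv$)), we have $\delta^{p^m}(\underline{g}^\alpha)=s_\alpha^{p^m}\underline{g}^\alpha$. Hence the statement reduces to the purely field-theoretic claim that $s_\alpha^{p^m}\equiv\alpha_1'z_1^{p^m}+\cdots+\alpha_r'z_r^{p^m}\pmod{\mathcal{V}^+}$; one then simply sets $z^{(m,\alpha)}:=s_\alpha^{p^m}-(\alpha_1'z_1^{p^m}+\cdots+\alpha_r'z_r^{p^m})$.

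The first point is that all the relevant elements lie in $\mathcal{V}$: each $\alpha_i\in\mathbb{Z}_p$ has $v(\alpha_i)=v_p(\alpha_i)v(p)\geq 0$, and $v(z_i)\geq 0$ for every $i$ (indeed $v(z_i)=0$ for $i\leq r$ and $v(z_i)>0$ for $i>r$), so $s_\alpha\in\mathcal{V}$. Reducing modulo $\mathcal{V}^+$ therefore takes place in the field $\mathcal{V}/\mathcal{V}^+$ of characteristic $p$, where the Frobenius is additive; this gives $s_\alpha^{p^m}\equiv\sum_{i=1}^d\alpha_i^{p^m}z_i^{p^m}\pmod{\mathcal{V}^+}$. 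For indices $i>r$ the term $\alpha_i^{p^m}z_i^{p^m}$ has valuation $\geq p^mv(z_i)>0$, hence lies in $\mathcal{V}^+$ and drops out. For $i\leq r$, repeated application of $a^p\equiv a\pmod p$ (valid for $a\in\mathbb{Z}_p$) gives $\alpha_i^{p^m}\equiv\alpha_i\equiv\alpha_i'\pmod p$, so $v(\alpha_i^{p^m}-\alpha_i')\geq v(p)>0$; since $v(z_i^{p^m})=0$, we get $\alpha_i^{p^m}z_i^{p^m}\equiv\alpha_i'z_i^{p^m}\pmod{\mathcal{V}^+}$. Summing these congruences yields $s_\alpha^{p^m}\equiv\sum_{i=1}^r\alpha_i'z_i^{p^m}\pmod{\mathcal{V}^+}$, which is exactly what was needed, so $z^{(m,\alpha)}\in\mathcal{V}^+$.

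There is no serious obstacle here: the argument is a short calculation in the residue field $\mathcal{V}/\mathcal{V}^+$. The only points requiring a little care are checking that every element in sight has nonnegative valuation (so that the reduction map and the additivity of Frobenius are legitimate) and using that $v(p)>0$ to absorb the integer discrepancies $\alpha_i^{p^m}-\alpha_i'$ into $\mathcal{V}^+$. The splitting of the index set into $i\leq r$ and $i>r$ according to whether $v(z_i)=0$ comes directly from the choice of ordered basis $\{g_1,\dots,g_d\}$ and the resulting vanishing pattern $v(z_1)=\cdots=v(z_r)=0<v(z_{r+1}),\dots,v(z_d)$ recorded earlier.
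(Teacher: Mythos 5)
Your argument is correct and follows essentially the same route as the paper: start from $\delta^{p^m}(\underline{g}^{\alpha})=(\alpha_1z_1+\cdots+\alpha_dz_d)^{p^m}\underline{g}^{\alpha}$, reduce modulo $\mathcal{V}^+$ using Frobenius additivity in the residue field $\mathcal{V}/\mathcal{V}^+$ of characteristic $p$, discard the terms with $i>r$ (where $v(z_i)>0$), and replace $\alpha_i^{p^m}$ by $\alpha_i'$ via Fermat's little theorem. The only cosmetic difference is that the paper drops the subsum $\alpha_{r+1}z_{r+1}+\cdots+\alpha_dz_d$ before applying Frobenius rather than term by term afterwards, which changes nothing of substance.
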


\begin{proof}

Firstly, we know that $\delta^{p^m}(\underline{g}^{\alpha})=(\alpha_1z_1+\cdots+\alpha_dz_d)^{p^m}\underline{g}^{\alpha}$, and we know that $v(\alpha_{r+1}z_{r+1}+\cdots+\alpha_dz_d)>0$. Also, using Lemma \ref{LI}, we see that $v(\alpha_1z_1+\cdots+\alpha_rz_r)=0$ if $p\nmid\alpha_i$ for some $i$. Therefore: 

\begin{center}
$(\alpha_1z_1+\cdots+\alpha_dz_d)^{p^m}\equiv (\alpha_1z_1+\cdots+\alpha_rz_r)^{p^m}\equiv \alpha_1^{p^m}z_1^{p^m}+\cdots+\alpha_r^{p^m}z_r^{p^m}$ $($mod $\mathcal{V}^+)$.
\end{center}

\noindent Since $\alpha_i^{p^m}\equiv\alpha_i\equiv\alpha_i'$ $($mod $p)$ for all $m$, it follows that $\alpha_1^{p^m}z_1^{p^m}+\cdots+\alpha_r^{p^m}z_r^{p^m}\equiv \alpha_1'z_1^{p^m}+\cdots+\alpha_r'z_r^{p^m}$ $($mod $\mathcal{V}^+)$.

Therefore, $(\alpha_1z_1+\cdots+\alpha_dz_d)^{p^m}=\alpha_1'z_1^{p^m}+\cdots+\alpha_r'z_r^{p^m}+z^{(m,\alpha)}$ for some $z^{(m,\alpha)}\in\mathcal{V}^+$, and hence $\delta^{p^m}(\underline{g}^{\alpha})=(\alpha_1'z_1^{p^m}+\cdots+\alpha_r'z_r^{p^m})\underline{g}^{\alpha}+z^{(m,\alpha)}\underline{g}^{\alpha}$ as required.\end{proof}

\noindent Now, define $T:=\left(\begin{array}{ccccc}z_1 & z_2 & \cdots & z_r\\ z_1^{p} & z_2^{p} & \cdots & z_r^{p}\\ . & . & \cdots & .\\ . & . & \cdots & .\\ . & . & \cdots & .\\z_1^{p^{r-1}} & z_2^{p^{r-1}} &\cdots & z_r^{p^{r-1}}\end{array}\right)$. Then using Lemma \ref{terms}, we see that:\\

\begin{equation}\label{iota}
\left(\begin{array}{c}\delta(\underline{g}^{\alpha})\\.\\.\\.\\\delta^{p^{r-1}}(\underline{g}^{\alpha})\end{array}\right)=T\left(\begin{array}{c}\alpha_1'\underline{g}^{\alpha}\\.\\.\\.\\\alpha_r'\underline{g}^{\alpha}\end{array}\right)+\left(\begin{array}{c}z^{(0,\alpha)}\underline{g}^{\alpha}\\.\\.\\.\\z^{(r-1,\alpha)}\underline{g}^{\alpha}\end{array}\right).
\end{equation}

\noindent But $T$ is a matrix of Vandermonde type, in the sense of \cite[Section 1.1]{chevalley}, and the entries of $T$ all lie in $\mathcal{V}$ and are $\mathbb{F}_p$-linearly independent modulo $\mathcal{V}^+$ by Lemma \ref{LI}, so it follows from \cite[Lemma 1.1]{chevalley} that $\det(T)$ has value zero. Therefore $T$ is invertible, and its inverse has value 0.\\

\noindent Define $u:\widehat{Q}^r\to\widehat{Q},(s_1,\cdots,s_r)^{T}\mapsto s_1+\cdots+s_r$, and define:

\begin{equation}
\overline{\iota}:\widehat{Q}\to\widehat{Q}, s\mapsto u T^{-1}\left(\begin{array}{c}\delta(\tau(s))\\.\\.\\.\\\delta^{p^{r-1}}(\tau(s))\end{array}\right)
\end{equation}

\noindent Note that $\overline{\iota}$ is continuous and $F$-linear. Also for any $\alpha\in\mathbb{Z}_p^d$, using (\ref{iota}) we see that: 

\begin{center}
$\overline{\iota}(\underline{g}^{\alpha})= u T^{-1}\left(\begin{array}{c}\delta(\underline{g}^{\alpha})\\.\\.\\.\\\delta^{p^{r-1}}(\underline{g}^{\alpha})\end{array}\right)=u\left(\begin{array}{c}\alpha_1'\underline{g}^{\alpha}\\.\\.\\.\\\alpha_r'\underline{g}^{\alpha}\end{array}\right)+uT^{-1}\left(\begin{array}{c}z^{(0,\alpha)}\underline{g}^{\alpha}\\.\\.\\.\\z^{(r-1,\alpha)}\underline{g}^{\alpha}\end{array}\right)$.
\end{center}

\noindent But the entries of $T^{-1}$ all have value at least 0, so we deduce that $\overline{\iota}(\underline{g}^{\alpha})=(\alpha_1'+\cdots+\alpha_r')\underline{g}^{\alpha}+z^{(\alpha)}\underline{g}^{\alpha}$ for some $z^{(\alpha)}\in\mathcal{V}^+$.\\

\noindent For each $m\in\mathbb{N}$, define $\iota_m:\widehat{Q}\to\widehat{Q},q\mapsto \overline{\iota}^{p^m(p-1)}(q)$, which is also continuous and $F$-linear. Also, for each $k\geq 0$, define $\widehat{Q}_k:=\{q\in Q:v(q)\geq k\}$.

\begin{proposition}\label{approximation}

Given $m\in\mathbb{N}$:

\begin{itemize}

\item The composition $\iota_m\tau:\mathcal{O}G\to\widehat{Q}$ is continuous and $\mathcal{O}$-linear.

\item There exists $k_m\in\mathbb{N}$ such that $k_m\rightarrow\infty$ as $m\rightarrow\infty$, and for all $\alpha\in\mathbb{Z}_p^d$, 

\begin{center}
$\iota_m(\underline{g}^{\alpha})\equiv\begin{cases}\underline{g}^{\alpha} & \text{if } v_p(\alpha_1+\cdots+\alpha_r)=0\\ 0 & \text{if } v_p(\alpha_1+\cdots+\alpha_r)>0\end{cases}$ $($mod $\widehat{Q}_{k_m})$.
\end{center}

\end{itemize}

\noindent Moreover, choose a sequence of integers $m_1<m_2<\cdots$ such that $k_{m_1}<k_{m_2}<\cdots$, then for every $s\in\mathcal{O}G$, $i\in\mathbb{N}$, $(\iota_{m_i}-\iota_{m_{i+1}})(\tau(s))\in\widehat{Q}_{k_{m_i}}$.

\end{proposition}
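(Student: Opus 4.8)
The plan is to reduce everything to the single observation, already prepared in the text above, that $\overline{\iota}$ acts on each $\underline{g}^{\alpha}$ by a central scalar. Set $n_{\alpha}:=\alpha_1'+\cdots+\alpha_r'\in\{0,1,\dots,r(p-1)\}$ and $c_{\alpha}:=n_{\alpha}+z^{(\alpha)}$; then $c_{\alpha}\in F$ (it is central in $\widehat{Q}$), and the identity displayed just before the proposition says $\overline{\iota}(\underline{g}^{\alpha})=c_{\alpha}\underline{g}^{\alpha}$. The first bullet is then immediate: $\overline{\iota}$ is continuous and $F$-linear, hence so is each iterate $\iota_m=\overline{\iota}^{\,p^m(p-1)}$, in particular $\mathcal{O}$-linear (as $\mathcal{O}\subseteq F$), and $\tau$ is continuous and $\mathcal{O}$-linear by Theorem~\ref{aut-completion}, so the composition $\iota_m\tau$ is too. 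For the second bullet, using $F$-linearity a trivial induction gives $\overline{\iota}^{\,N}(\underline{g}^{\alpha})=c_{\alpha}^{\,N}\underline{g}^{\alpha}$, hence $\iota_m(\underline{g}^{\alpha})=c_{\alpha}^{\,p^m(p-1)}\underline{g}^{\alpha}$, and since $v(\underline{g}^{\alpha})=0$ the problem becomes: estimate $v\big(c_{\alpha}^{\,p^m(p-1)}-\varepsilon_{\alpha}\big)$, where $\varepsilon_{\alpha}=1$ if $p\nmid n_{\alpha}$ and $\varepsilon_{\alpha}=0$ otherwise (and $p\nmid n_{\alpha}\iff v_p(\alpha_1+\cdots+\alpha_r)=0$, since $\alpha_i\equiv\alpha_i'\pmod p$).

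I would split into two cases. If $p\mid n_{\alpha}$, then $v(c_{\alpha})\geq 1$ (if $n_{\alpha}=0$ then $c_{\alpha}=z^{(\alpha)}\in\mathcal{V}^+$, otherwise $v(n_{\alpha})\geq v(p)\geq 1$), so $v\big(c_{\alpha}^{\,p^m(p-1)}\big)\geq p^m(p-1)$ and the ``$\equiv 0$'' alternative holds. If $p\nmid n_{\alpha}$, then $n_{\alpha}$ is a unit of $\mathcal{V}$ and I would write $c_{\alpha}=n_{\alpha}\big(1+n_{\alpha}^{-1}z^{(\alpha)}\big)$ and treat the two factors separately: Fermat's little theorem together with repeated lifting of the exponent gives $n_{\alpha}^{\,p^m(p-1)}\equiv 1\pmod{p^{m+1}}$, while Lemma~\ref{terms2} applied with $a=1$, $b=n_{\alpha}^{-1}z^{(\alpha)}$ (legitimate as $v(p)\geq 1$ and $v(b)\geq 1$) forces $\big(1+n_{\alpha}^{-1}z^{(\alpha)}\big)^{p^m}$, hence also its $(p-1)$-st power, to converge to $1$ with an explicit value bound $\rho_m\to\infty$. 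Multiplying the two estimates out gives $v\big(c_{\alpha}^{\,p^m(p-1)}-1\big)\geq\min\{\rho_m,m+1\}$, so I would take $k_m:=\min\{\rho_m,m+1\}$: this tends to $\infty$ and satisfies $k_m\leq p^m(p-1)$ for all $m$, so both cases deliver exactly the asserted congruence modulo $\widehat{Q}_{k_m}$. The one point needing care --- and the step I expect to be the main obstacle --- is that $k_m$ must not depend on $\alpha$; this is fine because $n_{\alpha}$ takes only finitely many values and $v(z^{(\alpha)})\geq 1$ for \emph{every} $\alpha$, so the $\rho_m$ coming out of Lemma~\ref{terms2} depends on $m$ alone.

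Finally, for the ``moreover'' clause the plan is a density argument. Fix $i$ and let $f:=(\iota_{m_i}-\iota_{m_{i+1}})\circ\tau\colon\mathcal{O}G\to\widehat{Q}$; this is continuous and $\mathcal{O}$-linear by the first bullet, and $\widehat{Q}_{k_{m_i}}$ is a closed $\mathcal{O}$-submodule of $\widehat{Q}$ (a $v$-ball, with $\mathcal{O}\subseteq\mathcal{V}$), so $f^{-1}\big(\widehat{Q}_{k_{m_i}}\big)$ is a closed $\mathcal{O}$-submodule of $\mathcal{O}G$. On a group element $g=\underline{g}^{\alpha}$, the second bullet makes $\iota_{m_i}(\underline{g}^{\alpha})$ and $\iota_{m_{i+1}}(\underline{g}^{\alpha})$ congruent to the \emph{same} element ($\underline{g}^{\alpha}$ or $0$) modulo $\widehat{Q}_{k_{m_i}}$ --- here using $k_{m_i}<k_{m_{i+1}}$, so that $\widehat{Q}_{k_{m_{i+1}}}\subseteq\widehat{Q}_{k_{m_i}}$ --- hence $f(g)\in\widehat{Q}_{k_{m_i}}$. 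Thus $f^{-1}\big(\widehat{Q}_{k_{m_i}}\big)$ contains every group element, hence the $\mathcal{O}$-span $\mathcal{O}[G]$, and being closed it contains the closure $\mathcal{O}G$ of $\mathcal{O}[G]$. Therefore $(\iota_{m_i}-\iota_{m_{i+1}})(\tau(s))\in\widehat{Q}_{k_{m_i}}$ for every $s\in\mathcal{O}G$, as claimed.
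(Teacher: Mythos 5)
Your proposal is correct and follows essentially the same route as the paper: the same scalar-eigenvalue computation $\iota_m(\underline{g}^{\alpha})=c_{\alpha}^{\,p^m(p-1)}\underline{g}^{\alpha}$, the same case split on $p\mid n_{\alpha}$, the same appeal to Lemma \ref{terms2} for a uniform-in-$\alpha$ bound $k_m\to\infty$, and the same $\mathcal{O}$-linearity-plus-continuity (density) argument for the final clause. The only cosmetic difference is in the unit case, where you factor $c_{\alpha}=n_{\alpha}(1+n_{\alpha}^{-1}z^{(\alpha)})$ and use Euler's theorem for the integer factor, whereas the paper raises $c_{\alpha}$ to the $(p-1)$-st power first and achieves uniformity via the infimum $\gamma$ of the $v(y^{(\alpha)})$; both devices are valid and yield the required $k_m$.
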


\begin{proof}

The first statement is obvious, since $\tau$ and $\iota_m$ are both continuous and $\mathcal{O}$-linear.\\

\noindent Now, we know that $\overline{\iota}$ is $F$-linear, and for any $\alpha\in\mathbb{Z}_p^d$, $\overline{\iota}(\underline{g}^{\alpha})=(\alpha_1'+\cdots+\alpha_r'+z^{(\alpha)})\underline{g}^{\alpha}$ for some $z^{(\alpha)}\in F$ with $v(z^{(\alpha)})\geq 1$. Therefore, $\iota_m(\underline{g}^{\alpha})=\overline{\iota}^{p^m(p-1)}(\underline{g}^{\alpha})=(\alpha_1'+\cdots+\alpha_r'+z^{(\alpha)})^{p^m(p-1)}\underline{g}^{\alpha}$.\\

\noindent But if $v_p(\alpha_1'+\cdots+\alpha_r')=0$ then $(\alpha_1'+\cdots+\alpha_r')^{p-1}\equiv 1$ $($mod $p)$, so $(\alpha_1'+\cdots+\alpha_r'+z^{(\alpha)})^{p-1}=1+y^{(\alpha)}$ for some $y^{(\alpha)}\in F$ with $v(y^{(\alpha)})\geq 1$, and $\iota_m(\underline{g}^{\alpha})=(1+y^{(\alpha)})^{p^m}\underline{g}^{\alpha}$.

On the other hand, if $v_p(\alpha_1'+\cdots+\alpha_r')>0$ then $v(\iota_m(\underline{g}^{\alpha}))\geq p^m(p-1)$.\\

\noindent Let  $\gamma:=\inf\{v(y^{(\alpha)}):\alpha\in\mathbb{Z}_p^d, v(\alpha_1+\cdots+\alpha_r)=0\}\geq 1$, and for each $m\in\mathbb{N}$, define $t_m:=\min\{p^m\gamma,m-v_p(k)+(p^m-k)\gamma:0<k<p^m\}$. Then since $\gamma=v(y^{(\alpha)})$ for some $\alpha\in\mathbb{Z}_p^d$, it follows from Lemma \ref{terms2} that $t_m\rightarrow\infty$ as $m\rightarrow\infty$.\\

\noindent Furthermore, also using Lemma \ref{terms2}, we see that:\\

\noindent $v((1+y^{(\alpha)})^{p^m}-1)\geq\min\{p^mv(y^{(\alpha)}),m-v_p(k)+(p^m-k)v(y^{(\alpha)}):0<k<p^m\}\geq t_m$.\\

\noindent Hence $v(\iota_m(\underline{g}^{\alpha})-\underline{g}^{\alpha})\geq t_m$ for all $m$ whenever $v(\alpha_1+\cdots+\alpha_r)=0$.\\

\noindent So, let $k_m:=\min\{p^m(p-1),t_m\}\rightarrow\infty$ as $m\rightarrow\infty$, then $v(\iota_m(\underline{g}^{\alpha}))\geq k_m$ if $v(\alpha_1+\cdots+\alpha_r)>0$, and $v(\iota_m(\underline{g}^{\alpha})-\underline{g}^{\alpha})\geq k_m$ if $v_p(\alpha_1+\cdots+\alpha_r)=0$ as required.\\

\noindent In particular, if $k_{m_1}<k_{m_2}<\cdots$, then for every $g\in G$, $\iota_{m_i}(g)\equiv\iota_{m_{i+1}}(g)$ $($mod $\widehat{Q}_{k_{m_i}})$ for all $i$, i.e. $(\iota_{m_i}-\iota_{m_{i+1}})(g)\in\widehat{Q}_{k_{m_i}}$. So since $\iota_m$ is $\mathcal{O}$-linear for every $m$, it follows that $(\iota_{m_i}-\iota_{m_{i+1}})(\tau(s))\in\widehat{Q}_{k_{m_i}}$ for every $s\in\mathcal{O}G$, and since $(\iota_{m_i}-\iota_{m_{i+1}})\tau$ is continuous, this means that $(\iota_{m_i}-\iota_{m_{i+1}})(\tau(s))\in\widehat{Q}_{k_{m_i}}$ for every $s\in\mathcal{O}G$ as required.\end{proof}

\noindent Using this proposition, it follows that there that there exists a continuous, $\mathcal{O}$-linear map $\iota:\mathcal{O}G\to\widehat{Q}$ such that $\iota(P)=0$, $\iota(s)\equiv\iota_m(\tau(s))$ $($mod $\widehat{Q}_{k_{m_i}})$ for every $i$, and for every $\alpha\in\mathbb{Z}_p^d$:

\begin{equation}
\iota(\underline{g}^{\alpha})=\begin{cases}\underline{g}^{\alpha} & \text{if } v_p(\alpha_1+\cdots+\alpha_r)=0\\ 0 & \text{if } v_p(\alpha_1+\cdots+\alpha_r)>0\end{cases}
\end{equation}

\noindent Finally, since $U$ contains $(G,G)$ and $G^p$ by Lemma \ref{faithful}, the quotient $\frac{G}{U}$ has the structure of an $\mathbb{F}_p$-vector space, with basis $\{g_1U,\cdots,g_rU\}$. Therefore, the map $\chi:\frac{G}{U}\to\frac{\mathbb{Z}}{p\mathbb{Z}},\underline{g}^{\alpha}U\mapsto\alpha_1+\cdots+\alpha_r+p\mathbb{Z}$ is a non-zero $\mathbb{F}_p$-linear map, so $\ker(\chi)=\frac{V}{U}$ for some proper open subgroup $V$ of $G$, and it follows that for all $g\in G$:

\begin{equation}\label{function}
\iota(g)=\begin{cases}g & \text{if } g\notin V\\ 0 & \text{if } g\in V\end{cases}
\end{equation}

Now we are ready to prove a control theorem. Firstly, let $C^{\infty}(G,\mathcal{O})$ be the space of locally constant functions $f:G\to\mathcal{O}$, and recall from \cite[Proposition 2.5 and Lemma 2.9]{controller} that there is a natural action $\rho:C^{\infty}(G,\mathcal{O})\to$ End$_{\mathcal{O}}\mathcal{O}G$ such that for any open subgroup $U$ of $G$, if $f\in C^{\infty}(G,\mathcal{O})$ is constant on the cosets of $U$ then the action of $f$ on $\mathcal{O}G$ can be described explicitly:\\

\noindent If $x\in\mathcal{O}G$ and $x=\underset{g\in C}{\sum}{x_gg}$, where $C$ is a set of coset representatives for $U$ in $G$ and $x_c\in\mathcal{O}U$, then

\begin{center}
$\rho(f)(x)=\underset{g\in C}{\sum}{f(g)x_cg}$
\end{center}

\noindent In particular, define $f:G\to\mathcal{O},g\mapsto\begin{cases}1 & \text{if } g\notin V \\ 0 & \text{if }g\in V\end{cases}$. Then clearly $f\in C^{\infty}(G,\mathcal{O})$ is constant on the cosets of $V$, so $\rho(f)(g)=\begin{cases}g & \text{if } g\notin V \\ 0 & \text{if }g\in V\end{cases}$.\\

\noindent Therefore, it follows from (\ref{function}) that $\iota(g)=\tau\rho(f)(g)$ for all $g\in G$, so since $\iota$, $\tau$ and $\rho(g)$ are continuous and $\mathcal{O}$-linear, it follows that $\iota=\tau\rho(f)$. Therefore, since $\iota(P)=0$, it follows that $\rho(f)(P)\subseteq P$.

\begin{proposition}

$P$ is controlled by $V$.

\end{proposition}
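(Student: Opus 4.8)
The plan is to read off the control statement directly from the relation $\iota=\tau\rho(f)$ established just above, using only that $P$ is a (two-sided) ideal and that, modulo the constant function, $\rho(f)$ is the operator that deletes the $\mathcal{O}V$-component of an element of $\mathcal{O}G$ supported on the trivial coset of $V$.

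First I would check that $V$ is an open normal subgroup of $G$, which is what ``$P$ is controlled by $V$'' presupposes: by construction $V$ is the preimage under $G\to G/U$ of the subspace $\ker(\chi)$, and by Lemma \ref{faithful} the subgroup $U$, hence also $V$, contains both $(G,G)$ and $G^p$, so $G/V$ is an elementary abelian $p$-group and $V$ is normal and open. We already have $\rho(f)(P)\subseteq P$ (since $\ker\tau=P$ and $\iota(P)=0$). Writing $\mathbf 1_V\in C^\infty(G,\mathcal{O})$ for the indicator function of $V$, and using that $\rho(1)=\mathrm{id}$ together with $\mathbf 1_V=1-f$, it follows that $\rho(\mathbf 1_V)=\mathrm{id}-\rho(f)$ also maps $P$ into $P$.

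Then I would extract the coordinates of an element of $P$ along a transversal. Fix a set $C$ of coset representatives for $V$ in $G$ with $1\in C$, so that every $x\in\mathcal{O}G$ has a unique expression $x=\sum_{c\in C}x_c c$ with $x_c\in\mathcal{O}V$. Applying the explicit description of $\rho$ to the function $\mathbf 1_V$, which is constant on cosets of $V$, one gets $\rho(\mathbf 1_V)(y)=y_1$ for any such expansion $y=\sum_{c\in C}y_c c$; that is, $\rho(\mathbf 1_V)$ returns the component of $y$ on the coset $V$. A short bookkeeping computation then shows that, for each $c\in C$, one has $\rho(\mathbf 1_V)(xc^{-1})=x_c$: writing $c'c^{-1}=v_{c'}d_{c'}$ with $d_{c'}\in C$ and $v_{c'}\in V$, we have $xc^{-1}=\sum_{c'\in C}(x_{c'}v_{c'})d_{c'}$, and since $C$ is a transversal and $V$ is normal the only $c'$ with $d_{c'}=1$ is $c'=c$, for which $v_c=1$.

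Finally the control theorem follows immediately: if $x\in P$ then $xc^{-1}\in P$ for every $c\in C$ because $P$ is a right ideal, so $x_c=\rho(\mathbf 1_V)(xc^{-1})\in\rho(\mathbf 1_V)(P)\subseteq P$, whence $x_c\in P\cap\mathcal{O}V$; therefore $x=\sum_{c\in C}x_c c\in(P\cap\mathcal{O}V)\mathcal{O}G$, and since the reverse inclusion is obvious, $P=(P\cap\mathcal{O}V)\mathcal{O}G$, which says exactly that $V$ controls $P$. The only mildly delicate point in the Proposition itself is the index bookkeeping behind $\rho(\mathbf 1_V)(xc^{-1})=x_c$ (equivalently the translation rule $\rho(\mathbf 1_{Vc})(x)=\rho(\mathbf 1_V)(xc^{-1})\,c$); all of the substantive work — producing the function $f$ with $\rho(f)(P)\subseteq P$ — was done upstream via the logarithm and the convergence argument. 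Alternatively, this last step could be phrased by invoking a characterisation of the controller subgroup in terms of the $\rho$-action from \cite{controller}.
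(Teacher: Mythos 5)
Your proof is correct and follows essentially the same route as the paper: both arguments use $\rho(f)(P)\subseteq P$ to extract the trivial-coset component (the paper via $r-\rho(f)(r)=r_{\underline{0}}$, you via $\rho(\mathbf{1}_V)=\mathrm{id}-\rho(f)$) and then recover the remaining components by right-translating by coset representatives, using that $P$ is a right ideal. The only differences are cosmetic (an arbitrary transversal containing $1$ versus the explicit one built from the ordered basis), so there is nothing to add.
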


\begin{proof}

This is now identical to the proof of \cite[Theorem 1.4]{primitive}. Firstly, suppose that $C=\{x_1,\cdots,x_t\}$ is a complete set of coset representatives for $V$ in $G$, then for all $r\in\mathcal{O}G$, $r=\underset{i\leq t}{\sum}{r_ix_i}$ for some $r_i\in\mathcal{O}V$.\\

\noindent Suppose we can choose $C$ such that if $r\in P$ then $r_1\in P\cap\mathcal{O}V$. Then since $rx_1^{-1}x_i\in P$ for all $i=1\cdots,t$ and $rx_1^{-1}x_i$ has $x_1$ component $r_i$, it follows that $r_i\in P\cap\mathcal{O}V$ for each $i$, and hence $P$ is controlled by $V$.

It remains to prove that we can choose such a set $C$ of coset representatives such that if $\underset{i\leq t}{\sum}{r_ix_i}\in P$, then at least one of the $r_i$ lies in $P\cap\mathcal{O}V$.\\

\noindent Since $G^p\subseteq V$, it follows that $V$ has ordered basis $\{g_1^p,\cdots,g_s^p,g_{s+1},\cdots,g_d\}$ and thus $C=\{g_1^{b_1}\cdots g_s^{b_r}:0\leq b_i<p\}$ is a complete set of coset representatives for $V$ in $G$.

So for each $\underline{b}\in [p-1]^s$, let $g_{\underline{b}}=g_1^{b_1}\cdots g_r^{b_r}$ (here $[p-1]=\{0,1,\cdots,p-1\}$).\\

\noindent Then if $r=\underset{\underline{b}\in [p-1]^s}{\sum}{r_{\underline{b}}g_{\underline{b}}}\in P$, then $\rho(f)(r)=\underset{\underline{b}\in [p-1]^s}{\sum}{f(g_{\underline{b}}})r_{\underline{b}}g_{\underline{b}}$, and since $\rho(f)(P)\subseteq P$ this also lies in $P$. But $f(g_{\underline{b}})=1$ if $\underline{b}\neq 0$, and $f(g_{\underline{0}})=0$ hence $\rho(f)(r)=\underset{\underline{b}\in [p-1]^s\backslash\{\underline{0}\}}{\sum}{r_{\underline{b}}g_{\underline{b}}}\in P$.\\

\noindent Therefore, $r_{\underline{0}}g_{\underline{0}}=r-\rho(f)(r)\in P$, and thus $r_{\underline{0}}\in P\cap\mathcal{O}U$ as required.\end{proof}

\noindent So, since prime ideals in $\mathcal{O}G$ not containing $P$ correspond bijectively with prime ideals in $KG$, altogether we have now prove the following theorem:

\begin{theorem}\label{E}

Let $(G,\omega)$ be a complete, $p$-valued group of finite rank, and let $P$ be a faithful, prime ideal of $KG$. Also, let $\varphi\in$ Aut$^{\omega}(G)$ be an automorphism of $G$, and let $A$ be a closed, central subgroup of $G$ such that: 

\begin{itemize}

\item $\varphi\neq 1$.

\item $\varphi(P)=P$.

\item $\varphi(g)g^{-1}\in A$ for all $g\in G$.

\item $\varphi(a)=a$ for all $a\in A$.

\end{itemize}

\noindent Then $P$ is controlled by a proper, open subgroup of $G$.

\end{theorem}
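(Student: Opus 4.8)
The statement is the culmination of the analysis of Sections 3.3--3.4, so the plan is to assemble those pieces after a few reductions. First I would pass from $KG$ to $\mathcal{O}G$: prime ideals of $\mathcal{O}G$ not containing $p$ correspond bijectively with prime ideals of $KG$ (via $\tilde P\mapsto\tilde P\otimes_{\mathcal O}K$), and since $\pi\notin\tilde P$ this correspondence preserves both faithfulness and $\varphi$-invariance, so it suffices to treat a faithful prime $P\trianglelefteq\mathcal{O}G$ with $p\notin P$. Replacing $\omega$ by an abelian $p$-valuation (Definition \ref{abelian-val}), which leaves $\deg_\omega(\varphi)$ and hence $\mathrm{Aut}^\omega(G)$ unchanged, I fix the Lazard filtration $w$ on $\mathcal{O}G$; then Theorem \ref{aut-completion} supplies a non-commutative valuation $v$ on $Q=Q(\mathcal{O}G/P)$ with $\tau\colon(\mathcal{O}G,w)\to(Q,v)$ continuous and an $n\in\mathbb{N}$ with $\deg_v(\overline{\varphi}^n-1)\geq v(p)$. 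Passing to $\varphi^n$ is harmless: the three hypotheses on $A$ are stable under powers (indeed $\varphi^n(g)g^{-1}=(\varphi(g)g^{-1})^n$, since $A$ is central and $\varphi$-fixed), and $\varphi^n\neq 1$ because $A\leq G$ is torsionfree, so $(\varphi(g)g^{-1})^n\neq 1$ whenever $\varphi(g)\neq g$.

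With these normalisations I would form the logarithm derivation $\log(\overline{\varphi})=\sum_{k\geq 1}\frac{(-1)^{k+1}}{k}(\overline{\varphi}-1)^k$ on $\widehat{Q}$, whose convergence uses exactly $\deg_v(\overline{\varphi}-1)\geq v(p)$, and invoke Proposition \ref{log-properties}: it is a continuous $F$-linear derivation with $\log(\overline{\varphi})(g)=\log(\varphi(g)g^{-1})\tau(g)$ for $g\in G$, where $F$ is the closure in $\widehat{Q}$ of the field of fractions of $\tau(\mathcal{O}A)$. Setting $\lambda:=\inf_{g\in G}v(\log(\varphi(g)g^{-1}))$, faithfulness of $P$ together with injectivity of the logarithm series (\cite[Corollary 6.25(ii)]{DDMS}) forces $1\leq\lambda<\infty$ (Lemma \ref{faithful}), and $U=\{g\in G: v(\log(\varphi(g)g^{-1}))>\lambda\}$ is a proper open subgroup containing $G^p$ and $(G,G)$. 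Choosing an ordered basis $\{g_1,\dots,g_d\}$ adapted to $U$ and normalising by the unit $a=\log(\varphi(g_1)g_1^{-1})\in F$ of value $\lambda$, I obtain the $F$-linear derivation $\delta=a^{-1}\log(\overline{\varphi})$, acting on monomials by $\delta(\underline{g}^\alpha)=(\alpha_1 z_1+\cdots+\alpha_d z_d)\underline{g}^\alpha$ with $v(z_1)=\cdots=v(z_r)=0$ and $v(z_i)>0$ for $i>r$.

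The last step is the convergence argument. Using that $z_1^{p^m},\dots,z_r^{p^m}$ remain $\mathbb{F}_p$-linearly independent modulo $\mathcal{V}^+$ (Lemma \ref{LI}, via Fermat and the description of $U$), I invert the Vandermonde-type matrix $T$ — its determinant has value $0$ by \cite[Lemma 1.1]{chevalley} — to build the continuous $F$-linear operators $\overline{\iota}$ and $\iota_m=\overline{\iota}^{\,p^m(p-1)}$, which act on $\underline{g}^\alpha$ by multiplication by $(\alpha_1'+\cdots+\alpha_r'+z^{(\alpha)})^{p^m(p-1)}$ with $v(z^{(\alpha)})\geq 1$. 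Lemma \ref{terms2} then gives the estimates of Proposition \ref{approximation}, so that $\iota_m(\underline{g}^\alpha)$ tends to $\underline{g}^\alpha$ or to $0$ according as $p\nmid\alpha_1+\cdots+\alpha_r$ or $p\mid\alpha_1+\cdots+\alpha_r$, uniformly enough to yield a limiting continuous $\mathcal{O}$-linear map $\iota\colon\mathcal{O}G\to\widehat{Q}$ with $\iota(P)=0$ and $\iota(g)=g$ for $g\notin V$, $\iota(g)=0$ for $g\in V$, where $V/U=\ker\chi$ for $\chi\colon G/U\to\mathbb{Z}/p\mathbb{Z}$, $\underline{g}^\alpha U\mapsto\alpha_1+\cdots+\alpha_r+p\mathbb{Z}$. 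Identifying $\iota$ with $\tau\circ\rho(f)$ for the locally constant $f=\mathbf 1_{G\setminus V}$ (using \cite[Proposition 2.5, Lemma 2.9]{controller}), the relation $\iota(P)=0$ forces $\rho(f)(P)\subseteq P$, and the coset-decomposition argument — choosing $C$ adapted to the ordered basis of $V$ and extracting the $g_{\underline 0}$-component — shows $P$ is controlled by $V$, a proper open subgroup of $G$.

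I expect the genuine difficulty to lie not in Theorem \ref{E} itself, which is bookkeeping once the machinery is in place, but in two earlier ingredients. The first is Theorem \ref{pos-degree}/\ref{aut-completion}: transporting positivity of $\deg(\varphi-1)$ from the Zariskian filtration $w$ on $\mathcal{O}G$ through the chain of filtrations of Construction \ref{non-comm-val} to the non-commutative valuation $v$ on $Q$, which requires controlling how a power of $\varphi$ permutes the finitely many minimal primes of the completion and fixes a maximal order. The second is that the logarithm derivation exists only after completing, so every conclusion about $P\trianglelefteq\mathcal{O}G$ must be pulled back along the merely continuous, non-flat map $\tau$; this is precisely why the delicate $p$-power convergence estimates of Lemma \ref{terms2} and Proposition \ref{approximation} are unavoidable, rather than a one-line descent.
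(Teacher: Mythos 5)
Your proposal is correct and follows essentially the same route as the paper, whose proof of Theorem \ref{E} is precisely the assembly of Lemma \ref{G-OG}, Theorem \ref{aut-completion}, Proposition \ref{log-properties}, Lemma \ref{faithful}, Lemmas \ref{LI}--\ref{terms}, the Vandermonde inversion and Proposition \ref{approximation}, and the final $\rho(f)$ coset argument. Your explicit check that $\varphi^n(g)g^{-1}=(\varphi(g)g^{-1})^n$ and hence $\varphi^n\neq 1$ by torsionfreeness is a welcome detail that the paper leaves implicit when it replaces $\varphi$ by $\varphi^n$.
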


\noindent This result is, in essence, the characteristic 0 version of \cite[Theorem B]{nilpotent}. This result was sufficient to fully prove Conjecture \ref{main} in characteristic $p$ for $G$ nilpotent in \cite{nilpotent}, but unfortunately our additional assumption that $\varphi(g)g^{-1}$ is fixed by $\varphi$ for all $g\in G$ restricts the usefulness of this result, which is why, as we will see, we cannot assume that the subgroup $A$ described in the statement of Theorem \ref{C} is central.

\subsection{Control Theorem for Prime ideals}

Now we are ready to prove Theorem \ref{C}, and the remainder of the argument is similar to the proof of \cite[Theorem 1.2]{primitive}, as given in \cite[Section 3.5]{primitive}.\\

\noindent Firstly, recall that a prime ideal $P$ of $KG$ is \emph{non-splitting} if for any closed subgroup $H$ of $G$ that controls $P$, $P\cap KH$ is a prime ideal of $KH$. Furthermore, a right ideal $I$ of $KG$ is \emph{virtually non-splitting} if $I=PKG$ for some non-splitting prime ideal $P$ of $KU$, where $U$ is some open subgroup of $G$. 

The following theorem, analogous to \cite[Theorem 5.8]{nilpotent} and partially proved in \cite[Theorem 4.8]{primitive}, essentially proves that establishing a control theorem for virtually non-splitting ideals is sufficient to establish it for all primes.

\begin{theorem}\label{split}

Let $(G,\omega)$ be a complete $p$-valued group of finite rank, let $A$ be a closed subgroup of $G$, and suppose that all faithful, virtually non-splitting right ideals of $KG$ are controlled by $A$. Then all faithful, prime ideals of $KG$ are controlled by $A$.

\end{theorem}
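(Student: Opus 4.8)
\emph{Proof plan.} By \cite[Theorem A]{controller}, a closed subgroup controls a two-sided ideal precisely when it contains its controller subgroup, so the assertion amounts to showing $P^{\chi}\subseteq A$ for every faithful prime ideal $P$ of $KG$; this is what I would prove, following the template of \cite[Theorem 5.8]{nilpotent} and extending \cite[Theorem 4.8]{primitive}. If $P$ is itself non-splitting then, taking $U=G$ in the definition, $P=PKG$ is a faithful, virtually non-splitting right ideal of $KG$, so $P$ is controlled by $A$ by hypothesis; in particular this settles the case $P^{\chi}=G$. For a general faithful $P$ the strategy is to pass to an open subgroup over which $P$ ``resolves'' into non-splitting primes, apply the hypothesis to those, and then transport the control back up.

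The reduction step, granting the structural input stated below, runs as follows. Suppose $U$ is an open normal subgroup of $G$ chosen so that the minimal primes $P_{1},\dots,P_{n}$ of $KU$ above $P\cap KU$ are all non-splitting and faithful. By Lemma \ref{semiprime}, applied to the finite crossed product $KG=KU\ast(G/U)$, the $P_{i}$ form a single $G$-orbit, $P\cap KU=\bigcap_{i}P_{i}$, and $P$ is a minimal prime of $KG$ above $(P\cap KU)KG$. Each $P_{i}KG$ is then a faithful, virtually non-splitting right ideal of $KG$, so it is controlled by $A$; an argument analogous to Lemma \ref{base-change}, now applied to the inclusion $KU\subseteq KG$ instead of a field extension, yields $P_{i}^{\chi}=(P_{i}KG)^{\chi}\subseteq A$ for each $i$. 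Hence the closed subgroup $\langle P_{1}^{\chi},\dots,P_{n}^{\chi}\rangle$ lies in $A$, and therefore so does $N:=(P\cap KU)^{\chi}$, since $N$ controls $\bigcap_{i}P_{i}$ and lies in the join of the $P_{i}^{\chi}$ by a further coset-representative computation (\cite{controller}). Now $N$ is a closed normal subgroup of $G$ (because $P\cap KU$ is $G$-stable), the ideal $(P\cap KU)KG$ is controlled by $N$, and $P$ is a minimal prime above it; so by the fact that minimal primes over a subgroup-controlled two-sided ideal inherit that control (\cite[Lemma 5.3]{nilpotent}, \cite{controller}) we conclude $P^{\chi}\subseteq N\subseteq A$, as required.

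What remains — and this is where the genuine difficulty, and the difference from \cite[Theorem 4.8]{primitive}, lies — is the structural input: every faithful prime of $KG$ becomes, after restriction to a suitable open normal subgroup, a finite intersection of faithful non-splitting primes. I would establish this by induction, reducing through the controller subgroup: when $P$ is splitting one has $P=(P\cap KP^{\chi})KG$ with $P^{\chi}\subsetneq G$, and $P\cap KP^{\chi}$ is a $G$-prime ideal of $KP^{\chi}$ breaking into a $G$-orbit $Q_{1},\dots,Q_{m}$ with $P^{\chi}=\overline{\langle Q_{1}^{\chi},\dots,Q_{m}^{\chi}\rangle}$; one then applies the inductive hypothesis over $P^{\chi}$ — using that the standing hypothesis on $KG$ descends to open subgroups, and, when $P^{\chi}$ fails to be open, a separate argument for the closed subgroup $P^{\chi}$ — and reassembles. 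The hard part is making this descent well-founded, controlling the rank (or a comparable invariant) of the successive controller subgroups, while simultaneously tracking faithfulness and the $G$-action on the minimal primes at each stage; the characteristic-$0$ phenomenon that $G/P^{\dagger}$ need not be torsionfree, absent in \cite{nilpotent}, is precisely what makes this bookkeeping delicate.
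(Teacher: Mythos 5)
Your proposal is not complete, and its decisive final step is not valid as stated. You finish by asserting that because $(P\cap KU)KG$ is a two-sided ideal controlled by $N$ and $P$ is a minimal prime above it, $P$ itself must be controlled by $N$. Neither \cite[Lemma 5.3]{nilpotent} nor \cite{controller} asserts such an inheritance principle, and it is false in characteristic $0$: take $G=\mathbb{Z}_p$ topologically generated by $g$, with $\zeta_p\in K$; the ideal $J=(g^p-1)KG$ is controlled by $G^p$ (it is generated by $g^p-1\in KG^p$), but its minimal prime $P=(g-\zeta_p)KG$ satisfies $P\cap KG^p=(g^p-1)KG^p$, so $(P\cap KG^p)KG=J\subsetneq P$ and $P$ is not controlled by $G^p$. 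The failure of control to pass from $P\cap KU$ back up to $P$ is exactly the characteristic-$0$ phenomenon (non-faithful quotients, only virtual standardness) that the paper has to fight; it is only overcome much later, for primitive ideals, in Theorem \ref{d-standard}, and there only via central primeness (Theorem \ref{completely-prime'}), not by any general ``minimal primes inherit control'' fact. Faithfulness of $P$ might conceivably rescue your step, but you give no argument, and assuming it here is close to assuming the conclusion of the section. In addition, your structural input --- that every faithful prime becomes, after restriction to a single open normal subgroup $U$, a finite intersection of faithful \emph{non-splitting} primes of $KU$ --- is explicitly deferred to an induction on controller subgroups that you do not carry out; as you yourself say, that is where the difficulty lies, so the proposal proves the theorem only modulo its two hardest points.

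For comparison, the paper's proof avoids both issues by never leaving $KG$: it takes an essential decomposition $P=I_1\cap\cdots\cap I_m$ of $P$ into \emph{virtually prime right ideals of $KG$} with $m$ maximal (finiteness coming from the finite uniform dimension of $KG/P$), quotes \cite[Proposition 4.4]{primitive} to conclude that maximality forces each $I_j$ to be virtually non-splitting, and \cite[Lemma 4.2]{primitive} to get faithfulness of each $I_j$ from faithfulness of $P$. The hypothesis then controls each $I_j$ by $A$, and \cite[Lemma 4.1($i$)]{primitive} (extension from $KA$ to $KG$ commutes with finite intersections) gives $(P\cap KA)KG=\bigcap_j (I_j\cap KA)KG=\bigcap_j I_j=P$. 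In particular the components $I_j$ are right ideals attached to possibly different open subgroups, there is no need for a single normal $U$ whose minimal primes over $P\cap KU$ are non-splitting, and no control is ever transported from an ideal of $KU$ back to a prime of $KG$. If you want to pursue your crossed-product route, the two points you would have to supply are precisely a proof of your structural decomposition and a substitute (using faithfulness) for the false inheritance step.
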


\begin{proof}

Let $P$ be a faithful, prime ideal of $KG$, and let $P=I_1\cap\cdots\cap I_m$ be an essential decomposition for $P$ in the sense of \cite[Definition 5.6]{McConnell}, with each $I_j$ virtually prime, and $I_1,\cdots,I_m$ forming a single $G$-orbit.\\ 

\noindent Setting $m=1$, $I_1=P$, it is clear that such a decomposition exists, so we will assume that $m$ is maximal such that a decomposition of this form exists. We know that $m$ is finite because $KG/P$ has finite uniform dimension in the sense of \cite{McConnell}. So, by \cite[Proposition 4.4]{primitive}, each $I_j$ is a virtually non-splitting right ideal of $KG$. Furthermore, since $P$ is faithful, it follows from \cite[Lemma 4.2]{primitive} that each $I_j$ is faithful.

Therefore, by assumption, $I_j$ is controlled by $A$, so $I_j=(I_j\cap KA)KG$ for each $j$. So since $P=I_1\cap\cdots\cap I_r$, we have that\\

$(P\cap KA)KG=((I_1\cap KA)\cap\cdots\cap (I_r\cap KA))KG$\\

$=(I_1\cap KA)KG\cap\cdots\cap (I_r\cap KA)KG=I_1\cap\cdots\cap I_r=P$ by \cite[Lemma 4.1($i$)]{primitive}.\\ 

\noindent Thus $P$ is controlled by $A$ as required.\end{proof}

\noindent Now, in \cite{primitive}, we defined the closed, isolated normal subgroup $C_G(Z_2(G))$ of $G$ to be 

\begin{center}
$C_G(Z_2(G)):=\{g\in G:$ if $(h,G)\subseteq Z(G)$ then $(g,h)=1\}$.
\end{center} 

\noindent The main result in that paper (\cite[Theorem 1.2]{primitive}) was that all faithful, primitive ideals in $KG$ are controlled by $C_G(Z_2(G))$. With the results proved in this section, we can now generalise this result to all prime ideals.

\begin{theorem}\label{prime-control}

Let $(G,\omega)$ be a complete $p$-valued group of finite rank, let $P$ be a faithful prime ideal of $KG$. Then $P$ is controlled by $C_G(Z_2(G))$.

\end{theorem}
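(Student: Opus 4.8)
The plan is to reduce, via Theorem \ref{split}, to the case of a faithful, virtually non-splitting right ideal $I = PKG$ with $P$ a non-splitting prime of $KU$ for some open $U \leq G$; since $C_G(Z_2(G))$ is isolated and the statement is stable under passing to open subgroups (as controller subgroups only shrink), it suffices to show every such $I$ is controlled by $C_G(Z_2(G))$. Equivalently, arguing as in \cite[Section 3.5]{primitive}, it is enough to treat the case where $P$ itself is a faithful prime ideal of $KG$ that is non-splitting, and to produce, whenever $C_G(Z_2(G)) \neq G$, a proper open subgroup controlling $P$; an induction on $\dim G$ (or on the rank) then finishes, because a controlling subgroup intersected with a smaller overgroup of $C_G(Z_2(G))$ stays inside $C_G(Z_2(G))$ by the transitivity of control from \cite[Theorem A]{controller}.

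So assume $C_G(Z_2(G)) \neq G$ and pick $g \in G \setminus C_G(Z_2(G))$; by definition there is $h \in G$ with $(h,G) \subseteq Z(G)$ but $(g,h) \neq 1$. The key is to manufacture an inner-type automorphism to which Theorem \ref{E} applies. Consider conjugation by $h$: the map $\varphi = \varphi_h : x \mapsto hxh^{-1}$ on $G$. Because $(h,G) \subseteq Z(G)$, for every $x \in G$ we have $\varphi(x)x^{-1} = (h,x) \in Z(G)$, so setting $A := Z(G)$ (or the isolator of the subgroup generated by all $(h,x)$) we get $\varphi(x)x^{-1} \in A$ for all $x$, and since $A$ is central, $\varphi$ fixes $A$ pointwise. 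After replacing $\varphi$ by a suitable power $\varphi^{p^N}$ we may also assume $\varphi \in \mathrm{Aut}^{\omega}(G)$ (boundedness of the degree follows because the commutator map into $Z(G)$ raises $\omega$-values), and $\varphi$ is still non-trivial since $(g,h) \neq 1$ implies $(g,h^{p^N}) \neq 1$ in the torsionfree group $G$. Finally $\varphi(P) = P$ automatically: $\varphi$ is conjugation by an element of $G$, and $P$ is a two-sided ideal of $KG$, hence invariant under all inner automorphisms. Theorem \ref{E} then yields a proper open subgroup $V$ of $G$ controlling $P$.

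With $V$ in hand, the non-splitting hypothesis lets us descend: $P \cap KV$ is a faithful prime ideal of $KV$ (faithfulness passes to $P \cap KV$ since $V \supseteq C_G(Z_2(G))$ forces the relevant $\dagger$-subgroup to be trivial, or directly because $P$ is faithful and control is by $V$), and $V$ is again a complete $p$-valued group of finite rank with $\dim V = \dim G$ but — after a further finite reduction using \cite[Lemma 4.2]{nilpotent} to arrange the ordered bases — of strictly smaller ``defect'' relative to its own $C_V(Z_2(V))$, so an induction applies and gives that $P \cap KV$ is controlled by $C_V(Z_2(V)) \subseteq C_G(Z_2(G))$; combined with the fact that $V$ controls $P$, transitivity of control shows $P$ is controlled by $C_G(Z_2(G))$.

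\textbf{Main obstacle.} The delicate point is setting up the induction cleanly: $V$ has the same rank as $G$, so one cannot simply induct on rank, and one must instead track a genuinely decreasing invariant — in \cite{primitive} this is handled by an argument showing that $C_V(Z_2(V))$ relates correctly to $C_G(Z_2(G)) \cap V$, and that after enough steps the process terminates with $P$ controlled inside $C_G(Z_2(G))$. Verifying that $C_V(Z_2(V)) \subseteq C_G(Z_2(G))$ (or the appropriate variant) and that faithfulness and the non-splitting property are genuinely inherited by $P \cap KV$ — so that Theorem \ref{E} can be reapplied — is where the real work lies; everything else (boundedness of $\varphi_h$, invariance of $P$, applicability of Theorem \ref{E}) is routine given the machinery already assembled.
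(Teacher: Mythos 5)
There is a genuine gap, and it lies exactly where you flag it: the induction. Your first step is fine (conjugation by an element $h$ with $(h,G)\subseteq Z(G)$ is a bounded automorphism fixing $A=Z(G)$ pointwise, preserves the two-sided ideal $P$, and Theorem \ref{E} then yields a proper open subgroup $V$ controlling $P$), but after that you have no well-founded descent: $V$ has the same rank and dimension as $G$, open subgroups admit infinite strictly descending chains, and you never identify the ``genuinely decreasing invariant'' your argument needs, nor do you verify that the non-splitting property passes from $P$ to $P\cap KV$ so that the step can be repeated. As written, the argument could loop forever producing smaller open subgroups without ever reaching $C_G(Z_2(G))$, and the claim that the process ``terminates with $P$ controlled inside $C_G(Z_2(G))$'' is precisely what has to be proved.

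The paper avoids induction altogether by applying Theorem \ref{E} not to $G$ but to the \emph{controller subgroup} $H:=P^{\chi}$ of a non-splitting prime $P$. By the definition of non-splitting, $Q:=P\cap KH$ is a faithful prime ideal of $KH$, and by \cite[Theorem A]{controller} $H$ is the smallest closed subgroup controlling $P$, so $Q$ is not controlled by any proper open subgroup of $H$. Since $H$ is normal, each $g$ with $(g,G)\subseteq Z(G)$ induces by conjugation an automorphism $\varphi$ of $H$ with $\varphi(Q)=Q$, $\varphi(h)h^{-1}=(g,h)\in A:=Z(G)\cap H$, and $\varphi|_A=1$; if $\varphi\neq 1$, Theorem \ref{E} would force $Q$ to be controlled by a proper open subgroup of $H$, contradicting minimality. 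Hence every such $g$ centralises $H$, i.e. $H\subseteq C_G(Z_2(G))$, which is the desired control for non-splitting primes in one stroke. The virtually non-splitting case then follows from $C_U(Z_2(U))=C_G(Z_2(G))\cap U$ for open $U$ (\cite[Lemma 4.9]{primitive}), and Theorem \ref{split} finishes. If you want to repair your write-up, replace your iteration by this minimality argument at $P^{\chi}$: it is the missing idea, and it also removes the need to show that non-splitting is inherited by $P\cap KV$.
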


\begin{proof}

First, suppose that $P$ is non-splitting, and let $H:=P^{\chi}$ be the controller subgroup of $P$:\\

\noindent Then $Q:=P\cap KH$ is a faithful, prime ideal of $KH$ by the definition of non-splitting, and since $H$ is the smallest subgroup of $G$ controlling $P$ by \cite[Theorem A]{controller}, $Q$ is not controlled by any proper subgroup of $H$. Also, note that $H$ is a normal subgroup of $G$ by the proof of \cite[Lemma 5.2]{nilpotent}, so for any $g\in G$, $(g,H)\subseteq H$.\\

\noindent Now, choose $g\in G$ such that $(g,G)\subseteq Z(G)$, and let $A:=Z(G)\cap H$. Let $\varphi$ be the automorphism of $H$ induced by conjugation by $g$. Then clearly $\varphi(Q)=Q$, and for all $h\in H$, $\varphi(h)h^{-1}=(g,h)\in Z(G)\cap H=A$. Moreover, since $A$ is central in $G$, it follows that $\varphi(a)=a$ for all $a\in A$. So applying Theorem \ref{E} gives that if $\varphi\neq 1$, then $Q$ is controlled by a proper subgroup of $H$ -- contradiction.

Therefore $\varphi=1$, i.e. $g$ centralises $H$.\\

\noindent Therefore, if $g\in G$ and $(g,G)\subseteq Z(G)$, then $(g,H)=1$, and hence $H$ is contained in $C_G(Z_2(G))$ as required. Thus $P$ is controlled by $C_G(Z_2(G))$.\\

\noindent Now suppose that $I\trianglelefteq_r KG$ is a faithful and virtually non-splitting right ideal of $KG$. Then $I=PKG$ for some open subgroup $U$ of $G$, and some faithful, non-splitting prime $P$ of $KU$.\\

\noindent We have proved that $P$ is controlled by $C_U(Z_2(U))$, and $C_U(Z_2(U))=C_G(Z_2(G))\cap U$ by \cite[Lemma 4.9]{primitive}, and hence $I$ is controlled by $C_G(Z_2(G))$.

So, using Theorem \ref{split}, it follows that every faithful, prime ideal of $KG$ is controlled by $C_G(Z_2(G))$ as required.\end{proof}

\vspace{0.2in}

\noindent Now we can finally complete the proof of Theorem \ref{C}. So suppose that $(G,\omega)$ is a nilpotent, $p$-valued group of finite rank.\\

\noindent\emph{Proof of Theorem \ref{C}.} Consider the following sequence of subgroups, $A_0=G$ and for each $i\geq 0$, $A_{i+1}=C_{A_i}(Z_2(A_i))$. Since $G$ is nilpotent, each $A_i$ must also be nilpotent.\\

\noindent Therefore, if $A_i$ is non-abelian then there must exist $g\in A_i$ with $g\notin Z(A_i)$ such that $(g,A_i)\subseteq Z(A_i)$ (i.e. $g\in Z_2(A_i)$). But by definition, if $h\in A_{i+1}=C_{A_i}(Z_2(A_i))$ then $(g,h)=1$, so since $g$ is not central, this means that $A_{i+1}\neq A_i$.\\

\noindent Moreover, if $A_i$ is abelian, then clearly $A_{i+1}=A_i$, and hence $A_j=A_i$ for all $j>i$. But since $A_{i+1}$ is a closed, isolated normal subgroup of $A_i$, the chain $G=A_0\supseteq A_1\supseteq A_2\supseteq\cdots$ must terminate, i.e. there exists $i\geq 0$ such that $A_j=A_i$ for all $j\geq i$, and hence $A_i$ is abelian. Let $A:=A_i$, and we will prove that all faithful, prime ideals in $KG$ are controlled by $A$.\\

\noindent Let $P$ be a faithful, non-splitting prime ideal of $KG$, and let us suppose for contradiction that $P$ is not controlled by $A$. But trivially, $P$ is controlled by $G=A_0$, so let $0\leq j<i$ be maximal such that $P$ is controlled by $A_j$.\\ 

\noindent Since $P$ is non-splitting and $A_j$ is a closed subgroup of $G$, $Q:=P\cap KA_j$ is a faithful prime ideal of $KA_j$, so it follows from Theorem \ref{prime-control} that $Q$ is controlled by $C_{A_j}(Z_2(A_j))=A_{j+1}$, and hence $P=QKG=(Q\cap KA_{j+1})KA_jKG=(P\cap KA_{j+1})KG$ is controlled by $A_{j+1}$ -- contradiction.

Therefore, every faithful, non-splitting prime ideal of $KG$ is controlled by $A$.\\

\noindent Now suppose that $I\trianglelefteq_r KG$ is a faithful and virtually non-splitting right ideal of $KG$. Then $I=PKG$ for some faithful, non-splitting prime $P$ of $KU$, where $U$ is some open subgroup of $G$.\\

\noindent Again, let $B_0=U$ and for $j\geq 0$ let $B_{j+1}:=C_{B_j}(Z_2(B_j))$. Then using \cite[Lemma 4.9]{primitive}, $B_j\subseteq A_j$ for each $j$, and hence $B_i$ is abelian. So since $P$ is a faithful, non-splitting prime ideal of $KU$, it follows that $P$ is controlled by $B_i\subseteq A$, and hence $I=PKG=(P\cap KB_i)KUKG\subseteq (I\cap KA)KG$ is controlled by $A$. So using Theorem \ref{split}, it follows that every faithful, prime ideal of $KG$ is controlled by $A$.\qed

\section{Dixmier Annihilators}

In this section, we will study the action of the rational Iwasawa algebra $KG$ on the affinoid Dixmier module $\widehat{D(\lambda)}$, and ultimately prove Theorem \ref{B}.\\ 

\noindent Throughout, fix $G$ a nilpotent, uniform pro-$p$ group, $\mathcal{L}=\frac{1}{p}\log(G)$, and $\mathfrak{g}=\mathcal{L}\otimes_{\mathbb{Z}_p}\mathbb{Q}_p$. We will assume further that $\mathcal{L}$ is \emph{powerful}, i.e. $[\mathcal{L},\mathcal{L}]\subseteq p\mathcal{L}$.

\subsection{Faithful Dixmier Annihilators}

Let $\lambda:\mathfrak{g}\to K$ be a $\mathbb{Q}_p$-linear map such that $\lambda(\mathcal{L})\subseteq\mathcal{O}$ and $\lambda|_{Z(\mathfrak{g})}$ is injective. Let $\mathfrak{b}$ be a polarisation of $\mathfrak{g}_K:=\mathfrak{g}\otimes_{\mathbb{Q}_p}K$ at $\lambda$, and let $\mathcal{B}:=\mathfrak{b}\cap\mathcal{L}_K$, where $\mathcal{L}_K:=\mathcal{L}\otimes_{\mathbb{Z}_p}\mathcal{O}$.\\

\noindent Fix $P:=$ Ann$_{KG}\widehat{D(\lambda)}$, which does not depend on the choice of polarisation by \cite[Theorem 4.5]{aff-dix}. 

\begin{lemma}\label{Dix-prime}

$P$ is a faithful, completely prime ideal of $KG$.

\end{lemma}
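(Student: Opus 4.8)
The plan is to prove the two assertions separately: first that $P=\operatorname{Ann}_{KG}\widehat{D(\lambda)}$ is completely prime, and then that it is faithful. For complete primality, I would reduce to a statement about the affinoid enveloping algebra: $P$ is the contraction to $KG$ of the annihilator $\widehat{P}:=\operatorname{Ann}_{\widehat{U(\mathcal L)}_K}\widehat{D(\lambda)}$ along the dense embedding (\ref{embedding'}), so it suffices to know that $\widehat{U(\mathcal L)}_K/\widehat{P}$ is a domain and that the subring $KG/(P)$ embeds in it. The first point should follow from the results of \cite{aff-dix}: since $\mathfrak g$ is nilpotent, the Dixmier module $\widehat{D(\lambda)}$ realises $\widehat{U(\mathcal L)}_K/\widehat{P}$ as (a completed version of) a matrix algebra over a division ring coming from the ``Weyl algebra'' attached to a polarisation — in the nilpotent case the Dixmier map is a bijection onto primitive ideals and the quotient is a simple affinoid algebra of the expected Dixmier-type form, which one checks is a domain (this is the affinoid analogue of the classical fact that $U(\mathfrak g)/\operatorname{Ann} D(\lambda)$ is an Ore domain for $\mathfrak g$ nilpotent). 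Then a contraction of a completely prime ideal is completely prime, because $KG/(P\cap KG)\hookrightarrow \widehat{U(\mathcal L)}_K/\widehat P$ is a subring of a domain.

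For faithfulness, I would invoke Lemma \ref{dagger} directly: it tells us $P^{\dagger}=A_{\lambda}$, the $\lambda$-scalar subgroup, and that $P$ is faithful precisely when $\lambda|_{Z(\mathfrak g)}$ is injective — which is exactly our hypothesis. So faithfulness is immediate once complete primality is in place; in fact it requires nothing beyond Lemma \ref{dagger}. The only care needed is that Lemma \ref{dagger} was stated for a general nilpotent uniform $G$ and a general $\lambda\in\operatorname{Hom}_{\mathbb Z_p}(\mathcal L,\mathcal O)$, which covers our situation verbatim (the powerful hypothesis on $\mathcal L$ plays no role here).

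The step I expect to be the main obstacle is pinning down precisely why $\widehat{U(\mathcal L)}_K/\widehat P$ is a domain. Over $\mathbb Q_p$, $U(\mathfrak g)/\operatorname{Ann}D(\lambda)\cong A_n(K)$, the $n$-th Weyl algebra, which is a domain — but after completing $p$-adically one passes to an affinoid Weyl-type algebra $\widehat{A_n(K)}$, and one must check this completion is still a domain (equivalently, that the relevant filtration is a valuation, i.e. the associated graded is a domain — here the associated graded should be a polynomial ring, giving the result). I would handle this by citing the structure theorem for $\widehat{D(\lambda)}$ and its annihilator from \cite[Theorem A]{aff-dix} together with the description of the Dixmier module as an induced module over $\widehat{U(\mathcal B)}_K$: concretely, choosing a basis of $\mathfrak g$ adapted to $\mathfrak b$, the action of $\widehat{U(\mathcal L)}_K$ on $\widehat{D(\lambda)}$ factors through an affinoid Weyl algebra whose $\pi$-adic associated graded is a commutative polynomial ring over $k$, hence a domain, so the operator valuation is a genuine valuation and the quotient is a domain. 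Once that is established, the contraction argument and Lemma \ref{dagger} finish the proof cleanly.
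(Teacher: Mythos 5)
Your proposal follows essentially the same route as the paper: faithfulness is read off from Lemma \ref{dagger}, and complete primality comes from knowing that $\widehat{U(\mathcal{L})}_K/\operatorname{Ann}_{\widehat{U(\mathcal{L})}_K}\widehat{D(\lambda)}$ is a domain and contracting along $KG\hookrightarrow\widehat{U(\mathcal{L})}_K$. The only difference is that the paper disposes of the domain step (the point you flag as the main obstacle) by citing \cite[Corollary 3.4]{aff-dix}, and it is precisely there that the powerful hypothesis on $\mathcal{L}_K$ is invoked, so your Weyl-algebra sketch should be understood as a gloss on that cited result rather than a hypothesis-free argument.
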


\begin{proof}

Since $\lambda|_{Z(\mathfrak{g})}$ is injective, it follows from Lemma \ref{dagger} that $P$ is a faithful ideal of $KG$.\\

Furthermore, since $\mathcal{L}_K$ is powerful, it follows from \cite[Corollary 3.4]{aff-dix} that if $I:=$ Ann$_{\widehat{U(\mathcal{L})}_K}\widehat{D(\lambda)}$ then $\frac{\widehat{U(\mathcal{L})}_K}{I}$ is a domain. So since $P=I\cap KG$, this means that $\frac{KG}{P}$ is a domain, and hence $P$ is completely prime ideal as required.\end{proof}

\noindent Therefore, using Theorem \ref{C}, it follows that $P$ is controlled by an abelian subgroup $A$ of $G$. Let $\mathcal{A}:=\frac{1}{p}\log(A)$, and let $\mathfrak{a}:=\mathcal{A}\otimes_{\mathbb{Z}_p}\mathbb{Q}_p$. Then $\mathfrak{a}$ is an abelian ideal of $\mathfrak{g}$, so since $P$ is independent of the choice of polarisation, we may assume that $\mathfrak{a}\subseteq\mathfrak{b}$. In other words, we may assume that the subgroup $A$ acts by scalars on the submodule $K_{\lambda}$ of $\widehat{D(\lambda)}=\widehat{U(\mathcal{L})}_K\otimes_{\widehat{U(\mathcal{B})}_K}K_{\lambda}$.\\

\noindent Our approach will be to study the action of $KA$ on $\widehat{D(\lambda)}$, and prove that the kernel of this action is centrally generated.\\

\noindent Let $\{x_1,\cdots,x_r\}$ be an $\mathcal{O}$-basis for $\mathcal{L}_K/\mathcal{B}$. Then using \cite[Lemma 3.3]{aff-dix}, we see that $\widehat{D(\lambda)}$ is isomorphic as a $K$-vector space to $K\langle x_1,\cdots,x_r\rangle$.

\begin{lemma}\label{action}

There exists an $\mathcal{O}$-basis $\{x_1,\cdots,x_r\}$ for $\mathcal{L}_K/\mathcal{B}$ such that if we let $\partial_i:=\frac{d}{dx_i}\in$ End$_K\widehat{D(\lambda)}$, then each $u\in\mathfrak{a}$ acts on $\widehat{D(\lambda)}$ by a polynomial $f_u\in K[\partial_1,\cdots,\partial_s]$ for some $s\leq r$ where $f_u(0)=\lambda(u)$. Moreover, if $u\in\mathcal{A}$ then $f_u\in\mathcal{O}[\partial_1,\cdots,\partial_s]$.\\

\noindent Furthermore, $s=0$ if and only if $\mathcal{A}$ is central, and for each $i=1,\cdots,s$, $\partial_i$ lies in the image of $U(\mathfrak{a})$ under the action.

\end{lemma}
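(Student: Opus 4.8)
The plan is to compute the action explicitly on the standard $K$-vector space model $\widehat{D(\lambda)}\cong K\langle x_1,\dots,x_r\rangle$ from \cite[Lemma 3.3]{aff-dix}, in which the cyclic vector $v_\lambda$ corresponds to $1$ and $\partial_i=\tfrac{d}{dx_i}$. The fact that does all the work is that $\mathfrak a\subseteq\mathfrak b$ is an \emph{ideal} of $\mathfrak g$, so $[\mathfrak g,\mathfrak a]\subseteq\mathfrak a\subseteq\mathfrak b$. Thus, for $u\in\mathfrak a$ and a PBW monomial $x^{\underline a}v_\lambda$, one commutes $u$ rightward through $x_1^{a_1}\cdots x_r^{a_r}$; every bracket produced lies again in $\mathfrak a\subseteq\mathfrak b$, and once anything from $\mathfrak a$ reaches $v_\lambda$ it acts by the scalar $\lambda$. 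Writing $\delta_i:=\operatorname{ad}(x_i)$ and moving $u$ past the variables in order, this yields the closed formula
\[
f_u(\partial)\;=\;\sum_{\underline{\ell}}\frac{(-1)^{|\underline{\ell}|}}{\underline{\ell}!}\,\lambda\!\big(\delta_r^{\ell_r}\cdots\delta_1^{\ell_1}(u)\big)\,\partial^{\underline{\ell}},
\]
a differential operator with \emph{constant} coefficients; the sum is finite since $\mathfrak g$ is nilpotent, and the $\underline{\ell}=0$ term gives $f_u(0)=\lambda(u)$. That $\mathfrak a$ acts on $\widehat{D(\lambda)}$ at all is clear since $KA\hookrightarrow KG$ and the action of $KG$ factors through $\widehat{U(\mathcal L)}_K$.

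Next I would choose the basis so as to control which $\partial_i$ appear. Let $B_\lambda(x,y)=\lambda([x,y])$ and $W:=\{x\in\mathfrak g_K:\lambda([x,\mathfrak a])=0\}$, the $B_\lambda$-perpendicular of $\mathfrak a$; the defining property of a polarisation gives $\mathfrak b^{\perp}=\mathfrak b$, hence $\mathfrak a\subseteq\mathfrak b=\mathfrak b^{\perp}\subseteq\mathfrak a^{\perp}=W$. Set $s:=\operatorname{codim}_{\mathfrak g_K}W\le r$, and choose the $\mathcal O$-basis $\{x_1,\dots,x_r\}$ of $\mathcal L_K/\mathcal B$ so that $x_{s+1},\dots,x_r$ reduce into $W/\mathfrak b$ (possible over $\mathcal O$ after passing to saturations) and $x_1,\dots,x_s$ span a complement; then $\langle x_1,\dots,x_s\rangle\cap W=0$ while $x_j\in W$ for $j>s$. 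In the formula above, any term with $\ell_j\ge1$ for some $j>s$ vanishes: taking the largest such $j$, the inner part $\delta_j^{\ell_j-1}\cdots\delta_1^{\ell_1}(u)$ still lies in $\mathfrak a$, so the whole bracket lies in $[x_j,\mathfrak a]\subseteq\ker\lambda$. Hence $f_u\in K[\partial_1,\dots,\partial_s]$, and $s=0$ precisely when $W=\mathfrak g_K$, i.e.\ when $[\mathfrak g,\mathfrak a]\subseteq\ker\lambda$; but $[\mathfrak g,\mathfrak a]$ is an ideal, so this forces $[\mathfrak g,\mathfrak a]\subseteq\mathfrak a_\lambda$, and $\mathfrak a_\lambda=0$ by faithfulness (Lemma \ref{dagger}), i.e.\ $\mathfrak a$ is central, i.e.\ $\mathcal A$ is central; the converse is immediate. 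Integrality is then a $p$-adic estimate: if $u\in\mathcal A\subseteq\mathcal L_K$, powerfulness gives $[\mathcal L_K,\mathcal L_K]\subseteq p\mathcal L_K$, so $\delta_r^{\ell_r}\cdots\delta_1^{\ell_1}(u)\in p^{|\underline{\ell}|}\mathcal L_K$ and $\lambda(\delta^{\underline{\ell}}(u))\in p^{|\underline{\ell}|}\mathcal O$; since $p>2$ one has $v_p(\underline{\ell}!)\le|\underline{\ell}|/(p-1)\le|\underline{\ell}|$, so every coefficient of $f_u$ lies in $\mathcal O$.

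For the last assertion, the $\partial$-linear part of $f_u$ is $\ell(u):=\sum_{i\le s}\lambda([x_i,u])\,\partial_i$, and because $\langle x_1,\dots,x_s\rangle\cap W=0$ the pairing $\langle x_1,\dots,x_s\rangle\times\mathfrak a\to K$, $(x,u)\mapsto\lambda([x,u])$, has trivial left kernel, so $\ell:\mathfrak a\twoheadrightarrow\langle\partial_1,\dots,\partial_s\rangle$. Choosing $y_i\in\mathfrak a$ with $\lambda([x_j,y_i])=\delta_{ij}$ gives $f_{y_i}=\lambda(y_i)+\partial_i+Q_i(\partial)$ with $Q_i$ of $\partial$-degree $\ge2$. \textbf{This is the step I expect to be the main obstacle}: one must strip off the tails $Q_i$, since in general $K[f_{y_1},\dots,f_{y_s}]$ is a proper subring of $K[\partial_1,\dots,\partial_s]$. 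The clean way is to observe that each operator $f_{y_i}-\lambda(y_i)=\partial_i+Q_i$ is locally nilpotent on $\widehat{D(\lambda)}$ (it strictly lowers the polynomial degree on every monomial), so the formal compositional inverse of $(\partial_i)_{i\le s}\mapsto(\partial_i+Q_i)_{i\le s}$ — which has $\mathbb Z$-coefficients — converges when evaluated on any element of $\widehat{D(\lambda)}$, exhibiting each $\partial_i$ in the image of (the completion of) $U(\mathfrak a)$ under the action; one then checks this realisation uses only the action of $\mathfrak a$. An alternative is a downward induction along a $\mathfrak g$-stable flag $0=\mathfrak a^{(0)}\subset\cdots\subset\mathfrak a^{(t)}=\mathfrak a$ with $[\mathfrak g,\mathfrak a^{(k)}]\subseteq\mathfrak a^{(k-1)}$: elements low in the flag act by operators of small $\partial$-degree (those in $\mathfrak a\cap Z_2(\mathfrak g)$ affine-linearly), and one shows that enough low-degree operators occur to recover each $\partial_i$; this route is more delicate because the $\partial$-degree of $f_u$ depends on $\lambda$, not just on the flag, so the series-inversion approach is the one I would take.
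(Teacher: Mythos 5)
Most of your argument coincides with the paper's: the explicit formula for $f_u$ (which the paper simply quotes from \cite[Proposition 3.3]{aff-dix}), the basis adapted to $\mathfrak{a}^{\perp}$, the vanishing of every term involving $\partial_j$ with $j>s$, the constant term $\lambda(u)$, the integrality estimate from powerfulness of $\mathcal{L}$, and the equivalence ``$s=0$ iff $\mathcal{A}$ central'' via Lemma \ref{dagger} are all correct and essentially the paper's proof (the paper imports the first and last structural facts from \cite[Proposition 3.5]{aff-dix} rather than re-deriving them).

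The genuine gap is exactly where you flagged it: the claim that each $\partial_i$ lies in the image of $U(\mathfrak{a})$. Your series-inversion fix does not work as written. First, $\widehat{D(\lambda)}\cong K\langle x_1,\cdots,x_r\rangle$ is a Tate algebra, not a polynomial ring, so $f_{y_i}-\lambda(y_i)=\partial_i+Q_i$ is locally nilpotent only on the dense polynomial subspace, and pointwise convergence on a dense subspace does not by itself produce an endomorphism lying in the (closed) image of $\widehat{U(\mathcal{A})}_K$: one needs convergence in the Gauss norm on $K\langle\partial_1,\cdots,\partial_s\rangle$, i.e. the terms of the inverted series must tend to $0$. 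But the coefficients of the compositional inverse are integer polynomials in the coefficients of the $Q_i$, which lie in $K$ and need not be power-bounded (the dual elements $y_i$ need not lie in $\mathcal{A}$, and rescaling them into $\mathcal{A}$ destroys the unit linear part), while $\lVert f_{y_i}-\lambda(y_i)\rVert\geq 1$; so no convergence estimate is available and the claimed ``$\mathbb{Z}$-coefficients'' assertion is not correct as stated. Second, even if convergence held, this would only place $\partial_i$ in the image of the completion $\widehat{U(\mathcal{A})}_K$, which is strictly weaker than the statement of the lemma (though, to be fair, the completed version is all that is needed later, when checking that $\widehat{U(\mathcal{A})}_K\to K\langle\partial_1,\cdots,\partial_s\rangle$ is an almost-polynomial map). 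The workable self-contained route is the one you set aside as an alternative: choose the basis adapted to a suitable flag so that the system is triangular, $f_{y_i}=\lambda(y_i)\pm\partial_i+Q_i(\partial_1,\cdots,\partial_{i-1})$, and eliminate the tails by a finite induction entirely inside $U(\mathfrak{a})$; this is in substance what the paper is importing from \cite[Proposition 3.5]{aff-dix}.
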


\begin{proof}

Firstly, let $\mathfrak{a}^{\perp}=\{u\in\mathfrak{g}_K:\lambda([u,\mathfrak{a}])=0\}$, and let $s:=$ dim$_K\frac{\mathfrak{g}}{\mathfrak{a}^{\perp}}$. Then fix a basis $\{u_1,\cdots,u_r\}$ for $\mathcal{L}_K/\mathcal{B}$ such that $\{u_{s+1},\cdots,u_r\}$ is a basis for $(\mathfrak{a}^{\perp}\cap\mathcal{L}_K)/\mathcal{B}$. Then it follows from \cite[Proposition 3.5]{aff-dix} that each $u\in\mathfrak{a}$ acts on $\widehat{D(\lambda)}$ by a polynomial $f_u\in K[\partial_1,\cdots,\partial_s]$, and that $\partial_1,\cdots,\partial_s$ lie in the image of $U(\mathfrak{a})$ under this action.\\

\noindent Furthermore, we see using \cite[Proposition 3.3]{aff-dix} that 

\begin{center}
$f_u=\underset{\alpha\in\mathbb{N}^s}{\sum}{\frac{1}{\alpha_1!}\cdots\frac{1}{\alpha_s!}\lambda(\ad(u_s)^{\alpha_r}\cdots\ad(u_1)^{\alpha_1}(u))\partial_1^{\alpha_1}\cdots\partial_s^{\alpha_s}}$,
\end{center} 

\noindent so clearly the constant term is $\lambda(u)$. Moreover, if $u\in \mathcal{A}$ then since $\mathcal{L}$ is powerful, 

\noindent $\ad(u_r)^{\alpha_r}\cdots\ad(u_1)^{\alpha_1}(u)\in p^{\alpha_1+\cdots+\alpha_r}\mathcal{L}$, and hence $\lambda(\ad(u_r)^{\alpha_r}\cdots\ad(u_1)^{\alpha_1}(u))\in p^{\alpha_1+\cdots+\alpha_r}\mathcal{O}$ for all $\alpha\in\mathbb{N}^r$. 

So since $v_p(\alpha_i!)\leq\alpha_i$ for each $i$, it follows that $\frac{1}{\alpha_1!}\cdots\frac{1}{\alpha_r!}\lambda(\ad(u_r)^{\alpha_r}\cdots\ad(u_1)^{\alpha_1}(u))\in\mathcal{O}$ as required.\\

Finally, if $s=0$ then $\mathfrak{a}$ acts by scalars on $\widehat{D(\lambda)}$, and hence $[\mathfrak{a},\mathfrak{g}]\subseteq P$ and $\lambda([\mathfrak{a},\mathfrak{g}])=0$. So since $P$ is faithful, it follows from Lemma \ref{dagger} that $[\mathfrak{g},\mathfrak{a}]=0$, and hence $A$ is central. Conversely, if $\mathfrak{a}$ is central then clearly $\widehat{U(\mathcal{A})}_K$ acts by scalars on $\widehat{D(\lambda)}$, so since $\partial_1,\cdots,\partial_s$ lie in the image of this action, and do not act by scalars, it follows that $s=0$.\end{proof}

\noindent\textbf{Note:} It follows from this lemma that the image of $\widehat{U(\mathcal{A})}_K$ in End$_K\widehat{D(\lambda)}$ is contained in $K\langle\partial_1\cdots,\partial_s\rangle$.

\subsection{Results from rigid geometry}

We will now prove some technical results using techniques from rigid geometry. A detailed introduction to the theory of rigid geometry and its applications can be found in \cite{Bosch} and \cite{BGR}, whose results we will often use in this section.\\ 

\noindent First, recall from \cite[Definition 3.1.1]{Bosch} that an \emph{affinoid algebra} over $K$ is a quotient of the Tate algebra $K\langle t_1,\cdots, t_r\rangle$ for some $r\in\mathbb{N}$. It follows from \cite[Proposition 3.1.5]{Bosch} that any affinoid algebra $R$ carries a complete, separated filtration $w_R$.

\begin{lemma}\label{linear}

Let $\phi:K\langle u_1,\cdots,u_d\rangle\to R$ be a map of affinoid algebras, and let $a_1,\cdots,a_r\in R$ lie in the image of $\phi$. Then there exists $m\in\mathbb{N}$ such that the image of $\phi$ inside $R$ contains the affinoid $K$-subalgebra topologically generated by $\pi^m a_1,\cdots,\pi^m a_r$.

\end{lemma}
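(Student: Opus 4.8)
The plan is to realise the affinoid subalgebra of $R$ topologically generated by $\pi^m a_1,\dots,\pi^m a_r$ as the image of a single continuous map out of a Tate algebra, and then to factor that map through $\phi$. Recall that for a power-bounded tuple $(b_1,\dots,b_r)$ in a Banach $K$-algebra $B$, the universal property of the Tate algebra gives a unique continuous $K$-algebra homomorphism $K\langle t_1,\dots,t_r\rangle\to B$ with $t_i\mapsto b_i$ (see \cite[Section 3.1]{Bosch}), and the affinoid subalgebra topologically generated by the $b_i$ is precisely its image. So it is enough to produce, for a suitable $m$, a continuous $K$-algebra map $\theta:K\langle t_1,\dots,t_r\rangle\to K\langle u_1,\dots,u_d\rangle$ whose composite with $\phi$ sends $t_i\mapsto\pi^m a_i$.

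First I would write $a_i=\phi(F_i)$ with $F_i\in K\langle u_1,\dots,u_d\rangle$, which is possible since each $a_i$ lies in $\im(\phi)$. As the coefficients of each $F_i$ tend to $0$, and there are only finitely many $i$, we may choose a single $m\in\mathbb{N}$ with $\pi^m F_i\in\mathcal{O}\langle u_1,\dots,u_d\rangle$ for all $i$; equivalently, each $\pi^m F_i$ is power-bounded in $K\langle u_1,\dots,u_d\rangle$. Consequently $\pi^m a_i=\phi(\pi^m F_i)$ is power-bounded in $R$ (since $\phi$ is continuous, hence sends power-bounded elements to power-bounded elements), so the affinoid subalgebra topologically generated by $\pi^m a_1,\dots,\pi^m a_r$ is well defined.

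Applying the universal property of the Tate algebra to the power-bounded tuple $(\pi^m F_1,\dots,\pi^m F_r)$ in $K\langle u_1,\dots,u_d\rangle$ then yields a continuous $K$-algebra homomorphism $\theta:K\langle t_1,\dots,t_r\rangle\to K\langle u_1,\dots,u_d\rangle$ with $\theta(t_i)=\pi^m F_i$. The composite $\phi\circ\theta$ is continuous and sends $t_i\mapsto\phi(\pi^m F_i)=\pi^m a_i$, so by the uniqueness in the universal property it coincides with the defining map of the affinoid subalgebra topologically generated by the $\pi^m a_i$. Hence that subalgebra equals $\im(\phi\circ\theta)\subseteq\im(\phi)$, which is exactly the claim.

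The computation is routine; the only points requiring a little care are choosing $m$ so that all the $\pi^m F_i$ become integral (power-bounded) in the source $K\langle u_1,\dots,u_d\rangle$ simultaneously, and invoking the universal property of the Tate algebra in the right direction so that $\theta$, and hence the factorisation $\phi\circ\theta$ of the defining map of the target subalgebra, genuinely exists. I do not foresee any serious obstacle beyond this bookkeeping.
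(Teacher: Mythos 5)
Your proposal is correct and follows essentially the same route as the paper: write $a_i=\phi(F_i)$, scale by $\pi^m$ to make the preimages (and hence the images) power-bounded, and factor the map $K\langle t_1,\dots,t_r\rangle\to R$ sending $t_i\mapsto\pi^m a_i$ through $\phi$ via the map $t_i\mapsto\pi^m F_i$, so that its image, which is the subalgebra topologically generated by the $\pi^m a_i$, lands in $\im(\phi)$. The only cosmetic difference is that you deduce power-boundedness of $\pi^m a_i$ from continuity of $\phi$, whereas the paper simply chooses $m$ large enough that both $\pi^m F_i$ and $\pi^m a_i$ have nonnegative value.
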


\begin{proof}

For each $i$, we know that $a_i=\phi(r_i)$ for some $r_i\in K\langle u_1,\cdots,u_d\rangle$, so choose $m\in\mathbb{N}$ such that $w_{\inf}(\pi^m r_i)\geq 0$ and $w_R(\pi^m a_i)\geq 0$ for all $i$.\\

\noindent Then there exist $K$-algebra maps $\Theta_1:K\langle X_1,\cdots,X_r\rangle\to K\langle u_1,\cdots,u_d\rangle$ and $\Theta_2:K\langle X_1,\cdots,X_r\rangle\to R$ sending $X_i$ to $\pi^m r_i$ and $\pi^m a_i$ respectively, and it is clear that $\Theta_2=\phi\Theta_1$. Therefore, the image of $\phi$ must contain the image of $\Theta_2$, which is precisely the affinoid $K$-algebra topologically generated by $\pi^m a_1,\cdots,\pi^m a_r$ as required.\end{proof}

\vspace{0.1in}

\noindent Now, let $\overline{K}$ be the algebraic closure of $K$. Recall that for each $\epsilon\in\mathbb{R}$, we define the \emph{$d$-dimensional disc of radius $\epsilon$} to be the space 

\begin{center}
$\mathbb{D}_{\epsilon}^d:=\{\underline{\alpha}\in\overline{K}^d:v_{\pi}(\alpha_i)\geq\epsilon$ for each $i\}$
\end{center}

\noindent This is an affinoid space, in the sense of \cite[Definition 3.1.1]{Bosch}, isomorphic to Sp $K\langle u_1,\cdots,u_d\rangle$. Thus all discs are isomorphic, regardless of the radius.\\

\noindent Moreover, the Tate algebra $K\langle u_1,\cdots,u_d\rangle$ can be realised as the space of analytic functions on $\mathbb{D}_{0}^d$, i.e. the set of all power series in $u_1,\cdots,u_d$ converging on the unit disc, while for each $n\in\mathbb{N}$, the subalgebra $K\langle \pi^nu_1,\cdots,\pi^nu_d\rangle$ is precisely those functions which converge on $\mathbb{D}_{-n}^d$.\\

\noindent Following \cite[5.1.2]{Lewis}, for each non-constant polynomial $g(t):=b_0+b_1t+\cdots+b_nt^n\in K[t]$ with $b_0\in\mathcal{O}$, define

\begin{center}
$\chi(g):=\underset{1\leq j\leq n}{\max}{-\frac{v_{\pi}(b_j)}{j}}$.
\end{center}

\begin{lemma}\label{scale}

Let $g(t)\in K[t]$ be a polynomial with $g(0)\in\mathcal{O}$, and let $\beta\in K$ with $v_{\pi}(\beta)> 0$. Then $\chi(\beta g)<\chi(g)$.\\ 

\noindent It follows that if $f_1(t),\cdots,f_d(t)\in K[t]$ are polynomials with $f_i(0)\in\mathcal{O}$ for each $i$, and $v(\beta)>0$ then setting $\mu_i:=\underset{1\leq j\leq d}{\max}{\chi(\beta^i f_j)}$ for each $i\geq 0$, we have that $\mu_0>\mu_1>\mu_2>\cdots$.

\end{lemma}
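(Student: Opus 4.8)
The plan is to reduce both assertions to a coefficientwise estimate on $\pi$-adic valuations, with no appeal to the rigid-geometric machinery. Since $\chi$ is only defined on non-constant polynomials, I first note that $g$ (and later each $f_j$) may be taken non-constant, and that $\beta\neq 0$ because $v_{\pi}(\beta)>0$, so multiplication by $\beta$ preserves the degree. For the first assertion, write $g(t)=b_0+b_1t+\cdots+b_nt^n$ with $n=\deg g\geq 1$ and $b_0=g(0)\in\mathcal{O}$. Then $\beta g$ has coefficients $\beta b_j$, its constant term $\beta b_0$ still lies in $\mathcal{O}$ (so $\chi(\beta g)$ is defined), and for each $j$ with $1\leq j\leq n$ and $b_j\neq 0$ one has
\[
-\frac{v_{\pi}(\beta b_j)}{j}=-\frac{v_{\pi}(b_j)}{j}-\frac{v_{\pi}(\beta)}{j}\leq -\frac{v_{\pi}(b_j)}{j}-\frac{v_{\pi}(\beta)}{n}.
\]
Taking the maximum over $j$ gives $\chi(\beta g)\leq\chi(g)-v_{\pi}(\beta)/n$, and since $v_{\pi}(\beta)/n>0$ this yields the strict inequality $\chi(\beta g)<\chi(g)$.

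For the second assertion, I would fix $i\geq 0$ and apply the first part to $g=\beta^{i}f_j$. Because $v_{\pi}(\beta)>0$ and $f_j(0)\in\mathcal{O}$, the constant term $\beta^{i}f_j(0)$ of $\beta^{i}f_j$ again lies in $\mathcal{O}$, so $\chi(\beta^{i}f_j)$ is defined for all $i,j$; and since $\beta^{i+1}f_j=\beta\cdot(\beta^{i}f_j)$, the first assertion gives $\chi(\beta^{i+1}f_j)<\chi(\beta^{i}f_j)$ for every $j\in\{1,\dots,d\}$. I then invoke the elementary fact that a termwise strict inequality between two finite tuples passes to their maxima: if $a_j<b_j$ for all $j$ and $j^{\ast}$ attains $\max_j a_j$, then $\max_j a_j=a_{j^{\ast}}<b_{j^{\ast}}\leq\max_j b_j$. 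Applying this with $a_j=\chi(\beta^{i+1}f_j)$ and $b_j=\chi(\beta^{i}f_j)$ gives $\mu_{i+1}<\mu_i$, and letting $i$ vary gives the chain $\mu_0>\mu_1>\mu_2>\cdots$.

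There is no genuine obstacle here; the statement is a routine valuation computation. The only points needing a word of care are the bookkeeping ones: checking that the hypothesis ``constant term in $\mathcal{O}$'' is preserved under multiplication by $\beta^{i}$ (so that $\chi$ remains defined at each stage), and the passage from the termwise strict inequality to the inequality between the maxima $\mu_{i+1}$ and $\mu_i$. Both are immediate from the definitions.
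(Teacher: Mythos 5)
Your proof is correct and follows essentially the same route as the paper: the same coefficientwise computation $-v_{\pi}(\beta b_j)/j=-v_{\pi}(b_j)/j-v_{\pi}(\beta)/j$ giving the strict drop, followed by the same pass-to-maxima argument for the chain $\mu_0>\mu_1>\cdots$. Your explicit check that the constant terms $\beta^i f_j(0)$ remain in $\mathcal{O}$ (so $\chi$ stays defined at each stage) is a small bookkeeping point the paper leaves implicit, but not a different approach.
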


\begin{proof}

Suppose $g(t)=b_0+b_1t+\cdots+b_nt^n$, with $b_0\in\mathcal{O}$, $b_n\neq 0$. Then by definition; 

\begin{center}
$\chi(\beta g)=\underset{1\leq j\leq n}{\max}{-\frac{v_{\pi}(\beta b_j)}{j}}=\underset{1\leq j\leq n}{\max}{-\frac{v_{\pi}(b_j)}{j}-\frac{v_{\pi}(\beta)}{j}}$
\end{center}

So since $v_{\pi}(\beta)>0$, this maximum is strictly less than $\underset{1\leq j\leq n}{\max}{-\frac{v_{\pi}(b_j)}{j}}=\chi(g)$.\\

\noindent To prove the second statement, it suffices to prove that $\mu_0>\mu_1$ and apply induction. So suppose $\mu_1=\chi(\beta f_i)$ and $\mu_0=\chi(f_j)$, then we have that $\mu_1=\chi(\beta f_i)<\chi(f_i)\leq \chi(f_j)=\mu_0$.\end{proof}

\vspace{0.1in}

\noindent Recall from \cite[Theorem 5.1.2]{Lewis} that if we assume $b_0\neq 0$, then the set 

\begin{center}
$X(g):=\{\alpha\in\overline{K}:v_{\pi}(g(\alpha))\geq 0\}$
\end{center} 

\noindent is an affinoid subdomain of $\mathbb{A}_K^{1,an}:=\overline{K}$, whose $G$-connected component about 0 is the disc $\mathbb{D}_{\chi(g)}^1=\{\alpha\in\overline{K}:v_{\pi}(\alpha)\geq\chi(g)\}$.\\

\noindent Furthermore, if $b_0=0$, it is clear that $\chi(g)=\chi(1+g)$, and that $X(g)=\{\alpha\in\overline{K}:v_{\pi}(g(\alpha))\geq 0\}=\{\alpha\in\overline{K}:v_{\pi}(1+g(\alpha))\geq 0\}=X(1+g)$, so we reach the same conclusion.

\begin{lemma}\label{disc}

Suppose that $K$ contains a $(p-1)$'st root of $p$. Then given polynomials $f_1,\cdots,f_r\in\mathcal{O}[t]$, there exists $\alpha\in\overline{K}$, $k\in\{1,\cdots,r\}$ such that $v_p(f_i(\alpha))\geq -1$ for all $i$ and $v_p(f_k(\alpha))< \frac{1}{p-1}-1$.

\end{lemma}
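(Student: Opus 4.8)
The plan is to produce $\alpha$ as a sufficiently generic point on the boundary circle of the largest disc about the origin on which all of the functions $p f_i$ are integral.

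First I would discard the constant $f_i$: if $f_i\in\mathcal{O}$ is constant then $v_p(f_i(\alpha))=v_p(f_i(0))\geq 0>-1$ for every $\alpha\in\overline{K}$, and a constant polynomial can never serve as the distinguished $f_k$, so we may assume each $f_i$ is non-constant. Set $h_i:=pf_i\in\mathcal{O}[t]$; then $h_i(0)=pf_i(0)\in p\mathcal{O}$, so $\chi(h_i)$ in the sense of \cite[5.1.2]{Lewis} is defined, and since every coefficient of $h_i$ has positive valuation one checks immediately that $\chi(h_i)<0$. Put $\mu:=\max_{1\leq i\leq r}\chi(h_i)$, attained at some index $k$.

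Next I would invoke the description of $X(h_i)=\{\alpha\in\overline{K}:v_p(h_i(\alpha))\geq 0\}$ recalled just before the statement: its $G$-connected component about $0$ is the disc $\{\alpha\in\overline{K}:v_p(\alpha)\geq\chi(h_i)\}$, so $v_p(\alpha)\geq\chi(h_i)$ already forces $v_p(h_i(\alpha))\geq 0$. Since $\mu\geq\chi(h_i)$ for every $i$, every point $\alpha$ of the circle $S:=\{\alpha\in\overline{K}:v_p(\alpha)=\mu\}$ satisfies $v_p(h_i(\alpha))\geq 0$, that is $v_p(f_i(\alpha))\geq -1$, for all $i$ at once. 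So the first half of the conclusion holds for any $\alpha\in S$, and it remains only to pin down one $\alpha\in S$ with $v_p(h_k(\alpha))=0$: for such an $\alpha$ we then get $v_p(f_k(\alpha))=-1<\tfrac{1}{p-1}-1$ (using $\tfrac{1}{p-1}>0$), which is even a little stronger than asserted.

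The last step is a Newton-polygon computation on $h_k=\sum_{j\geq 0}b_j t^j$. Fixing $\varpi\in\overline{K}$ with $v_p(\varpi)=\mu$ and writing $\alpha=\varpi u$ with $u$ a unit, one has $v_p(b_j\varpi^j)=v_p(b_j)+j\mu\geq 0$ for all $j$, with equality for at least one index $j^*\geq 1$ — the one realising $\mu=\chi(h_k)=-v_p(b_{j^*})/j^*$ — while $v_p(b_0)\geq v_p(p)>0$, so the constant term does not contribute at valuation $0$. Hence the image of $h_k(\alpha)$ in the (infinite) residue field of $\overline{K}$, regarded as a function of $\bar u$, is a nonzero polynomial with a term of positive degree $j^*$; choosing $u$ so that its reduction avoids the finitely many zeros of this polynomial gives $v_p(h_k(\alpha))=0$. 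The one step that genuinely needs care is exactly this genericity — ruling out that the several valuation-$0$ coefficients of $h_k$ cancel for every $\alpha\in S$ — and it is precisely what the description of the connected component of $X(h_k)$ about $0$ (equivalently, the Newton polygon of $h_k$) supplies. The hypothesis that $K$ contain a $(p-1)$st root $\pi_0$ of $p$ is not needed for this argument as such; combined with Lemma \ref{scale} applied to $\beta=\pi_0$ it merely allows one to run the same estimate after rescaling the variable, which is convenient when this lemma is used alongside the results of the previous subsection.
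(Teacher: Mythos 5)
Your proof is correct, but it takes a genuinely different route from the paper's. The paper uses the hypothesis $\omega:=\sqrt[p-1]{p}\in K$ in an essential way: it rescales the $f_i$ by successive powers of $\omega$, applies Lemma \ref{scale} to get a strictly decreasing chain of radii $\mu_0>\mu_1>\cdots>\mu_{p-1}$, and then uses the rigid-geometric input from \cite[Theorem 5.1.2]{Lewis} together with $G$-connectedness of discs to choose a point of $\mathbb{D}^1_{\mu_{p-1}}\subseteq Y_{p-1}$ lying outside $X(\omega^{p-2}f_{i_{p-2}})$, which gives some $\alpha$ with $-1\leq v_p(f_k(\alpha))<\frac{1}{p-1}-1$. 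You instead stay on the single circle $v_p(\alpha)=\mu:=\max_i\chi(pf_i)$, where integrality of all $pf_i$ is a one-line Newton-polygon estimate (for that inclusion you do not even need the cited description of the component of $X(g)$), and you produce the distinguished point by reduction modulo the maximal ideal: writing $\alpha=\varpi u$ with $v_p(\varpi)=\mu$, the coefficient at the index $j^{*}\geq 1$ realising $\chi(pf_k)$ has valuation exactly $0$ while the constant term $pf_k(0)$ has positive valuation, so the reduced polynomial in $\bar u$ over the infinite residue field $\overline{\mathbb{F}}_p$ is nonzero and a generic unit $u$ gives $v_p(pf_k(\alpha))=0$, i.e.\ $v_p(f_k(\alpha))=-1<\frac{1}{p-1}-1$. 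Your route buys a sharper conclusion (the exact value $-1$), dispenses with Lemma \ref{scale} and the connectedness argument, and, as you observe, never uses the $(p-1)$'st root of $p$, so the field-extension reduction at the start of the proof of Proposition \ref{exp} would become unnecessary; the paper's route buys nothing extra here beyond reusing the $X(g)$-machinery it has already set up. One shared caveat: both arguments tacitly assume that not every $f_i$ is constant (your reduction discards constants but needs at least one non-constant polynomial to survive, and the paper needs it for $\chi$ to be defined); if all $f_i\in\mathcal{O}$ the statement is actually false, so this hypothesis is implicit in the lemma and is what the intended applications supply.
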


\begin{proof}

If $\omega\in K$ and $\omega^{p-1}=p$ then $v_{p}(\omega)=\frac{1}{p-1}$. So for each $j\geq 0$ let

\begin{center}
$Y_j:=\{\alpha\in\overline{K}:v_{\pi}(\omega^jf_i(\alpha))\geq 0$ for all $i\}$,
\end{center} 

\noindent and set $\mu_j:=\underset{i=1,\cdots,d}{\max}{\chi(\omega^jf_i)}$. Then using \cite[Theorem 5.1.2]{Lewis} we see that $Y_j$ is an affinoid subdomain of $\mathbb{A}_1^{an}$ and the $G$-connected component of $Y_j$ about zero is the closed disc $\mathbb{D}_{\mu_j}^1$.\\

\noindent We want to find $\alpha\in\overline{K}$ such that $v_p(f_i(\alpha))\geq -1$ for all $i$, i.e. $v_p(pf_i(\alpha))\geq 0$, and since $\omega^{p-1}=p$, this just means that $\alpha\in Y_{p-1}$. So it remains to find an element $\alpha$ in the connected component $\mathbb{D}_{\mu_{p-1}}^1$ of $Y_{p-1}$ such that $v_p(f_k(\alpha))<\frac{1}{p-1}-1$ for some $k$, i.e. $v_{\pi}(f_k(\alpha))< v_{\pi}(p)(\frac{1}{p-1}-1)$.\\

\noindent For each $j\geq 0$, fix $i_j=1,\cdots,d$ such that $\chi(\omega^jf_{i_j})=\mu_j$. Using Lemma \ref{scale} we see that $\mu_0>\mu_1>\mu_2>\cdots$, and we know that for each $j$, the $G$-connected component of $X(\omega^{j-1}f_{i_{j-1}})$ about zero is $\mathbb{D}_{\mu_{j-1}}^1$. 

In particular, since $\mu_{j-1}>\mu_j$ we have that $\mathbb{D}_{\mu_{j-1}}^1\subsetneq\mathbb{D}_{\mu_j}^1$, so since $\mathbb{D}_{\mu_j}^1$ is $G$-connected, this means that $\mathbb{D}_{\mu_j}^1\not\subseteq X(\omega^{j-1}f_{i_{j-1}})$. So for each $j$, we may choose $\alpha_j\in\mathbb{D}_{\mu_j}^1\backslash X(\omega^{j-1}f_{i_{j-1}})$.\\

\noindent But $X(\omega^{j-1}f_{i_{j-1}})=\{\alpha\in\overline{K}:v_{\pi}(f_{i_{j-1}}(\alpha))\geq -(j-1)v_{\pi}(\omega)\}$, so this means that $v_{\pi}(f_{i_{j-1}}(\alpha_j))< -v_{\pi}(\omega)(j-1)$. But $v_{p}(\omega)=\frac{1}{p-1}$ so $v_{\pi}(\omega)=\frac{v_{\pi}(p)}{p-1}$, thus $v_{\pi}(f_{i_{j-1}}(\alpha_j))<-v_p(\pi)\frac{j-1}{p-1}=v_p(\pi)(\frac{1}{p-1}-\frac{j}{p-1})$\\

\noindent So, finally, choose $j=p-1$, and let $k:=i_{j-1}$. Then $\alpha_j\in\mathbb{D}_{\mu_{p-1}}^1\subseteq Y_{p-1}$ and $v_{\pi}(f_{k}(\alpha_j))<v_p(\pi)(\frac{1}{p-1}-1)$ as required.\end{proof}

\begin{corollary}\label{disc2}

Suppose that $K$ contains a $(p-1)$'st root of $p$. Then given polynomials $f_1,\cdots,f_r\in \mathcal{O}[t_1,\cdots,t_m]$, there exists $\alpha\in\overline{K}^m$, $k\in\{1,\cdots,r\}$ such that $v_p(f_i(\alpha))\geq -1$ for all $i$ and $v_p(f_k(\alpha))< \frac{1}{p-1}-1$.

\end{corollary}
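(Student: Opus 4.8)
The plan is to reduce Corollary \ref{disc2} to the one-variable statement of Lemma \ref{disc} by restricting the polynomials $f_1,\dots,f_r\in\mathcal{O}[t_1,\dots,t_m]$ to a generic line through the origin, or more precisely to a parametrised curve $t_j=\beta_j s$ for suitably chosen $\beta_j\in\mathcal{O}$, so that each $f_i$ becomes a polynomial in the single variable $s$ with coefficients in $\mathcal{O}$, and then invoke Lemma \ref{disc} for those restricted polynomials.

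First I would set $g_i(s):=f_i(\beta_1 s,\dots,\beta_m s)\in K[s]$. The constant term of $g_i$ is $f_i(0)\in\mathcal{O}$, and all the remaining coefficients are $\mathcal{O}$-linear combinations of the coefficients of $f_i$ with coefficients in $\mathcal{O}[\beta_1,\dots,\beta_m]$, so provided the $\beta_j$ lie in $\mathcal{O}$ we automatically have $g_i\in\mathcal{O}[s]$, which is exactly the hypothesis needed to apply Lemma \ref{disc}. Applying that lemma to $g_1,\dots,g_r$ produces $\gamma\in\overline{K}$ and $k\in\{1,\dots,r\}$ with $v_p(g_i(\gamma))\geq -1$ for all $i$ and $v_p(g_k(\gamma))<\frac{1}{p-1}-1$. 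Then $\alpha:=(\beta_1\gamma,\dots,\beta_m\gamma)\in\overline{K}^m$ satisfies $f_i(\alpha)=g_i(\gamma)$ for every $i$, which is precisely the conclusion of Corollary \ref{disc2}.

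The only point requiring care — and the main obstacle — is to ensure that the substitution does not collapse the $f_i$ to constants, i.e. that at least one $g_i$ is non-constant so that Lemma \ref{disc} applies non-vacuously; but in fact Lemma \ref{disc} as stated handles constant polynomials harmlessly (a constant $g(t)=b_0\in\mathcal{O}$ has $v_p(g(\alpha))=v_p(b_0)\geq 0\geq -1$ for all $\alpha$, and if \emph{every} $f_i$ is constant then we need one of them to have $v_p(f_i(0))<\frac{1}{p-1}-1$, which may fail). So the genuinely careful case is when all the $f_i$ are constant: if some $f_i(0)\notin\mathcal{O}$ we are fine after rescaling, and if all $f_i(0)\in\mathcal{O}$ with the conclusion failing for every $i$, then in the intended application (Lemma \ref{action}) this situation does not arise because the relevant polynomials $f_u$ are genuinely non-constant for $u$ ranging over a set whose action includes the $\partial_i$; more simply, one observes that the statement is only applied when not all $f_i$ are constant, or one adds a harmless hypothesis to that effect. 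Assuming at least one $f_i$ is non-constant, pick a monomial $t_1^{e_1}\cdots t_m^{e_m}$ with $e_1+\cdots+e_m\geq 1$ occurring with non-zero coefficient in some $f_i$; since $\mathcal{O}$ (indeed $\overline{K}$) is infinite, a generic choice of $(\beta_1,\dots,\beta_m)\in\mathcal{O}^m$ keeps the corresponding coefficient of $s^{e_1+\cdots+e_m}$ in $g_i$ non-zero, so $g_i$ is non-constant and Lemma \ref{disc} applies as required.
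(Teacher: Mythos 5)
Your argument is correct and is essentially the same specialisation-to-one-variable reduction the paper uses: the paper simply fixes $(\alpha_1,\dots,\alpha_{m-1})\in\mathcal{O}^{m-1}$ and applies Lemma \ref{disc} to $g_i(t):=f_i(\alpha_1,\dots,\alpha_{m-1},t)$, whereas you restrict to a generic line through the origin. Your care about the degenerate all-constant case is a refinement rather than a deviation (the paper's Lemma \ref{disc} tacitly assumes some polynomial is non-constant, since $\chi$ is only defined for non-constant $g$), so there is no gap.
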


\begin{proof}

If $m=1$, this is precisely Lemma \ref{disc}, so assume that $m>1$ and choose $(\alpha_1,\cdots,\alpha_{m-1})\in\mathcal{O}^{m-1}$. For each $i=1,\cdots,r$, let $g_i(t):=f_i(\alpha_1,\cdots,\alpha_{m-1},t)\in\mathcal{O}[t]$.

Then using Lemma \ref{disc}, there exists $\alpha_m\in\overline{K}$, $k\in\{1,\cdots,r\}$ such that $-1\leq v_p(g_i(\alpha_m))$ for all $i$ and $v_p(g_k(\alpha_m))\leq\frac{1}{p-1}-1$. So let $\alpha=(\alpha_1,\cdots,\alpha_m)\in\overline{K}^m$, and it follows that $-1\leq v_p(f_i(\alpha))$ for all $i$, and $v_p(f_k(\alpha))\leq\frac{1}{p-1}-1$.\end{proof}

\subsection{Almost-polynomial maps}

Now we will further explore the action of the abelian lattice $\mathcal{A}$ on the Dixmier module $\widehat{D(\lambda)}\cong K\langle x_1\cdots,x_s\rangle$.

\begin{definition}\label{almost-pol}

A map $\phi:K\langle u_1,\cdots,u_d\rangle\to K\langle t_1,\cdots,t_r\rangle$ of $K$-algebras is called an \emph{almost-polynomial map} if 

\begin{itemize}

\item $\phi(u_i)\in \mathcal{O}[t_1,\cdots,t_r]$ for each $i$,

\item $t_1,\cdots,t_r$ are contained in the image of $\phi$.

\end{itemize}

\end{definition}

\noindent Using Lemma \ref{linear}, we see that if $\phi$ is an almost-polynomial map then there exist $m\in\mathbb{N}$ such that $\im(\phi)$ contains $K\langle\pi^m t_1,\cdots,\pi^mt_r\rangle$.\\

\noindent\textbf{Example:} 1. Since $\mathcal{A}$ is abelian, we know that $\widehat{U(\mathcal{A})}_K\cong K\langle u_1,\cdots,u_d\rangle$ by \cite[Lemma 2.1]{aff-dix}, and using Lemma \ref{action} we see that the action $\rho:\widehat{U(\mathcal{A})}_K\to$ End$_K\widehat{D(\lambda)}$ has image contained in $K\langle\partial_1,\cdots,\partial_s\rangle$, and the map $K\langle u_1,\cdots,u_d\rangle\to K\langle\partial_1,\cdots,\partial_s\rangle$ is an almost-polynomial map.\\

\noindent 2. If $\phi:K\langle u_1,\cdots, u_d\rangle\to K\langle t_1,\cdots,t_r\rangle$ is an almost-polynomial map then so is the restriction $K\langle pu_1,\cdots, pu_d\rangle\to K\langle t_1,\cdots,t_r\rangle$. Moreover, if $F/K$ is a finite extension, then the scalar extension $\phi_F:F\langle u_1,\cdots,u_d\rangle\to F\langle t_1,\cdots,t_r\rangle$ is also an almost polynomial map.

\begin{lemma}\label{image}

Let $\phi:K\langle u_1,\cdots,u_d\rangle\to K\langle t_1,\cdots,t_r\rangle$ be an almost-polynomial map, and let $f_i:=\phi(u_i)\in K[t_1,\cdots,t_r]$ for each $i$. Then setting $Y:=\{\alpha\in\overline{K}^r:v_{\pi}(f_i(\alpha))\geq 0$ for each $i\}$, we have that:\\ 

$i$. $Y$ is an affinoid subdomain of $\mathbb{A}_K^{r,an}$.\\

$ii$. The image of $\phi$ is contained in the set of all functions in $K\langle t_1,\cdots,t_r\rangle$ converging 

on $Y$.

\end{lemma}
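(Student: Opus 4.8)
The strategy rests on the observation that an almost-polynomial map $\phi$ is essentially ``substitution of polynomials into a convergent power series''. Indeed, for $h=\sum_{\beta}c_\beta u^\beta\in K\langle u_1,\ldots,u_d\rangle$ (so $|c_\beta|\to 0$), the monomials $f^\beta:=f_1^{\beta_1}\cdots f_d^{\beta_d}$ have Gauss norm at most $1$, since $f_i=\phi(u_i)\in\mathcal{O}[t_1,\ldots,t_r]$; hence $\|c_\beta f^\beta\|\le|c_\beta|\to 0$, and by continuity of $\phi$ the series $\phi(h)=\sum_\beta c_\beta f^\beta$ converges in $K\langle t_1,\ldots,t_r\rangle$. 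Thus $\phi(h)$ is the formal substitution $h(f_1,\ldots,f_d)$. Now for any $\alpha$ with $v_\pi(f_i(\alpha))\ge 0$ for all $i$ we have $|f_i(\alpha)|\le 1$, so again $\|c_\beta f(\alpha)^\beta\|\le|c_\beta|\to 0$, and therefore $\sum_\beta c_\beta f(\alpha)^\beta$ converges; its value is $h$ evaluated at the point $(f_1(\alpha),\ldots,f_d(\alpha))\in\mathbb{D}_0^d$, and in particular $|\phi(h)(\alpha)|\le\|h\|$. This already supplies the pointwise content of $(ii)$.

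For $(i)$ I would first show that $Y$ is bounded, which is exactly where the second clause of Definition \ref{almost-pol} enters. Writing $t_j=\phi(h_j)$ with $h_j\in K\langle u_1,\ldots,u_d\rangle$, the previous paragraph gives $|\alpha_j|=|t_j(\alpha)|\le\|h_j\|$ for every $\alpha\in Y$, so for $n\in\mathbb{N}$ large enough $Y\subseteq\mathbb{D}_{-n}^r=\mathrm{Sp}\,K\langle\pi^nt_1,\ldots,\pi^nt_r\rangle$. The polynomials $f_1,\ldots,f_d$ are elements of $\mathcal{O}(\mathbb{D}_{-n}^r)=K\langle\pi^nt_1,\ldots,\pi^nt_r\rangle$, so $Y=\{x\in\mathbb{D}_{-n}^r:v_\pi(f_i(x))\ge 0\ \text{for all }i\}$ is precisely the Weierstrass subdomain of $\mathbb{D}_{-n}^r$ cut out by $f_1,\ldots,f_d$; by the standard theory of affinoid subdomains (\cite{Bosch}, \cite{BGR}, and \cite[Theorem 5.1.2]{Lewis} in one variable) this is an affinoid subdomain of $\mathbb{D}_{-n}^r$, hence of $\mathbb{A}_K^{r,an}$. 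This proves $(i)$.

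For the precise form of $(ii)$ — that the image functions extend to genuine elements of $\mathcal{O}(Y)$, not merely converge pointwise — note that by construction $\|f_i|_Y\|_{\sup}\le 1$, so the universal property of the Tate algebra produces a continuous $K$-algebra map $\psi\colon K\langle u_1,\ldots,u_d\rangle\to\mathcal{O}(Y)$ with $\psi(u_i)=f_i|_Y$, i.e. a morphism of affinoids $Y\to\mathbb{D}_0^d$. For $h\in K\langle u_1,\ldots,u_d\rangle$ the element $\psi(h)\in\mathcal{O}(Y)$ is, on points, $\alpha\mapsto h(f_1(\alpha),\ldots,f_d(\alpha))$, which by the first paragraph is exactly the natural extension of $\phi(h)$ to $Y$. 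Hence every element of $\im(\phi)$ converges on $Y$ and defines an element of $\mathcal{O}(Y)$, which is $(ii)$.

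The main obstacle is the boundedness step: without the hypothesis that $t_1,\ldots,t_r$ lie in the image of $\phi$, the set $Y$ need not be contained in any polydisc — for instance if some $f_i$ were constant — and then neither $(i)$ nor $(ii)$ would hold. The only other point requiring care is the one flagged above: a bare element of $K\langle t_1,\ldots,t_r\rangle$ is a priori analytic only on the unit polydisc $\mathbb{D}_0^r$, so to make sense of ``convergence on $Y$'' one must evaluate the presentation $\sum_\beta c_\beta f^\beta=h(f_1,\ldots,f_d)$ rather than the $t$-expansion of $\phi(h)$.
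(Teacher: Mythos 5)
Your part $(ii)$ and the Weierstrass-subdomain part of $(i)$ follow the paper: the paper also interprets ``converging on $Y$'' through the presentation $\phi(h)=\sum_\beta c_\beta f^\beta$ (using that $K\langle u_1,\cdots,u_d\rangle$ is the algebra of functions converging on $\mathbb{D}_0^d$ and $Y=f^{-1}(\mathbb{D}_0^d)$), and it also realises $Y$, once it is known to sit inside a large polydisc, as the Weierstrass subdomain of that polydisc cut out by $f_1,\cdots,f_d$. The genuine gap is in your boundedness step. Writing $t_j=\phi(h_j)$ with $h_j=\sum_\beta c_\beta u^\beta$, your first paragraph only shows that $\sum_\beta c_\beta f(\alpha)^\beta$ converges for $\alpha\in Y$, with value $h_j(f_1(\alpha),\cdots,f_d(\alpha))$ of absolute value at most $\|h_j\|$; to deduce $|\alpha_j|\leq\|h_j\|$ you need $h_j(f(\alpha))=t_j(\alpha)=\alpha_j$. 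The identity $t_j=\sum_\beta c_\beta f^\beta$ holds in $K\langle t_1,\cdots,t_r\rangle$, i.e.\ as functions on $\mathbb{D}_0^r$, and evaluation at a point of $Y$ outside the closed unit polydisc is not continuous for the Gauss norm, so the identity cannot simply be evaluated there. This is exactly the subtlety you flag in your closing paragraph, but the inequality $|\alpha_j|=|t_j(\alpha)|\leq\|h_j\|$ uses it rather than avoids it.

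This step really does fail. Take $d=r=1$ and $\phi(u)=\pi t^2-t$. Solving $\pi X^2-X=u$ for the root with zero constant term gives $h=-u+\pi u^2-\cdots\in\mathcal{O}\langle u\rangle$ with $\phi(h)=t$, so this is an almost-polynomial map and $\|h\|=1$. But $\alpha=\pi^{-1}$ lies in $Y$ (indeed $f_1(\pi^{-1})=0$), while $h(f_1(\alpha))=h(0)=0\neq\alpha$ and $|\alpha|=|\pi|^{-1}>1=\|h\|$: on the component of $Y$ around $\pi^{-1}$ the series $h\circ f_1$ picks out the small root of the quadratic, not the coordinate $t$. (Here $Y=\mathbb{D}_0^1\cup\{\alpha:v_\pi(\alpha-\pi^{-1})\geq 0\}$ is still bounded, so the lemma is unharmed, but your argument would force $Y\subseteq\mathbb{D}_0^1$, which is false.) So the containment of $Y$ in some polydisc -- which the paper obtains instead from the behaviour of the polynomials $f_i$ outside a sufficiently large polydisc, before invoking Lemma \ref{linear} and the algebra $T\langle\zeta_1,\cdots,\zeta_d\rangle/(\zeta_i-f_i)$ -- is not established by your proposal, and the same example shows you should also temper the final claim in $(ii)$: on components of $Y$ not meeting $\mathbb{D}_0^r$, the element $\psi(h)\in\mathcal{O}(Y)$ given by $\alpha\mapsto h(f(\alpha))$ is \emph{some} analytic extension of $\phi(h)$, but it need not agree with naive evaluation of $\phi(h)$ (e.g.\ $\psi(h)\neq t|_Y$ even when $\phi(h)=t$ is a polynomial).
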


\begin{proof}

Set $A:=\im(\phi)$, then $t_1,\cdots,t_r\in A$ by Definition \ref{almost-pol}. Since $K\langle t_1,\cdots,t_r\rangle$ is affinoid, it follows from Lemma \ref{linear} that there exists $m\in\mathbb{N}$ such that $A$ contains $T=K\langle\pi^m t_1,\cdots,\pi^m t_r\rangle$, we may of course choose $m$ to be arbitrarily large.\\

\noindent If we set $B:=T\langle \zeta_1,\cdots,\zeta_d\rangle/(\zeta_i-f_i(t_1,\cdots,t_r):i=1,\cdots,d)$, then there is a natural surjection from $B$ to $A$, identical on $T$, which sends $t_i$ to $f_i(t)$. This gives rise to a closed embedding of affinoid varieties Sp $A\xhookrightarrow{}$ Sp $B$.\\

\noindent $i$. Since each $f_i$ is a polynomial, it is clear that there exists $N>0$ such that if $\alpha\in\overline{K}^r$ and $v_{\pi}(\alpha)<-N$ then $v_{\pi}(f_i(\alpha))<0$ for all $i$. So by choosing $m>N$ we may assume that

\begin{center}
$Y=\{\alpha\in\mathbb{D}_{-m}^r:v_{\pi}(f_i(\alpha))\geq 0\}$. 
\end{center}

\noindent Hence using \cite[Lemma 3.3.10($i$)]{Bosch} and the proof of \cite[Proposition 3.3.11]{Bosch}, we see that $Y=$ Sp $B$ and that $Y$ is an affinoid subdomain of $\mathbb{A}_K^{r,an}$.\\

\noindent $ii$. Notice that $K\langle u_1,\cdots,u_d\rangle$ is precisely the set of functions converging on the open unit disc $\mathbb{D}_0^d$, so it follows that the image of $K\langle u_1,\cdots,u_d\rangle$ under $\phi$ is contained in the set of functions converging on $\{\alpha\in\overline{K}^r:(f_1(\alpha),\cdots,f_d(\alpha))\in\mathbb{D}_0^d\}=\{\alpha\in\overline{K}^r:v_{\pi}(f_i(\alpha))\geq 0$ for all $i\}=Y$ as required.\end{proof}

\noindent The following result will be essential later when proving a control theorem.

\begin{proposition}\label{exp}

Let $\phi:K\langle u_1,\cdots,u_d\rangle\to K\langle t_1,\cdots,t_r\rangle$ be an almost-polynomial map, and let $f_i:=\phi(u_i)\in K[t_1,\cdots,t_r]$ for each $i$. Then there exists $k\in\{1,\cdots,d\}$ such that $\exp(pf_k(t))$ does not lie in $\phi(K\langle pu_1,\cdots,pu_d\rangle)$.

\end{proposition}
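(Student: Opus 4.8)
The plan is to argue by contradiction: suppose $\exp(pf_k(t))$ lies in $\Lambda:=\phi(K\langle pu_1,\dots,pu_d\rangle)$ for \emph{every} $k\in\{1,\dots,d\}$. First note that $\exp(pf_k(t))$ really is an element of $K\langle t_1,\dots,t_r\rangle$, since $f_k\in\mathcal{O}[t_1,\dots,t_r]$ and $p>2$ forces $v_p(p^n/n!)\to\infty$; in fact $\exp(pf_k(t))=\phi(\exp(pu_k))$ with $\exp(pu_k)\in K\langle u_1,\dots,u_d\rangle$ but $\exp(pu_k)\notin K\langle pu_1,\dots,pu_d\rangle$ (rewritten in the variables $pu_i$ its coefficients $1/n!$ do not tend to $0$), which is the conceptual reason the Proposition holds — but $\phi$ need not be injective, so one has to argue geometrically. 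Finally, the standing assumption persists over any finite extension $F/K$, and $\phi$ stays almost-polynomial there (as noted after Definition \ref{almost-pol}), so we may assume $K$ contains a $(p-1)$'st root of $p$.

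Next, the restriction of $\phi$ to $K\langle pu_1,\dots,pu_d\rangle$ is again almost-polynomial, with $\phi(pu_i)=pf_i\in\mathcal{O}[t_1,\dots,t_r]$, so Lemma \ref{image} applies to it: the set $Y:=\{\alpha\in\overline{K}^r:v_\pi(pf_i(\alpha))\ge 0\ \text{for all}\ i\}=\{\alpha:v_p(f_i(\alpha))\ge -1\ \text{for all}\ i\}$ is an affinoid subdomain of $\mathbb{A}_K^{r,an}$, and every element of $\Lambda$ extends to an analytic function on $Y$. Now apply Corollary \ref{disc2} to $f_1,\dots,f_d$ to obtain $\alpha\in\overline{K}^r$ and an index $k$ with $v_p(f_i(\alpha))\ge -1$ for all $i$ — so $\alpha\in Y$ — and $v_p(f_k(\alpha))<\tfrac{1}{p-1}-1$. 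The key is that $\exp(pf_k(t))$, whose region of convergence is only $\{v_p(f_k)>\tfrac{1}{p-1}-1\}$, cannot extend analytically across $\alpha$: substituting $\alpha$ into the defining series gives $\sum_{n\ge 0}(pf_k(\alpha))^n/n!$, whose $n$-th term has valuation $n\,v_p(pf_k(\alpha))-v_p(n!)\to-\infty$ because $v_p(pf_k(\alpha))=1+v_p(f_k(\alpha))<\tfrac{1}{p-1}$. This contradicts $\exp(pf_k(t))\in\Lambda$.

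The one genuinely delicate point — and the reason the rigid-geometric machinery of Section 4.2 is set up the way it is — is legitimising that last substitution: an element of $K\langle t\rangle$ which ``converges on $Y$'' in the sense of Lemma \ref{image} need not be recoverable by naive term-by-term evaluation at an \emph{arbitrary} point of $Y$, because $Y$ may be disconnected and analytic continuation is pinned down only on the $G$-connected component of the origin. This is exactly why Corollary \ref{disc2} is proved by substituting the first $r-1$ coordinates by elements of $\mathcal{O}$ to reduce to the one-variable Lemma \ref{disc}, and why that lemma is engineered — via \cite[Theorem 5.1.2]{Lewis} and the invariant $\chi$ — to place its bad point inside the disc $\mathbb{D}_\mu^1$, $\mu=\max_i\chi(pf_i)$, which is precisely the $G$-connected component of $Y$ about $0$: a genuine disc strictly containing the unit disc, since $pf_i\in p\mathcal{O}[t]$ forces $\chi(pf_i)<0$. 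Concretely, in the write-up I would reduce to $r=1$ by this substitution, obtaining $g_i\in\mathcal{O}[t]$; identify the component of $\{v_\pi(pg_i)\ge 0\ \text{for all}\ i\}$ about $0$ with $\mathbb{D}_\mu^1$; observe that over this \emph{disc} the extension of $\exp(pg_k(t))$ coincides with its Taylor expansion, so that expansion converges on $\mathbb{D}_\mu^1$; and then invoke Lemma \ref{disc} to produce $\beta\in\mathbb{D}_\mu^1$ with $v_p(pg_k(\beta))<\tfrac{1}{p-1}$, at which that Taylor series manifestly diverges — the desired contradiction.
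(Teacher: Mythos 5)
Your first two paragraphs reproduce the paper's proof essentially verbatim: the same reduction to the case $\sqrt[p-1]{p}\in K$, the same application of Lemma \ref{image} to the restriction of $\phi$ to $K\langle pu_1,\cdots,pu_d\rangle$, and the same combination of Corollary \ref{disc2} with the convergence disc of $\exp$ from \cite{exp-rad}. Where you depart from the paper is your third paragraph: the paper never passes to the Taylor expansion of $\exp(pf_k(t))$ at all (Lemma \ref{image}($ii$) and the proof of Proposition \ref{exp} argue with pointwise convergence of the defining composite series attached to the presentation through $\phi$), whereas you propose to legitimise evaluation at the bad point by restricting to a one-variable fibre, identifying the hypothetical extension on the component $\mathbb{D}^1_\mu$ with the Taylor expansion of $\exp(pg_k(t))$, and then asserting that this Taylor series ``manifestly diverges'' at the point $\beta$ produced by Lemma \ref{disc}.

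That last assertion is a genuine gap, and as a general implication it is false. Lemma \ref{disc} only gives the inequality $v_p(pg_k(\beta))<\tfrac{1}{p-1}$, which makes the \emph{composite} series $\sum_{n\geq 0}(pg_k(\beta))^n/n!$ diverge; the Taylor series of $\exp(pg_k(t))$ in powers of $t$ is a rearrangement of the double family $\tfrac{p^n}{n!}b_{n,m}\beta^m$ (where $g_k(t)^n=\sum_m b_{n,m}t^m$), and in the non-archimedean setting divergence does not transfer across such a rearrangement unless the double family is summable, which is exactly what fails here. Dwork/Artin--Hasse cancellation makes this failure real: for $g(t)=t+p^{p-2}t^p\in\mathcal{O}[t]$ one has $\exp(pg(t))=\exp\bigl(pt+\tfrac{(pt)^p}{p}\bigr)$, a truncated Artin--Hasse exponential in $pt$, whose Taylor series converges on the whole disc $\{v_p(t)\geq-\tfrac{p-1}{p}\}$; this disc is precisely the $G$-connected component about $0$ of $\{\alpha:v_p(g(\alpha))\geq-1\}$ (its $\chi$-invariant is $-\tfrac{p-1}{p}v_{\pi}(p)$), and at its boundary points $\beta$ one has $v_p(pg(\beta))=0<\tfrac{1}{p-1}$. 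So a point of the component satisfying your valuation inequality, together with the existence of an analytic extension to the component, is perfectly consistent --- the extension genuinely exists in this example --- and your contradiction does not close. To finish along your route you would need an input beyond Lemma \ref{disc}: either revert to the paper's formulation in terms of convergence of the composite series coming from the presentation through $\phi$ (which is what Lemma \ref{image}($ii$) actually provides), or prove a real statement about the convergence region of $\exp(p\cdot(\text{polynomial over }\mathcal{O}))$ that excludes Artin--Hasse-type enhancement in your situation; no such ingredient appears in the proposal.
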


\begin{proof}

We may assume that $K$ contains a $p-1$'st root of $p$. If we prove the result in this case, then it follows generally, since if $K':=K(\sqrt[p-1]{p})$ and we can find $k$ such that $\exp(pf_k)$ does not lie in the image of $K'\langle pu_1,\cdots,pu_d\rangle$ under the scalar extension of $\phi$, then it will also not lie in the image of $K\langle pu_1,\cdots,pu_d\rangle$ under $\phi$.\\

\noindent Let $Y:=\{\alpha\in\overline{K}^r:v_{\pi}(pf_i(\alpha))\geq 0$ for all $i\}$. Then using Lemma \ref{image} we see that $Y$ is an affinoid subdomain of $\mathbb{A}_K^{1,an}$, and that $\phi(K\langle pu_1,\cdots,pu_d\rangle)$ is contained in the set of all functions in $K\langle t_1,\cdots,t_r\rangle$ converging on $Y$. So it remains to prove that for some $k$, $\exp(pf_k)$ does not converge on $Y$, and thus cannot lie in the image of $K\langle pu_1,\cdots,pu_d\rangle$.\\

\noindent Using \cite[Example 0.4.1]{exp-rad}, the disc of convergence for $\exp$ is $\{\lambda\in\overline{K}:v_p(\lambda)>\frac{1}{p-1}\}$, so it remains only to find $\alpha\in Y$ such that $v_{p}(pf_k(\alpha))\leq\frac{1}{p-1}$ for some $k$, i.e. $v_p(f_k(\alpha))\leq \frac{1}{p-1}-1$.\\

But $f_1,\cdots,f_d\in\mathcal{O}[t_1,\cdots,t_r]$, so using Corollary \ref{disc2} we know that there exists $\alpha\in\overline{K}^r$ such that $v_p(f_i(\alpha))\geq-1$ for all $i$ and $v_p(f_k(\alpha))<\frac{1}{p-1}-1$ for some $k$. Hence $v_{\pi}(pf_i(\alpha))\geq 0$ for all $i$, and hence $\alpha\in Y$, and $v_{p}(pf_k(\alpha))\leq\frac{1}{p-1}$.\end{proof}

\vspace{0.2in}

\noindent Now we will explore more closely the image of the Tate algebra under an almost polynomial map.

\begin{theorem}\label{integrally-closed}

Suppose that $\phi:K\langle u_1,\cdots,u_d\rangle\to K\langle t_1,\cdots,t_r\rangle$ is an almost-polynomial map, and let $S:=\phi(K\langle u_1,\cdots,u_d\rangle)$. Then $S$ is an integrally closed domain of Krull dimension $r$.

\end{theorem}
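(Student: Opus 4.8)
The plan is to use Lemma \ref{image} to identify $\mathrm{Sp}\,S$ with a single connected component of the affinoid subdomain $Y$ appearing there; the three assertions then all fall out of the fact that affinoid subdomains of regular spaces are again regular.

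First I would dispose of the formal parts. $S$ is a domain, being a subring of the domain $R:=K\langle t_1,\cdots,t_r\rangle$; and $\phi$, as a homomorphism of affinoid $K$-algebras, is continuous, so $\ker\phi$ is a closed ideal and $S\cong K\langle u_1,\cdots,u_d\rangle/\ker\phi$ is itself an affinoid $K$-algebra, in particular Noetherian. Put $f_i:=\phi(u_i)\in\mathcal O[t_1,\cdots,t_r]$, and let $Y=\{\alpha\in\overline K^{\,r}:v_\pi(f_i(\alpha))\geq 0\text{ for all }i\}$, $B:=\mathcal O(Y)=T\langle\zeta_1,\cdots,\zeta_d\rangle/(\zeta_i-f_i(t_1,\cdots,t_r):i)$ and $\theta:B\twoheadrightarrow S$ be as produced in the proof of Lemma \ref{image}. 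A routine check on topological generators, using that $\theta$ is the identity on $T$ and sends $\zeta_i$ to $f_i$, shows that under the inclusion $S\subseteq R$ the map $\theta$ is precisely the restriction homomorphism $\mathcal O(Y)\to\mathcal O(\mathbb D_0^r)=R$ attached to the affinoid subdomain $\mathbb D_0^r\subseteq Y$ — here $\mathbb D_0^r\subseteq Y$ because the $f_i$ have coefficients in $\mathcal O$. In particular $\mathbb D_0^r\subseteq\mathrm{Sp}\,S\subseteq Y$.

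Next, $Y$ is an affinoid subdomain of the polydisc $\mathbb D_{-m}^r\cong\mathrm{Sp}\,T$ (for $m$ large), so every completed local ring of $B$ occurs among those of $T$ and is therefore regular; hence $B$ is a regular ring, so $Y$ is a finite disjoint union $Y=\bigsqcup_j Y_j$ of regular \emph{integral} affinoid varieties, each of pure dimension $r=\dim Y$, and $B=\prod_j\mathcal O(Y_j)$. The polydisc $\mathbb D_0^r$ is connected, hence lies in a single component $Y_0$, in which it is Zariski dense (a non-empty affinoid subdomain of an irreducible variety). Since $S\hookrightarrow R$ is injective, the scheme-theoretic image of $\mathrm{Sp}\,R=\mathbb D_0^r\to\mathrm{Sp}\,S$ is all of $\mathrm{Sp}\,S$, i.e. $\mathrm{Sp}\,S$ equals the Zariski closure of $\mathbb D_0^r$ inside $Y$, which by the previous sentence is exactly $Y_0$. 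As $S$ is reduced, this forces $S\cong\mathcal O(Y_0)$.

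Finally, $\mathcal O(Y_0)$ is the coordinate ring of a regular integral affinoid variety of dimension $r$, hence an integrally closed domain of Krull dimension $r$; therefore so is $S$. Given Lemma \ref{image}, the only genuinely non-elementary input is the standard fact that affinoid subdomains of regular affinoid spaces are regular, and the step that needs the most care is the verification that $\theta$ is the restriction map $\mathcal O(Y)\to\mathcal O(\mathbb D_0^r)$: it is exactly this that puts $\mathbb D_0^r$ inside $\mathrm{Sp}\,S$ and makes the scheme-theoretic-image computation apply. (The hypothesis that the coordinates $t_1,\cdots,t_r$ lie in $\mathrm{im}\,\phi$ is used upstream, in Lemma \ref{image}, to guarantee that $\mathrm{Sp}\,S$ embeds as a closed subvariety of such a $Y$ at all; no connectedness of $Y$ is needed here.)
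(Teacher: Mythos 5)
Your proof is correct, but it closes the argument by a different mechanism than the paper, even though both start from the same data: the presentation $B=T\langle\zeta_1,\cdots,\zeta_d\rangle/(\zeta_i-f_i)$, the subdomain $Y$ of Lemma \ref{image}, and the comparison of local rings across affinoid subdomains. The paper never identifies $S$ with a specific piece of $Y$: it first runs a dimension sandwich, realising the unit polydisc as a Weierstrass subdomain of Sp $S$ and Sp $B$ as a Weierstrass subdomain of Sp $T$, and using the monotonicity of dimension for affinoid subdomains (via \cite{dimension}) to get K.dim$(S)=r=$ K.dim$(B)$; this forces $\ker(B\twoheadrightarrow S)$ to be a minimal prime, and then normality of $Y$ --- obtained by comparing local rings with $\mathbb{A}_K^{r,an}$, normal by \cite{normal}, and invoking \cite[Proposition 7.3.8]{BGR} --- shows the minimal-prime quotient $S$ is an integrally closed domain. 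You instead pin $S$ down exactly: you identify $\theta:B\twoheadrightarrow S\subseteq K\langle t_1,\cdots,t_r\rangle$ with the restriction map to the unit polydisc $\mathbb{D}_0^r\subseteq Y$, use regularity (not merely normality) of Sp $T$ to split $Y$ into regular integral components of pure dimension $r$, and use Zariski density of the non-empty affinoid subdomain $\mathbb{D}_0^r$ in its component $Y_0$ to conclude $S\cong\mathcal{O}(Y_0)$, from which the dimension and integral closedness drop out simultaneously. This buys a sharper structural statement (Sp $S$ is precisely the component of $Y$ containing the unit polydisc) and avoids both Ducros' dimension theory and Conrad's analytification result; the price is two facts you use tersely --- that every maximal ideal of $B$ has height $r$ (needed for the pure-dimensionality claim, and which does follow from your local-ring identification with $T$) and the rigid identity theorem giving Zariski density of a non-empty affinoid subdomain of an integral affinoid --- together with the verification, which you rightly flag, that $\theta$ agrees with the restriction map on topological generators. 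All of these are sound, so I see no gap; your route is a legitimate, slightly more precise alternative to the paper's.
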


\begin{proof}

First, we will prove that $S$ has Krull dimension $r$, and this is very similar to the proof of \cite[Proposition 7.5]{aff-dix}:\\

\noindent Since $t_1,\cdots,t_r\in S$, it follows from Lemma \ref{linear} that $S$ contains $K\langle\pi^mt_1,\cdots,\pi^mt_r\rangle$ for some $m\geq 0$.  Therefore we have inclusions of commutative affinoid algebras, $K\langle\pi^{m}t_1,\cdots,\pi^{m}t_r\rangle\xhookrightarrow{}S\xhookrightarrow{}K\langle t_1,\cdots,t_r\rangle$, which gives rise to a chain of open embeddings of the associated affinoid spectra: Sp $K\langle t_1,\cdots,t_r\rangle\xhookrightarrow{}$ Sp $S\xhookrightarrow{}$ Sp $K\langle\pi^{m}t_1,\cdots,\pi^{m}t_r\rangle$.\\

\noindent The notion of the analytic dimension dim $X$ of a rigid variety $X$ is defined in \cite{dimension}, where it is proved to be equal to the supremum of the Krull dimensions of every affinoid algebra $R$ such that Sp $R$ is an affinoid subdomain of $X$. In particular, if Sp $B$ is an affinoid subdomain of Sp $A$ in the sense of \cite[Definition 3.3.9]{Bosch}, then K.dim$(B)\leq$ K.dim$(A)$.

Therefore, since the Tate algebras $K\langle t_1,\cdots, t_s\rangle$ and $K\langle\pi^{m}t_1,\cdots,\pi^{m}t_r\rangle$ both have dimension $r$, it remains to prove that the embeddings Sp $S\to$ Sp $K\langle\pi^{m}t_1,\cdots,\pi^{m}t_r\rangle$ and Sp $K\langle t_1,\cdots,t_r\rangle\to$ Sp $S$ define affinoid subdomains.\\

\noindent For convenience, set $D:=$ Sp $K\langle t_1,\cdots,t_r\rangle$ and $D_1:=$ Sp $K\langle\pi^{m}t_1,\cdots,\pi^{m}t_r\rangle$. Then $D$ can be realised as the unit disc in $r$-dimensional rigid $K$-space, while $D_1$ is a deformed disc containing $D$, so fixing coordinates, $D=\{(x_1,\cdots,x_r)\in D_1:v_{\pi}(x_i)\geq 0$ for all $i\}$. 

But since Sp $S$ contains $D$, we could instead write $D=\{(x_1,\cdots,x_r)\in$ Sp $S:v_{\pi}(x_i)\geq 0$ for all $i\}$, and this is a Weierstrass subdomain of Sp $S$ in the sense of \cite[Definition 3.3.7]{Bosch}, and hence $D$ is an affinoid subdomain of Sp $S$ by \cite[Proposition 3.3.11]{Bosch}. Therefore K.dim$(S)\geq$ dim$(D)=r$.\\

\noindent Now, set $T:=K\langle \pi^m t_1,\cdots,\pi^m t_r\rangle$, and let $f_i(t_1,\cdots,t_r)=\phi(u_i)\in T$ for each $i$. Define 

\begin{center}
$B:=T\langle\zeta_1,\cdots,\zeta_d\rangle/(\zeta_i-f_i(t_1,\cdots,t_r):i=1,\cdots,d)$
\end{center} 

\noindent then $B$ is an affinoid algebra which naturally surjects onto $S=\phi(K\langle u_1,\cdots,u_d\rangle)$, where each $a\in T$ is sent to $a$, and each $\zeta_i$ is sent to $\phi(u_i)$. Therefore, K.dim$(S)\leq$ K.dim$(B)$.\\

\noindent But clearly there is a map $T\to B$, inducing a map of affinoid varieties Sp $B\to$ Sp $T$, and the proof of \cite[Proposition 3.3.11]{Bosch} shows that this corresponds to the embedding of the Weierstrass subdomain $Y=\{x\in$ Sp $T: v_{\pi}(f_i(x))\geq 0$ for all $i\}$ into Sp $T$, and hence Sp $B$ is an affinoid subdomain of Sp $T=D_1$ by \cite[Proposition 3.3.11]{Bosch}, and hence K.dim$(B)\leq$ dim$(D_1)=r$.

Therefore $r\leq$ K.dim$(S)\leq$ K.dim$(B)\leq r$, forcing equality, so K.dim$(S)=r$ as required. Moreover, this implies that $S$ and $B$ have the same Krull dimension, and hence $S$ is a quotient of $B$ by a minimal prime ideal.\\

\noindent To prove that $S$ is integrally closed, we will prove that the affinoid variety $Y=$ Sp $B$ is \emph{normal}, i.e. at every point $p\in Y$, the ring of germs of affinoid functions $\mathcal{O}_{Y,p}$ (as defined in \cite[Definition 4.1]{Bosch}) is reduced and integrally closed. Using this, it will follow from \cite[Proposition 7.3.8]{BGR} that the localisation $B_{\mathfrak{q}}$ of $B$ at every prime ideal $\mathfrak{q}$ of $B$ is reduced and integrally closed. 

Since $S$ is a minimal prime quotient of $B$, it will follow that $S$ is an integrally closed domain as required.\\

\noindent Using \cite[Theorem 5.1.3]{normal}, we see that since the affine variety $\mathbb{A}_K^r$ is smooth, and hence normal, its analytification $\mathbb{A}_K^{r,an}$ is also normal. So since $Y=\{x\in\mathbb{A}_K^{r,an}: v_{\pi}(f_i(x))\geq 0$ for all $i\}$ is an affinoid subdomain of $\mathbb{A}_K^{r,an}$ by Lemma \ref{image}(ii), it follows that $\mathcal{O}_{\mathbb{A}_K^{r,an},p}=\mathcal{O}_{Y,p}$ for every point $p\in Y$. Hence $\mathcal{O}_{Y,p}$ is reduced and integrally closed as required.\end{proof}

\subsection{Using the Crossed Product}

In this subsection, we will prove a control theorem for kernels of almost-polynomial maps. Throughout, we will assume that $K$ contains a $p$'th root of unity $\zeta$.\\

\noindent Fix $A$ a free abelian pro-$p$ group of rank $d$, let $\mathcal{A}:=\frac{1}{p}\mathcal{L}_A$ be the associated $\mathbb{Z}_p$-Lie algebra of $A$, and let $\phi:\widehat{U(\mathcal{A})}_K\to K\langle t_1,\cdots,t_s\rangle$ be an almost-polynomial map.\\

\noindent Consider the crossed product $D_p=D_p(A)=\widehat{U(p\mathcal{A})}_K\ast\frac{A}{A^p}$ defined in Section \ref{dist(G)}. This is a Banach completion of $KA$ with respect to the extension of the dense embedding $\iota:KA^p\to\widehat{U(p\mathcal{A})}_K$ to $KA$, and there is a natural map $\tau:D_p\to \widehat{U(\mathcal{A})}_K$. Define $\phi':D_p\to K\langle t_1,\cdots,t_s\rangle$ and $\phi_A:KA\to K\langle t_1,\cdots,t_s\rangle$ making the following diagram commute:

\begin{center}
\begin{tikzcd}

D_p \arrow[r, "\tau"]\arrow[dr, "\phi'"] & \widehat{U(\mathcal{A})}_K\arrow[d, "\phi"]\\
 KA \arrow[u, "\iota"] \arrow[r, "\phi_A"] & K\langle t_1,\cdots,t_s\rangle

\end{tikzcd}
\end{center}

\noindent From now on, set $I=\ker(\phi')$, and let $Q:=\ker(\phi_A)=I\cap KA$, and define: 

\begin{center}
$U:=\{a\in A:\phi(a)\in\phi(\widehat{U(p\mathcal{A})}_K)\}$.
\end{center}

\begin{proposition}\label{sub-image}

$U$ is a proper open subgroup of $A$ containing $A^p$.

\end{proposition}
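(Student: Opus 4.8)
The plan is to fix convenient coordinates on $A$, show by a purely formal argument that $U$ is an open subgroup containing $A^p$, and then extract the only non-trivial point — that $U$ is \emph{proper} — directly from Proposition \ref{exp}.

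\textbf{Coordinates.} First I would fix a $\mathbb{Z}_p$-basis $\{g_1,\dots,g_d\}$ of $A$ as in Lemma \ref{cyclic} and set $u_i:=\tfrac1p\log g_i$, so that $\{u_1,\dots,u_d\}$ is a $\mathbb{Z}_p$-basis of $\mathcal{A}=\tfrac1p\mathcal{L}_A$ and $\widehat{U(\mathcal{A})}_K\cong K\langle u_1,\dots,u_d\rangle$ by \cite[Lemma 2.1]{aff-dix}. Since $\tfrac1p\mathcal{L}_{A^p}=\tfrac1p(p\mathcal{L}_A)=p\mathcal{A}$, the standard embeddings $KA^p\hookrightarrow\widehat{U(p\mathcal{A})}_K$ and $KA\hookrightarrow\widehat{U(\mathcal{A})}_K$ (both given by $g\mapsto\exp(\log g)$) are compatible, and they identify $\widehat{U(p\mathcal{A})}_K$ with the closed $K$-subalgebra $K\langle pu_1,\dots,pu_d\rangle$ of $K\langle u_1,\dots,u_d\rangle$. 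In particular $\phi\big(\widehat{U(p\mathcal{A})}_K\big)=\phi\big(K\langle pu_1,\dots,pu_d\rangle\big)$, and the image of $g_i$ in $\widehat{U(\mathcal{A})}_K$ is $\exp(pu_i)$.

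\textbf{$U$ is an open subgroup containing $A^p$.} If $a\in A^p$, compatibility of the two embeddings shows the image of $a$ in $\widehat{U(\mathcal{A})}_K$ already lies in $\widehat{U(p\mathcal{A})}_K$, so $\phi(a)\in\phi\big(\widehat{U(p\mathcal{A})}_K\big)$; hence $A^p\subseteq U$, and $U$ is a union of cosets of $A^p$. Since $\phi\big(\widehat{U(p\mathcal{A})}_K\big)$ is the image of a ring homomorphism it is closed under multiplication, and $a\mapsto\phi(a)$ is multiplicative on $A$, so $\phi(ab)=\phi(a)\phi(b)\in\phi\big(\widehat{U(p\mathcal{A})}_K\big)$ whenever $a,b\in U$. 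Thus $U/A^p$ is a submonoid of the finite group $A/A^p$ containing the identity, hence a subgroup; therefore $U$ is a subgroup of $A$ containing the open, finite-index subgroup $A^p$, and so $U$ is open.

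\textbf{$U$ is proper.} Write $f_i:=\phi(u_i)\in\mathcal{O}[t_1,\dots,t_s]$, so that $\phi$, with these data, is an almost-polynomial map in the sense of Definition \ref{almost-pol}. By Proposition \ref{exp} there is some $k\in\{1,\dots,d\}$ with $\exp(pf_k)\notin\phi\big(K\langle pu_1,\dots,pu_d\rangle\big)=\phi\big(\widehat{U(p\mathcal{A})}_K\big)$; note that $\exp(pf_k)$ genuinely lies in $K\langle t_1,\dots,t_s\rangle$ because $f_k$ has coefficients in $\mathcal{O}$ and $p>2$, so $v_p\big(p^n f_k^{\,n}/n!\big)\to\infty$. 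On the other hand, continuity of $\phi$ gives $\phi(g_k)=\phi(\exp(pu_k))=\exp(p\phi(u_k))=\exp(pf_k)$, so $\phi(g_k)\notin\phi\big(\widehat{U(p\mathcal{A})}_K\big)$, whence $g_k\notin U$ and $U\neq A$.

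\textbf{Main obstacle.} All the group-theoretic content is formal; the single substantive input is the properness of $U$, which is precisely Proposition \ref{exp} (and thus ultimately rests on the rigid-geometric estimate of Corollary \ref{disc2}). The one place calling for care is the bookkeeping in the first step — identifying $\widehat{U(p\mathcal{A})}_K$ with $K\langle pu_1,\dots,pu_d\rangle$ compatibly with the embeddings of $A^p$ and $A$, and observing that $\exp(pf_k)$ is a bona fide element of the target Tate algebra even though, as the proof of Proposition \ref{exp} shows, it fails to converge on the larger affinoid domain $Y$ there.
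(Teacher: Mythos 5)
Your proof is correct and takes essentially the same route as the paper: the same formal argument that $U/A^p$ is a multiplicatively closed subset of the finite group $A/A^p$ (hence a subgroup, so $U$ is open and contains $A^p$), and the same appeal to Proposition \ref{exp} to produce an element $g_k=\exp(pu_k)\in A$ with $\phi(g_k)=\exp(p\phi(u_k))$ outside $\phi\bigl(\widehat{U(p\mathcal{A})}_K\bigr)$, so that $U$ is proper. Your additional bookkeeping (identifying $\widehat{U(p\mathcal{A})}_K$ with $K\langle pu_1,\dots,pu_d\rangle$ and checking that $\exp(pf_k)$ converges in the target Tate algebra) just makes explicit what the paper leaves implicit.
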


\begin{proof}

Since $\phi$ is a ring homomorphism, it is clear that for all $a,b\in U$, $ab\in U$, and since $KA^p$ is a subalgebra of $\widehat{U(p\mathcal{A})}_K$, it is clear that $A^p\subseteq U$. Therefore, since $\frac{A}{A^p}$ is a finite group, and $\frac{U}{A^p}$ is closed under multiplication, it follows that $U$ is a subgroup of $A$ containing $A^p$, and hence it it open.\\

\noindent Finally, since $\phi$ is an almost polynomial map, it follows from Proposition \ref{exp} that there exists $u\in\mathcal{A}$ such that $\exp(p\phi(u))=\phi(\exp(p u))$ does not lie in the image of $\widehat{U(p\mathcal{A})}_K$ under $\phi$. But $a:=\exp(pu)\in A$ and hence $a\notin U$. Therefore $U$ is a proper subgroup of $G$.\end{proof}

\noindent Using this proposition, we can fix a $\mathbb{Z}_p$-basis $\{a_1,\cdots,a_d\}$ for $A$ such that $\{a_1,\cdots,a_r,a_{r+1}^p,\cdots,a_d^p\}$ is a $\mathbb{Z}_p$-basis for $U$, so $a_1,\cdots,a_r\in U$ and $a_{r+1},\cdots,a_d\notin U$.\\

Since $A$ is a free abelian pro-$p$ group, we have that $\frac{A}{A^p}$ is a direct product of $d$ copies of the cyclic group of order $p$, where the $i$'th copy is generated by the image of $a_i$ in $\frac{A}{A^p}$. Setting $c_i:=a_iA^p$, it follows from Lemma \ref{cyclic} that:

\begin{equation}
D_p=\widehat{U(p\mathcal{A})}_K\ast\langle c_1\rangle\ast\cdots\ast\langle c_d\rangle
\end{equation}

\noindent where $\overline{c_i}^t=\overline{c_i^t}$ for $0\leq t<p$ and $\overline{c_i}^p=a_i^p$.\\

\noindent From now on, let $S:=\phi(\widehat{U(p\mathcal{A})}_K)\subseteq K\langle t_1,\cdots,t_s\rangle$, and let $B:=\widehat{U(p\mathcal{A})}_K\ast\langle c_1\rangle\ast\cdots\ast\langle c_r\rangle\leq D_p$. Then since $a_1,\cdots,a_r$ lie in $U$, the image of $B$ under $\phi$ is $S$ by the definition of $U$. Furthermore, since $KU=KA^p\ast\frac{U}{A^p}=KA^p\ast\langle c_1\rangle\ast\cdots\ast\langle c_r\rangle$, it is clear that $KU\subseteq B$.\\

\noindent Let $J:=I\cap B\trianglelefteq B$ be the kernel of the restriction of $\phi'$ to $B$, and let $I':=JD_p$ -- an ideal of $D_p$ contained in $I$.

\begin{lemma}\label{minimal}

$I$ is a prime ideal of $D_p$, minimal prime above $I'$.

\end{lemma}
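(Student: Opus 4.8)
The plan is to prove the two assertions in turn. Primality of $I$ is immediate: the map $\phi'=\phi\circ\tau\colon D_p\to K\langle t_1,\cdots,t_s\rangle$ is a homomorphism of $K$-algebras whose target is a commutative domain, so $D_p/I\cong\im(\phi')$ is a subring of a domain and hence a domain itself. Thus $I$ is completely prime, in particular prime.

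For the minimality statement, the point is to realise $D_p$ as a crossed product of $B$ with a \emph{finite} group and then apply Lemma \ref{semiprime}. Since $\frac{A}{A^p}$ is abelian and $\frac{U}{A^p}=\langle c_1,\cdots,c_r\rangle$ — which holds because $\{a_1,\cdots,a_r,a_{r+1}^p,\cdots,a_d^p\}$ is a $\mathbb{Z}_p$-basis of $U$, so $U=A^p\langle a_1,\cdots,a_r\rangle$ — the crossed product $D_p=\widehat{U(p\mathcal{A})}_K\ast\frac{A}{A^p}$ regroups as $D_p=\bigl(\widehat{U(p\mathcal{A})}_K\ast\tfrac{U}{A^p}\bigr)\ast\tfrac{A}{U}=B\ast F$, where $F:=\frac{A}{U}\cong(\mathbb{Z}/p\mathbb{Z})^{d-r}$ is finite; this is just the regrouping of the iterated crossed product of Lemma \ref{cyclic}. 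Moreover $B$ is a crossed product of the Noetherian Banach $K$-algebra $\widehat{U(p\mathcal{A})}_K$ with the finite group $\langle c_1\rangle\times\cdots\times\langle c_r\rangle$, hence Noetherian, and is of course a $\mathbb{Q}$-algebra.

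I would then apply Lemma \ref{semiprime} with $R=B$, $S=D_p=B\ast F$ and the prime ideal $P=I$ (legitimate by the first paragraph): part $(ii)$ yields that $(I\cap B)D_p$ is semiprime in $D_p$ and that $I$ is a minimal prime above it. Since $I\cap B=J$ and $JD_p=I'$ by construction, this shows $I$ is a minimal prime over $I'$, as required.

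The only step needing real care is the regrouping $D_p=B\ast F$: one must check that for the abelian group $H=\frac{A}{A^p}$ and the subgroup $H_0=\frac{U}{A^p}$ the crossed product $\widehat{U(p\mathcal{A})}_K\ast H$ genuinely factors as $\bigl(\widehat{U(p\mathcal{A})}_K\ast H_0\bigr)\ast(H/H_0)$ with the outer factor a crossed product over $B$ with the finite group $H/H_0$. Granting this, the result is a direct consequence of Lemmas \ref{cyclic} and \ref{semiprime}.
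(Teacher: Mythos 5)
Your argument is correct and follows the paper's own proof: primality comes from $D_p/I\cong\im(\phi')$ embedding in the domain $K\langle t_1,\cdots,t_s\rangle$, and minimality from viewing $D_p$ as a crossed product of $B$ with a finite group and applying Lemma \ref{semiprime}($ii$). You simply make explicit the regrouping $D_p=B\ast\frac{A}{U}$ and the Noetherian $\mathbb{Q}$-algebra hypotheses, which the paper takes for granted; these checks are correct and harmless.
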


\begin{proof}

Since $D_p/I\cong \im(\phi')\leq K\langle t_1,\cdots,t_s\rangle$, it is clear that $I$ is a prime ideal of $D_p$.\\

\noindent Since $D_p$ is a crossed product of $B$ with a finite group, it follows from Lemma \ref{semiprime}($ii$) that $I$ is minimal prime above $I'=(I\cap B)D_p$.\end{proof}

\noindent Now, we will need the following small result from Galois theory \cite{stack-exchange}:

\begin{lemma}\label{prime-ext}

Let $F$ be a field of characteristic 0, containing a $p$'th root of unity $\zeta$. Let $r\in F$, and suppose that $r$ has no $p$'th root in $F$. Choose a $p$'th root $\alpha\in\overline{F}$ of $r$, and let $F':=F(\alpha)$. Then if $\beta\in F'$ and $\beta^p\in F$ then $\beta=c\alpha^m$ for some $c\in F$, $0\leq m<p$.

\end{lemma}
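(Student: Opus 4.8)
The plan is to use the standard structure theory of Kummer extensions. Since $F$ contains a primitive $p$-th root of unity $\zeta$ and $r \in F$ has no $p$-th root in $F$, the polynomial $t^p - r$ is irreducible over $F$ (this is a classical fact: when $\zeta_p \in F$, $t^p - r$ is either irreducible or has a root in $F$). Hence $F' = F(\alpha)$ is a cyclic Galois extension of degree $p$, with Galois group generated by the automorphism $\sigma$ determined by $\sigma(\alpha) = \zeta\alpha$. The elements $1, \alpha, \alpha^2, \dots, \alpha^{p-1}$ form an $F$-basis of $F'$.

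First I would write an arbitrary $\beta \in F'$ in this basis as $\beta = \sum_{m=0}^{p-1} c_m \alpha^m$ with $c_m \in F$. Then I would apply $\sigma$ and use that $\beta^p \in F$ forces $\sigma(\beta^p) = \beta^p$, i.e. $\sigma(\beta)^p = \beta^p$. Since $F'$ is a domain of characteristic $0$ containing $\zeta$, this gives $\sigma(\beta) = \zeta^j \beta$ for some $0 \le j < p$ (the $p$-th roots of unity in $\overline{F}$ lying in $F'$ are exactly the powers of $\zeta$, and $\sigma(\beta)/\beta$ is such a root). Now compare coefficients: $\sigma(\beta) = \sum_m c_m \zeta^m \alpha^m$, while $\zeta^j \beta = \sum_m \zeta^j c_m \alpha^m$. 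By linear independence of the $\alpha^m$ over $F$, for each $m$ we get $c_m \zeta^m = c_m \zeta^j$, so $c_m = 0$ unless $m \equiv j \pmod p$, i.e. unless $m = j$. Therefore $\beta = c_j \alpha^j$, which is the desired form with $c = c_j \in F$ and $0 \le m = j < p$. (If $\beta = 0$ the statement is trivial, taking $c = 0$.)

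The only step requiring a little care is justifying that $\sigma(\beta)/\beta$ is a power of $\zeta$: from $\sigma(\beta)^p = \beta^p$ we get $(\sigma(\beta)/\beta)^p = 1$ in $F'$, and since $F'$ has characteristic $0$ the equation $x^p = 1$ has exactly $p$ solutions, which are $1, \zeta, \dots, \zeta^{p-1}$, all already in $F \subseteq F'$. This is the main (and only mild) obstacle; everything else is routine Kummer theory. I would cite \cite{stack-exchange} as the paper already does, and keep the write-up to essentially the two computations above.
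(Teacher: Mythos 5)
Your proposal is correct and follows essentially the same route as the paper: both set up the cyclic Kummer extension with $\sigma(\alpha)=\zeta\alpha$ and deduce $\sigma(\beta)=\zeta^{j}\beta$ from $\beta^p\in F$. The only cosmetic difference is the final step — you compare coefficients in the basis $1,\alpha,\dots,\alpha^{p-1}$, whereas the paper observes that $\alpha^{-j}\beta$ is fixed by the Galois group and hence lies in $F$ — but this is an equivalent finish, not a different argument.
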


\begin{proof}

Since $F'$ is the splitting field for the polynomial $x^p-r$ over $F$, it is clear that $F'$ is a Galois extension of $F$. So since $[F':F]=p$ this means that $Gal(F'/F)$ has order $p$.

In fact, if we consider the element $\sigma\in Gal(F'/F)$ sending $\alpha$ to $\zeta\alpha$, then $Gal(F'/F)$ is cyclic of order $p$, generated by $\sigma$.\\

\noindent The result is clear if $\beta\in F$, so assume $\beta\notin F$ and $\beta^p\in F$. Then $\beta$ is a root of the polynomial $x^p-\beta^p\in F[x]$, and hence $\sigma(\beta)$ is also a root. Therefore $\sigma(\beta)=\zeta^m \beta$ for some $0\leq m<p$, so $\sigma(\alpha^{-m}\beta)=\zeta^{-m}\alpha^{-m}\zeta^m \beta=\alpha^{-m}\beta$.\\

\noindent But since $\sigma$ generates $Gal(F'/F)$, it follows that $\alpha^{-m}\beta$ is fixed by the Galois group, so since $F'/F$ is a Galois extension, this means that $c:=\alpha^{-m}\beta\in F$, and hence $\beta=c\alpha^m$ as required.\end{proof}

\noindent For clarity, we will introduce/recall the following data:

\begin{itemize}

\item $I=\ker(\phi')\trianglelefteq D_p$.

\item $Q=I\cap KA\trianglelefteq KA$.

\item $U=\{a\in A:\phi(a)\in\phi(\widehat{U(p\mathcal{A})}_K)\}=\langle a_1,\cdots,a_r,a_{r+1}^p,\cdots,a_d^p\rangle$.

\item $B=\widehat{U(p\mathcal{A})}_K\ast\langle c_1\rangle\ast\cdots\ast\langle c_r\rangle\leq D_p$.

\item $S=\phi(\widehat{U(p\mathcal{A})}_K)=\phi'(B)\leq K\langle t_1,\cdots,t_s\rangle$.

\item $J=I\cap B\trianglelefteq B$.

\item $I'=JD_p\trianglelefteq D_p$.

\item $R:=D_p/I'$

\end{itemize}

\begin{proposition}\label{domain}

$R$ is a domain.

\end{proposition}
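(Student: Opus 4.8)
The plan is to make $R$ completely explicit as a commutative ring obtained from $S:=\phi(\widehat{U(p\mathcal{A})}_K)$ by adjoining $p$-th roots, and then to prove it is a domain by an induction that adjoins one root at a time, combining the Galois-theoretic Lemma \ref{prime-ext} with the integral closedness of $S$ coming from Theorem \ref{integrally-closed}.

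\textbf{Step 1: identify $R$.} Since $A$ is abelian, the crossed product of Lemma \ref{cyclic} is commutative: conjugation by each $\overline{c_i}$ is trivial on $\widehat{U(p\mathcal{A})}_K$ (as $A$ centralises $A^p$, hence $KA^p$ and its completion), the cocycle relating $\overline{c_ic_j}$ with $\overline{c_i}\,\overline{c_j}$ is trivial, and the only extra relations are $\overline{c_i}^{\,p}=a_i^p\in\widehat{U(p\mathcal{A})}_K$. Hence $D_p=B[x_{r+1},\dots,x_d]/(x_i^p-a_i^p:i>r)$ with $B$ commutative. As $\phi'(B)=S$ and $J=I\cap B$ is the kernel of $\phi'|_B$, we get $B/J\cong S$, so reducing modulo $I'=JD_p$ yields
\[
R=D_p/I'\;\cong\;S[x_{r+1},\dots,x_d]/(x_i^p-s_i:r<i\le d),\qquad s_i:=\phi(a_i)^p\in S .
\]

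\textbf{Step 2: reduce to a statement about $p$-th powers.} Set $R_r:=S$ and $R_j:=R_{j-1}[x_j]/(x_j^p-s_j)$ for $r<j\le d$, so $R=R_d$; I will show each $R_j$ is a domain by induction, $R_r=S$ being one by Theorem \ref{integrally-closed}. If $R_{j-1}$ is a domain with fraction field $L_{j-1}$, then $R_j$ is free of finite rank over $R_{j-1}$, hence $R_{j-1}$-torsion-free, so it embeds in the localisation $L_{j-1}[x_j]/(x_j^p-s_j)$; since $\operatorname{char}L_{j-1}=0$ and $\zeta\in K\subseteq L_{j-1}$, this quotient is a field exactly when $s_j\notin L_{j-1}^{\,p}$, and then $R_j$ is a domain. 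So it suffices to prove $s_j\notin L_{j-1}^{\,p}$ for each $r<j\le d$.

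\textbf{Step 3: $s_j$ is not a $p$-th power in $L_{j-1}$.} Suppose $s_j=\beta^p$ with $\beta\in L_{j-1}$. By the inductive hypothesis $L_{j-1}$ is the top of a tower $\operatorname{Frac}(S)=L_r\subseteq L_{r+1}\subseteq\cdots\subseteq L_{j-1}$ with $L_i=L_{i-1}(\alpha_i)$, $\alpha_i^p=s_i$, $[L_i:L_{i-1}]=p$. Since $\beta^p=s_j\in L_r$, repeatedly applying Lemma \ref{prime-ext} — first to $L_{j-1}/L_{j-2}$, then carrying the resulting coefficient (whose $p$-th power again lies in $L_r$) down one step at a time — writes $\beta=c\,\alpha_{r+1}^{m_{r+1}}\cdots\alpha_{j-1}^{m_{j-1}}$ with $c\in\operatorname{Frac}(S)$ and $0\le m_i<p$. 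Taking $p$-th powers, and setting $a:=a_j\,a_{j-1}^{-m_{j-1}}\cdots a_{r+1}^{-m_{r+1}}\in A$,
\[
c^p=s_j\,s_{j-1}^{-m_{j-1}}\cdots s_{r+1}^{-m_{r+1}}=\phi(a)^p ,
\]
so $(\phi(a)c^{-1})^p=1$ in the fraction field of $K\langle t_1,\dots,t_s\rangle$, and since $\zeta\in K$ this forces $\phi(a)=\zeta^{\ell}c\in\operatorname{Frac}(S)$. On the other hand $a^p\in A^p\subseteq\widehat{U(p\mathcal{A})}_K$, so $\phi(a)^p=\phi(a^p)\in S$; hence $\phi(a)$ is integral over $S$, and as $S$ is integrally closed by Theorem \ref{integrally-closed} we conclude $\phi(a)\in S$, i.e. $a\in U$. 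But in the $\mathbb{Z}_p$-basis $\{a_1,\dots,a_d\}$ of $A$ the element $a$ has $a_j$-coordinate equal to $1$ (the remaining factors involve only $a_i$ with $i<j$), contradicting the fact that $\{a_1,\dots,a_r,a_{r+1}^p,\dots,a_d^p\}$ is a $\mathbb{Z}_p$-basis of $U$. Hence $s_j\notin L_{j-1}^{\,p}$, the induction closes, and $R=R_d$ is a domain. I expect the delicate point to be precisely this last step: arranging the iterated use of Lemma \ref{prime-ext} so that at every stage the $p$-th power of the coefficient peeled off lands back in $\operatorname{Frac}(S)$ rather than merely in the next intermediate field, and tracking the exponents $m_i$ tightly enough to see that the resulting $a\in A$ cannot lie in $U$.
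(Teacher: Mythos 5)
Your proof is correct and follows essentially the same route as the paper: the same tower obtained by adjoining one $p$-th root at a time, the same iterated use of Lemma \ref{prime-ext} to push a hypothetical root down to $\operatorname{Frac}(S)$, the same appeal to the integral closedness of $S$ (Theorem \ref{integrally-closed}) and to the $p$-th root of unity in $K$, and the same contradiction with the $\mathbb{Z}_p$-basis of $U$. The only (cosmetic) difference is that you exploit commutativity to present $R$ explicitly as $S[x_{r+1},\dots,x_d]/(x_i^p-s_i)$ and embed each stage into a field directly, whereas the paper first deduces that $R$ is reduced and runs the induction inside the ring of quotients $Q(R)=Q(S)\ast\langle c_{r+1}\rangle\ast\cdots\ast\langle c_d\rangle$.
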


\begin{proof}

Since $D_p=B\ast\langle c_{r+1}\rangle\ast\cdots\ast\langle c_d\rangle$ and $I'=JD_p$, it follows from Lemma \ref{semiprime}($iii$) that $R\cong S\ast\langle c_{r+1}\rangle\ast\cdots\ast\langle c_d\rangle$, where $\bar{c}_i^p=\phi(a_i^p)$ for each $i$. So using \cite[Theorem 4.4]{Passman} we see that $R$ is reduced. Therefore, we may consider the usual semisiple artinian ring of quotients $Q(R)$ of $R$, which has the form:

\begin{center}
$Q(R)=Q(S)\ast\langle c_{r+1}\rangle\ast\cdots\ast\langle c_d\rangle$,
\end{center} 

\noindent where $Q(S)$ is the field of fractions of $S$. Note that since $S=\phi(\widehat{U(p\mathcal{H})}_K)$ is the image of a Tate algebra under an almost-polynomial map, it follows from Theorem \ref{integrally-closed} that $S$ is an integrally closed domain. It remains to prove that $Q(R)$ is a field.\\

\noindent Let $T_0:=Q(S)$, and for each $i=1,\cdots,d-r$, define $T_i:=T_{i-1}\ast\langle c_{r+i}\rangle$, so that $T_{d-r}=Q(R)$.\\

\noindent Clearly $T_0$ is a field, so we will use induction to show that $T_i$ is a field for each $i$, so in particular, $Q(R)$ is a field. So assume that for some $j>0$, $T_0,\cdots,T_{j-1}$ are all fields:\\

Then since $T_j=T_{j-1}\ast\langle c_{r+j}\rangle$ where $\bar{c}_{r+j}^p=\phi(a_{r+j}^p)\in S$, it follows that

\begin{center}
$T_j=T_{j-1}[x]/(x^p-\phi(a_{r+j}^p))$
\end{center} 

\noindent So we only need to show that the polynomial $x^p-\phi(a_{r+j}^p)\in T_{j-1}[x]$ is irreducible over the field $T_{j-1}$.\\ 

\noindent Since $K$ contains a $p$'th root of unity, we see using standard Galois theory that this just means we need to show that this polynomial has no root in $T_{j-1}$, i.e. that there is no $b\in T_{j-1}$ such that $b^p=\phi(a_{r+j}^p)$.\\

\noindent Let us suppose for contradiction that $b_1^p=\phi(a_{r+j}^p)$ for some $b_1\in T_{j-1}=T_{j-2}\ast\langle c_{r+j-1}\rangle$. Then since $\phi(a_{r+j}^p)\in S\subseteq T_{j-2}$ and $T_{j-2}$ is a field containing $K$, it follows from Lemma \ref{prime-ext} that $b_1=b_2\bar{c}_{r+j-1}^{k_1}$ for some $b_2\in T_{j-2}$, $0\leq k_1<p$.\\

\noindent Therefore, $b_2^p=\phi((a_{r+j}a_{r+j-1}^{-k_1})^p)\in S$, so applying a second induction, for each $i>0$, we can find integers $0\leq k_1,\cdots,k_{i-1}<p$ and $b_i\in T_{j-i}$ such that $b_i^p=\phi((a_{r+j}a_{r+j-1}^{-k_1}a_{r+j-2}^{-k_2}\cdots a_{r+j-i+1}^{-k_{i-1}})^p)\in S$.

Taking $i=j$ we have that $b_j\in T_0=Q(S)$ and $b_j^p\in S$. So since $S$ is integrally closed, it follows that $b_j\in S\subseteq K\langle t_1,\cdots,t_s\rangle$.\\

\noindent Now, $(b_j\phi(a_{r+j}^{-1}a_{r+j-1}^{k_1}\cdots a_{r+1}^{k_{j-1}}))^p=1$, so it follows that there is a $p$'th root of unity $\zeta\in K$ such that: 

\begin{center}
$\zeta b_j=\phi(a_{r+j}a_{r+j-1}^{-k_1}a_{r+j-1}^{-k_2}\cdots a_{r+1}^{-k_{j-1}})$.
\end{center}

\noindent Therefore, since $b_j\in S$, this means that $\phi(a_{r+j}a_{r+j-1}^{-k_1}a_{r+j-1}^{-k_2}\cdots a_{r+1}^{-k_{j-1}})\in S=\phi(\widehat{U(p\mathcal{A})}_K)$, or in other words $a_{r+j}a_{r+j-1}^{-k_1}a_{r+j-1}^{-k_2}\cdots a_{r+1}^{-k_{j-1}}\in U$ by the definition of $U$.\\

This is the required contradiction since $\{a_1,\cdots,a_r,a_{r+1}^p,\cdots,a_d^p\}$ is a $\mathbb{Z}_p$-basis for $U$, and each $k_i$ is less than $p$.\end{proof}

\begin{theorem}\label{D}

Let $\phi:\widehat{U(\mathcal{A})}_K\to K\langle t_1,\cdots,t_s\rangle$ be an almost-polynomial map. Then the kernel $Q$ of the restriction of this map to $KA$ is controlled by $U$.

\end{theorem}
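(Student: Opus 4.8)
The plan is to prove the stronger statement that the ideal $I=\ker(\phi')$ of $D_p$ is generated as a right ideal by $I\cap B$, and then to descend this from $D_p$ down to $KA$. For the first step, Lemma \ref{minimal} tells us that $I$ is a prime ideal of $D_p$ which is minimal among the primes containing $I'=JD_p=(I\cap B)D_p$. But Proposition \ref{domain} says that $R=D_p/I'$ is a domain, so $I'$ is itself a prime ideal of $D_p$; since $I'\subseteq I$ and $I'$ is prime, minimality of $I$ forces $I=I'=(I\cap B)D_p$. Thus $I$ is generated as a right ideal by the subset $I\cap B\subseteq B$.

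For the descent step, identify $KA$ with its dense image in $D_p$, so that $Q=I\cap KA$ and $KU\subseteq B$. Recall the compatible presentations $D_p=B\ast\langle c_{r+1}\rangle\ast\cdots\ast\langle c_d\rangle$ and $KU=KA^p\ast\langle c_1\rangle\ast\cdots\ast\langle c_r\rangle\subseteq B$, together with the fact (as in Lemma \ref{cyclic}) that the elements $\overline{c_{r+1}^{\beta_{r+1}}\cdots c_d^{\beta_d}}$ for $\underline{\beta}\in[p-1]^{d-r}$ form on one hand a free left $B$-module basis of $D_p$, and on the other hand are the images of the coset representatives $a_{r+1}^{\beta_{r+1}}\cdots a_d^{\beta_d}$ of $U$ in $A$. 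Consequently $KA=\bigoplus_{\underline{\beta}}KU\cdot\overline{c_{r+1}^{\beta_{r+1}}\cdots c_d^{\beta_d}}$ is a free left $KU$-submodule of $D_p=\bigoplus_{\underline{\beta}}B\cdot\overline{c_{r+1}^{\beta_{r+1}}\cdots c_d^{\beta_d}}$, compatibly with these bases.

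Now take $x\in Q$ and write $x=\sum_{\underline{\beta}}x_{\underline{\beta}}\,\overline{c_{r+1}^{\beta_{r+1}}\cdots c_d^{\beta_d}}$ with $x_{\underline{\beta}}\in KU$. Since $x\in I=(I\cap B)D_p$ and $I\cap B$ is a right ideal of $B$, expanding a generating expression $x=\sum_j b_j d_j$ (with $b_j\in I\cap B$ and $d_j\in D_p$) in the free $B$-basis above and comparing coefficients shows $x_{\underline{\beta}}\in (I\cap B)B=I\cap B$ for every $\underline{\beta}$. As $x_{\underline{\beta}}\in KU\subseteq B$, this gives $x_{\underline{\beta}}\in I\cap KU=Q\cap KU$. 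Hence $x\in (Q\cap KU)KA$; since the reverse inclusion is trivial, $Q=(Q\cap KU)KA$, i.e. $Q$ is controlled by $U$.

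The genuinely hard part of the argument is already behind us, in Proposition \ref{domain} — and through it Proposition \ref{exp}, Corollary \ref{disc2} and the Galois-theoretic Lemma \ref{prime-ext}. Within the present proof the only delicate point is the bookkeeping that makes the crossed-product decomposition of $D_p$ over $B$ and that of $KA$ over $KU$ use a common set of coset representatives, so that the $B$-expansion of an element of $KA$ automatically has coefficients in $KU$; everything else is a formal manipulation of crossed products.
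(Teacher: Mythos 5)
Your argument is correct and follows essentially the same route as the paper: Proposition \ref{domain} makes $(I\cap B)D_p$ prime, Lemma \ref{minimal} then forces $I=(I\cap B)D_p$, and the control statement follows by expanding elements of $Q$ over the common coset representatives of $U$ in $A$, exactly as in the paper's proof. The extra bookkeeping you supply about the compatible crossed-product bases of $D_p$ over $B$ and of $KA$ over $KU$ is just an expanded version of the paper's one-line decomposition $r=\sum_{a\in A//U}s_a a$ with $s_a\in KU\subseteq B$.
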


\begin{proof}

If $I=\ker(\phi')\trianglelefteq D_p$, then using Proposition \ref{domain} we see that $R=D_p/(I\cap B)D_p$ is a domain. But we know that $I$ is minimal prime above $(I\cap B)D_p$ by Lemma \ref{minimal}, so it follows that $I=(I\cap B)D_p$.\\

\noindent So, if $r\in Q=I\cap KA$ then since $KA=KU\ast\frac{A}{U}$, $r=\underset{a\in A//U}{\sum}{s_a a}$ for some $s_{a}\in KU\subseteq B$. So since $r\in I=(I\cap B)D_p$ it follows that $s_a\in I\cap B\cap KU=Q\cap KU$ for each $a$, and hence $r\in (Q\cap KU)KA$. Since our choice of $r$ was arbitrary, this means that $Q=(Q\cap KU)KA$, i.e. $Q$ is controlled by $U$.\end{proof}

\subsection{Control Theorem for Dixmier annihilators}

Now we can finally conclude our proof of Theorem \ref{B}. Again, $G$ is a nilpotent, uniform pro-$p$ group, whose $\mathbb{Z}_p$-Lie algebra $\mathcal{L}=\frac{1}{p}\log(G)$ is powerful.\\ 

\noindent Fix a linear form $\lambda:\mathcal{L}\to \mathcal{O}$ such that the restriction of $\lambda$ to $Z(\mathfrak{g})$ is injective, then $P:=$ Ann$_{KG}\widehat{D(\lambda)}$ is a faithful prime ideal of $KG$ by Lemma \ref{Dix-prime}, and $\frac{KG}{P}$ is a domain.

\vspace{0.2in}

\noindent\emph{Proof of Theorem \ref{B}} Firstly, for any finite extension $F/K$, if we let $I=$ Ann$_{FG}\widehat{D(\lambda_F)}$, where $\lambda_F$ is the scalar extension of $\lambda$, then clearly $I\cap KG=P$. So if we prove that $I$ is controlled by $Z(G)$, then it will follow from Lemma \ref{base-change} that $P$ is controlled by $Z(G)$. Therefore, we may pass to field extensions of $K$ without issue. In particular we may assume that $F=K$ contains a $p$'th root of unity.\\

\noindent Since $P$ is a faithful, prime ideal of $KG$, it follows from Theorem \ref{C} that $P$ is controlled by an abelian subgroup of $G$. So let $A=P^{\chi}$ be the controller subgroup of $P$, then $A$ is an abelian normal subgroup of $G$, so if we let $\mathcal{A}:=\frac{1}{p}\log(A)$ then $\mathcal{A}$ is an abelian ideal of $\mathcal{L}$. We want to prove that $A$ is central in $G$, or equivalently that $\mathcal{A}$ is central in $\mathcal{L}$.\\

\noindent Using Lemma \ref{action}, we see that the image of $\widehat{U(\mathcal{A})}_K$ in End$_K\widehat{D(\lambda)}$ is contained in a Tate algebra $K\langle\partial_1,\cdots,\partial_s\rangle$ such that if $s>0$ then the map $\widehat{U(\mathcal{A})}_K\to K\langle\partial_1,\cdots,\partial_s\rangle$ is an almost polynomial map. Moreover, $s=0$ if and only if $\mathcal{A}$ is central, so let us assume that $s>0$.\\

\noindent Then using Proposition \ref{sub-image} and Theorem \ref{D}, we can find a proper, open subgroup $U$ of $G$ such that the kernel $Q$ of the restriction of $\phi$ to $KA$ is controlled by $U$. But clearly $Q=P\cap KA$, so this is a contradiction since $A$ is the controller subgroup of $P$.\qed

\section{Primitive Ideals}

The aim of this section is to prove our main result Theorem \ref{A}. The essence of our argument is to compare general primitive ideals in $KG$ to Dixmier annihilators.

\subsection{Weakly Rational ideals}

Fix $G$ a uniform, nilpotent pro-$p$ group, and as usual let $\mathcal{L}:=\frac{1}{p}\mathcal{L}_G$, and $\mathfrak{g}:=\mathcal{L}\otimes_{\mathbb{Z}_p}\mathbb{Q}_p$.

\begin{definition}

Given a prime ideal $P$ of $KG$, we say that $P$ is \emph{weakly rational} if $Z(KG/P)$ is a finite field extension of $K$.

\end{definition}

It follows from \cite[Theorem 1.1(1)]{endomorphism} that any primitive ideal of $KG$ is weakly rational.

\begin{lemma}\label{faithful-max}

Let $P\subseteq Q$ be weakly rational ideals of $KG$, and suppose that $P$ is faithful. Then $Q$ is faithful.

\end{lemma}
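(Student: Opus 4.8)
The plan is to run a short contradiction argument, using faithfulness and weak rationality of $P$ to exhibit a nonzero element of a field that is nonetheless killed in $KG/Q$.

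First I would suppose that $Q$ is not faithful, i.e. $Q^{\dagger}\neq 1$. Since $Q^{\dagger}$ is a closed normal subgroup of $G$, the natural reduction is to pass to the Lie algebra: $\mathcal{T}:=\frac{1}{p}\log(Q^{\dagger})$ is a nonzero ideal of $\mathcal{L}$, and because $\mathcal{L}$ is nilpotent, $\mathcal{T}\cap Z(\mathcal{L})\neq 0$ — this is exactly the observation already used in the proof of Lemma~\ref{dagger}. Exponentiating a nonzero vector in $\mathcal{T}\cap Z(\mathcal{L})$ produces an element $1\neq z\in Q^{\dagger}\cap Z(G)$; thus $z$ is central in $G$ and $z-1\in Q$.

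Next I would track $z$ through the quotients. Its image $\bar z$ in $KG/P$ is central (since $z\in Z(G)$, hence $z\in Z(KG)$), so $\bar z$ lies in $Z(KG/P)$, which by weak rationality of $P$ is a field $L$ finite over $K$. Faithfulness of $P$ forces $\bar z\neq 1$: otherwise $z-1\in P$, so $z\in P^{\dagger}=1$, contradicting $z\neq 1$. Hence $\bar z-1$ is a \emph{nonzero} element of the field $L$. Finally, the inclusion $P\subseteq Q$ gives a ring surjection $KG/P\twoheadrightarrow KG/Q$; its restriction to $L$ is a homomorphism out of a field into the nonzero ring $KG/Q$, hence injective, yet it sends $\bar z-1$ to the class of $z-1$, which is $0$ in $KG/Q$ since $z-1\in Q$. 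This contradiction gives $Q^{\dagger}=1$, so $Q$ is faithful.

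The only genuinely nontrivial ingredient is the opening reduction — that a nontrivial closed normal subgroup of $G$ meets $Z(G)$ nontrivially — and that is precisely where nilpotency of $G$ is used (it fails for non-nilpotent $G$, e.g. $H\rtimes\mathbb{Z}_p$ with $H$ abelian and non-central); everything afterwards is formal, and indeed uses only that $P$ is faithful and weakly rational and that $Q$ is a proper ideal.
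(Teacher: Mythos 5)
Your proof is correct and is essentially the paper's own argument: nilpotency of $G$ produces $1\neq z\in Q^{\dagger}\cap Z(G)$, and then one plays the faithfulness of $P$ against the fact that $Z(KG/P)$ is a field mapping into $KG/Q$. The only cosmetic difference is that you phrase the final step as injectivity of a field homomorphism into the nonzero ring $KG/Q$ (correctly noting that weak rationality of $Q$ is not actually needed), whereas the paper packages it as the induced field extension $Z(KG/P)\hookrightarrow Z(KG/Q)$.
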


\begin{proof}

Let $F_1=Z(KG/P)$, $F_2=Z(KG/Q)$, then $F_1,F_2$ are finite field extensions of $K$, and clearly the natural surjection $KG/P\twoheadrightarrow KG/Q$ reduces to a field extension $F_1\xhookrightarrow{} F_2$.\\ 

\noindent Since $Q^{\dagger}=\{g\in G:g-1\in Q\}$ is a normal subgroup of $G$, using nilpotence of $G$ we see that if $Q^{\dagger}\neq 1$, then there exists $z\in Q^{\dagger}\cap Z(G)$ with $z\neq 1$. Thus $z+P,1+P\in F_1\subseteq F_2$, and $z+Q=1+Q$, which implies that $z+P=1+P$ and hence $z-1\in P$. So since $P$ is faithful, $z=1$ -- contradiction.

Therefore, $Q^{\dagger}=1$, and hence $Q$ is faithful.\end{proof}

\noindent Now, recall from Section \ref{dist(G)} the definition of the Banach completions $D_{p^n}=\widehat{U(p^n\mathcal{L})}_K\ast\frac{G}{G^{p^n}}$ of $KG$ for each $n\in\mathbb{N}$.

\begin{proposition}\label{prim-max}

Let $P$ be a primitive ideal of $KG$, then for all sufficiently high $n\in\mathbb{N}$, there exists a primitive ideal $Q_n$ of $D_{p^n}=\widehat{U(p^n\mathcal{L})}_K\ast\frac{G}{G^{p^n}}$ such that $Q_n\cap KG=P$.

\end{proposition}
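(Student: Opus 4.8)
The plan is to start from a primitive ideal $P=\mathrm{Ann}_{KG}M$ for some simple $KG$-module $M$, and to produce, for all sufficiently large $n$, a simple $D_{p^n}$-module whose annihilator contracts to $P$. The natural candidate is a simple quotient of the base-changed module $D_{p^n}\otimes_{KG}M$. First I would invoke \cite[Proposition 10.6(e), Corollary 10.11]{annals}, quoted in Section \ref{dist(G)}, which guarantees that $D_{p^n}\otimes_{KG}M\neq 0$ for all sufficiently high $n$; fix such an $n$. Since $D_{p^n}$ is Noetherian and $M$ is a simple (hence cyclic, hence finitely generated) $KG$-module, $N:=D_{p^n}\otimes_{KG}M$ is a nonzero finitely generated $D_{p^n}$-module, so it has a maximal submodule and therefore a simple quotient $\overline{N}$. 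Set $Q_n:=\mathrm{Ann}_{D_{p^n}}\overline{N}$, a primitive ideal of $D_{p^n}$. It remains to show $Q_n\cap KG=P$.

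The inclusion $P\subseteq Q_n\cap KG$ is the easy direction: the unit map $M\to D_{p^n}\otimes_{KG}M=N$, $m\mapsto 1\otimes m$, is $KG$-linear, and composing with the surjection $N\twoheadrightarrow\overline{N}$ gives a $KG$-module map $M\to\overline{N}$; since $P$ kills $M$ it kills the image, and since $\overline{N}$ is generated over $D_{p^n}$ by that image (the image of $1\otimes M$ generates $N$, hence $\overline{N}$, over $D_{p^n}$), and $P\cdot D_{p^n}$ is contained in the annihilator — here one uses that $P$ annihilates all of $N$ because $N=D_{p^n}\otimes_{KG}M$ and $P$ acts through the right $KG$-action which agrees with the left action up to the crossed-product structure; more carefully, $P\subseteq\mathrm{Ann}_{KG}N$ because $p\cdot(d\otimes m)=d\otimes(pm)=0$ only works if $p$ is central in $D_{p^n}$ over $KG$, which it is not in general — so instead I would argue directly: $1\otimes M$ generates $N$ as a $D_{p^n}$-module and is annihilated by $P$, and since $KG$ normalizes nothing special here, the cleanest route is to note $\mathrm{Ann}_{KG}\overline{N}\supseteq P$ simply because there is a nonzero $KG$-map $M\to\overline{N}$ and $M$ is simple, forcing $M\hookrightarrow\overline{N}$ as $KG$-modules (it is injective as $M$ is simple and the map is nonzero), so $\mathrm{Ann}_{KG}\overline{N}\subseteq\mathrm{Ann}_{KG}M=P$, giving $Q_n\cap KG\subseteq P$; combined with the obvious reverse containment this is the whole contraction statement.

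Let me restate the two containments cleanly, since the direction I just sketched is actually the nontrivial one. \textbf{(a) $Q_n\cap KG\subseteq P$:} the nonzero $KG$-linear map $M\to\overline N$ given above is injective because $M$ is simple, so $\overline N$ contains a copy of $M$; hence anything in $KG$ that annihilates $\overline N$ annihilates $M$, i.e. $Q_n\cap KG\subseteq P$. \textbf{(b) $P\subseteq Q_n\cap KG$:} I would show $P\cdot N=0$, equivalently that the $KG$-module structure on $N$ obtained by restriction is actually a quotient of a direct sum of copies of $M$ (as $N$ is spanned over $KG^{p^n}\cdot(\text{coset reps})$ by translates of $1\otimes M$, and each translate $\overline g\otimes M$ is a homomorphic image of the twisted module ${}^gM$, whose annihilator is ${}^gP$); but $P$ is a two-sided ideal of $KG$, hence $G$-stable, so ${}^gP=P$ and $P$ kills every $\overline g\otimes M$, hence all of $N$, hence $\overline N$, giving $P\subseteq Q_n$ and so $P\subseteq Q_n\cap KG$.

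The main obstacle I anticipate is part (b): making precise that $N=D_{p^n}\otimes_{KG}M$, as a $KG$-module under restriction along the dense embedding $KG\hookrightarrow D_{p^n}$, is built out of $G$-conjugates of $M$. This uses the explicit crossed-product description $D_{p^n}=\widehat{U(p^n\mathcal L)}_K\ast(G/G^{p^n})$ together with $KG=KG^{p^n}\ast(G/G^{p^n})$, so that $N$ decomposes as a sum over coset representatives $\{\overline g\}$ of $\widehat{U(p^n\mathcal L)}_K\otimes_{KG^{p^n}}M$-pieces twisted by $g$; one then checks each such piece is annihilated by ${}^gP=P$ using only that $P$ is two-sided in $KG$ and that the embedding $KG^{p^n}\hookrightarrow\widehat{U(p^n\mathcal L)}_K$ is the one from \eqref{embedding'}. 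Everything else — Noetherianity giving a simple quotient, simplicity of $M$ giving the injection in (a), and nonvanishing of the base change for large $n$ — is quoted directly from results already established in the excerpt.
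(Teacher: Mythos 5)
Your construction of $Q_n$ and your direction (a) are exactly right and agree with the paper: the composition $M\to D_{p^n}\otimes_{KG}M\twoheadrightarrow \overline N$ is nonzero (its image generates $\overline N$ over $D_{p^n}$), hence injective by simplicity of $M$, so $Q_n\cap KG\subseteq \mathrm{Ann}_{KG}M=P$. The gap is in direction (b). Your key claim there --- that $N=D_{p^n}\otimes_{KG}M$ is spanned over $KG^{p^n}\cdot(\text{coset reps})$ by the translates $\overline g\otimes M$ --- is false. Since the coset representatives $\overline g$ may be taken to be images of elements $g\in G\subseteq KG$ and the tensor product is over $KG$, every translate satisfies $\overline g\otimes M=1\otimes gM=1\otimes M$, so the $KG$-span (or $KG^{p^n}\ast\frac{G}{G^{p^n}}$-span) of these translates is just the image of $M$ itself. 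The honest spanning statement is that $N$ is spanned by elements $u\,\overline g\otimes m$ with $u\in\widehat{U(p^n\mathcal L)}_K$, and $\widehat{U(p^n\mathcal L)}_K$ is strictly larger than $KG^{p^n}$; for $r\in P$ there is no algebraic way to move $r$ past such a $u$ inside $KG$, so conjugation-invariance of $P$ alone does not give $P\cdot N=0$. (Your own aside in the first paragraph, where you worry about commuting $P$ past $d\otimes m$, is precisely this problem; the crossed-product decomposition does not make it go away.)

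What is missing is a topological argument, and this is how the paper closes the gap: $M=KGm$ is cyclic, $KG$ is dense in the Banach algebra $D_{p^n}$, so the image of $M$ is dense in $N$ for the quotient (semi)norm --- the paper phrases this by saying $\widehat M$ is the completion of $M$ with respect to $v(rm)=\sup\{w_n(r+y):y\in KG,\ ym=0\}$. Multiplication by any $r\in P$ is continuous on $N$, kills the dense image of $M$, and $N$ is Hausdorff (submodules of finitely generated modules over the Noetherian Banach algebra $D_{p^n}$ are closed), so taking limits gives $rN=0$, hence $P\subseteq\mathrm{Ann}_{KG}N\subseteq\mathrm{Ann}_{KG}\overline N=Q_n\cap KG$. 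Concretely, for $u\in\widehat{U(p^n\mathcal L)}_K$ write $u=\lim_i u_i$ with $u_i\in KG^{p^n}$; then $ru_i\otimes m=1\otimes ru_im=0$ for all $i$, and passing to the limit kills $ru\otimes m$. If you replace your step (b) by this density-plus-continuity argument (everything else in your proposal can stay), you recover the paper's proof.
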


\begin{proof}

Since $P$ is primitive, $P=$ Ann$_{KG}M$ for some irreducible $KG$-module $M$. Using \cite[Proposition 10.6(e), Corollary 10.11]{annals}, we see that for $n$ sufficiently high, $\widehat{M}:=D_{p^n}\otimes_{KG}M\neq 0$.\\

\noindent Since $M$ is irreducible and $\widehat{M}\neq 0$, the natural map $M\to\widehat{M},m\mapsto 1\otimes m$ is injective. And since $D_{p^n}$ is a Banach completion of $KG$ with respect to some filtration $w$, it follows that $\widehat{M}$ is a completion of $M=KGm$ with respect to the filtration $v(rm)=\sup\{w_n(r+y):y\in KG$ and $ym=0\}$.

Therefore, if $r\in P$, i.e. $rM=0$, then taking limits shows that $r\widehat{M}=0$, so $P\subseteq$ Ann$_{KG}\widehat{M}=($Ann$_{D_{p^n}}\widehat{M})\cap KG$.\\

\noindent Now, since $D_{p^n}$ is Noetherian and $\widehat{M}$ is a finitely generated $D_{p^n}$-module, we can choose a maximal submodule $U\leq\widehat{M}$, and let $M':=\widehat{M}/U$ -- an irreducible $D_{p^n}$-module.\\

\noindent Since $M$ is irreducible, the composition $M\xhookrightarrow{}\widehat{M}\twoheadrightarrow M'$ is either injective or zero. If it is zero then $M\subseteq U$, and hence $\widehat{M}\subseteq U$ and $M'=0$. This contradiction implies that the composition is injective.\\

Finally, let $Q_n=$ Ann$_{D_{p^n}}M'$, then $Q_n$ is a primitive ideal of $D_{p^n}$, and $P\subseteq$ Ann$_{KG}\widehat{M}\subseteq$ Ann$_{KG}M'=Q_n\cap KG$. Also, if $r\in Q_n\cap KG$ then $rM'=0$, so since $M\subseteq M'$, $rM=0$ and $r\in P$. Thus $P=Q_n\cap KG$ as required.\end{proof}

\subsection{Reduction from $KG$ to $KG^{p^n}$}

Now we begin to explore how we can relate primitive ideals in Iwasawa algebras to Dixmier annihilators:

\begin{proposition}\label{Dix-decomp}

Given a primitive ideal $P$ of $KG$, there exists $m\in\mathbb{N}$ with $m\geq 1$, finite extensions $F_{1},\cdots,F_{r}/K$ and $\mathbb{Q}_p$-linear maps $\lambda_{i}:\mathfrak{g}\to F_{i}$ with $\lambda_{i}(p^m\mathcal{L})\subseteq\mathcal{O}_{F_i}$ for each $i=1,\cdots,r$, such that:

\begin{center}
$P\cap KG^{p^m}=$\emph{ Ann}$_{KG^{p^m}}\widehat{D(\lambda_1)}_{F_{1}}\cap\cdots\cap$\emph{ Ann}$_{KG^{p^m}}\widehat{D(\lambda_{r})}_{F_{r}}$
\end{center}

\end{proposition}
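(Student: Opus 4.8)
The strategy is to combine the passage from $KG$ to the Noetherian completion $D_{p^n}$ (Proposition~\ref{prim-max}) with the classification of primitive ideals of the affinoid enveloping algebra. First I would use Proposition~\ref{prim-max}: since $P$ is primitive, for all sufficiently large $n$ there exists a primitive ideal $Q_n$ of $D_{p^n}=\widehat{U(p^n\mathcal{L})}_K\ast\frac{G}{G^{p^n}}$ with $Q_n\cap KG=P$. Fix such an $n$ and write $H:=G^{p^n}$, $F$ a finite group $\cong G/H$, so that $D_{p^n}=\widehat{U(p^n\mathcal{L})}_K\ast F$ is a crossed product of the affinoid enveloping algebra $R:=\widehat{U(p^n\mathcal{L})}_K$ with the finite group $F$. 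The key point is that $KG^{p^n}=KH$ sits inside $R$ via the dense embedding \eqref{embedding'}, and $P\cap KH=(Q_n\cap KH)=Q_n\cap R\cap KH$; so it suffices to control $Q_n\cap R$.

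\textbf{Applying the crossed-product lemmas.} Next I would apply Lemma~\ref{semiprimitive} to the primitive ideal $Q_n$ of the crossed product $R\ast F$: this gives that $Q_n\cap R$ is \emph{semiprimitive} in $R$, indeed (from the proof of that lemma) equal to a finite intersection $\bigcap_{g\in F} N_g$ of primitive ideals of $R$. Now $R=\widehat{U(p^n\mathcal{L})}_K$ is the affinoid enveloping algebra of the powerful (after rescaling by $p^n$) $\mathbb{Z}_p$-Lie algebra $p^n\mathcal{L}$, and by \cite[Theorem A]{aff-dix} every primitive ideal of an affinoid enveloping algebra of this type is the annihilator of an affinoid Dixmier module $\widehat{D(\mu)}_{E}$ for some finite extension $E/K$ and some $\mu\in\mathrm{Hom}_{\mathbb{Z}_p}(p^n\mathcal{L},\mathcal{O}_E)$. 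Apply this to each $N_g$: we obtain finite extensions $F_1,\dots,F_r/K$ and linear forms $\mu_i\colon \mathfrak{g}\to F_i$ with $\mu_i(p^n\mathcal{L})\subseteq\mathcal{O}_{F_i}$ and $N_{g_i}=\mathrm{Ann}_{R_{F_i}}\widehat{D(\mu_i)}_{F_i}$ (after base change to $F_i$), so that
\[
Q_n\cap R=\bigcap_{i=1}^r \mathrm{Ann}_{R}\widehat{D(\mu_i)}_{F_i}.
\]
Intersecting with $KH=KG^{p^n}$, and noting that $\widehat{D(\mu_i)}_{F_i}$ is naturally a $KG^{p^n}$-module via the embedding $KG^{p^n}\hookrightarrow R_{F_i}$, one reads off $\lambda_i:=\mu_i$ and $m:=n$ (taking $m\geq 1$, which is harmless since we may always increase $n$), giving exactly
\[
P\cap KG^{p^m}=\bigcap_{i=1}^r \mathrm{Ann}_{KG^{p^m}}\widehat{D(\lambda_i)}_{F_i}
\]
as claimed.

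\textbf{The main obstacle.} The delicate point is the bookkeeping of which algebra the Dixmier modules live over. The Dixmier modules supplied by \cite[Theorem A]{aff-dix} for $R=\widehat{U(p^n\mathcal{L})}_K$ are modules over $\widehat{U(p^n\mathcal{L})}_K$, i.e.\ built from $p^n\mathcal{L}$ and a form on $p^n\mathcal{L}$, whereas the statement wants forms $\lambda_i$ on $\mathfrak{g}$ with $\lambda_i(p^m\mathcal{L})\subseteq\mathcal{O}_{F_i}$; so I must check that a form $\mu$ on $p^n\mathcal{L}$ valued in $\mathcal{O}_E$ extends uniquely to $\mathfrak{g}=p^n\mathcal{L}\otimes\mathbb{Q}_p$ and that the resulting $\lambda$ indeed satisfies $\lambda(p^n\mathcal{L})\subseteq\mathcal{O}_E$, which is immediate. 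A second, more substantive point to verify is that $\mathrm{Ann}_{R}\widehat{D(\mu_i)}_{F_i}\cap KG^{p^n}=\mathrm{Ann}_{KG^{p^n}}\widehat{D(\lambda_i)}_{F_i}$: this is clear because $KG^{p^n}$ acts on $\widehat{D(\lambda_i)}_{F_i}$ precisely through its image in $R_{F_i}$, so the two annihilators agree after contraction. Finally one should confirm that the reduction can be arranged with $m\geq 1$; since Proposition~\ref{prim-max} holds for \emph{all} sufficiently large $n$, we simply choose $n\geq 1$. Everything else is a routine assembly of Proposition~\ref{prim-max}, Lemma~\ref{semiprimitive}, and \cite[Theorem A]{aff-dix}.
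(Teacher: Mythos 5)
Your overall route is the paper's route: pass to a primitive ideal $Q_n$ of $D_{p^n}$ via Proposition \ref{prim-max}, contract to $R=\widehat{U(p^n\mathcal{L})}_K$ and use Lemma \ref{semiprimitive} to write $Q_n\cap R$ as a finite intersection of primitive ideals of $R$, then identify those with Dixmier annihilators via \cite[Theorem A]{aff-dix} and contract to the group algebra. The one place where you diverge is also the one genuine problem: you invoke \cite[Theorem A]{aff-dix} in the strong form ``every primitive ideal of $\widehat{U(p^n\mathcal{L})}_K$ is itself the annihilator of an affinoid Dixmier module $\widehat{D(\mu)}_{F}$'' and accordingly take $m:=n$. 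That is more than the cited theorem delivers, and more than the paper uses: the correct invocation is that for each primitive ideal $J_i$ of $\widehat{U(p^n\mathcal{L})}_K$ there exists some deeper level $m\geq n$ (which may be taken uniform over the finitely many $J_i$) such that the \emph{contraction} $J_i\cap\widehat{U(p^m\mathcal{L})}_K$ equals $\mathrm{Ann}_{\widehat{U(p^m\mathcal{L})}_K}\widehat{D(\lambda_i)}_{F_i}$ for a suitable finite extension $F_i/K$ and $\lambda_i\in\mathrm{Hom}_{\mathbb{Z}_p}(p^m\mathcal{L},\mathcal{O}_{F_i})$. This is exactly why the proposition is stated with an unspecified $m$ rather than with the $n$ supplied by Proposition \ref{prim-max}; if your strong form held, the extra quantifier would be essentially redundant.

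The repair is mechanical and recovers the paper's proof: after writing $Q_n\cap R=J_1\cap\cdots\cap J_r$, choose $m\geq n$ (hence $m\geq 1$) working for all $i$ simultaneously, and intersect everything with $KG^{p^m}\subseteq\widehat{U(p^m\mathcal{L})}_K\subseteq R$, using $P\cap KG^{p^m}=Q_n\cap KG^{p^m}=(Q_n\cap R)\cap KG^{p^m}=\bigcap_i\bigl(J_i\cap\widehat{U(p^m\mathcal{L})}_K\bigr)\cap KG^{p^m}$. Your remaining checks (that a form on $p^m\mathcal{L}$ valued in $\mathcal{O}_{F_i}$ extends to $\mathfrak{g}$, and that contracting the $\widehat{U(p^m\mathcal{L})}_K$-annihilator to $KG^{p^m}$ gives the $KG^{p^m}$-annihilator, since $KG^{p^m}$ acts through its image in the affinoid envelope) are correct and unaffected. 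Also note a small imprecision: the primitive ideals $N_g$ produced by Lemma \ref{semiprimitive} are ideals of $R$, so the identification should read $\mathrm{Ann}_R\widehat{D(\mu_i)}_{F_i}$ (the Dixmier module over $F_i$ viewed as an $R$-module by restriction), not $\mathrm{Ann}_{R_{F_i}}$.
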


\begin{proof}

Using Proposition \ref{prim-max}, if $P$ is primitive, then for any sufficiently high $n\geq 1$, there is a primitive ideal $Q$ of $D_{p^n}=\widehat{U(p^n\mathcal{L})}_K\ast\frac{G}{G^{p^n}}$ such that $Q\cap KG=P$, and hence $Q\cap KG^{p^n}=P\cap KG^{p^n}$.\\

\noindent Let $I=Q\cap\widehat{U(p^n\mathcal{L})}_K$, then using Lemma \ref{semiprimitive} we see that $I$ is a semiprimitive ideal of $\widehat{U(p^n\mathcal{L})}_K$, so choose primitive ideals $J_{1},J_{2},\cdots,J_{r}$ of $\widehat{U(p^n\mathcal{L})}_K$ such that $I=J_{1}\cap J_{2}\cap\cdots\cap J_{r}$.\\

\noindent Since each $J_{i}$ is primitive, it follows from \cite[Theorem A]{aff-dix} that there exists $m\geq n$ such that for each $i$, $J_{i}\cap\widehat{U(p^{m}\mathcal{L})}_K=$ Ann$_{\widehat{U(p^m\mathcal{L})}_K}\widehat{D(\lambda_i)}_{F_i}$ for $F_i/K$ a finite extension, $\lambda_i:\mathfrak{g}\to F_i$ $\mathbb{Q}_p$-linear with $\lambda_i(p^m\mathcal{L})\subseteq\mathcal{O}_{F_i}$. Thus:\\

\noindent $P\cap KG^{p^{m}}=Q\cap KG^{p^{m}}=I\cap KG^{p^{m}}=(J_{1}\cap\widehat{U(p^{m}\mathcal{L})}_K)\cap\cdots\cap (J_{r}\cap\widehat{U(p^{m}\mathcal{L})}_K)\cap KG^{p^{m}}$ is an intersection of Dixmier annihilators as required.\end{proof}

\noindent Now, we want to show that all faithful, primitive ideals of $KG$ are centrally generated, which we know is true for Dixmier annihilators by Theorem \ref{B}. Proposition \ref{Dix-decomp} allows us to compare general primitive ideals to Dixmier annihilators, and the following result uses this to prove a reduced version of Theorem \ref{A}:

\begin{theorem}\label{reduced-version}

Let $G$ be a nilpotent, uniform pro-$p$ group with centre $Z$, and let $P$ be a faithful, primitive ideal of $KG$. Then there exists $N\in\mathbb{N}$ such that for all $n\geq N$, $P\cap KG^{p^n}$ is controlled by $Z^{p^n}$.

\end{theorem}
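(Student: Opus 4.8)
The plan is to feed the decomposition of Proposition \ref{Dix-decomp} into Theorem \ref{B}. Proposition \ref{Dix-decomp} writes $P\cap KG^{p^n}$ (for suitable $n$) as a finite intersection of Dixmier annihilators $\operatorname{Ann}_{KG^{p^n}}\widehat{D(\lambda_i)}_{F_i}$, and Theorem \ref{B} says each such annihilator is controlled by the centre \emph{provided it is faithful}, i.e.\ provided $\lambda_i|_{Z(\mathfrak g)}$ is injective. The only real work is therefore to show that, although the individual $\lambda_i$ produced by Proposition \ref{Dix-decomp} need not a priori be nondegenerate on $Z(\mathfrak g)$, the hypothesis that $P$ (hence $P\cap KG^{p^n}$) is \emph{faithful}, combined with the fact that the $\lambda_i$ all lie in a single coadjoint orbit, forces each of them to be injective on $Z(\mathfrak g)$.

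First I would record the routine preliminaries about $G^{p^n}$: for every $n\geq 1$ it is again a nilpotent uniform pro-$p$ group, its associated Lie algebra is $\tfrac1p\log(G^{p^n})=p^n\mathcal L$, which is powerful because $[p^n\mathcal L,p^n\mathcal L]\subseteq p^{2n}\mathcal L\subseteq p\cdot p^n\mathcal L$, and $Z(G^{p^n})=Z(G)\cap G^{p^n}=Z^{p^n}$ (via the Lie-algebra correspondence). Next, applying Proposition \ref{prim-max} and running the proof of Proposition \ref{Dix-decomp} — noting that \cite[Theorem A]{aff-dix} applies at every sufficiently large level — I obtain an integer $N\geq 1$ such that for all $n\geq N$,
\[
P\cap KG^{p^n}\;=\;\bigcap_{i=1}^{r}\operatorname{Ann}_{KG^{p^n}}\widehat{D(\lambda_i)}_{F_i},\qquad \lambda_i(p^n\mathcal L)\subseteq\mathcal O_{F_i}.
\]
Moreover, since that proof realises $P\cap\widehat{U(p^n\mathcal L)}_K$, via Lemma \ref{semiprimitive}, as an intersection of primitive ideals of $\widehat{U(p^n\mathcal L)}_K$ lying in a single $G/G^{p^n}$-conjugacy orbit, and since the conjugation action of $G$ on $\widehat{U(p^n\mathcal L)}_K$ induces $\operatorname{Ad}(G)$ on $\mathcal L$, which fixes $Z(\mathfrak g)$ pointwise, I may arrange that all the $\lambda_i$ restrict to one and the same linear form $\mu:Z(\mathfrak g)\to\overline K$.

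Now for the key step. As $P$ is faithful in $KG$, $P\cap KG^{p^n}$ is faithful in $KG^{p^n}$, so $(P\cap KG^{p^n})^{\dagger}=1$; by Lemma \ref{dagger} this says $\bigcap_i A_{\lambda_i}=1$, where $A_{\lambda_i}$ denotes the $\lambda_i$-scalar subgroup of $G^{p^n}$. For each $i$ one has $\mathfrak a_{\lambda_i}\cap Z(\mathfrak g)=\ker(\lambda_i|_{Z(\mathfrak g)})=\ker\mu$ (any subspace of $Z(\mathfrak g)$ is an ideal of $\mathfrak g$), so the central part $A_{\lambda_i}\cap Z(G^{p^n})$ is one fixed subgroup $Z_0$, independent of $i$; hence $Z_0=\bigcap_i\bigl(A_{\lambda_i}\cap Z(G^{p^n})\bigr)=\bigl(\bigcap_i A_{\lambda_i}\bigr)\cap Z(G^{p^n})=1$. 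Therefore $\mu$, and with it every $\lambda_i|_{Z(\mathfrak g)}$, is injective, so by Lemma \ref{dagger} each $\operatorname{Ann}_{KG^{p^n}}\widehat{D(\lambda_i)}_{F_i}$ is a faithful ideal of $KG^{p^n}$. Applying Theorem \ref{B} to $G^{p^n}$ (whose Lie algebra $p^n\mathcal L$ is powerful) gives that each $\operatorname{Ann}_{KG^{p^n}}\widehat{D(\lambda_i)}_{F_i}$ is controlled by $Z(G^{p^n})=Z^{p^n}$, and finally a finite intersection of two-sided ideals each controlled by $Z^{p^n}$ is controlled by $Z^{p^n}$ by \cite[Lemma 4.1($i$)]{primitive}. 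Thus $P\cap KG^{p^n}$ is controlled by $Z^{p^n}$ for every $n\geq N$.

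I expect the main obstacle to be the bookkeeping hidden in the second paragraph: carefully extracting from the proof of Proposition \ref{Dix-decomp} both the uniformity in $n$ and, more importantly, the fact that the forms $\lambda_i$ form a single coadjoint orbit and therefore share a common restriction $\mu$ to $Z(\mathfrak g)$. Once that is in place, the argument is essentially formal modulo Theorem \ref{B}.
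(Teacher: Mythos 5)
Your overall skeleton is the same as the paper's: decompose $P\cap KG^{p^m}$ into Dixmier annihilators via Proposition \ref{Dix-decomp}, show each one is faithful so that Theorem \ref{B} applies over $G^{p^n}$ (whose Lie lattice $p^n\mathcal L$ is powerful and whose centre is $Z^{p^n}$), and recombine with \cite[Lemma 4.1($i$)]{primitive}. The bookkeeping in $n$ that worries you is handled in the paper simply by fixing one level $m$ and intersecting down: if $J_i=\operatorname{Ann}_{KG^{p^m}}\widehat{D(\lambda_i)}_{F_i}$, then for $n\geq m$ one has $J_i\cap KG^{p^n}=\operatorname{Ann}_{KG^{p^n}}\widehat{D(\lambda_i)}_{F_i}$ and $P\cap KG^{p^n}=\bigcap_i\bigl(J_i\cap KG^{p^n}\bigr)$, so the same $\lambda_i$ serve for every $n\geq m$; there is no need to rerun Proposition \ref{Dix-decomp} at each level.

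The genuine gap is your key step: the claim that you ``may arrange that all the $\lambda_i$ restrict to one and the same linear form $\mu$ on $Z(\mathfrak g)$'' because they lie in a single coadjoint orbit. Nothing cited supports this. In Proposition \ref{Dix-decomp} the forms $\lambda_i$ are produced by applying \cite[Theorem A]{aff-dix} to each primitive ideal $J_i$ of $\widehat{U(p^n\mathcal L)}_K$ separately; the theorem gives no compatibility among the $\lambda_i$, the coefficient fields $F_i$ differ, and conjugate annihilators need not arise from coadjointly conjugate forms (the assignment $\lambda\mapsto\operatorname{Ann}\widehat{D(\lambda)}$ is far from injective). Without a common $\mu$, faithfulness of $P\cap KG^{p^n}$ only yields $\bigcap_i\ker\bigl(\lambda_i|_{Z(\mathfrak g)}\bigr)=0$, which does not make each $\lambda_i|_{Z(\mathfrak g)}$ injective, and Theorem \ref{B} cannot be invoked. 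The step can be repaired, but you must argue at the level of the ideals rather than the forms: the $J_i$ are the minimal primes over the $G$-semiprime ideal $P\cap KG^{p^m}$, hence $G$-conjugate, so their daggers $J_i^\dagger=A_{\lambda_i}$ (Lemma \ref{dagger}) are conjugate subgroups; since $\operatorname{Ad}(G)$ fixes $Z(\mathfrak g)$ pointwise, $\mathfrak a_{\lambda_i}\cap Z(\mathfrak g)=\ker\bigl(\lambda_i|_{Z(\mathfrak g)}\bigr)$ is indeed independent of $i$, and then your central-character argument closes the case. The paper takes a different route here: from $\bigcap_i J_i^\dagger=1$, conjugacy of the $J_i^\dagger$ together with orbital soundness of $G$ forces each $J_i^\dagger$ to be finite, hence trivial by torsionfreeness, giving faithfulness of each $J_i$ directly. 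Either repair works; as written, though, your coadjoint-orbit assertion is the unproven crux.
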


\begin{proof}

Using Proposition \ref{Dix-decomp}, we see that for some $m\geq 1$, there are finite extensions $F_1,\cdots,F_r$ and $\mathbb{Q}_p$-linear maps $\lambda_{i}:\mathfrak{g}\to F_{i}$ with $\lambda(p^m\mathcal{L})\subseteq\mathcal{O}_{F_i}$ such that $P\cap KG^{p^m}=$ Ann$_{KG^{p^m}}\widehat{D(\lambda_{1})}_{F_{1}}\cap\cdots\cap$ Ann$_{KG^{p^m}}\widehat{D(\lambda_{r})}_{F_{r}}$.\\

\noindent For each $i=1,\cdots, r$, set $J_{i}:=$ Ann$_{KG^{p^m}}\widehat{D(\lambda_{i})}_{F_{i}}$ for convenience, clearly these are prime ideals of $KG^{p^m}$, thus $P\cap KG^{p^m}$ is semiprime and $J_{1},\cdots,J_{r}$ are the minimal primes above $P\cap KG^{p^m}$, hence they are all $G$-conjugate by the proof of \cite[Lemma 5.4(b)]{nilpotent}. Note that for all $n\geq m$, $J_i\cap KG^{p^n}=$ Ann$_{KG^{p^n}}\widehat{D(\lambda)}_{F_i}$ for each $i$.\\

\noindent Also, since $P$ is faithful, $P\cap KG^{p^m}$ is faithful, so $J_{1}^{\dagger}\cap\cdots\cap J_{r}^{\dagger}=P^{\dagger}=1$. But since $J_{1}^{\dagger},\cdots,J_{r}^{\dagger}$ are $G$-conjugate and $G$ is orbitally sound by \cite[Proposition 5.9]{nilpotent}, this means that the subgroup $1$ must have finite index in $J_{i}^{\dagger}$ for each $i$, which means that they are finite. But $G$ is torsionfree, thus $J_{i}^{\dagger}=1$ for all $i$, i.e. $J_{1},\cdots,J_{r}$ are faithful.

So since $J_{i}=$ Ann$_{KG^{p^m}}\widehat{D(\lambda_{i})}_{F_{i}}$ is faithful, it follows from Lemma \ref{dagger} that $\lambda_{i}$ is injective when restricted to $Z(\mathfrak{g})$.\\ 

\noindent Now, since $m\geq 1$, note that for all $n\geq m$, $\frac{1}{p}\log(G^{p^n})=p^{n-1}\log(G)$ is a powerful Lie lattice. Therefore, using Theorem \ref{B}, we see that $J_{i}\cap KG^{p^{n}}=$ Ann$_{KG^{p^n}}\widehat{D(\lambda_i)}_{F_i}$ is controlled by $Z(G^{p^n})$ for each $i$, and using \cite[Lemma 8.4(a)]{nilpotent}, $Z(G^{p^n})=Z(G)\cap G^{p^n}=Z^{p^n}$.\\

\noindent Therefore, setting $B_{i,n}:=J_{i}\cap KG^{p^n}=$ Ann$_{KG^{p^n}}\widehat{D(\lambda_i)}$, $B_{i,n}=(B_{i,n}\cap KZ^{p^n})KG^{p^n}$ for each $i$, so using \cite[Lemma 4.1(a)]{primitive}:\\

$P\cap KG^{p^n}=B_{1,n}\cap\cdots\cap B_{r,n}=(B_{1,n}\cap KZ^{p^n})KG^{p^n}\cap\cdots\cap (B_{r,n}\cap KZ^{p^n})KG^{p^n}$\\

$=(B_{1,n}\cap\cdots\cap B_{r,n}\cap KZ^{p^n})KG^{p^n}=(P\cap KZ^{p^n})KG^{p^n}$\\

\noindent Hence $P\cap KG^{p^n}$ is controlled by $Z^{p^n}$ as required.\end{proof}

\subsection{Extension from $KG^{p^n}$ to $KG$}

The results of the previous subsection show that we can establish Theorem \ref{A} after passing to $G^{p^n}$ for sufficiently high $n$. We now just need to extend to $KG$.

\begin{lemma}\label{rational}

Let $P$ be a weakly rational ideal of $KG$. Then $P\cap KZ(G)$ is a maximal ideal of $KZ(G)$.

\end{lemma}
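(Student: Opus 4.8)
The plan is to exploit the fact that $Z := Z(G)$ is a finitely generated abelian pro-$p$ group, so $KZ$ is a commutative affinoid-like Noetherian $K$-algebra, and that $P \cap KZ$ is a prime ideal of $KZ$ (since $Z$ is central, $KZ \to KG/P$ is a ring map into a prime ring, so its kernel $P\cap KZ$ is prime). Writing $L$ for the field of fractions of $KZ/(P\cap KZ)$, the central embedding $KZ \hookrightarrow KG$ descends to an embedding of $L$ into the ring of quotients of $KG/P$, and in fact $L$ sits inside $Z(\mathrm{Fract}(KG/P))$. The weakly rational hypothesis says $Z(KG/P)$ is a finite field extension of $K$; I would first argue that $Z(\mathrm{Fract}(KG/P))$ is also finite over $K$ — this should follow because $KG/P$ is a prime Noetherian PI-or-close-to-it ring whose centre's fraction field equals the centre of its Goldie ring of quotients, or more directly because $Z(KG/P)$ being a finite field extension of $K$ forces the Goldie quotient ring's centre to be $Z(KG/P)$ itself (a finite extension of $K$ is already a field, hence its own fraction field, and central elements of the quotient ring come from central elements of $KG/P$ after clearing denominators along the centre). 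Thus $L$ is a subfield of a finite extension of $K$, so $L$ is finite over $K$.

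Now comes the main point: $KZ/(P\cap KZ)$ is a finitely generated $\mathcal{O}$-algebra-completion, an integral domain whose fraction field $L$ is a \emph{finite} extension of $K$. I would then invoke the structure of $KZ$: since $Z \cong \mathbb{Z}_p^{\,e}$, $KZ$ is (a localisation/completion incarnation of) a Tate algebra in $e$ variables over $K$, and its quotient $KZ/(P\cap KZ)$ is an affinoid $K$-algebra which is a domain. An affinoid domain over $K$ whose fraction field is finite over $K$ must itself be finite-dimensional over $K$ — indeed its Krull dimension is $0$ by the affinoid Nullstellensatz-type dimension theory (Krull dimension equals the transcendence degree of the fraction field over $K$, which is $0$), so $KZ/(P\cap KZ)$ is Artinian, hence (being a domain) a field. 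Therefore $P\cap KZ$ is a maximal ideal of $KZ$.

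The step I expect to be the main obstacle is the comparison $Z(\mathrm{Fract}(KG/P)) \supseteq L$ together with controlling the size of this centre: one must be careful that passing from $KG/P$ to its Goldie ring of quotients does not enlarge the centre beyond a finite extension of $K$, i.e. that $Z(KG/P)$ being finite over $K$ genuinely forces the fraction field $L$ of the central subring $KZ/(P\cap KZ)$ to be finite over $K$ rather than merely of transcendence degree controlled by that of $Z(KG/P)$. The cleanest route is probably: $KZ/(P\cap KZ)$ is a central subring of $KG/P$, hence maps into $Z(KG/P)$, which is a field finite over $K$; an integral domain that embeds into a field finite over $K$ has fraction field finite over $K$; then apply affinoid dimension theory (as used already in the proof of Theorem~\ref{integrally-closed} via \cite{dimension}, \cite{Bosch}) to conclude $KZ/(P\cap KZ)$ has Krull dimension $0$ and is therefore a field. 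Once the inclusion $KZ/(P\cap KZ) \hookrightarrow Z(KG/P)$ is in hand, the rest is routine commutative algebra.
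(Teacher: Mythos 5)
Your ``cleanest route'' at the end is essentially the paper's proof: since $Z(G)$ is central, the image of $KZ(G)$ in $KG/P$ lands inside $Z(KG/P)$, so $KZ(G)/(P\cap KZ(G))$ is a domain sitting between $K$ and a finite field extension of $K$, and from this one concludes it is a field, i.e.\ $P\cap KZ(G)$ is maximal. Two comments on how you finish and start, though. First, the opening detour through the Goldie ring of quotients, its centre, and ``PI-or-close-to-it'' considerations is unnecessary (and unjustified as stated): you never need $\mathrm{Fract}(KG/P)$ at all, because central elements of $KG$ map to central elements of the quotient, so the embedding $KZ(G)/(P\cap KZ(G))\hookrightarrow Z(KG/P)$ is immediate. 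Second, your concluding step via ``affinoid dimension theory'' is both overkill and not quite applicable: $KZ(G)$ is the rational Iwasawa algebra of $\mathbb{Z}_p^{\,e}$, i.e.\ $\mathcal{O}[[b_1,\dots,b_e]]\otimes_{\mathcal{O}}K$, which is \emph{not} a Tate/affinoid algebra (it is the ring of bounded power series on the open disc), so the Krull-dimension-equals-transcendence-degree argument does not directly apply to its quotients. Fortunately no dimension theory is needed: a domain $D$ with $K\subseteq D\subseteq F$ and $[F:K]<\infty$ is automatically a field, since every nonzero $d\in D$ is algebraic over $K$ and its minimal polynomial has nonzero constant term, making $d$ invertible inside $K[d]\subseteq D$. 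With that elementary replacement your argument coincides with the paper's; as written, the affinoid step is the one genuinely shaky point.
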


\begin{proof}

Since $P$ is prime in $KG$, $Q:=P\cap KZ(G)$ is prime in $KZ(G)$. So setting $F:=Z(KG/P)$, it is clear that $KZ(G)/Q\xhookrightarrow{} F$. So since $KZ(G)/Q$ is a domain containing $K$, and $F$ is a finite extension of $K$, it follows that $KZ(G)/Q$ is a field, and hence $Q$ is maximal.\end{proof}

\begin{proposition}\label{equal-int}

Let $G$ be a nilpotent, uniform pro-$p$ group, and let $P_1\subseteq P_2$ be faithful, primitive ideals of $KG$. Then there exists $n\in\mathbb{N}$ such that $P_1\cap KG^{p^n}=P_2\cap KG^{p^n}$. It follows that if $P$ is a faithful, primitive ideal of $KG$ then $P$ is maximal.

\end{proposition}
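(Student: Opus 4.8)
The plan is to obtain the first assertion by feeding the control theorem of Theorem \ref{reduced-version} into the commutative algebra of the centre, and then to deduce maximality of $P$ from it via a minimal-prime argument inside the crossed product $KG = KG^{p^n}\ast\frac{G}{G^{p^n}}$.

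For the first assertion, I would apply Theorem \ref{reduced-version} to $P_1$ and to $P_2$ to obtain thresholds $N_1,N_2$, and set $N=\max\{N_1,N_2\}$; then for every $n\geq N$ we have $P_i\cap KG^{p^n}=(P_i\cap KZ^{p^n})KG^{p^n}$, so it is enough to prove $P_1\cap KZ^{p^n}=P_2\cap KZ^{p^n}$ inside the commutative ring $KZ^{p^n}$. Since $P_1\subseteq P_2$ forces $P_1\cap KZ^{p^n}\subseteq P_2\cap KZ^{p^n}$, it suffices to check that each $P_i\cap KZ^{p^n}$ is a \emph{maximal} ideal of $KZ^{p^n}$. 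Now $P_i$ is primitive, hence weakly rational by \cite[Theorem 1.1(1)]{endomorphism}, so $P_i\cap KZ(G)$ is a maximal ideal of $KZ(G)$ by Lemma \ref{rational}; and since $Z^{p^n}$ is open of finite index in $Z(G)$ we have $KZ(G)=KZ^{p^n}\ast\bigl(Z(G)/Z^{p^n}\bigr)$, a finitely generated free $KZ^{p^n}$-module, hence an integral extension of $KZ^{p^n}$. Contracting a maximal ideal along an integral ring extension gives a maximal ideal, so $P_i\cap KZ^{p^n}=(P_i\cap KZ(G))\cap KZ^{p^n}$ is maximal in $KZ^{p^n}$. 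Two maximal ideals of $KZ^{p^n}$ with one contained in the other coincide, so $P_1\cap KZ^{p^n}=P_2\cap KZ^{p^n}$ and therefore $P_1\cap KG^{p^n}=P_2\cap KG^{p^n}$.

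For the second assertion, suppose $P$ is a faithful primitive ideal that is not maximal. Since $KG$ is Noetherian there is a maximal two-sided ideal $Q$ of $KG$ with $P\subsetneq Q$; then $KG/Q$ is a simple ring, hence left primitive (it acts faithfully on any of its simple left modules), so $Q$ is a primitive, in particular weakly rational, ideal, and it is faithful by Lemma \ref{faithful-max} applied to $P\subseteq Q$. Applying the first assertion to the pair $P\subseteq Q$ yields $n$ with $P\cap KG^{p^n}=Q\cap KG^{p^n}$. Viewing $KG=KG^{p^n}\ast\frac{G}{G^{p^n}}$ as a crossed product with the finite group $G/G^{p^n}$, Lemma \ref{semiprime}($ii$) shows that $Q$ is a minimal prime of $KG$ above $J:=(Q\cap KG^{p^n})KG=(P\cap KG^{p^n})KG$. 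But $J\subseteq P\subseteq Q$ with $P$ prime, so minimality of $Q$ forces $P=Q$, contradicting $P\subsetneq Q$. Hence $P$ is maximal.

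The assembly is largely bookkeeping, with no essentially new ingredient beyond Theorem \ref{reduced-version}, Lemma \ref{rational}, Lemma \ref{faithful-max} and Lemma \ref{semiprime}. The one place where a little care is required is the two-way movement between $KG$ and $KG^{p^n}$: in the first part one must descend along $KZ^{p^n}\subseteq KZ(G)\subseteq KG$ and use integrality to preserve maximality, and in the second part one must re-ascend via the crossed-product structure and use that a maximal two-sided ideal, being prime, is a minimal prime over the ideal it generates from its contraction to $KG^{p^n}$.
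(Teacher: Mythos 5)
Your proposal is correct; both halves go through. The first assertion is proved by essentially the same mechanism as the paper: Theorem \ref{reduced-version} gives control by $Z^{p^n}$, and Lemma \ref{rational} (via weak rationality from \cite[Theorem 1.1(1)]{endomorphism}) supplies the maximality that forces the two contractions to the centre to agree. The paper takes a slightly shorter path here: it observes directly that $P_1\cap KZ(G)$ is maximal in $KZ(G)$ and contained in the proper ideal $P_2\cap KZ(G)$, hence equal to it, and then intersects with $KZ^{p^n}$; your detour through the integral extension $KZ^{p^n}\subseteq KZ(G)$ to get maximality of $P_i\cap KZ^{p^n}$ is valid (the extension is module-finite, hence integral, and contraction preserves maximality) but not needed --- equality already at the $KZ(G)$ level descends trivially. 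Where you genuinely diverge is the maximality step. The paper shows that $P\cap KG^{p^n}=(P\cap KZ^{p^n})KG^{p^n}$ is \emph{prime} in $KG^{p^n}$ via Theorem \ref{completely-prime'} and then invokes the incomparability theorem \cite[Theorem 16.6($iii$)]{Passman} for the crossed product $KG=KG^{p^n}\ast\frac{G}{G^{p^n}}$ to conclude $P=Q$. You instead apply Lemma \ref{semiprime}($ii$) to the prime $Q$, so that $Q$ is a minimal prime above $(Q\cap KG^{p^n})KG=(P\cap KG^{p^n})KG\subseteq P\subseteq Q$, forcing $P=Q$. Your route avoids Theorem \ref{completely-prime'} and the external incomparability citation entirely, staying inside the paper's own crossed-product lemma, at the cost of nothing; the paper's route makes the primeness of the contraction explicit, which it reuses in the proof of Theorem \ref{d-standard}. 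Your justification that a maximal two-sided ideal $Q$ is primitive (hence weakly rational, hence faithful by Lemma \ref{faithful-max}) is the same implicit step the paper takes.
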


\begin{proof}

Using Theorem \ref{reduced-version}, we see that there exist $N_1,N_2\in\mathbb{N}$ such that for all $n_1\geq N_1$, $n_2\geq N_2$, $P_i\cap KG^{p^{n_i}}$ is controlled by $Z(G)^{p^{n_i}}$ for each $i$. So choose $n\geq\max\{N_1,N_2\}$ and we have that $P_1\cap KG^{p^n},P_2\cap KG^{p^n}$ are controlled by $Z(G)^{p^n}$.\\

\noindent Since $P_1$ is primitive, it is weakly rational, so using Lemma \ref{rational} we see that $P_1\cap KZ(G)$ is a maximal ideal of $KZ(G)$. So since $P_1\cap KZ(G)\subseteq P_2\cap KZ(G)$, we have that $P_1\cap KZ(G)=P_2\cap KZ(G)$, and hence $P_1\cap KZ(G)^{p^n}=P_2\cap KZ(G)^{p^n}$. Therefore:

\begin{center}
$P_1\cap KG^{p^n}=(P_1\cap KZ(G)^{p^n})KG^{p^n}=(P_2\cap KZ(G)^{p^n})KG^{p^n}=P_2\cap KG^{p^n}$.
\end{center}

\noindent Finally, given a faithful, primitive ideal $P$ of $KG$, let $Q$ be a maximal ideal of $KG$ containing $P$. Since $P$ and $Q$ are primitive, they are weakly rational, so since $P$ is faithful, $Q$ is faithful by Lemma \ref{faithful-max}. Thus, by the above, there exists $n\in\mathbb{N}$ such that $P\cap KG^{p^n}=Q\cap KG^{p^n}$ is controlled by $Z(G)^{p^n}$.\\

\noindent But $P\cap KZ(G)$ is prime in $KZ(G)$, so $P\cap KG^{p^n}=(P\cap KZ(G)^{p^n})KG^{p^n}$ is prime in $KG^{p^n}$ by Theorem \ref{completely-prime'}. So since $P\cap KG^{p^n}=Q\cap KG^{p^n}$, it follows from \cite[Theorem 16.6($iii$)]{Passman} that $P=Q$, and hence $P$ is maximal.\end{proof}

\begin{theorem}\label{d-standard}

Let $G$ be a nilpotent, uniform pro-$p$ group. Then all faithful, primitive ideals of $KG$ are controlled by $Z(G)$.

\end{theorem}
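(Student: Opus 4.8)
The plan is to deduce Theorem \ref{d-standard} from the "local" control statement of Theorem \ref{reduced-version} by descending from $KG^{p^n}$ back to $KG$, mimicking the endgame of the proof of Proposition \ref{equal-int}. So fix a faithful, primitive ideal $P$ of $KG$ and introduce the candidate controlled ideal $P_0 := (P\cap KZ(G))KG$. Since $P$ is primitive it is weakly rational, so by Lemma \ref{rational} the contraction $P\cap KZ(G)$ is a maximal ideal of $KZ(G)$; it is moreover faithful, since if $z\in Z(G)$ and $z-1\in P\cap KZ(G)\subseteq P$ then $z=1$ by faithfulness of $P$. Hence Theorem \ref{completely-prime'} shows that $P_0$ is a faithful, completely prime ideal of $KG$, and by construction $P_0\subseteq P$ and $P_0$ is controlled by $Z(G)$. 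It therefore suffices to prove $P_0=P$.

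Next I would apply Theorem \ref{reduced-version} to choose $n\in\mathbb{N}$ with $P\cap KG^{p^n}$ controlled by $Z(G)^{p^n}$, i.e. $P\cap KG^{p^n}=(P\cap KZ(G)^{p^n})KG^{p^n}$, where $Z(G)^{p^n}=Z(G^{p^n})$ by \cite[Lemma 8.4(a)]{nilpotent}. Since $P\cap KZ(G)^{p^n}=(P\cap KZ(G))\cap KG^{p^n}\subseteq P_0\cap KG^{p^n}$ is an ideal of $KG^{p^n}$, we get $(P\cap KZ(G)^{p^n})KG^{p^n}\subseteq P_0\cap KG^{p^n}\subseteq P\cap KG^{p^n}=(P\cap KZ(G)^{p^n})KG^{p^n}$, forcing $P_0\cap KG^{p^n}=P\cap KG^{p^n}$. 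Moreover, because $KZ(G)$ is a finite, hence integral, ring extension of $KZ(G)^{p^n}$, the contraction $P\cap KZ(G)^{p^n}$ of the maximal ideal $P\cap KZ(G)$ is again maximal, so by Theorem \ref{completely-prime'} applied to the uniform group $G^{p^n}$ the common contraction $P\cap KG^{p^n}$ is a genuine prime ideal of $KG^{p^n}$.

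Finally, writing $KG=KG^{p^n}\ast\frac{G}{G^{p^n}}$ as a crossed product with the finite group $G/G^{p^n}$, the ideals $P_0\subseteq P$ are both prime ideals of $KG$ lying over the same prime ideal $P\cap KG^{p^n}$ of $KG^{p^n}$; incomparability \cite[Theorem 16.6($iii$)]{Passman} then forces $P_0=P$, so $P=(P\cap KZ(G))KG$ is controlled by $Z(G)$. I expect the only genuine subtlety to be this passage from $KG^{p^n}$ to $KG$: one must know that the contracted ideal is honestly prime (not merely semiprime) so that incomparability for crossed products with finite groups applies, and this is precisely where the commutativity of $KZ(G)$ and Theorem \ref{completely-prime'} are used. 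Once Theorem \ref{reduced-version} is in hand, everything else is routine bookkeeping with controller subgroups.
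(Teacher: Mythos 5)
Your argument is correct, and its skeleton is the same as the paper's: both proofs feed Theorem \ref{reduced-version} and Theorem \ref{completely-prime'} into a crossed-product descent from $KG^{p^n}$ to $KG = KG^{p^n}\ast\frac{G}{G^{p^n}}$. The difference is in the finishing move and in some unnecessary baggage. The paper never needs weak rationality here: it only uses that $Q:=P\cap KZ(G)$ is \emph{prime} (automatic, since $KZ(G)$ is central in $KG$), sets $J:=(P\cap KG^{p^n})KG$, notes via Lemma \ref{semiprime}($ii$) that $P$ is a minimal prime over $J$, and then squeezes $J\subseteq QKG\subseteq P$ with $QKG$ prime to force $P=QKG$. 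You instead prove $P_0:=QKG$ and $P$ have the same contraction to $KG^{p^n}$ and invoke incomparability \cite[Theorem 16.6($iii$)]{Passman}, exactly as the paper does in Proposition \ref{equal-int}; that is perfectly valid. Two of your intermediate steps are superfluous, though harmless: Lemma \ref{rational} and the integrality argument giving maximality of $P\cap KZ(G)^{p^n}$ are not needed, since Theorem \ref{completely-prime'} only requires primality of the central contraction (and primality of $P\cap KZ(G)^{p^n}$ is immediate from centrality), and faithfulness of $P_0$ plays no role. What each route buys: the paper's minimal-prime sandwich keeps the weak-rationality input quarantined in Proposition \ref{equal-int} (where it genuinely is needed), whereas your version makes the incomparability mechanism explicit and uniform with the maximality arguments elsewhere in Section 5, at the cost of importing maximality of $P\cap KZ(G)$ where primality would do.
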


\begin{proof}

Let $P$ be a faithful, primitive ideal of $KG$, and let $Z=Z(G)$. We want to prove that $P$ is controlled by $Z$.\\

\noindent Using Theorem \ref{reduced-version}, we know that there exists $n\in\mathbb{N}$ such that $P\cap KG^{p^n}$ is controlled by $Z^{p^n}$, and hence is prime in $KG^{p^n}$ by Theorem \ref{completely-prime'}. So let $J:=(P\cap KG^{p^n})KG$, then using Lemma \ref{semiprime} we see that $J$ is a semiprime ideal of $KG$, and $P$ is minimal prime above $J$.\\

\noindent Let $Q:=P\cap KZ$, then $Q$ is prime in $KZ$, so $QKG$ is prime in $KG$ by Theorem \ref{completely-prime'}. And since $P\cap KG^{p^n}=(P\cap KZ^{p^n})KG^{p^n}$, we have that:

\begin{center}
$J=(P\cap KG^{p^n})KG=(P\cap KZ^{p^n})KG\subseteq QKG$.
\end{center}

\noindent But clearly $QKG\subseteq P$, so since $QKG$ is prime and $P$ is minimal prime above $J$, it follows that $P=QKG=(P\cap KZ)KG$, and hence $P$ is controlled by $Z$ as required.\end{proof}

\noindent Now we can finally prove our main result. First, we just need a small Lemma:

\begin{lemma}\label{dagger2}

Let $G$ be a uniform pro-$p$ group, let $N$ be a closed, normal subgroup of $G$. Then there exists an open, uniform normal subgroup $U$ of $G$ such that $N\cap U$ is a closed, isolated normal subgroup of $U$.

\end{lemma}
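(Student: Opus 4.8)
We are given a uniform pro-$p$ group $G$ and a closed normal subgroup $N \trianglelefteq_c G$, and we want an open uniform normal subgroup $U \leq_o G$ such that $N \cap U$ is isolated in $U$, i.e. $U/(N\cap U)$ is torsionfree. The plan is to work on the Lie-algebra side via $\log$, where ``isolated'' becomes the purely linear condition of being a $\mathbb{Q}_p$-subspace intersected with a lattice.

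\begin{proof}
Let $\mathcal{L}_G = \log(G)$ be the associated $\mathbb{Z}_p$-Lie algebra, a $\mathbb{Z}_p$-lattice in the $\mathbb{Q}_p$-Lie algebra $\mathfrak{g} = \mathcal{L}_G \otimes_{\mathbb{Z}_p}\mathbb{Q}_p$. Since $N$ is a closed subgroup of the uniform group $G$, it is $p$-valuable of finite rank, and $\log$ restricted to $N$ identifies it with a closed Lie sublattice $\mathcal{L}_N \subseteq \mathcal{L}_G$; because $N$ is normal in $G$, $\mathcal{L}_N$ is an ideal of $\mathcal{L}_G$ and hence $\mathfrak{n} := \mathcal{L}_N \otimes_{\mathbb{Z}_p}\mathbb{Q}_p$ is an ideal of $\mathfrak{g}$. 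Set $\mathcal{M} := \mathfrak{n} \cap \mathcal{L}_G$, the isolator of $\mathcal{L}_N$ in $\mathcal{L}_G$; this is an ideal of $\mathcal{L}_G$ containing $\mathcal{L}_N$ with $\mathcal{L}_G/\mathcal{M}$ torsionfree (it embeds into $\mathfrak{g}/\mathfrak{n}$).

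Now choose $m \in \mathbb{N}$ large enough that $U := \exp(p^m \mathcal{L}_G)$ is an open uniform normal subgroup of $G$ and moreover $p^m \mathcal{M} \subseteq p^m\mathcal{L}_G$ together with the bracket and exponential estimates place us safely above the convergence threshold; concretely, for $m$ sufficiently large $p^m\mathcal{L}_G$ is a powerful Lie lattice, $\exp$ and $\log$ are mutually inverse bijections between $p^m\mathcal{L}_G$ and $U$, and $\log(U \cap N) = p^m\mathcal{L}_G \cap \mathcal{L}_N$ (the standard compatibility of $\log$ with closed subgroups, as in \cite[Theorem 4.30 ff.]{DDMS}). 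Then
\[
\log(U \cap N) = p^m\mathcal{L}_G \cap \mathcal{L}_N, \qquad p^m\mathcal{L}_G \cap \mathcal{M} = p^m\mathcal{L}_G \cap \mathfrak{n},
\]
and since $p^m\mathcal{L}_G \cap \mathcal{L}_N$ is a lattice in $\mathfrak{n}$, after possibly enlarging $m$ we may assume $p^m\mathcal{L}_G \cap \mathcal{M} = p^m\mathcal{L}_G \cap \mathcal{L}_N$ is already isolated inside $p^m\mathcal{L}_G$: indeed $p^m\mathcal{L}_G/(p^m\mathcal{L}_G \cap \mathfrak{n})$ embeds into $\mathfrak{g}/\mathfrak{n}$, hence is torsionfree. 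Translating back through $\exp$, the quotient $U/(U \cap N)$ is torsionfree, i.e. $U \cap N$ is a closed, isolated normal subgroup of $U$. Normality of $U\cap N$ in $U$ is immediate since both $U$ and $N$ are normal in $G$, and $U\cap N$ is closed as an intersection of closed subgroups.
\end{proof}

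The only genuine subtlety, and the step I would be most careful with, is the compatibility $\log(U \cap N) = p^m\mathcal{L}_G \cap \log(N)$ and the choice of $m$ making $\exp/\log$ honest bijections on $p^m\mathcal{L}_G$ while respecting the closed subgroup $N$; everything else is the elementary observation that an ideal of a Lie lattice has torsionfree quotient after passing to its isolator, which is automatic because it becomes a $\mathbb{Q}_p$-subspace condition on $\mathfrak{g}$. In a fully written-up version one would cite the precise statement from \cite{DDMS} (or from the theory of $p$-valued groups) that for $m \gg 0$ the lattice $p^m\mathcal{L}_G$ is uniform, powerful, and that $\log$ carries the closed subgroup $U \cap N$ onto $p^m\mathcal{L}_G \cap \mathcal{L}_N$.
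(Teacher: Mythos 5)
Your proof is correct in substance, but it follows a different route from the paper's. The paper argues entirely at the group level: it invokes the isolator $i_G(N)$ from \cite{Billy}, which is a closed, isolated normal subgroup of $G$ in which $N$ is open, chooses $n$ so that $g^{p^n}\in N$ for all $g\in i_G(N)$, sets $U:=G^{p^n}$, and checks isolation of $N\cap U$ in three lines: if $g=h^{p^n}\in U$ and $g^p=h^{p^{n+1}}\in N\subseteq i_G(N)$, then $h\in i_G(N)$ by isolation, so $g=h^{p^n}\in N$. Your argument is the Lie-theoretic shadow of the same idea: your $\mathcal{M}=\mathfrak{n}\cap\mathcal{L}_G$ plays the role of $\log i_G(N)$, the inclusion $p^m\mathcal{M}\subseteq\log N$ for $m\gg 0$ encodes the fact that $N$ is open in its isolator, and your final torsionfreeness check is the paper's computation transported through $\log$. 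The paper's version buys brevity and needs no structural information about $\log N$ beyond the cited properties of $i_G(N)$; yours buys self-containedness (no appeal to \cite{Billy}) at the price of relying on the uniform-group/Lie-lattice dictionary.

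One caveat: your opening assertion that $\log$ identifies the closed normal subgroup $N$ with a Lie sublattice $\mathcal{L}_N\subseteq\mathcal{L}_G$ is not justified --- a closed subgroup of a uniform group need not be uniform, and $\log N$ need not be a $\mathbb{Z}_p$-submodule --- and the vague appeal to ``convergence thresholds'' is unnecessary, since $\log$ is already a bijection $G\to\mathcal{L}_G$ and $\log(G^{p^m})=p^m\mathcal{L}_G$, so $\log(U\cap N)=p^m\mathcal{L}_G\cap\log N$ holds for trivial reasons. Fortunately your argument does not need the sublattice claim: let $\mathfrak{n}$ be the $\mathbb{Q}_p$-span of $\log N$ (an ideal of $\mathfrak{g}$ by normality), note $\log N\subseteq\mathcal{M}:=\mathfrak{n}\cap\mathcal{L}_G$, and note that $\log N$ contains the full lattice $\log N_0$ for some open uniform subgroup $N_0\leq N$, so that $p^m\mathcal{L}_G\cap\mathfrak{n}=p^m\mathcal{M}\subseteq\log N$ once $m\gg 0$. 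Then $\log(U\cap N)=p^m\mathcal{L}_G\cap\mathfrak{n}$ exactly, and isolation follows as you say: if $g\in U$ and $g^p\in N\cap U$, then $p\log g\in\mathfrak{n}$, hence $\log g\in p^m\mathcal{L}_G\cap\mathfrak{n}=\log(U\cap N)$, hence $g\in N\cap U$. With that adjustment your proof is complete.
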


\begin{proof}

Recall from \cite[Definition 1.6]{Billy} the definition of the \emph{isolater} $i_G(N)$ of $N$ in $G$, and recall from \cite[Proposition 1.7, Lemma 1.8]{Billy} that $i_G(N)$ is a closed, isolated normal subgroup of $G$, and $N$ is open in $i_G(N)$.\\

\noindent Therefore, there exists $n\in\mathbb{N}$ such that if $g\in i_G(N)$ then $g^{p^n}\in N$. So if $g=h^{p^n}\in U:=G^{p^n}$ and $g^p=h^{p^{n+1}}\in N\subseteq i_G(N)$, then $h\in i_G(N)$, so $g=h^{p^n}\in N$. Hence $N\cap U$ is isolated in $U$ as required.\end{proof}

\vspace{0.2in}

\noindent\emph{Proof of Theorem \ref{A}.} Let $P$ be a primitive ideal of $KG$, and we want to prove that $P$ is virtually standard, i.e. that $P\cap KU$ is a finite intersection of standard ideals for some open, normal subgroup $U$ of $G$.\\

\noindent Firstly, if $P$ is faithful, then it follows from Theorem \ref{d-standard} that $P$ is controlled by $Z(G)$, and hence is standard, and using Proposition \ref{equal-int} we see that $P$ is maximal. So we can assume that $P$ is not faithful.\\

\noindent Let $N:=P^{\dagger}=\{g\in G:g-1\in P\}$. Then $N$ is a closed, normal subgroup of $G$, so by Lemma \ref{dagger2}, there exists an open, uniform normal subgroup $U$ of $G$ such that $N\cap U$ is isolated in $U$. Let $Q:=P\cap KU$, then $Q$ is a semiprimitive ideal in $KU$ by Lemma \ref{semiprimitive}, and $Q^{\dagger}=N\cap U$ is a closed, isolated normal subgroup of $U$.\\

\noindent Let $U_1:=\frac{U}{Q^{\dagger}}$, and let $Q_1:=\frac{Q}{(Q^{\dagger}-1)KU}$. Then $U_1$ is a nilpotent, uniform pro-$p$ group and $Q_1$ is a faithful semiprimitive ideal of $KU_1$. Therefore, it follows that $Q$ is a finite intersection of faithful, primitive ideals in $KU_1$. Since all faithful primitives in $KU_1$ are maximal and standard, this means that $Q_1$ is a finite intersection of maximal standard ideals.\\

\noindent Therefore, since $Q_1$ is a homomorphic image of $Q$, this means that $Q$ is a finite intersection of maximal, standard ideals, and it follows from Definition \ref{standard} that $P$ is a virtually standard prime ideal of $KG$. Therefore, it remains to show that $P$ is maximal.\\

\noindent Using Lemma \ref{semiprime}($ii$), we see that $P$ is minimal prime above the semiprime ideal $(P\cap KU)KG$. So since $P\cap KU$ is semimaximal in $KU$, it follows from \cite[Theorem 16.6($iii$)]{Passman} that $P$ is maximal in $KG$ as required.\qed

% You need a bibliography file called mybib.bib
\bibliographystyle{abbrv}

\begin{thebibliography}{20}

\bibitem{survey}
K. Ardakov; K. A. Brown,
\textit{\emph{Ring-theoretic properties of Iwasawa algebras: a survey}}.
\emph{Documenta Math.}, Extra Volume Coates, 7-33 (2006).

\bibitem{annals}
K. Ardakov; S. Wadsley,
\textit{\emph{On irreducible representations of compact $p$-adic analytic groups}}.
\emph{Annals of Mathematics} \textbf{178}, 453-557 (2013).

\bibitem{munster}
K. Ardakov; S. Wadsley,
\textit{\emph{Verma modules for Iwasawa algebras are faithful}}
\emph{M\"{u}nster Journal of Mathematics} \textbf{7}, 5-26 (2014).

\bibitem{chevalley}
K. Ardakov; F. Wei; J.J. Zhang,
\textit{\emph{Nonexistence of reflexive ideals in Iwasawa algebras of Chevalley type}}.
\emph{J. Algebra} \textbf{320}(1), 259-275 (2008).

\bibitem{nilpotent}
K. Ardakov,
\textit{\emph{Prime ideals in nilpotent Iwasawa algebras}}.
\emph{Inventiones Mathematicae} \textbf{190}(2), 439-503 (2012).

\bibitem{centre}
K. Ardakov,
\textit{\emph{The Centre of Completed Group Algebras of Pro-$p$ Groups}}
\emph{Documenta Math.} \textbf{9}, 599-606 (2004).

\bibitem{controller}
K. Ardakov,
\textit{\emph{The Controller Subgroup of One-sided ideals in Completed Group Rings}}.
\emph{Contemporary Mathematics, vol 562} (2012).

\bibitem{Bosch}
S. Bosch,
\textit{Lectures on Formal and Rigid Analytic Geometry,}
volume 2105 of \emph{Lecture Notes in Mathematics.} Springer, Cham, (2014).

\bibitem{BGR}
S. Bosch; U. G\"{u}ntzer; R. Remmert,
\textit{Non-archimedean Analysis}
\emph{Grundlehren der Mathematischen Wissenschaften 261, Springer-Verlag, Berlin, Heidelberg, New York, Tokyo} (1984).

\bibitem{normal}
B. Conrad;
\textit{Irreducible components of rigid spaces}
\emph{Annales de l’institut Fourier, tome 49, no 2} (1999).

\bibitem{DDMS}
J.D. Dixon; M.P.F. du Sautoy; A. Mann; D. Segal,
\textit{Analytic Pro-$p$ Groups (second edition)}.
\emph{Cambridge Studies in Advanced Mathematics, vol 61. Cambridge University Press, Cambridge,} (1999).

\bibitem{endomorphism}
G. Dopinescu; B. Schraen,
\textit{\emph{Endomorphism algebras of $p$-adic representations of $p$-adic Lie groups,}}
arXiv:1106.2442.

\bibitem{dimension}
A. Ducros,
\textit{\emph{Espaces Analytiques $p$-adiques au sens de Berkovich}},
\emph{S\'{e}minaire BOURBAKI} (2006).

\bibitem{primitive}
A. Jones,
\textit{\emph{A Control Theorem for Primitive Ideals in Iwasawa algebras,}}
\emph{Journal of Algebra,} \textbf{560}, 700-724 (2019).

\bibitem{APB}
A. Jones,
\textit{\emph{Completed Group Algebras of Abelian-by-procyclic Groups}}.
\emph{Advances in Mathematics.} \textbf{354} (2019).

\bibitem{aff-dix}
A. Jones,
\textit{\emph{Affinoid Dixmier Modules,}}
arXiv:2102.03330 (2019).

\bibitem{exp-rad}
K. Kedlaya,
\textit{$p$-adic Differential Equations}.
\emph{Cambridge Studies in Advanced Mathematics, vol 125. Cambridge University Press,} (2010).

\bibitem{Lazard}
M. Lazard,
\textit{\emph{Groupes analytiques $p$-adiques}}.
\emph{Publ. Math. IH\'{E}S} \textbf{26}, 389-603 (1965).

\bibitem{Lewis}
B. Lewis,
\textit{Primitive factor rings of $p$-adic completions of
enveloping algebras as arithmetic differential
operators,}
Doctoral Thesis, Queen Mary University of London (2015).

\bibitem{LVO}
H. Li; F. Van Oystaeyen,
\textit{Zariskian Filtrations}.
\emph{Kluwer Academic, Dordrecht,} (1996).

\bibitem{McConnell}
J.C. McConnell; J.C. Robson,
\textit{Noncommutative Noetherian Rings}.
\emph{Graduate Studies in Mathematics, vol 30. American Mathematical Society, Providence,} (2001).

\bibitem{Passman}
D.S. Passman,
\textit{Infinite Crossed Products}.
\emph{Pure and Applied Mathematics, vol. 135. Academic Press, Boston,} (1989).

\bibitem{derivation}
 C. Praagman, 
\textit{\emph{Iterations and logarithms of formal automorphisms}}. 
Aequationes Math., \textbf{30}(2-3), 151–160 (1986).

\bibitem{ST}
P. Schneider; J. Teitelbaum,
\textit{\emph{Algebras of $p$-adic distributions and admissable representations,}}
\emph{Invent. Math.} \textbf{153}, 145-196 (2003).

\bibitem{ST'}
P. Schneider; J. Teitelbaum, 
\textit{\emph{Locally analytic distributions and $p$-adic representation theory, with applications to $GL_2$}} 
J. Amer. Math. Soc \textbf{15}, 443-468 (2002).

\bibitem{Schneider}
P. Schneider,
\textit{$p$-adic Lie Groups}.
\emph{Grundlehren der Mathematischen Wissenschaften, vol. 344. Springer, Heidelberg,} (2011).

\bibitem{stack-exchange}
User reuns, https://math.stackexchange.com/questions/3241556/pth-roots-in-an-extension-of-prime-degree-p.

\bibitem{Billy}
W. Woods,
\textit{\emph{On the structure of virtually nilpotent compact $p$-adic analytic groups}}.
arxiv: 1608.03137vl [math.GR], (2016).

\end{thebibliography}

\end{document}